\theoremstyle{plain}
\newtheorem{thm}{Theorem}[section]
\newtheorem{lem}[thm]{Lemma}
\newtheorem{prop}[thm]{Proposition}
\newtheorem{cor}[thm]{Corollary}
\newtheorem{rem}[thm]{Remark}
\theoremstyle{definition}
\numberwithin{equation}{section}
\numberwithin{thm}{section}
\numberwithin{defn}{section}
\title{Orthodiagonal Maps, Tilings of Rectangles, and their Convergence to Conformal Maps}
\author[1]{Ilia Binder}
\author[2]{David Pechersky}
\affil[1]{University of Toronto}
\affil[2]{Beijing Institute of Mathematical Sciences and Applications (BIMSA)}
\date{\today}
\begin{document}

\maketitle

\begin{abstract}
\noindent A classic result of Brooks, Smith, Stone and Tutte \cite{BSST40} associates to any finite planar network with distinguished source and sink vertices, a tiling of a rectangle by smaller subrectangles whose aspect ratios are given by the conductances of corresponding edges in the network. This tiling can be viewed as a discrete analogue of the uniformizing conformal map that maps a simply connected domain with four distinguished prime ends to a rectangle, so that the four prime ends are mapped to the four corners of the rectangle. \\ \\
We make this intuition precise by showing that if $\Omega$ is a simply connected domain with four distinguished prime ends $A,B,C,D$ in counterclockwise order and $(\Omega_{n})_{n\geq{1}}$ is a sequence of orthodiagonal maps with distinguished boundary vertices $A_{n}, B_{n}, C_{n}, D_{n}$ in counterclockwise order, that are finer and finer approximations of $\Omega$ with its distinguished boundary points $A,B,C,D$, then the corresponding ``rectangle tiling maps" converge uniformly on compacts to the aforementioned conformal map on $\Omega$.
\end{abstract}

\tableofcontents{}

\section{Introduction}
\label{sec: Introduction}

\noindent In the past two decades, discrete complex analysis has proven to be a powerful tool for solving problems in critical 2D statistical physics. For instance, it was used to prove Cardy's formula for crossing probabilities in critical percolation \cite{SS01}, the convergence of interfaces in the critical Ising model to $SLE_{3}$ \cite{Ising}, and the convergence of percolation interfaces to $SLE_{6}$ \cite{CN07}. Here $SLE_{\kappa}$ refers to Schramm Loewner evolution, a $1$- parameter family of random curves parametrized by $\kappa>0$ that is characterized by conformal invariance and a certain Markov property. \\ \\
A recurring theme in thermodynamics is the notion of universality. Universality is a phenomenon where, due to some overarching principle, physical phenomenon with radically different microscopic details (i.e. magnets, porous media, and growing polymer chains in 2D), give rise to the same macroscopic behavior when you zoom out (i.e. $SLE$ curves). In the context of $2D$ statistical physics, universality suggests that for the same model, it shouldn't matter how we chop up our space- i.e. it doesn't matter if we define our model on a triangular or a square lattice- the resulting large-scale behavior should be the same.\\ \\
%The simplest example of a universal phenomenon is the bell curve from undergraduate statistics. By the central limit theorem, if you have a variable whose value is influenced by a bunch of tiny independent factors, regardless of how these tiny influences are distributed, if you take a bunch of measurements of your variable of interest, the fluctuations about the mean will be Gaussian.\\ \\
Orthodiagonal maps \cite{GJN20} are a very general class of discretizations of continuous $2D$ space that accommodate a notion of discrete complex analysis. Thus, at least in principle, the same techniques that were used previously to prove convergence of observables and interfaces in $2D$ critical statistical physics models for isoradial and more restrictive lattices, could be generalized to demonstrate the same convergence when our statistical physics happens on an orthodiagonal map. For By Theorem 1.1 of \cite{YY11} and Theorem 1.1 of \cite{GJN20}, loop-erased random walk on orthodiagonal maps converges to $SLE_{2}$. While this is not yet proven, the same is expected to be true of harmonic explorer: harmonic explorer on orthodiagonal maps should converge to $SLE_{4}$. Similarly, the fluctuations of the dimer model height function on orthodiagonal maps are expected to converge to the Gaussian free field (GFF).\footnote{In fact, this convergence is expected to be true in the more general setting of $t$-embeddings, subject to some very mild regularity assumptions. For more details, see Theorem 1.4 of \cite{CLR23}.}
 \\ \\
In this work, we leverage discrete complex analysis to solve a purely deterministic problem in this very general setting. A classic paper of Brooks, Smith, Stone and Tutte describes how planar electrical networks give rise to tilings of rectangles by smaller subrectangles \cite{BSST40}. Each subrectangle in the tiling corresponds to an edge of the network and its aspect ratio is precisely the conductance of this corresponding edge. These tilings can be thought of as discrete analogues of the uniformizing conformal map that maps a simply connected domain to a rectangle so that four distinguished points on the boundary of our simply connected domain are mapped to the four corners of the rectangle. We make this idea rigorous by showing that for any simply connected domain, if we have an increasingly fine sequence of orthodiagonal approximations, the associated tilings converge to the corresponding uniformizing conformal map. This significantly improves on a previous result of Georgakopoulos and Panagiotis who prove this convergence in the case where the approximating orthodiagonal map is just a chunk of the square grid \cite{GP20}. Furthermore, our approach is significantly different from the one in \cite{GP20} which relies heavily on the fact that reflected random walks on $\delta\mathbb{Z}^{2}$ converge in law to reflected Brownian motion as $\delta\rightarrow{0}$. To our knowledge, this result is not known for any other lattices. \\ \\
In recent work, Albin, Lind, and Poggi-Corradini provide an explicit rate of convergence for these tilings to the limiting conformal map in the general orthodiagonal setting (this is effectively Theorem 3 of \cite{ALP23}), subject to certain assumptions on the smoothness of the boundary of the simply connected domain that is being approximated. They then use this to prove convergence of the probabilistic interpretation of modulus as well as convergence of discrete extremal length to continuous extremal length in this setting (see Theorems 2 and 4 of \cite{ALP23}). By modifying their approach, we manage to avoid making any assumptions about the smoothness of the boundary of our simply connected domain, at the expense of providing an explicit rate of convergence. \\ \\
Our result can also be interpreted as the rectangle tiling analogue of similar results that are known for circle packings. The Koebe-Andreev-Thurston theorem tells us that any finite triangulation can be realized as the tangency graph of a circle packing in the plane. With this in mind, Bill Thurston made the observation that if you fill a simply connected domain with circles packed together, the Koebe-Andreev-Thurston theorem gives you a natural way to repack these circles in the unit disk in a way that preserves tangency. Since this ``repacking map" sends circles to circles, if we fill our simply connected domain with smaller and smaller circles, the corresponding circles in the images should also get smaller and smaller. In the limit, these repacking maps should converge to a function that sends infinitisimal circles to infinitisimal circles. In other words, a conformal map. Thus, Thurston conjectured that circle packings should give us a way to approximate the uniformizing conformal map from a simply connected domain to the unit disk \cite{Th85}. This was proven by Rodin and Sullivan when the circle packings in the simply connected domain consist of circles, all having the same radii, packed together in a honeycomb pattern \cite{RS87}.  This was later generalized to circle packings with arbitrary combinatorics by He and Schramm \cite{HS96}. For more on circle packings and their connection to complex analysis, see \cite{Stephenson}. \\ \\
Finally, it is worth noting that the closely related tilings of cylinders have been the object of recent study by Benjamini and Schramm \cite{BS96}, Georgakopoulos \cite{G16}, and Hutchcroft and Peres \cite{HP17} in connection with the Poisson boundaries of infinite planar graphs, by Hersonsky \cite{H18}, in connection with computational geometry, and by Bertacco, Gwynne, and Sheffield \cite{BGS24} in connection with random planar geometry. In particular, Theorem 1.4 of \cite{BGS24} and Theorem 3.13 of \cite{H18} are of a similar flavor to our main result, Theorem \ref{thm: convergence of rectangle tiling maps}. Theorem 1.4 of \cite{BGS24} considers a sequence of weighted graphs and their duals embedded in an infinite cylinder so that simple random walk on both the primal and the dual graphs converge to Brownian motion on the cylinder. In this setting, it asserts that certain corresponding tilings of this cylinder by rectangles (which can be thought of as discrete conformal maps) are close to the identity map. Theorem 3.13 of \cite{H18} can be viewed as a version of our result for doubly connected domains. Namely, given a doubly connected domain in the plane with piecewise linear boundary and a sequence of Delaunay triangulations of this domain with finer and finer mesh that are $\tau$-quasiuniform (see Definition 3.3 in \cite{H18}), Theorem 3.13 of \cite{H18} asserts that certain discrete conformal maps (which correspond to tilings of an annulus) converge uniformly on compacts to the conformal map that maps our doubly connected domain to an annulus with inradius 1. Note that Delaunay triangulations are a special case of the orthodiagonal maps we describe in Section \ref{subsec: OD map intro}. For details, see Section 1.2 of \cite{BG24}. 

\subsection{Organization} 
\label{subsec: Organization}

A brief outline of the proof of our main result (Theorem \ref{thm: convergence of rectangle tiling maps}) and therefore the organization of the rest of this paper is as follows: 
\begin{itemize}
    \item In Section \ref{sec: preliminaries} we review some preliminary material that we need to state and prove our main result, Theorem \ref{thm: convergence of rectangle tiling maps}, which appears at the end of this section. In particular, we see that the tiling maps in the statement of Theorem \ref{thm: convergence of rectangle tiling maps}, are effectively discrete holomorphic functions on orthodiagonal maps.   
    \item In Section \ref{sec: Precompactness of the Tiling Maps}, we show that our tiling maps are precompact with respect to the topology of uniform convergence on compacts. This gives us subsequential limits. We use the fact that for a family of continuous functions, precompactness is equivalent to uniform boundedness and equicontinuity. Uniform boundedness follows from the extremal length estimates in Section \ref{subsec: Bounds for EL and Dirichlet Energy}. Equicontinuity follows from the modulus of continuity estimates in Section \ref{subsec: modulus of continuity for tiling maps}. To motivate our argument in Section \ref{subsec: modulus of continuity for tiling maps}, in Section \ref{subsec: Modulus of Continuity for Limiting Conformal Map}, we use the same argument to prove analogous modulus of continuity estimates for conformal maps.
    \item In Section \ref{sec: Limits of Discrete Holomorphic Functions are Holomorphic}, we show that limits of discrete holomorphic functions on orthodiagonal maps are holomorphic. While this has already been proven in the more general setting of $t$-embeddings in \cite{CLR23} (see Proposition 6.15 of \cite{CLR23}), for completeness, we provide an alternative proof of this fact in the simpler orthodiagonal setting. Thus, discrete holomorphicity of our tiling maps gives us holomorphicity for any subsequential limit.
    \item Having established that any subsequential limit of our tiling maps is holomorphic, in Section \ref{sec: Convergence of Tiling Maps}, we uniquely identify the limiting holomorphic function by its boundary behavior, thereby completing the proof of our main result.  
\end{itemize}

\section*{Acknowledgements} Many thanks to Dmitry Chelkak for discussing the results of \cite{CLR23} with us and commenting on the state-of-the-art in discrete complex analysis more broadly. 

\section{Preliminaries}
\label{sec: preliminaries}

\subsection{The Theory of Electrical Networks}

Following $\cite{GJN20}$, a finite \textbf{network} is a finite graph $G=(V,E)$ along with a weight function $c:E\rightarrow{\mathbb{R}_{>0}}$. For any edge $e\in{E}$ we say that $c(e)$ is the \textbf{conductance} of that edge. The reciprocal $r(e)=\frac{1}{c(e)}$ is the \textbf{resistance} of that edge.  \\ \\
A function $\theta:\vec{E}\rightarrow{\mathbb{R}}$ is said to be \textbf{antisymmetric} if $\theta(-\vec{e})=-\theta(\vec{e})$ for all $\vec{e}\in{\vec{E}}$. Intuitively, antisymmetric functions on a network $G$ are the discrete analogues of vector fields. Let $\ell_{-}^{2}(\vec{E})$ denote the space of antisymmetric functions on $\vec{E}$ with the inner product: 
\begin{equation*}
\langle\theta,\psi\rangle_{r}:=\frac{1}{2}\sum_{\vec{e}\in{\vec{E}}}r(e)\theta(\vec{e})\psi(\vec{e})
\end{equation*}
The \textbf{energy} of $\theta\in{\ell_{-}^{2}(\vec{E})}$ is: 
\begin{equation*}
\mathcal{E}(\theta)=\|\theta\|_{r}^{2}=\langle\theta,\theta\rangle_{r}
\end{equation*}
Given $f:V\rightarrow{\mathbb{R}}$, its \textbf{gradient} $cdf:\vec{E}\rightarrow{\mathbb{R}}$ is given by: 
\begin{equation*}
(cdf)(\vec{e})=c(e)(f(e^{+})-f(e^{-}))
\end{equation*}
for any directed edge $\vec{e}=(e^{-},e^{+})$. For any function $f:V\rightarrow{\mathbb{R}}$, the gradient is antisymmetric. Thus, we can define the energy of a function $f:V\rightarrow{\mathbb{R}}$ as the energy of its gradient: 
\begin{equation*}
\mathcal{E}(f)=\mathcal{E}(cdf)=\frac{1}{2}\sum_{\vec{e}\in{\vec{E}}}c(e)\big(f(e^{+})-f(e^{-})\big)^{2}
\end{equation*}
Given a function in $\ell^{2}_{-}(\vec{E})$, we are often interested in the energy of its restriction to some subgraph of $G$. To make it clear where it is we are computing the energy, if $\theta\in{\ell_{-}^{2}(\vec{E})}$ and $G'=(V',E')$ is a subgraph of $G=(V,E)$ with the same edge weights, then:   
\begin{equation*}
    \mathcal{E}(\theta;G')=\frac{1}{2}\sum_{\vec{e}\in{\vec{E'}}}r(e)\theta(\vec{e})^{2}
\end{equation*}
Similarly, if $f:V\rightarrow{\mathbb{R}}$,
\begin{equation*}
    \mathcal{E}(f;G')=\frac{1}{2}\sum_{\vec{e}\in{\vec{E'}}}c(e)\big(f(e^{+})-f(e^{-})\big)^{2}
\end{equation*}
A function $\theta\in{\ell_{-}^{2}(\vec{E})}$ satisfies the \textbf{cycle law} if for any directed cycle $\gamma=(\vec{e}_{1}, \vec{e}_{2}, ..., \vec{e}_{m})$ in $G$,
\begin{equation*}
\sum_{i=1}^{m}r(e_{i})\theta(\vec{e}_{i})=0
\end{equation*}
It is not hard to see that $\theta\in{\ell_{-}^{2}(\vec{E})}$ satisfies the cycle law if and only if $\theta=cdf$ for some function $f:V\rightarrow{\mathbb{R}}$. 
Given $\theta\in{\ell_{-}^{2}(\vec{E})}$, its \textbf{divergence} $\text{div}(\theta):V\rightarrow{\mathbb{R}}$ is given by: 
\begin{equation*}
\big(\text{div}(\theta)\big)(x)=\sum_{e^{-}=x}\theta(\vec{e})
\end{equation*}
Similar to the continuous setting, the divergence of $\theta$ at $x$ measures the net flow out of $x$ by $\theta$. Given distinct vertices $a,z\in{V}$, a function $\theta\in{\ell_{-}^{2}(\vec{E})}$ is a \textbf{flow} from $a$ to $z$ if:
\begin{equation*}
\big(\text{div}(\theta)\big)(x)=0 \hspace{5pt}\text{for all $x\in{V\setminus{\{a,z\}}}$.}  
\end{equation*}
Given a flow $\theta$ from $a$ to $z$ its \textbf{strength}, denoted by $\|\theta\|$, is defined as follows: 
\begin{equation*}
\|\theta\|=\sum_{x:x\sim{a}}\theta(a,x)=\big(\text{div}(\theta)\big)(a)
\end{equation*}
For every flow $\theta$ from $a$ to $z$, 
\begin{equation*}
\|\theta\|=\sum_{y:y\sim{z}}\theta(y,z)=-\big(\text{div}(\theta)\big)(z)
\end{equation*}
This is because: 
\begin{equation*}
\sum_{\vec{e}\in{\vec{E}}}\theta(\vec{e})=0=\sum_{x\in{V}}\sum_{y:y\sim{x}}\theta(x,y)=\sum_{x\in{V}}\big(\text{div}(\theta)\big)(x)=\big(\text{div}(\theta)\big)(a)+\big(\text{div}(\theta)\big)(z)
\end{equation*}
The first equality follows from the antisymmetry of $\theta$. Given $f:V\rightarrow{\mathbb{R}}$, its \textbf{Laplacian} $\Delta{f}:V\rightarrow{\mathbb{R}}$ is given by: 
\begin{equation*}
\Delta{f}(x)=\big(\text{div}(cdf)\big)(x)=\sum_{e^{-}=x}c(e)\big(f(e^{+})-f(e^{-})\big)=\sum_{y:y\sim{x}}c(x,y)\big(f(y)-f(x)\big)
\end{equation*}
If $\Delta{f}(x)=0$, we say that $f$ is \textbf{harmonic} at $x$. 
Equivalently, $f$ is harmonic at $x$ if: 
\begin{equation*}
f(x)=\frac{1}{\pi_{x}}\sum_{y:y\sim{x}}c(x,y)f(y)
\end{equation*}
where: 
\begin{equation*}
\pi_{x}=\sum\limits_{y:y\sim{x}}c(x,y)
\end{equation*}
From this formula, it is immediate that harmonic functions satisfy the maximum principle: 
\begin{prop}
Suppose that $G=(V,E,c)$ is a finite network and $h:V\rightarrow{\mathbb{R}}$ is harmonic on $U\subsetneq{V}$. Define: 
\begin{equation*}
\partial{U}=\{w\in{V}\setminus{U}:w\sim{u} \hspace{5pt}\text{for some}\hspace{5pt} u\in{U} \}
\end{equation*}
Then: 
\begin{equation*}
\max_{u\in{U}}h(u)\leq{\max_{v\in{\partial{U}}}}h(v)
\end{equation*}
\end{prop}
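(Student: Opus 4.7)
The plan is to exploit the mean–value reformulation of harmonicity, namely $h(x) = \pi_x^{-1}\sum_{y\sim x} c(x,y)\,h(y)$, to propagate the maximum through $U$ until it reaches a vertex with a neighbor in $\partial U$. Concretely, set $M = \max_{u\in U} h(u)$ (which is attained since $U$ is finite) and let $S = \{u\in U : h(u) = M\}$. The main observation is that if $u\in S$ and $v\sim u$ lies in $U$, then $v\in S$ as well: the identity
\begin{equation*}
\sum_{y\sim u} c(u,y)\bigl(M - h(y)\bigr) = 0
\end{equation*}
has every term nonnegative on the $U$-neighbors (by definition of $M$), so each such term must vanish. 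In particular $h(v) = M$.

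Next I would show that some vertex $u^{\star}\in S$ has a neighbor in $\partial U$. Assuming $G$ is connected (which is the setting implicit throughout the paper), since $U\subsetneq V$ I can pick any $u_0\in S$ and a path $u_0, u_1, \dots, u_k$ in $G$ with $u_k\in V\setminus U$. Let $j$ be the largest index with $u_j\in U$; then $u_{j+1}\in\partial U$ by the definition of $\partial U$. Applying the propagation step inductively along $u_0,\dots,u_j$ shows $u_j\in S$, and $u_j$ has the required neighbor in $\partial U$.

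Finally, applying the mean–value identity at this $u^{\star} := u_j$ and isolating its neighbor $w\in\partial U$,
\begin{equation*}
\pi_{u^{\star}} M \;=\; \sum_{y\sim u^{\star},\, y\in U} c(u^{\star},y)\,h(y) \;+\; \sum_{y\sim u^{\star},\, y\in V\setminus U} c(u^{\star},y)\,h(y),
\end{equation*}
and using $h(y)\leq M$ on the $U$-sum together with positivity of conductances, the $V\setminus U$-sum must be at least $\bigl(\sum_{y\sim u^{\star},\, y\in V\setminus U} c(u^{\star},y)\bigr)M$. Hence at least one boundary neighbor $w\in\partial U$ satisfies $h(w)\geq M$, giving $\max_{v\in\partial U} h(v)\geq M$, as required.

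The proof is entirely routine; the only mild subtlety is the connectivity assumption needed to reach $\partial U$ from an interior maximizer. If $G$ is not assumed connected, the same argument applies component by component, provided one restricts attention to components of $G$ that meet both $U$ and $V\setminus U$; components contained entirely in $U$ would render the stated bound vacuous since they contribute no boundary vertices, so the proposition should be read with this caveat (or with the standing connectivity assumption).
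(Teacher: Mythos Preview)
The paper does not actually prove this proposition; it simply declares it ``immediate'' from the mean--value formula $h(x)=\pi_x^{-1}\sum_{y\sim x}c(x,y)h(y)$. Your write-up supplies the standard argument behind that remark, and the overall strategy is sound.

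There is, however, a small logical slip in your propagation step. You claim that if $u\in S$ and $v\sim u$ with $v\in U$, then $v\in S$, arguing that in $\sum_{y\sim u}c(u,y)(M-h(y))=0$ the $U$-neighbor terms are nonnegative and hence must each vanish. But nonnegativity of only the $U$-terms does not force them to vanish: if $u$ also has neighbors in $\partial U$, those terms could be negative (i.e.\ $h(y)>M$), and then the $U$-terms need not be zero. The fix is immediate: if some $\partial U$-neighbor of a vertex in $S$ satisfies $h(y)>M$ you are already done with the whole proposition; otherwise every term in the sum is nonnegative and your propagation goes through verbatim. Equivalently, one can argue by contradiction: assume $\max_{\partial U}h<M$, so that all terms are nonnegative with the $\partial U$-terms strictly positive, and conclude that no vertex of $S$ can have a $\partial U$-neighbor, contradicting connectivity. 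With this caveat inserted, the proof is complete and is exactly the elaboration the paper has in mind.
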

\noindent Following \cite{pmrwcp}, a simple random walk on the network $G=(V,E,c)$ is the discrete time Markov process $(X_{n})_{n\geq{0}}$ with transition probabilities: 
\begin{equation*}
P(x,y)=\frac{c(x,y)}{\pi_{x}}1_{(x\sim{y})}
\end{equation*}
Given a function $f:V\rightarrow{\mathbb{R}}$ and vertices $a,z\in{V}$, it is clear that $cdf$ is a flow from $a$ to $z$ if and only if $\Delta{f}(x)=0$ for all $x\in{V\setminus{\{a,z\}}}$. We call such a function a \textbf{voltage}. Since the discrete boundary value problem: \begin{align*}
    h(a)&=\alpha \\ 
    h(b)&=\beta \\ 
    \Delta{h}(x)&=0 \hspace{5pt}\text{for all $x\in{V\setminus{\{a,z\}}}$}
\end{align*}  
has a unique solution for any choice of $\alpha, \beta\in{\mathbb{R}}$, voltages form a two-parameter family. The flow $cdh$ corresponding to any voltage $h:V\rightarrow{\mathbb{R}}$ is known as the corresponding \textbf{current flow}. Given distinct vertices $a,z\in{V}$, the \textbf{effective resistance} between $a$ and $z$ in $G$, denoted by $R_{\text{eff}}(a\leftrightarrow{z};G)$, is given by: 
\begin{equation*}
R_{\text{eff}}(a\leftrightarrow{z};G)=\frac{h(z)-h(a)}{\|cdh\|}
\end{equation*}
where $h$ is any nonconstant voltage. To see that this quantity is well- defined, just observe that adding a constant doesn't affect the voltage difference between $a$ and $z$, $h(z)-h(a)$, or the current flow $cdh$. Similarly, multiplying $h$ by a nonzero constant scales the voltage difference and the strength of the corresponding current flow by the same factor, leaving the effective resistance unchanged.\\ \\
More generally, given disjoint sets of vertices $A,Z\subseteq{V}$, we can define a new network by identifying the vertices of $A$ to a single vertex $a$ and identifying the vertices of $Z$ to a single vertex $z$. Then the effective resistance $R_{\text{eff}}(A\leftrightarrow{Z};G)$ between $A$ and $Z$ in $G$, is given by the electrical resistance between the vertices $a$ and $z$ in this new network.
\\ \\
In this paper, we will frequently need to bound effective resistances from above and below. To do this, we will use the following pair of variational formulas. Dirichlet's Principle allows us to bound effective resistances from below by finding functions with small discrete Dirichlet energy: 
\begin{prop}[Dirichlet's Principle]
If $G=(V,E,c)$ is a finite network with distinct vertices $a,z\in{V}$ then: 
\begin{equation*}
R_{\text{eff}}(a\leftrightarrow{z};G)=\sup\left\{\frac{1}{\mathcal{E}(h)}: h:V\rightarrow{\mathbb{R}},  \hspace{2pt} h(a)=0, \hspace{2pt} h(z)=1\right\}
\end{equation*}
\end{prop}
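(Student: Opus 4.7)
The plan is to show both inequalities by exhibiting a minimizer. Let $h_0$ be the unique voltage with $h_0(a) = 0$ and $h_0(z) = 1$; its existence and uniqueness follow from the boundary value problem mentioned in the excerpt. I will argue that $h_0$ minimizes $\mathcal{E}(h)$ subject to the constraints $h(a)=0,\,h(z)=1$, and that its energy equals $1/R_{\text{eff}}(a \leftrightarrow z;G)$. The supremum will then be attained precisely at $h_0$.

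The key tool will be the discrete Green's (summation by parts) identity. For any $f,g : V \to \mathbb{R}$, define the bilinear form
\begin{equation*}
\mathcal{E}(f,g) := \tfrac{1}{2}\sum_{\vec{e} \in \vec{E}} c(e)\bigl(f(e^+) - f(e^-)\bigr)\bigl(g(e^+) - g(e^-)\bigr).
\end{equation*}
Splitting the product, using antisymmetry to relabel head and tail, and regrouping by the head vertex, I would verify the identity
\begin{equation*}
\mathcal{E}(f,g) = -\sum_{y \in V} g(y)\,\Delta f(y).
\end{equation*}
This is a purely algebraic manipulation of the sum and is the only calculation I expect to have to do carefully; it is the main place where one can slip a sign.

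Given any competitor $h$ with $h(a) = 0, h(z) = 1$, write $h = h_0 + \varphi$ where $\varphi := h - h_0$ vanishes at both $a$ and $z$. Expanding the quadratic form and applying the identity yields
\begin{equation*}
\mathcal{E}(h) = \mathcal{E}(h_0) + 2\mathcal{E}(h_0,\varphi) + \mathcal{E}(\varphi),
\end{equation*}
and the cross term is $\mathcal{E}(h_0,\varphi) = -\sum_y \varphi(y)\Delta h_0(y) = 0$ since $\Delta h_0$ vanishes off $\{a,z\}$ while $\varphi$ vanishes on $\{a,z\}$. Hence $\mathcal{E}(h) \geq \mathcal{E}(h_0)$ with equality iff $\varphi$ is constant on each connected component, i.e., iff $h = h_0$ (assuming $G$ is connected, which is the only nontrivial case).

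Finally, I would compute $\mathcal{E}(h_0)$ explicitly using the same identity: since $h_0$ is harmonic off $\{a,z\}$,
\begin{equation*}
\mathcal{E}(h_0) = -h_0(a)\Delta h_0(a) - h_0(z)\Delta h_0(z) = -\Delta h_0(z) = \|cd h_0\|,
\end{equation*}
where the last equality uses that $(\text{div}\,cd h_0)(z) = -\|cd h_0\|$ as shown in the preliminaries. Combining with the definition $R_{\text{eff}}(a\leftrightarrow z; G) = (h_0(z)-h_0(a))/\|cd h_0\| = 1/\|cd h_0\|$ gives $\mathcal{E}(h_0) = 1/R_{\text{eff}}(a \leftrightarrow z; G)$, and the two displayed inequalities together yield the asserted supremum formula. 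The main obstacle is really only bookkeeping in the summation by parts; everything else is a direct consequence of harmonicity and the definitions already set up.
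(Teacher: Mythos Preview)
Your argument is correct: the summation-by-parts identity, the orthogonality of $h_0$ to perturbations vanishing at $a$ and $z$, and the evaluation $\mathcal{E}(h_0)=\|cdh_0\|=1/R_{\text{eff}}$ are all standard and go through exactly as you outline.

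The paper does not spell out a proof of this proposition; instead it remarks (after Proposition~\ref{Skopenkov's trick}) that both Dirichlet's and Thomson's Principles ``can be recovered cheaply as corollaries'' of the inequality $\|\theta\|\cdot\text{gap}_{A,B}(f)\leq\mathcal{E}(\theta)^{1/2}\mathcal{E}(f)^{1/2}$. That route proceeds by taking $\theta$ to be the current flow $cdh_0$ and $f=h$ any competitor, which yields $\mathcal{E}(h)\geq 1/R_{\text{eff}}$ directly from Cauchy--Schwarz; equality for $h=h_0$ then identifies the supremum. Your approach is the classical variational one---expand the energy around the harmonic minimizer and kill the cross term by summation by parts---whereas the paper's suggested route packages the same content into a single duality-type inequality that handles Dirichlet and Thomson simultaneously. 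Both are short; yours is more self-contained, the paper's is more unified.
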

\noindent Thomson's Principle allows us to bound effective resistances from above, by finding low- energy flows:
\begin{prop}[Thomson's Principle]
If $G=(V,E,c)$ is a finite network with distinct vertices $a,z\in{V}$ then: 
\begin{equation*}
R_{\text{eff}}(a\leftrightarrow{z};G)=\inf\left\{\mathcal{E}(\theta): \|\theta\|=1, \hspace{2pt} \text{$\theta$ is a flow from $a$ to $z$} \right\}
\end{equation*}
\end{prop}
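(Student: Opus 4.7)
The plan is to exhibit a particular unit flow whose energy equals $R_{\text{eff}}(a\leftrightarrow z;G)$, then use an $\langle\cdot,\cdot\rangle_r$-orthogonality argument to show that no other competitor can do better. The natural candidate is the unit current flow itself: take $h$ to be the unique voltage with $h(a)=0$ and $\|cdh\|=1$, so that $h(z)=R_{\text{eff}}(a\leftrightarrow z;G)$ by definition, and set $i:=cdh$, which is automatically a unit flow from $a$ to $z$.

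The first step is to compute $\mathcal{E}(i)$ via a discrete Green's identity. For any $f,g:V\to\mathbb{R}$, a standard summation by parts — collecting, for each directed edge, its contribution at the tail vertex and using antisymmetry to absorb the factor of $\tfrac{1}{2}$ in the inner product — gives $\langle cdf, cdg\rangle_r = -\sum_{x\in V} f(x)\,\Delta g(x)$. Specialising to $f=g=h$ and using that $\Delta h$ is supported on $\{a,z\}$ with values $+1$ and $-1$, one reads off $\mathcal{E}(i)=h(z)-h(a)=R_{\text{eff}}(a\leftrightarrow z;G)$, which already matches the claimed infimum.

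The second step is to upgrade this to optimality. Given any unit flow $\theta$ from $a$ to $z$, I would write $\theta=i+\phi$ with $\phi:=\theta-i$. The crucial observation is that $\phi$ is an antisymmetric function with zero divergence at \emph{every} vertex: at $a$ and $z$ the strengths of $\theta$ and $i$ cancel, and elsewhere both $\theta$ and $i$ are already divergence-free. Expanding $\mathcal{E}(\theta)=\mathcal{E}(i)+2\langle i,\phi\rangle_r+\mathcal{E}(\phi)$ and applying the same Green's identity, the cross term equals $-\sum_x h(x)\,\mathrm{div}(\phi)(x)=0$. Therefore $\mathcal{E}(\theta) \geq \mathcal{E}(i)=R_{\text{eff}}(a\leftrightarrow z;G)$, with equality iff $\phi\equiv 0$, which simultaneously yields Thomson's Principle and the uniqueness of the minimizer.

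The main obstacle is technical rather than conceptual: getting the summation-by-parts identity right, with the factor of $\tfrac{1}{2}$ from antisymmetric indexing of $\vec E$, the signs of the divergence, and the correspondence between $\Delta h$ and the strength of $cdh$ all lined up. Once the identity is in hand, the proof is essentially Pythagoras in $\ell^2_-(\vec E)$, expressing the orthogonality between the subspace of gradients (current flows) and the subspace of divergence-free antisymmetric functions (cycle flows).
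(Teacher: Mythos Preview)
Your argument is correct and is the classical textbook proof of Thomson's Principle via the orthogonal decomposition of $\ell^2_-(\vec E)$ into gradients and divergence-free flows. The summation-by-parts identity you describe is exactly right, and once it is in hand the rest is indeed just Pythagoras.

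The paper does not actually prove this proposition; it is stated as a standard preliminary fact. The only hint the paper gives is the remark following Proposition~\ref{Skopenkov's trick} that both Dirichlet's and Thomson's Principles ``can be recovered cheaply as corollaries'' of the Cauchy--Schwarz inequality $\|\theta\|\cdot\text{gap}_{A,B}(f)\leq \mathcal{E}(\theta)^{1/2}\mathcal{E}(f)^{1/2}$. Spelled out, that route would take $A=\{a\}$, $B=\{z\}$, $f$ the voltage with $f(a)=0$, $f(z)=R_{\text{eff}}$, and $\theta$ any unit flow, yielding $R_{\text{eff}}\leq \mathcal{E}(\theta)^{1/2}\mathcal{E}(f)^{1/2}=\mathcal{E}(\theta)^{1/2}R_{\text{eff}}^{1/2}$ and hence $R_{\text{eff}}\leq \mathcal{E}(\theta)$; equality for $\theta=cdh$ follows from the same energy computation you give. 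The two approaches are really the same identity viewed from different angles: your Green's identity $\langle cdh,\phi\rangle_r=-\sum_x h(x)\,\mathrm{div}(\phi)(x)$ is precisely what underlies both the equality case of Cauchy--Schwarz in Proposition~\ref{Skopenkov's trick} and the vanishing of your cross term. Your presentation has the mild advantage of making the uniqueness of the minimizer explicit.
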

\noindent Given $f:V\rightarrow{\mathbb{R}}$ and $A,B\subseteq{V}$ nonempty, disjoint sets of vertices, we define the quantity $\text{gap}_{A,B}(f)$ as follows: 
$$
\text{gap}_{A,B}(f)=\min\limits_{b\in{B}}f(b) - \max\limits_{a\in{A}}f(a)
$$
Additionally, recall Proposition 4.11 of \cite{GJN20} which tells us that: 
\begin{prop}
\label{Skopenkov's trick}
If $G=(V,E,c)$ is a finite network, $A,B\subseteq{V}$ are disjoint, nonempty sets of vertices then for any flow $\theta$ on $G$ and any function $f:V\rightarrow{\mathbb{R}}$ such that $\text{gap}_{A,B}(f)\geq{0}$,
$$
\|\theta\|\cdot{\text{gap}_{A,B}(f)}\leq{\mathcal{E}(\theta)^{1/2}\mathcal{E}(f)^{1/2}}
$$
\end{prop}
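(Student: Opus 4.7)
The plan is to establish the inequality via a discrete integration-by-parts identity followed by Cauchy-Schwarz at the edge level. The starting point is the bilinear pairing
$$S := \frac{1}{2}\sum_{\vec{e}\in\vec{E}}\theta(\vec{e})\bigl(f(e^{+})-f(e^{-})\bigr).$$
Splitting this into two pieces and using the antisymmetry $\theta(-\vec{e})=-\theta(\vec{e})$ to reindex the term containing $f(e^{+})$, a short computation rearranges it as the discrete Green's identity
$$S \;=\; -\sum_{x\in V}f(x)\,\text{div}(\theta)(x),$$
the direct analogue of $\int\theta\cdot\nabla f=-\int f\,\text{div}\,\theta$.

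Next I would invoke the flow hypothesis. Interpreting $\theta$ as a flow from $A$ to $B$ (equivalently, the flow from $a$ to $z$ in the network obtained by identifying $A$ to a single vertex $a$ and $B$ to a single vertex $z$), $\text{div}(\theta)$ is supported on $A\cup B$, is nonnegative on $A$ with total mass $\|\theta\|$, and is nonpositive on $B$ with total mass $-\|\theta\|$. Bounding $f(a)\leq \max_{A}f$ against the nonnegative weights $\text{div}(\theta)(a)$, and $f(b)\geq \min_{B}f$ against the nonnegative weights $-\text{div}(\theta)(b)$, one obtains
$$S \;\geq\; -\|\theta\|\max_{A}f+\|\theta\|\min_{B}f \;=\; \|\theta\|\cdot\text{gap}_{A,B}(f),$$
which is nonnegative by hypothesis.

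Finally, I would bound $S$ from above by Cauchy-Schwarz. Writing each edge summand as $\bigl(\sqrt{r(e)}\,\theta(\vec{e})\bigr)\bigl(\sqrt{c(e)}(f(e^{+})-f(e^{-}))\bigr)$ and using $r(e)c(e)=1$, Cauchy-Schwarz on $\vec{E}$ gives
$$S\;\leq\;\tfrac{1}{2}\Bigl(\sum_{\vec{e}}r(e)\theta(\vec{e})^{2}\Bigr)^{1/2}\Bigl(\sum_{\vec{e}}c(e)(f(e^{+})-f(e^{-}))^{2}\Bigr)^{1/2}=\mathcal{E}(\theta)^{1/2}\mathcal{E}(f)^{1/2},$$
and chaining with the lower bound $S\geq\|\theta\|\cdot\text{gap}_{A,B}(f)$ completes the proof.

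The only genuinely delicate step is the summation-by-parts identity: one must track the factor of $\tfrac{1}{2}$ that appears because each undirected edge is counted with both orientations, and verify that the sign of $\text{div}(\theta)$ on $A$ versus $B$ combines with the minus sign in Green's identity to produce $+\text{gap}_{A,B}(f)$ rather than its negative. After that bookkeeping is settled, the remainder is a single application of Cauchy-Schwarz on the edge space.
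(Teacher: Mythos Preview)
Your proof is correct and follows precisely the approach the paper indicates: the paper does not give a full proof but remarks that the inequality ``follows almost immediately from the Cauchy--Schwarz inequality on $\ell^{2}_{-}(\vec{E})$,'' and your argument is exactly the natural unpacking of that remark---pair $\theta$ with $cdf$ via $\langle\theta,cdf\rangle_{r}=S$, bound $S$ above by Cauchy--Schwarz, and bound $S$ below by $\|\theta\|\cdot\text{gap}_{A,B}(f)$ using the divergence identity together with the source/sink sign pattern of a flow from $A$ to $B$. One small caveat worth being explicit about: your lower bound on $S$ uses that $\text{div}(\theta)\geq 0$ on $A$ and $\leq 0$ on $B$, which is the intended meaning of ``flow from $A$ to $B$'' here (and is how the proposition is applied later in the paper), but is not automatic if one merely requires $\text{div}(\theta)=0$ off $A\cup B$; the paper's statement is slightly informal on this point, so your reading is the right one.
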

\noindent This inequality follows almost immediately from the Cauchy- Schwarz inequality on $\ell^{2}_{-}(\vec{E})$. Furthermore, Dirichlet's Principle and Thomson's Principle can both be recovered cheaply as corollaries of this inequality. 

\subsection{Extremal Length and Planar Networks}
\label{Extremal Length and Planar Networks}

\noindent Suppose $G=(V, E, c)$ is a finite network and $\Gamma$ is a nonempty collection of paths in $G$. Then the \textbf{extremal length} of the collection of paths $\Gamma$ in $G$ is given by the following variational formula: 
\begin{equation}
\label{def_EL}
\lambda(\Gamma, G):=\sup_{\rho}\frac{\ell^{2}(\rho,\Gamma)}{A(\rho)}
\end{equation}
where our supremum is taken over all nonzero metrics $\rho:E\rightarrow{\mathbb{R}_{\geq{0}}}$, and:
\begin{align*}
    \ell(\rho, \Gamma)&:=\min\{\sum_{e\in{\gamma}}\rho(e): \gamma\in{\Gamma}\}, & A(\rho)&:=\sum_{e\in{E}}c(e)\rho(e)^{2}
\end{align*}
%This is all analogous to the continuous theory. Given a domain $\Omega\subseteq{\mathbb{C}}$ and a family of rectifiable curves in $\Omega$, $\Gamma$, the extremal length of $\Gamma$ is given by the variational formula: 
%\begin{equation*}
    %\lambda(\Gamma,\Omega):=\sup_{\rho}\frac{\ell^{2}(\rho, \Gamma)}{A(\rho)}    
%\end{equation*}
%where our supremum is taken over all Borel- measurable functions $\rho:\Omega\rightarrow{\mathbb{R}_{\geq{0}}}$, and: 
%\begin{align*}
    %\ell(\rho,\Gamma)&:=\inf_{\gamma\in{\Gamma}}\int_{\gamma}\rho(z)|dz| & A(\rho)&:=\int_{\Omega}\rho(x,y)^{2}dxdy
%\end{align*}
%In the continuum, extremal length is conformally invariant. Namely, if $\phi:\Omega\rightarrow{\mathbb{C}}$ is conformal and $\Gamma$ is a family of rectifiable curves in $\Omega$, then: 
%\begin{equation*}
    %\lambda(\Gamma,\Omega)=\lambda(\phi(\Gamma),\phi(\Omega))    
%\end{equation*}
%where $\phi(\Gamma)=\{\phi(\gamma):\gamma\in{\Gamma}\}$. As a result, in the continuum, extremal length is a valuable tool for estimating other conformally invariant quantities. For instance, the Ahflors-Beurling-Carlemann estimate (see Theorem IV.5.2 of \cite{GM05}) gives two- sided bounds for harmonic measure in terms of the extremal length of a corresponding family of curves. \\ \\
%Returning to the discrete setting, 
Note that the quantity $\ell^{2}(\rho,\Gamma)/A(\rho)$ doesn't change if we replace $\rho$ by some scalar multiple $\lambda\rho$ where $\lambda>0$. Thus:
\begin{equation*}
\lambda(\Gamma,G)=\sup_{\rho}\frac{l^{2}(\rho,\Gamma)}{A(\rho)}=\sup_{A(\rho)=1}l^{2}(\rho,\Gamma)=\sup_{l(\rho,\Gamma)=1}\frac{1}{A(\rho)}=\big(\inf_{l(\rho,\Gamma)=1}A(\rho)\big)^{-1}
\end{equation*}
The set of metrics $\rho$ on $G$ such that $l(\rho,\Gamma)=1$ is referred to as the set of \textbf{admissible metrics} and is denoted by $\mathcal{A}(\Gamma)$. We say that a metric $\rho$ on $G$ is \textbf{extremal} for $\lambda(\Gamma,G)$ if $\lambda(\Gamma,G)=l^{2}(\Gamma,G)/A(\rho)$. Looking at the second equality above, we see that when we compute the extremal length of the path family $\Gamma$, we are optimizing a continuous function, $\rho\mapsto{l^{2}(\rho,\Gamma)}$, over the set  of $\rho\in{\mathbb{R}_{\geq{0}}^{E}}$ such that $A(\rho)=1$. This is a compact subset of $\mathbb{R}^{E}$ with respect to the standard topology on $\mathbb{R}^{E}$. Thus, in contrast to the continuous setting (see Exercise IV.9 of \cite{GM05}), for a finite network we always have an extremal metric. \\ \\ 
This extremal metric is unique up to multiplication by a scalar. This follows by the same argument as in the continuous setting: suppose that $\rho_{1}$ and $\rho_{2}$ are both extremal for $\lambda(\Gamma,G)$. First we rescale so that $A(\rho_{1})=A(\rho_{2})=1$. It follows that $l^{2}(\Gamma,\rho_{1})=l^{2}(\Gamma,\rho_{2})=\lambda(\Gamma,G)$. Consider the metric $\nu:=\frac{1}{2}(\rho_{1}+\rho_{2})$. Trivially: 
\begin{equation}
l(\nu,\Gamma)\geq{\frac{1}{2}\big(l(\rho_{1},\Gamma)+l(\rho_{2},\Gamma)\big)}=\sqrt{\lambda(\Gamma,G)} \hspace{5pt} \implies \hspace{5pt} l^{2}(\Gamma, \nu)\geq{\lambda(\Gamma,G)}
\end{equation}
On the other hand, by Cauchy- Schwartz, $A(\nu)\leq{\frac{1}{2}\big(A(\rho_{1})+A(\rho_{2})\big)}=1$ with equality iff $\rho_{1}$ is a scalar multiple of $\rho_{2}$. If $A(\nu)<1$ then $l^{2}(\nu,\Gamma)/A(\nu)>\lambda(\Gamma,G)$. This is not possible since $\lambda(\Gamma,G)$ is the supremum of $l^{2}(\rho,\Gamma)/A(\rho)$ over all metrics $\rho$. Thus, $\rho_{2}$ is a scalar multiple of $\rho_{1}$. Since $A(\rho_{1})=A(\rho_{2})=1$ we actually have that $\rho_{1}=\rho_{2}$.   \\ \\
In all of the cases we're interested in, the path family $\Gamma$ will be the set of paths $\gamma$ in $G$ that start at a vertex of $S$ and end at a vertex of $T$ for $S$, $T$ nonempty disjoint subsets of $V$. We denote the extremal length of this path family by $\lambda(S\leftrightarrow{T};G)$. It turns out that the quantity $\lambda(S\leftrightarrow{T};G)$ is precisely the effective resistance between $S$ and $T$ from the theory of electrical networks:
\begin{prop}
\label{Effective Resistance is Extremal Length}
(Theorem 2 of \cite{D62}) Suppose $G=(V,E,c)$ is a finite network and $S, T$ are nonempty, disjoint subsets of $V$. Then:
$$
\lambda(S\leftrightarrow{T};G)=R_{\text{eff}}(S\leftrightarrow{T};G)
$$
\end{prop}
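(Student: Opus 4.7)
The plan is to prove both inequalities via the variational characterizations of $R_{\text{eff}}$ and $\lambda$, exploiting a duality between voltage functions and edge metrics. Each voltage naturally produces an admissible metric whose area equals its Dirichlet energy, and conversely each admissible metric produces a voltage-like function via $\rho$-distances. Both directions will ultimately reduce to Dirichlet's Principle, after first identifying $S$ to a single vertex $s$ and $T$ to a single vertex $t$ (which affects neither $R_{\text{eff}}(S \leftrightarrow T; G)$ nor the path family $\Gamma$).

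For the inequality $\lambda \geq R_{\text{eff}}$, I would start from any function $h$ with $h(s)=0$ and $h(t)=1$ and set $\rho(e) := |h(e^+) - h(e^-)|$ for each undirected edge. For any path $\gamma \in \Gamma$, telescoping yields $\sum_{e \in \gamma} \rho(e) \geq |h(t) - h(s)| = 1$, so $\ell(\rho, \Gamma) \geq 1$, while $A(\rho) = \sum_{e \in E} c(e)(h(e^+)-h(e^-))^2 = \mathcal{E}(h)$. Hence $\lambda(\Gamma, G) \geq \ell^2(\rho,\Gamma)/A(\rho) \geq 1/\mathcal{E}(h)$; taking the supremum over admissible $h$ and applying Dirichlet's Principle gives $\lambda(\Gamma, G) \geq R_{\text{eff}}(s \leftrightarrow t; G)$.

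For the reverse inequality, given any metric $\rho$ with $\ell(\rho, \Gamma) = 1$, I would define $f(x)$ to be the shortest $\rho$-length of a path in $G$ from $S$ to $x$ (setting $f|_S = 0$). Admissibility forces $f|_T \geq 1$. Because $\rho$-distance satisfies the triangle inequality, $|f(y) - f(x)| \leq \rho(\{x,y\})$ whenever $x \sim y$. The truncation $g := \min(f, 1)$ preserves the boundary values $g|_S = 0$, $g|_T = 1$ and is $1$-Lipschitz as a function of $f$, so $(g(y)-g(x))^2 \leq (f(y)-f(x))^2 \leq \rho(\{x,y\})^2$ on every edge. Summing against conductances gives $\mathcal{E}(g) \leq A(\rho)$, and Dirichlet's Principle yields $R_{\text{eff}}(s \leftrightarrow t; G) \geq 1/\mathcal{E}(g) \geq 1/A(\rho)$. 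Taking the supremum over admissible $\rho$ concludes $R_{\text{eff}} \geq \lambda$.

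The main obstacle is conceptual rather than computational: producing a voltage from a metric. The correct choice, the $\rho$-distance to $S$, is natural once one views extremal metrics as the discrete analogue of the modulus of the gradient of the equilibrium potential, but verifying that this distance function is $\rho$-Lipschitz across edges, and then that truncation at the level $\{f = 1\}$ only decreases energy, are the two elementary observations on which the whole argument rests. One could alternatively prove the second inequality via Thomson's Principle by pairing an admissible metric against the unit current flow through Cauchy--Schwarz, but the path-distance construction above is more direct and avoids having to decompose the current flow into path flows.
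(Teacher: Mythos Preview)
The paper does not actually supply a proof of this proposition; it is stated as a known result with a citation to Duffin \cite{D62}. Your argument is correct and is essentially the classical proof: the two variational formulas are matched by passing back and forth between functions $h:V\to\mathbb{R}$ with $h|_S=0$, $h|_T=1$ and edge metrics $\rho$ via $\rho(e)=|h(e^+)-h(e^-)|$ in one direction and $h(x)=\min\bigl(d_\rho(S,x),1\bigr)$ in the other, with Dirichlet's Principle closing both inequalities. The identification of $S$ and $T$ to single vertices is harmless for the reasons you indicate, and the Lipschitz/truncation step is exactly what is needed to turn an arbitrary admissible metric into a competitor for the Dirichlet variational problem. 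There is nothing to correct.
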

\noindent One nice property of extremal length is blocking duality. Given, $S,T$ nonempty, disjoint sets of vertices in $G$, we say that a set $F\subset{E}$ is an \textbf{$S$- $T$ cut} if $F$ separates $S$ from $T$ in $G$. That is, if we remove the edges of $F$ from $G$, there is no nearest- neighbor path in $G$ starting at a vertex of $S$ and ending at a vertex of $T$. \\ \\
Let $B(S,T;G)$ denote the set of $S$- $T$ cuts in $G$. Analogous to how we defined the extremal length of a path family, we can talk about the extremal length of the set of $S$- $T$ cuts in $G$. This is denoted by $\lambda(S\not\leftrightarrow{T};G)$ and defined as follows: 
$$
\lambda(S\not\leftrightarrow{T};G)=\sup_{\rho}\frac{\ell^{2}(\rho,B(S,T;G))}{A(\rho)}
$$
where our supremum is taken over all nonzero metrics $\rho:E\rightarrow{\mathbb{R}_{\geq{0}}}$, and:
\begin{align}
    \ell(\rho, B(S,T;G))&=\min\{\sum_{e\in{F}}\rho(e): F\in{B(S,T;G)}\}, & A(\rho)&=\sum_{e\in{E}}c(e)\rho(e)^{2}
\end{align}
More generally, while we initially restricted our attention to path families so as to draw parallels with the continuous theory, it is clear that if we let $\Gamma$ be any family of multisets of edges in $G$, Definition \ref{def_EL} still makes sense. Thus, we can actually talk about the extremal length of any family of multisets of edges of $G$. For instance, the modulus of the set of spanning trees of a network has been the subject of recent study in \cite{AKP20}. \\ \\
A classic result of Ford and Fulkerson relates the extremal length of paths from $S$ to $T$ to the extremal length of the set of $S$- $T$ cuts:
\begin{prop}
(Theorem 1 of \cite{FF56}) If $G=(V,E,c)$ and $H=(V,E,r)$ are finite networks so that $r:E\rightarrow{\mathbb{R}_{>0}}$ is the resistance function corresponding to the conductance function $c:E\rightarrow{\mathbb{R}_{>0}}$ and $S,T$ are nonempty, disjoint sets of vertices in $G$, then: 
$$
\lambda(S\leftrightarrow{T};G)\cdot{\lambda(S\not\leftrightarrow{T};H)}=1
$$
\end{prop}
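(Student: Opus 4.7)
The plan is to reduce the claim to the duality between effective resistance and effective conductance. By Proposition \ref{Effective Resistance is Extremal Length}, $\lambda(S \leftrightarrow T; G) = R_{\text{eff}}(S \leftrightarrow T; G) =: R$, so it suffices to show that $\lambda(S \not\leftrightarrow T; H) = 1/R$ --- that is, that the extremal length of $S$-$T$ cuts on $H$ realizes the effective conductance between $S$ and $T$ in $G$.

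For the lower bound $\lambda(S \not\leftrightarrow T; H) \geq 1/R$, I would exhibit an explicit admissible cut metric built from the current flow. Let $u$ be the voltage on $G$ equal to $0$ on $S$ and $1$ on $T$, let $i := cdu$ be the corresponding current flow (of strength $\|i\| = 1/R$), and set $\rho'(e) := R\,|i(\vec{e})|$. For any $S$-$T$ cut $F$, orienting its edges from the $S$-side to the $T$-side, conservation of flow forces $\sum_{e \in F} i(\vec{e}) = \|i\|$, so $\ell(\rho', F) \geq R\|i\| = 1$ and $\rho'$ is admissible on $H$. Using $\mathcal{E}(i) = R\|i\|^2 = 1/R$, a direct computation yields
\[
\sum_{e \in E} r(e)\rho'(e)^2 = R^2 \sum_{e \in E} r(e) i(\vec{e})^2 = R^2\,\mathcal{E}(i) = R,
\]
hence $\lambda(S \not\leftrightarrow T; H) \geq 1/R$.

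The main obstacle is the reverse inequality $\lambda(S \not\leftrightarrow T; H) \leq 1/R$. Take any admissible cut metric $\rho'$ on $H$ and view $\rho'(e)$ as an edge capacity on $G$; admissibility then says that every $S$-$T$ cut has total capacity at least $1$. By the classical combinatorial max-flow min-cut theorem, there exists an $S$-$T$ flow $\tilde{\theta}$ on $G$ with strength $\|\tilde{\theta}\| \geq 1$ and $|\tilde{\theta}(\vec{e})| \leq \rho'(e)$ for every edge. Setting $\theta := \tilde{\theta}/\|\tilde{\theta}\|$ produces a unit flow with $|\theta(\vec{e})| \leq \rho'(e)$, and Thomson's Principle gives
\[
R \leq \mathcal{E}(\theta) = \sum_{e \in E} r(e)\theta(\vec{e})^2 \leq \sum_{e \in E} r(e)\rho'(e)^2.
\]
Taking the infimum over admissible $\rho'$ yields $\lambda(S \not\leftrightarrow T; H) \leq 1/R$, and multiplying the two halves of the identity gives the claim.

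One might hope to avoid invoking max-flow min-cut by adapting the variational methods of Section 2.1, for instance by pairing the current flow $i$ with the shortest-path function $f(v) := d_{\rho'}(S,v)$ through Skopenkov's inequality (Proposition \ref{Skopenkov's trick}). This appears to fail because a cut-admissible metric need not be path-admissible: small examples show one can arrange $d_{\rho'}(S,v) < 1$ for some $v \in T$, so the necessary $\text{gap}_{S,T}(f) \geq 1$ breaks down. Extracting a \emph{flow} from $\rho'$ (rather than a function) therefore seems to require the genuinely combinatorial input of max-flow min-cut.
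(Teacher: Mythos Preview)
The paper does not actually prove this proposition: it is stated with attribution to Ford--Fulkerson and the reader is referred to \cite{FF56} and \cite{ACFP18}. So there is no ``paper's proof'' to compare against in the general case. The only related argument in the paper is the Remark following Corollary~\ref{FF_discrete_rectangles}, which sketches a short proof \emph{in the special case of discrete rectangles} via harmonic conjugates: one computes both extremal lengths directly from the conjugate pair $(h,\widetilde{h})$ and reads off the reciprocal relation. That argument is quite different from yours and does not generalize beyond the planar, simply-connected setting.

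Your argument is correct and is essentially the standard route. One small imprecision: in the lower bound you assert that for any $S$--$T$ cut $F$, ``orienting its edges from the $S$-side to the $T$-side, conservation of flow forces $\sum_{e\in F} i(\vec e)=\|i\|$.'' For a \emph{non-minimal} cut $F$ there is no well-defined $S$-side/$T$-side orientation on every edge of $F$, and the equality can fail. What is true (and sufficient) is this: let $U$ be the set of vertices reachable from $S$ in $G\setminus F$; then the edge boundary $\partial U\subseteq F$, and summing the divergence over $U$ gives $\sum_{\vec e\in\partial U}i(\vec e)=\|i\|$, whence
\[
\ell(\rho',F)=R\sum_{e\in F}|i(\vec e)|\ \ge\ R\sum_{e\in\partial U}|i(\vec e)|\ \ge\ R\,\Bigl|\sum_{\vec e\in\partial U} i(\vec e)\Bigr|\ =\ R\,\|i\|\ =\ 1.
\]
With that fix the lower bound goes through.

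For the upper bound you invoke the combinatorial max-flow min-cut theorem. That is entirely legitimate, but it is worth noting that this \emph{is} Theorem~1 of \cite{FF56}, i.e., the very result the proposition is citing---so your argument is really a reformulation (extremal length of cuts $=$ effective conductance) layered on top of Ford--Fulkerson rather than an independent proof. Your closing paragraph correctly identifies why a purely variational bypass via Proposition~\ref{Skopenkov's trick} does not work here.
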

\noindent For a more modern treatment of this result as well as a generalization to the case of $p$-extremal length, see \cite{ACFP18}. This result is particularly useful in the case where our graph $G$ is planar, in which case we can identify the set of $S$- $T$ cuts with path families in the dual graph. \\ \\
A \textbf{finite planar map} is a finite planar graph $(V,E)$ along with a proper embedding of this graph into the Riemann sphere, viewed up to homeomorphism of the Riemann sphere. Specifying a proper embedding of a graph in the Riemann sphere up to orientation- preserving homeomorphism is equivalent to assigning a coherent system of orientations to the edges about each vertex (for details, see Section 1.1.2 of \cite{stfluor}). Thus, despite the topology present in our initial definition, planar maps can be viewed as purely combinatorial objects. Equivalently, we can think of finite planar maps as gluings of polygons along edges so that the resulting topological manifold is a sphere. \\ \\
A \textbf{quadrangulation with boundary} is a bipartite planar map all of whose faces are quadrilaterals, with the possible exception of some finite number of distinguished faces which we think of as ``holes" in our planar map. Notice that requiring our planar map to be bipartite is equivalent to asking that all of these ``hole" faces have an even number of sides. Given a quadrangulation with boundary $G=(V,E)$, we refer to these distinguished faces as the \textbf{exterior} faces of $G$.  The remaining faces are called the \textbf{interior} faces of $G$. The edges and vertices tangent to the exterior faces of $G$ are known as the boundary vertices and edges of $G$. We denote these by $\partial{V}$ and $\partial{E}$. A quadrangulation with boundary $G=(V,E)$ is \textbf{simply- connected} if $G$ has a unique exterior face whose boundary is a simple closed curve. \\ \\
Since our quadrangulations are bipartite, we have a natural bipartition of the vertices $V=V^{\bullet}\sqcup{V^{\circ}}$. The vertices of $V^{\bullet}$ are known as the \textbf{primal} vertices of $G$ and are typically colored black. The vertices of $V^{\circ}$ are known as the \textbf{dual} vertices and are typically colored white. These give rise to the primal and dual graphs $G^{\bullet}=(V^{\bullet}, E^{\bullet})$ and $G^{\circ}=(V^{\circ}, E^{\circ})$. $G^{\bullet}$ is formed by connecting any pair of primal vertices that share an interior face in $G$. Similarly, $G^{\circ}$ is formed by connecting any pair of dual vertices that share an interior face in $G$. Since the interior faces of $G$ are all quadrilaterals, each interior face corresponds to one primal and one dual edge. In this way, there is a natural correspondence between primal and dual edges.    \\ \\
Based on the paragraph above, it might seem that the setting we are working in is very restrictive. On the contrary, observe that this procedure of recovering a graph $G^{\bullet}$ (and its dual $G^{\circ}$) from a quadrangulation $G$ gives us a one- to- one correspondence between the set of quadrangulations with $n$ faces and the set of planar maps with $n$ edges (see Section 2.2.1 of \cite{stfluor}). In other words, restricting our attention to bipartite quadrangulations with $k$ holes is equivalent to restricting our attention to embeddings of a graph and its dual in the Riemann sphere, up to orientation- preserving homeomorphism, so that the resulting discrete object has the topology of the Riemann sphere with k discs removed. \\ \\
Given a quadrangulation with boundary $G=(V^{\bullet}\sqcup{V^{\circ}}, E)$, a \textbf{conformal metric} on $G$ is a function $c:E^{\bullet}\sqcup{E^{\circ}}\rightarrow{(0,\infty))}$ such that:
$$
c(e^{\circ})=\frac{1}{c(e^{\bullet})}
$$
for $e^{\circ}\in{E^{\circ}}$, $e^{\bullet}\in{E^{\bullet}}$ so that $e^{\circ}$ is the dual edge corresponding to the primal edge $e^{\bullet}$. Let $c^{\bullet}:E^{\bullet}\rightarrow{(0,\infty)}$ and $c^{\circ}:E^{\circ}\rightarrow{(0,\infty)}$ denote the conductances on $G^{\bullet}$ and $G^{\circ}$ produced by restricting $c$ to $E^{\bullet}$ and $E^{\circ}$ respectively. If $\theta\in{\ell_{-}^{2}(\vec{E^{\bullet}})}$ and $f:V^{\bullet}\rightarrow{\mathbb{R}}$, we write: 
\begin{align*}
    \mathcal{E}^{\bullet}(\theta)&=\mathcal{E}(\theta;G^{\bullet}), & \mathcal{E}^{\bullet}(f)&=\mathcal{E}(f;G^{\bullet})
\end{align*}
to emphasize that these energies are being computed on the primal graph $G^{\bullet}$. Similarly, given a subgraph $H$ of $G^{\bullet}$, we write: 
\begin{align*}
    \mathcal{E}^{\bullet}(\theta;H)&=\mathcal{E}(\theta;H), & \mathcal{E}^{\bullet}(f;H)&=\mathcal{E}(f;H)
\end{align*}
Given $\omega\in{l^{2}_{-}(\vec{E^{\circ}})}$, $g:V^{\circ}\rightarrow{\mathbb{R}}$, and a subgraph $H$ of $G^{\circ}$, the quantities $\mathcal{E}^{\circ}(\omega)$, $\mathcal{E}^{\circ}(g)$, $\mathcal{E}^{\circ}(\omega;H)$, $\mathcal{E}^{\circ}(g;H)$ are defined analogously. \\ \\
A \textbf{discrete conformal rectangle} is a simply connected, bipartite quadrangulation with boundary endowed with a conformal metric $c:E^{\bullet}\sqcup{E^{\circ}}\rightarrow{(0,\infty)}$ and four distinguished boundary points $A^{\bullet}, B^{\bullet}, C^{\bullet}, D^{\bullet}\in{\partial{V^{\bullet}}}$, listed in counterclockwise order. Since our quadrangulation with boundary is simply connected, it has a unique exterior face $f$, so $A^{\bullet}, B^{\bullet}, C^{\bullet}, D^{\bullet}$ must all lie along  $f$. Furthermore, having embedded our quadrangulation in the Riemann sphere, we can talk about orientation. \\ \\
The distinguished boundary points $A^{\bullet}, B^{\bullet}, C^{\bullet}, D^{\bullet}\in{\partial{V^{\bullet}}}$ of our discrete conformal rectangle give rise to primal boundary arcs $[A^{\bullet},B^{\bullet}], [C^{\bullet}, D^{\bullet}]\subseteq{\partial{V^{\bullet}}}$. $[A^{\bullet},B^{\bullet}]$ refers to the set of primal vertices that lie along the counterclockwise path from $A^{\bullet}$ to $B^{\bullet}$ along the boundary of $f$. Similarly, $[C^{\bullet},D^{\bullet}]$ is the set of primal vertices that lie along the counterclockwise path from $C^{\bullet}$ to $D^{\bullet}$ along the boundary of $f$. These primal boundary arcs have corresponding dual arcs $[B^{\circ},C^{\circ}], [D^{\circ}, A^{\circ}]\subseteq{\partial{V^{\circ}}}$ where $[B^{\circ},C^{\circ}]$ consists of the set of dual vertices that lie along the counterclockwise path from $B^{\bullet}$ to $C^{\bullet}$ along the boundary of $f$. Similarly, $[D^{\circ}, A^{\circ}]$ is the set of dual vertices that lie along the counterclockwise path from $D^{\bullet}$ to $A^{\bullet}$ along the boundary of $f$.
\begin{figure}[H]
\label{fig: discrete conformal rectangle}
\centering 
\includegraphics[scale=0.55]{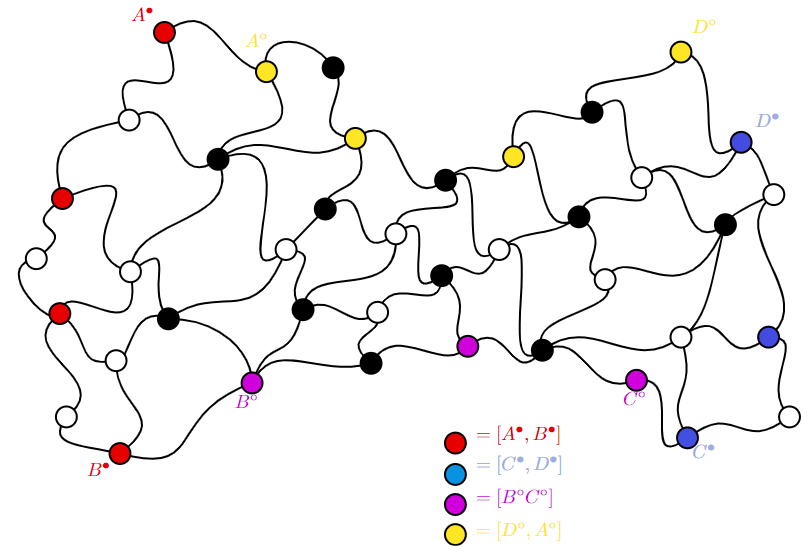}
\caption{A discrete conformal rectangle with its four distinguished boundary arcs.}
\end{figure}
\noindent We say that an $S$- $T$ cut, $F\subseteq{E}$, is \textbf{minimal} if for any edge $e\in{F}$, $F\setminus{\{e\}}$ is no longer an $S$- $T$ cut.
\begin{lem}
\label{lem: min cuts are dual paths rectangles}
(Lemma VIII.1 of \cite{D62}) If $(G,c)$ is a discrete conformal rectangle with distinguished boundary points $A^{\bullet}, B^{\bullet}, C^{\bullet}, D^{\bullet}\in{\partial{V^{\bullet}}}$ giving rise to primal boundary arcs $[A^{\bullet}, B^{\bullet}]$, $[C^{\bullet}, D^{\bullet}]\subseteq{\partial{V^{\bullet}}}$ with corresponding dual arcs $[B^{\circ},C^{\circ}], [D^{\circ}, A^{\circ}]\subseteq{\partial{V^{\circ}}}$, then the set of minimal $[A^{\bullet}, B^{\bullet}]$- $[C^{\bullet}, D^{\bullet}]$ cuts in $G^{\bullet}$ is in one- to- one correspondence with the set of simple paths from $[B^{\circ},C^{\circ}]$ to $[D^{\circ}, A^{\circ}]$ in $G^{\circ}$. 
\end{lem}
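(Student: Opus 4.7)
The natural bijection to try is the one given by the primal-dual correspondence: each interior face of $G$ is a quadrilateral containing one primal edge $e^{\bullet}$ and one dual edge $e^{\circ}$ which ``cross,'' so edges of $E^{\bullet}$ and $E^{\circ}$ are in canonical bijection. I would map a minimal cut $F\subseteq E^{\bullet}$ to $F^{\circ}:=\{e^{\circ}:e^{\bullet}\in F\}\subseteq E^{\circ}$, and show this is (the edge set of) a simple path in $G^{\circ}$ from $[B^{\circ},C^{\circ}]$ to $[D^{\circ},A^{\circ}]$, and conversely that every such simple path $P$ sends back to a minimal cut $\{e^{\bullet}:e^{\circ}\in P\}$.

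The bulk of the work is a parity/degree argument at each dual vertex. For any interior dual vertex $v\in V^{\circ}\setminus\partial V^{\circ}$, the primal edges sitting in the quadrilateral faces around $v$ close up into a cycle $\gamma_{v}$ in $G^{\bullet}$, and any cut must intersect any cycle in an even number of edges; hence $v$ has even degree in $F^{\circ}$. For $v\in[B^{\circ},C^{\circ}]$, the same sequence of primal edges forms an \emph{arc} whose two endpoints lie on $[A^{\bullet},B^{\bullet}]$ and $[C^{\bullet},D^{\bullet}]$ respectively, and the separation condition forces an odd number of edges of $F$ along this arc, so $v$ has odd degree in $F^{\circ}$. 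The same holds on $[D^{\circ},A^{\circ}]$, while dual vertices on the boundary arcs opposite to these (between $A^{\bullet}$ and $B^{\bullet}$ or between $C^{\bullet}$ and $D^{\bullet}$) have both endpoints of their primal arcs on the same side of the cut and hence even degree.

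Next I would use minimality to collapse these to the degrees $\{0,2\}$ at interior dual vertices and $\{1\}$ at the two relevant boundary arcs. If some dual vertex $v$ had degree $\ge 3$ in $F^{\circ}$, then cyclic order around $v$ lets one identify two of the incident primal edges of $F$ whose deletion still leaves the two components of $G^{\bullet}\setminus F$ separated, contradicting minimality; similarly high odd degrees at boundary dual vertices reduce to $1$. Connectivity of $F^{\circ}$ then follows from a Jordan-curve argument applied to the two components of $G^{\bullet}\setminus F$ in the planar embedding: the topological ``interface'' between them is a single arc, which by the degree conditions is necessarily a simple path in $G^{\circ}$ with one endpoint in $[B^{\circ},C^{\circ}]$ and one in $[D^{\circ},A^{\circ}]$.

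For the inverse direction, given such a simple dual path $P$, I would close it up into a Jordan arc in the sphere by appending two short segments through the exterior face $f$ from its endpoints along $\partial f$; this Jordan arc crosses $\partial f$ at one dual vertex of each arc and separates $[A^{\bullet},B^{\bullet}]$ from $[C^{\bullet},D^{\bullet}]$. By the Jordan Curve Theorem, the primal edges crossed by this arc (which are exactly $\{e^{\bullet}:e^{\circ}\in P\}$) form a cut, and minimality follows because each such primal edge is crossed exactly once by $P$, so deleting any one reconnects the two sides across the corresponding quadrilateral face. The two constructions are mutually inverse by construction. I expect the most delicate step to be the reduction of high-degree dual vertices to degree $\le 2$ via minimality: it requires a careful local planar argument tracking the cyclic order of primal edges around $v$ and determining on which side of the cut each primal vertex of each incident quadrilateral lies.
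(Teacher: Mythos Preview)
The paper does not prove this lemma; it is quoted from Duffin \cite{D62} without proof, so there is no in-paper argument to compare against. Your overall strategy---primal--dual edge bijection, degree parity at dual vertices, minimality to force degrees $\le 2$, and a Jordan-curve argument for the converse---is the standard one and is essentially Duffin's.

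There is, however, a real error in your parity step for boundary dual vertices. For $v\in[B^{\circ},C^{\circ}]$ the primal diagonals of the faces incident to $v$ form a walk in $G^{\bullet}$ whose two endpoints are the \emph{boundary primal vertices adjacent to $v$ along $\partial G$}. These lie on the counterclockwise boundary arc from $B^{\bullet}$ to $C^{\bullet}$, and except in degenerate cases they belong to neither $[A^{\bullet},B^{\bullet}]$ nor $[C^{\bullet},D^{\bullet}]$: the boundary of the quadrangulation alternates primal and dual vertices, so there are typically many primal boundary vertices strictly between $B^{\bullet}$ and $C^{\bullet}$ that sit in no distinguished primal arc. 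It is therefore \emph{false} that every $v\in[B^{\circ},C^{\circ}]$ has odd degree in $F^{\circ}$; most have degree $0$, and only one has degree $1$.

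The fix is straightforward. Writing $B^{\bullet}=q_{0},q_{1},\dots,q_{m}=C^{\bullet}$ for the boundary primal vertices along that arc, the parity of $\deg_{F^{\circ}}(v_{i})$ for the dual vertex $v_{i}$ between $q_{i-1}$ and $q_{i}$ is $\mathbf{1}_{\{q_{i-1},q_{i}\text{ on opposite sides of }F\}}$. Summing over $i$ telescopes to $\mathbf{1}_{B^{\bullet}\in S}+\mathbf{1}_{C^{\bullet}\in S}\equiv 1\pmod 2$, so an \emph{odd} number of vertices of $[B^{\circ},C^{\circ}]$ have odd degree. Reducing this to exactly one, and reducing interior degrees to $\{0,2\}$, is precisely where minimality enters, along the lines you sketch. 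Your Jordan-arc argument for the inverse direction is fine as stated.
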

\noindent When computing the extremal length of the family of paths between disjoint vertex sets $S$ and $T$ in $G$, for a fixed metric $\rho$, we are interested in the quantity: 
$$
\inf_{\gamma}\left\{\sum_{e\in{\gamma}}\rho(e)\right\}
$$
where our infimum is taken over all paths $\gamma$ in $G$ between $S$ and $T$. Since any such path that isn't simple has a simple subpath of smaller $\rho$- weight, when taking this infimum, it actually suffices to restrict our attention to simple paths $\gamma$ from $S$ to $T$. Similarly, when we compute the extremal length of the set of $S$- $T$ cuts in a network, rather than taking an infimum over all $S$- $T$ cuts, it suffices to restrict our attention only to minimal $S$- $T$ cuts. Thus, as an immediate corollary of Lemma \ref{lem: min cuts are dual paths rectangles}, we have that:
\begin{cor}
\label{FF_discrete_rectangles}
If $(G,c)$ is a discrete rectangle with distinguished boundary points $A^{\bullet}, B^{\bullet}, C^{\bullet}, D^{\bullet}\in{\partial{V^{\bullet}}}$ giving rise to primal boundary arcs $[A^{\bullet}, B^{\bullet}], [C^{\bullet}, D^{\bullet}]\subseteq{\partial{V^{\bullet}}}$ with corresponding dual arcs $[B^{\circ},C^{\circ}], [D^{\circ}, A^{\circ}]\subseteq{\partial{V^{\circ}}}$, then: 
$$
\lambda([A^{\bullet}, B^{\bullet}]\not\leftrightarrow{[C^{\bullet}, D^{\bullet}]};(G^{\bullet},c^{\bullet}))=\lambda([B^{\circ},C^{\circ}]\leftrightarrow{[D^{\circ}, A^{\circ}]};(G^{\circ},c^{\circ}))
$$
\end{cor}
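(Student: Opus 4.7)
The plan is to deduce the corollary directly from Lemma \ref{lem: min cuts are dual paths rectangles} and the two reductions flagged in the paragraph just before the corollary. First, I would rewrite both sides in their reduced form: because every non-minimal $S$-$T$ cut contains a minimal sub-cut of no larger $\rho$-weight, the infimum in $\ell(\rho,B(\cdot))$ coincides with the infimum over \emph{minimal} cuts; analogously, the infimum in $\ell(\rho,\Gamma)$ coincides with the infimum over \emph{simple} paths. So the LHS of the corollary equals $\sup_{\rho^{\bullet}}\bigl(\min_{F}\sum_{e\in F}\rho^{\bullet}(e)\bigr)^2/A^{\bullet}(\rho^{\bullet})$, with $F$ ranging over minimal $[A^{\bullet},B^{\bullet}]$-$[C^{\bullet},D^{\bullet}]$ cuts of $G^{\bullet}$, and the RHS equals the analogous supremum with $\gamma$ ranging over simple paths from $[B^{\circ},C^{\circ}]$ to $[D^{\circ},A^{\circ}]$ in $G^{\circ}$.

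Next, Lemma \ref{lem: min cuts are dual paths rectangles} provides a bijection between these two families. The construction, via planar duality, is implemented by the canonical primal-dual edge correspondence of the ambient quadrangulation $G$: each interior quadrilateral face of $G$ contributes exactly one primal edge and one dual edge, and for a minimal cut $F\subseteq E^{\bullet}$ the associated simple path $\gamma\subseteq E^{\circ}$ uses exactly the dual partners of the edges in $F$. In particular, $|F|=|\gamma|$ and there is a canonical edge-for-edge bijection $F\leftrightarrow\gamma$.

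Finally, I would transport metrics across this edge bijection. Given $\rho^{\bullet}:E^{\bullet}\to\mathbb{R}_{\geq 0}$, define $\rho^{\circ}$ on $E^{\circ}$ by the edge identification, so that the sum $\sum_{e\in F}\rho^{\bullet}(e)$ equals $\sum_{e\in\gamma}\rho^{\circ}(e)$ and the two numerators match. For the denominators, the conformal-metric identity $c^{\circ}(e^{\circ})=1/c^{\bullet}(e^{\bullet})$ must be used to relate $A^{\bullet}(\rho^{\bullet})=\sum_{e}c^{\bullet}(e)\rho^{\bullet}(e)^2$ with $A^{\circ}(\rho^{\circ})=\sum_{e}c^{\circ}(e)\rho^{\circ}(e)^2$, possibly after an appropriate rescaling of $\rho^{\circ}$ absorbing a factor of $c^{\bullet}(e)$. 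The main obstacle is to arrange the rescaling so that numerator and denominator of the extremal-length ratio transform compatibly in one stroke; once this identification is in place, the two suprema are reparametrizations of one another and the equality of extremal lengths is immediate.
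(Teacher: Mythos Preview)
Your approach is exactly what the paper has in mind; the paper gives no separate proof and simply declares the result an immediate corollary of Lemma~\ref{lem: min cuts are dual paths rectangles} together with the two reductions (to minimal cuts and to simple paths) that you spell out.

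The ``main obstacle'' you flag in step~3 is, however, genuine for the statement \emph{as printed}, and no edgewise rescaling will dissolve it. If you try $\rho^{\circ}(e^{\circ})=f(e)\,\rho^{\bullet}(e^{\bullet})$, then matching the areas forces $f(e)^{2}c^{\circ}(e^{\circ})=c^{\bullet}(e^{\bullet})$, i.e.\ $f(e)=c^{\bullet}(e^{\bullet})$ up to a global scalar; but then the $\ell$-sums $\sum_{e\in F}\rho^{\bullet}(e)$ and $\sum_{e\in\gamma}\rho^{\circ}(e)$ no longer agree edge by edge, so the numerators do not match. The point is that the weight on the cut side should be $r^{\bullet}=1/c^{\bullet}$ rather than $c^{\bullet}$: this is precisely the pairing in the Ford--Fulkerson proposition immediately above, and it is also what is actually used when the corollary is invoked later (for instance in the proof of Proposition~\ref{prop: Discrete and Continuous EL Comparable}) to conclude $\lambda^{\bullet}\lambda^{\circ}=1$. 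With that weight, take the na\"ive identification $\rho^{\circ}(e^{\circ})=\rho^{\bullet}(e^{\bullet})$ under the primal/dual edge correspondence: the $\ell$-infima coincide because Lemma~\ref{lem: min cuts are dual paths rectangles} matches minimal cuts with simple paths edge for edge, and the areas coincide because $c^{\circ}(e^{\circ})=1/c^{\bullet}(e^{\bullet})=r^{\bullet}(e^{\bullet})$. The two variational problems are then literally the same optimisation, and equality follows with no rescaling at all.
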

\noindent Suppose $G=(V^{\bullet}\sqcup{V^{\circ}},E)$ is a bipartite quadrangulation with boundary endowed with a conformal metric $c:E^{\bullet}\sqcup{E^{\circ}}\rightarrow{\mathbb{R}}$. We say that $h:V^{\bullet}\rightarrow{\mathbb{R}}$ is harmonic on $G^{\bullet}$ if $h$ is harmonic on $\text{Int}(V^{\bullet})=V^{\bullet}\setminus{\partial{V^{\bullet}}}$. $\widetilde{h}:V^{\circ}\rightarrow{\mathbb{R}}$ is a \textbf{harmonic conjugate} of $h$ on $G$ if for any interior face $f$ of $G$, we have that: 
\begin{equation}
\label{discrete_CR}
\big(\widetilde{h}(w_{2})-\widetilde{h}(w_{1})\big)=c^{\bullet}(v_{1}, v_{2})\big(h(v_{2})-h(v_{1})\big)
\end{equation}
where $v_{1}, w_{1}, v_{2}, w_{2}$ are the vertices of $f$ listed in counterclockwise order so that $v_{1}, v_{2}\in{V^{\bullet}}$ and $w_{1}, w_{2}\in{V^{\circ}}$. Equation \ref{discrete_CR} is a discrete analogue of the Cauchy- Riemann equations for a quadrangulation with boundary $G$, endowed with a conformal metric. It is easy to check that the conjugate of a harmonic function on $G^{\bullet}$ is harmonic on $G^{\circ}$. Additionally, since $c$ is a conformal metric, if $\widetilde{h}$ is a harmonic conjugate of $h$, then $h$ is a harmonic conjugate of $\widetilde{h}$. The next two propositions are well-known, though they are rarely stated in this generality: 
\begin{prop}
\label{uniqueness of harmonic conjugate}
    Suppose $G=(V^{\bullet}\sqcup{V^{\circ}},E)$ is a bipartite quadrangulation with boundary endowed with a conformal metric $c:E^{\bullet}\sqcup{E^{\circ}}\rightarrow{\mathbb{R}}$. If $h:V^{\bullet}\rightarrow{\mathbb{R}}$ is harmonic on $G^{\bullet}$, then the harmonic conjugate of $h$, if it exists, is unique up to an additive constant. 
\end{prop}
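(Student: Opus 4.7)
The plan is to take two harmonic conjugates of $h$ and subtract them, then use the discrete Cauchy-Riemann equations \ref{discrete_CR} to show that the difference is forced to be constant on each connected component of $G^{\circ}$. Since we will want a single constant of ambiguity (rather than one per component), the one nontrivial ingredient is the connectivity of $G^{\circ}$, which I will want to invoke as a structural fact about bipartite quadrangulations with boundary.

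Concretely, suppose $\widetilde{h}_1$ and $\widetilde{h}_2$ are both harmonic conjugates of $h$ and set $\phi:=\widetilde{h}_1-\widetilde{h}_2:V^{\circ}\to\mathbb{R}$. Fix an interior face $f$ of $G$ with its four vertices $v_1,w_1,v_2,w_2$ in counterclockwise order, with $v_1,v_2\in V^{\bullet}$ and $w_1,w_2\in V^{\circ}$. Applying \ref{discrete_CR} to each of $\widetilde{h}_1$ and $\widetilde{h}_2$, the right-hand side is the same in both identities, namely $c^{\bullet}(v_1,v_2)\bigl(h(v_2)-h(v_1)\bigr)$, so subtracting yields $\phi(w_2)=\phi(w_1)$. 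Now recall that by construction the edges of $G^{\circ}$ are in bijection with the interior faces of $G$: the dual edge corresponding to the interior quadrilateral face $f$ above is precisely $\{w_1,w_2\}$. Therefore $\phi$ is constant across every edge of $G^{\circ}$, i.e.\ $\phi$ is locally constant on $G^{\circ}$.

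It then remains to note that $G^{\circ}$ is connected, so that $\phi$ is globally constant and hence $\widetilde{h}_2=\widetilde{h}_1+\text{const}$. This connectivity is the main thing to justify; the clean way is to observe that any two dual vertices $w,w'\in V^{\circ}$ lie on boundaries of interior faces that are joined through the quadrangulation, and that one can walk from $w$ to $w'$ by repeatedly crossing an interior face to pass from one dual vertex to the other. In the settings of interest in this paper (simply connected discrete conformal rectangles and, more generally, bipartite quadrangulations with boundary whose dual graph is connected), this is immediate; I expect this to be the only real obstacle, and it is essentially combinatorial rather than analytic. Once connectivity is in hand, the proof is complete, and the proposition follows.
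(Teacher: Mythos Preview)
The paper does not actually supply a proof of this proposition; it is stated as well-known and left without argument. Your proof is the standard one and is correct: subtracting two harmonic conjugates and applying \eqref{discrete_CR} face by face shows the difference is constant across every dual edge, and connectivity of $G^{\circ}$ upgrades this to a global constant. You are right that the only substantive point is the connectivity of $G^{\circ}$, and you handle it appropriately by flagging it; in the simply connected case used throughout the paper this is immediate, and in general it follows from the connectivity of the underlying planar map $G$. Note also that, as your argument makes clear, the harmonicity of $h$ plays no role in uniqueness---only the Cauchy--Riemann relation is used.
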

 
\begin{prop}
Suppose $G=(V^{\bullet}\sqcup{V^{\circ}},E)$ is a bipartite quadrangulation with boundary endowed with a conformal metric $c:E^{\bullet}\sqcup{E^{\circ}}\rightarrow{\mathbb{R}}$ and $h:V^{\bullet}\rightarrow{\mathbb{R}}$ is harmonic on $G^{\bullet}$. Let $H=(V_{H}^{\bullet}\sqcup{V_{H}^{\circ}}, E_{H})$ be submap of $G$ which is itself a simply- connected, bipartite quadrangulation with boundary. Then $h$ has a harmonic conjugate $\widetilde{h}:V_{H}^{\circ}\rightarrow{\mathbb{R}}$ on $H$.
\end{prop}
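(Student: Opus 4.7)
The plan is to construct $\widetilde{h}$ by ``integrating'' the discrete Cauchy--Riemann equation \eqref{discrete_CR} along paths in $H^{\circ}$, starting from a base dual vertex, and to use the harmonicity of $h$ together with the simple connectivity of $H$ to show this is path-independent. Concretely, I fix a base vertex $w_{0}\in V_{H}^{\circ}$, set $\widetilde{h}(w_{0})=0$, and for any other $w\in V_{H}^{\circ}$ choose a path $w_{0}=u_{0},u_{1},\ldots,u_{m}=w$ in $H^{\circ}$ and define
\[
\widetilde{h}(w):=\sum_{i=0}^{m-1}c^{\bullet}(v_{i}^{-},v_{i}^{+})\bigl(h(v_{i}^{+})-h(v_{i}^{-})\bigr),
\]
where $(v_{i}^{-},v_{i}^{+})$ is the primal edge sharing an interior face with the dual edge $(u_{i},u_{i+1})$, oriented so that $(v_{i}^{-},u_{i},v_{i}^{+},u_{i+1})$ appears in counterclockwise order around that face. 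Note that $H^{\circ}$ is connected because $H$ is a simply connected quadrangulation with boundary, so such paths always exist.

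To show $\widetilde{h}$ is well defined, I need the sum of the above increments to vanish around every closed walk in $H^{\circ}$. Using the plane embedding of $H^{\circ}$ inherited from $H$, each bounded face of $H^{\circ}$ encloses exactly one interior primal vertex of $H$, and the simple connectivity of $H$ ensures that every cycle in $H^{\circ}$ is a $\mathbb{Z}$-linear combination of the boundary cycles of these bounded faces. Thus it suffices to verify the cancellation face by face. For a fixed interior primal vertex $v$ of $H$ with dual-face boundary $u_{1}\to u_{2}\to\cdots\to u_{k}\to u_{1}$, and corresponding primal neighbors $v_{1},\ldots,v_{k}$ of $v$ in $H^{\bullet}$, the increments telescope in the primal picture to
\[
\sum_{i=1}^{k}c^{\bullet}(v,v_{i})\bigl(h(v_{i})-h(v)\bigr)=\Delta h(v).
\]

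The main subtlety, and really the only step that is not a routine computation, is making sure we can legitimately apply the hypothesis $\Delta h(v)=0$ at this vertex, since harmonicity is assumed on $\mathrm{Int}(V^{\bullet})$ in the \emph{ambient} quadrangulation $G$, whereas the face of $H^{\circ}$ in question involves only $H$'s combinatorics. The key topological observation is that any $v\in\mathrm{Int}(V_{H}^{\bullet})$ lies in no face of $H$ on the boundary; hence every face of $G$ incident to $v$ must be an interior (quadrilateral) face of $H$, and in particular an interior face of $G$. This shows simultaneously that $v\in\mathrm{Int}(V^{\bullet})$ (so the hypothesis applies) and that the primal neighborhoods of $v$ in $G^{\bullet}$ and $H^{\bullet}$ coincide, so $\Delta_{H^{\bullet}}h(v)=\Delta_{G^{\bullet}}h(v)=0$.

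Combining these two ingredients, the face-sum vanishes, so $\widetilde{h}$ is well defined on $V_{H}^{\circ}$. By construction it satisfies the defining increment relation across every interior face of $H$, i.e.\ equation \eqref{discrete_CR}, so it is a harmonic conjugate of $h$ on $H$. Uniqueness up to a constant is already handled by the previous proposition, which is why the freedom to choose $\widetilde{h}(w_{0})$ causes no ambiguity.
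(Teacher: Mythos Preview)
Your argument is the standard construction and is correct. The paper itself does not give a proof of this proposition: it is stated as one of two ``well-known'' facts (along with the uniqueness statement in Proposition~\ref{uniqueness of harmonic conjugate}) and then used without further justification. So there is no paper proof to compare against; your write-up simply fills in the omitted standard argument.

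One small point worth tightening: when you assert that the bounded faces of $H^{\circ}$ are in bijection with $\mathrm{Int}(V_{H}^{\bullet})$ and that the face-cycles generate the cycle space of $H^{\circ}$, you are implicitly using that $H$ is simply connected in the paper's sense (unique exterior face with simple-closed-curve boundary). This is true, but it is exactly the place where simple connectivity enters, so it would not hurt to say explicitly that an arbitrary closed walk in $H^{\circ}$ bounds a region made up of such faces because $\widehat{H}$ is a topological disk. Similarly, your claim that $v\in\mathrm{Int}(V_{H}^{\bullet})$ forces all $G$-faces incident to $v$ to already be interior faces of $H$ relies on ``submap'' meaning that $H$ is a union of interior faces of $G$; this is the intended meaning in the paper, so the step is fine, but spelling it out removes any ambiguity.
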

\begin{rem}
Ford- Fulkerson duality is a general statement that holds for any finite network. However, in the case of discrete rectangles, it has a particularly simple proof stemming from the fact that if $h$ is the function on $V^{\bullet}$ that is equal to $0$ on $[A^{\bullet}, B^{\bullet}]$, $1$ on $[C^{\bullet},D^{\bullet}]$ and is harmonic elsewhere so that: 
\begin{equation*}
    \lambda^{\bullet}=\lambda([A^{\bullet},B^{\bullet}]\leftrightarrow{[C^{\bullet}, D^{\bullet}]};G)=\frac{1}{\mathcal{E}^{\bullet}(h)}
\end{equation*}
its harmonic conjugate $\widetilde{h}$ is (up to an additive constant) equal to $0$ on $[B^{\circ}, C^{\circ}]$, $\frac{1}{\lambda^{\bullet}}$ on $[D^{\circ}, A^{\circ}]$ and is harmonic elsewhere. Furthermore, by the discrete Cauchy-Riemann equations, $\mathcal{E}^{\circ}(\widetilde{h})=\mathcal{E}^{\bullet}(h)$. Hence:  
\begin{equation*}
    \lambda^{\circ}=\lambda([B^{\circ}, C^{\circ}]\leftrightarrow{[D^{\circ},A^{\circ}]};G^{\circ})=\frac{(1/\lambda^{\bullet})^{2}}{\mathcal{E}^{\circ}(\widetilde{h})}=\frac{(1/\lambda^{\bullet})^{2}}{(1/\lambda^{\bullet})}=\frac{1}{\lambda^{\bullet}}
\end{equation*}
\end{rem}

\subsection{Tilings of Rectangles}
\label{subsec: Tilings of Rectangles}

Suppose $\Omega\subseteq{\mathbb{C}}$ is a Jordan domain with analytic boundary and distinguished boundary points $A, B, C, D$ listed in counterclockwise order. Let $[A,B]_{\partial{\Omega}}, [B,C]_{\partial{\Omega}}, [C,D]_{\partial{\Omega}}, [D,A]_{\partial{\Omega}}$ denote the closed boundary arcs stretching counterclockwise from $A$ to $B$, $B$ to $C$, $C$ to $D$ and $D$ to $A$ along $\partial{\Omega}$. Let $(A,B)_{\partial{\Omega}}, (B,C)_{\partial{\Omega}}, (C,D)_{\partial{\Omega}}, (D,A)_{\partial{\Omega}}$ be the corresponding open boundary arcs. Let $\widetilde{h}$ be the solution to the following boundary value problem: 
\begin{align*}
    \widetilde{h}(x)&=0 \text{ for } x\in{[B,C]_{\partial{\Omega}}} \\
    \widetilde{h}(x)&=1 \text{ for } x\in{[A,D]_{\partial{\Omega}}} \\
    \Delta{\widetilde{h}(x)}&=0 \text{ for } x\in{\Omega} \\
    \partial_{n}\widetilde{h}(x)&=0 \text{ for } x\in{(B, C)_{\partial{\Omega}}\cup{(D, A)_{\partial{\Omega}}}}
\end{align*}
The existence and uniqueness of the solution to this problem is clear by conformal invariance. Namely, if $L$ is the extremal length from $[A,B]_{\partial{\Omega}}$ to $[C,D]_{\partial{\Omega}}$ in $\Omega$ and $\mathcal{R}_{L}:=(0,L)\times{(0,1)}$ then there is a unique conformal map $\phi:\Omega\rightarrow{\mathcal{R}_{L}}$ sending $A,B,C,D$ to the corners of $\mathcal{R}_{L}$. Since our boundary value problem is conformally invariant, $\widetilde{h}(x)=Im(\phi(x))$. Furthermore, the conjugate harmonic function $h(x)=Re(\phi(x))$ satisfies:
\begin{align*}
    h(x)&=0 \text{ for } x\in{[A,B]_{\partial{\Omega}}} \\
    h(x)&=L \text{ for } x\in{[C,D]_{\partial{\Omega}}} \\
    \partial_{n}h(x)&=0 \text{ for } x\in{(B,C)_{\partial{\Omega}}\cup{(D,A)_{\partial{\Omega}}}}
\end{align*}
Thus, if we are presented with a Jordan domain $\Omega$ with four distinguished boundary points $A, B, C, D$ listed in counterclockwise order and $\phi:\Omega\rightarrow{\mathcal{R}_{L}}$ is the conformal map that maps $A, B, C, D$ to the corners of $\mathcal{R}_{L}$, we can intuitively think of the real and imaginary parts of this conformal map as solving the aforementioned boundary value problems. \\ \\
Suppose $(G,c)$ is a discrete rectangle with distinguished boundary points $A^{\bullet}, B^{\bullet}, C^{\bullet}, D^{\bullet}\in{\partial{V^{\bullet}}}$ listed in counterclockwise order, giving rise to primal boundary arcs $[A^{\bullet},B^{\bullet}],[C^{\bullet},D^{\bullet}]\subseteq{\partial{V^{\bullet}}}$ and corresponding dual boundary arcs $[B^{\circ},C^{\circ}], [D^{\circ}, A^{\circ}]\subseteq{\partial{V^{\circ}}}$. Let $\widetilde{h}$ be the solution to the following boundary value problem on $(G^{\circ},c^{\circ})$:
\begin{align*}
    \widetilde{h}(x)&=0 \hspace{2pt}\text{ for $x\in{[B^{\circ},C^{\circ}]}$} \\
    \widetilde{h}(x)&=1 \hspace{2pt}\text{ for $x\in{[D^{\circ}, A^{\circ}]}$}\\
    \Delta^{\circ}\widetilde{h}(x)&=0 \hspace{2pt}\text{ for $x\in{V^{\circ}\setminus{([B^{\circ},C^{\circ}]\cup{[D^{\circ},A^{\circ}]})}}$}
\end{align*}
Let $h$ be the solution to the following boundary value problem on $(G^{\bullet},c^{\bullet})$:
\begin{align*}
    h(x)&=0 \hspace{2pt}\text{ for $x\in{[A^{\bullet}, B^{\bullet}]}$}\\
    h(x)&=L \hspace{2pt}\text{ for $x\in{[C^{\bullet},D^{\bullet}]}$} \\
    \Delta^{\bullet}{h}(x)&=0 \hspace{2pt}\text{ for  $x\in{V^{\bullet}\setminus{([A^{\bullet}, B^{\bullet}]\cup{[C^{\bullet},D^{\bullet}]})}}$}
\end{align*}
where $L$ is the effective resistance between $[A^{\bullet}, B^{\bullet}]$ and $[C^{\bullet},D^{\bullet}]$ in $(G^{\bullet},c^{\bullet})$. Just as in the continuous setting, $\widetilde{h}$ is the harmonic conjugate of $h$.
Since they are defined in terms of analogous boundary value problems, the functions $h$ and $\widetilde{h}$ are discrete analogues of the real and imaginary parts of the uniformizing conformal map that takes a simply connected domain with four distinguished prime ends to a rectangle so that the four distinguished prime ends are mapped to the four corners of the rectangle. \\ \\
Suppose $(G,c)$ is a discrete rectangle with distinguished boundary points $A^{\bullet}, B^{\bullet}, C^{\bullet}, D^{\bullet}\in{\partial{V^{\bullet}}}$ listed in counterclockwise order, giving rise to primal boundary arcs $[A^{\bullet},B^{\bullet}],[C^{\bullet},D^{\bullet}]\subseteq{\partial{V^{\bullet}}}$ and corresponding dual boundary arcs $[B^{\circ},C^{\circ}], [D^{\circ}, A^{\circ}]\subseteq{\partial{V^{\circ}}}$. Let $h:V^{\bullet}\rightarrow{\mathbb{R}}$, $\widetilde{h}:V^{\circ}\rightarrow{\mathbb{R}}$ be conjugate harmonic functions defined as above. For any interior face $f$ of $G$ with incident vertices $x,y,u,v$ where $x,y\in{V^{\bullet}}$, $u,v\in{V^{\circ}}$, the image of $f$ under the \textbf{tiling map} $\phi$ is defined as follows: 
$$
\phi(f)=[h(x),h(y)]\times{[\widetilde{h}(u), \widetilde{h}(v)]}
$$
where the order of $x,y$ and $u,v$ is chosen so that: 
\begin{align*}
h(x)&\leq{h(y)}, & \widetilde{h}(u)&\leq{\widetilde{h}(v)}
\end{align*}
As the name suggests, $\phi$ corresponds to a tiling of the rectangle $\mathcal{R}_{L}$ by smaller subrectangles: 
\begin{thm} 
    (Theorem 4.31 of \cite{BSST40}) Suppose $\phi$ is the tiling map associated with the discrete rectangle $(G,c)$ with distinguished boundary points $A^{\bullet}, B^{\bullet}, C^{\bullet}, D^{\bullet}\in{\partial{V^{\bullet}}}$ listed in counterclockwise order. Then for any pair of distinct inner faces $f, f'$ of $G$, the rectangles $\phi(f)$ and $\phi(f')$ have disjoint interiors. Furthermore, if $F_{\text{in}}$ is the set of interior faces of $G$, 
    $$ \bigcup\limits_{f\in{F_{\text{in}}}}\phi(f)=[0,L]\times{[0,1]}
    $$
\end{thm}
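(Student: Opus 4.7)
The plan is to verify the tiling property on vertical slices and combine it with an area computation. First, by the maximum principle applied to $h$ and $\widetilde{h}$, together with their boundary data, $h(V^\bullet) \subseteq [0, L]$ and $\widetilde{h}(V^\circ) \subseteq [0, 1]$, so every $\phi(f) \subseteq [0, L] \times [0, 1]$. Moreover, the discrete Cauchy-Riemann equation (\ref{discrete_CR}) shows that the area of the tile for a face with counterclockwise vertices $v_1, w_1, v_2, w_2$ equals
\[
|h(v_2) - h(v_1)| \cdot |\widetilde{h}(w_2) - \widetilde{h}(w_1)| = c^\bullet(v_1, v_2)\bigl(h(v_2)-h(v_1)\bigr)^2,
\]
so the total tile area (a sum over interior faces, which are in bijection with primal edges) is $\mathcal{E}^\bullet(h)$. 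Rescaling the computation in the remark after Corollary \ref{FF_discrete_rectangles} to boundary values $0$ and $L$ gives $\mathcal{E}^\bullet(h) = L$, matching the area of $[0, L] \times [0, 1]$.

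The heart of the proof is a slice argument. Fix a generic $x_0 \in (0, L)$ avoiding all values $h(v)$. The primal edges $(v_1, v_2)$ with $h(v_1) < x_0 < h(v_2)$ form a minimal $[A^\bullet, B^\bullet]$-$[C^\bullet, D^\bullet]$ cut in $G^\bullet$, and by Lemma \ref{lem: min cuts are dual paths rectangles} they correspond to a simple dual path $w_0, \ldots, w_k$ from $[B^\circ, C^\circ]$ to $[D^\circ, A^\circ]$, so $\widetilde{h}(w_0) = 0$ and $\widetilde{h}(w_k) = 1$. The discrete Cauchy-Riemann equation yields $|\widetilde{h}(w_i) - \widetilde{h}(w_{i-1})| = c^\bullet(v_{i-1}, v_i)|h(v_i) - h(v_{i-1})|$ for each consecutive pair. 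Summing, and using that the net flow of $cdh$ across the cut equals its strength $\|cdh\| = L/R_{\text{eff}} = 1$,
\[
\sum_{i=1}^{k} |\widetilde{h}(w_i) - \widetilde{h}(w_{i-1})| = 1 = |\widetilde{h}(w_k) - \widetilde{h}(w_0)|.
\]
Equality between total variation and net variation forces $\widetilde{h}$ to be strictly monotone along the dual path, so the intervals $[\widetilde{h}(w_{i-1}), \widetilde{h}(w_i)]$ (in increasing order) partition $[0, 1]$. These intervals are precisely the vertical extents at $x = x_0$ of the tiles whose $h$-interval contains $x_0$, hence almost every point of the slice $\{x_0\} \times [0, 1]$ lies in the interior of exactly one tile.

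By Fubini, almost every point of $[0, L] \times [0, 1]$ lies in the interior of exactly one tile. This directly yields pairwise disjoint interiors, and combined with the total area equality and closedness of the tiles, it forces their union to equal $[0, L] \times [0, 1]$. The subtlest part is establishing monotonicity of $\widetilde{h}$ along the dual path: a direct sign calculation would require tracking the counterclockwise orientation of each face against the traversal direction of the dual path, but the total-variation argument above sidesteps this bookkeeping entirely by matching the sum of $|\widetilde{h}|$-jumps with the conserved flow strength of $cdh$.
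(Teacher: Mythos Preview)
The paper does not prove this theorem; it simply cites it as Theorem 4.31 of \cite{BSST40}. So there is no ``paper's own proof'' to compare against. Your argument must therefore stand on its own, and it largely does: the slice-by-slice strategy combined with the total-variation trick is a clean and correct route to the result.

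A few points deserve sharpening. First, the claim that the edges with $h(v_1)<x_0<h(v_2)$ form a \emph{minimal} cut relies on the connectedness of the sublevel and superlevel sets $\{h<x_0\}$ and $\{h>x_0\}$ in $G^\bullet$; this follows from the maximum principle (and is used implicitly throughout the paper), but you should say so, since minimality is exactly what licenses the invocation of Lemma \ref{lem: min cuts are dual paths rectangles}. Second, your notation $c^\bullet(v_{i-1},v_i)$ for the primal edge dual to $(w_{i-1},w_i)$ is slightly abusive, since these primal edges are not consecutive; it would be clearer to write the $i$-th cut edge as $(p_i,q_i)$ with $h(p_i)<x_0<h(q_i)$. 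Third, the identity $\|cdh\|=1$ deserves one line of justification: the voltage drop is $L$ and the effective resistance is $L$ by construction, so the current strength is $L/L=1$; you then use that every term $c^\bullet(p_i,q_i)(h(q_i)-h(p_i))$ in the flow-across-the-cut sum is positive, which is what makes the sum of absolute values equal the strength.

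With these clarifications the argument is complete. The total-variation-equals-net-variation step is the elegant core: it replaces any orientation bookkeeping by a single inequality that is forced to be an equality, and this is genuinely the right way to establish monotonicity of $\widetilde{h}$ along the dual level path.
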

\noindent Since $h$ and $\widetilde{h}$ are conjugate, the aspect ratio of the rectangle $\phi(f)$ corresponding to the face $f$ of $G$ with incident vertices $x,y\in{V^{\bullet}}$ and $u,v\in{V^{\circ}}$ is precisely the resistance of the primal edge $\{x,y\}\in{E^{\bullet}}$ or equivalently, the conductance of the dual edge $\{u,v\}\in{E^{\circ}}$. %The intuition to keep in mind for later is that if $\Omega$ is a simply connected domain with four distinguished prime ends and $(G,c)$ is a discrete rectangle that is ``close" to $\Omega$ in some appropriate sense, then the corresponding tiling map should be close to the uniformizing conformal map that maps $\Omega$ to a rectangle so that the four distinguished prime ends are mapped to the four corners of the rectangle. 
\subsection{Orthodiagonal Approximations of Planar Domains} 
\label{subsec: OD map intro}

Our reason for working in the level of generality that we did in Sections \ref{Extremal Length and Planar Networks} and \ref{subsec: Tilings of Rectangles} was to illustrate the scope of the theory of planar electrical networks. That said, in order to get a sequence of discrete conformal maps that converge to the relevant uniformizing comformal map in the limit, we need to do two things:
\begin{enumerate}
    \item Fix an embedding of our graph in the complex plane.
    \item Pick a conformal metric that is tied to the geometry of this embedding.
\end{enumerate}
With this in mind, an \textbf{orthodiagonal map} is a finite, bipartite quadrangulation with boundary $G=(V^{\bullet}\sqcup{V^{\circ}}, E)$ with a fixed, proper embedding in the  plane so that: \begin{itemize}
    \item Each edge is a straight line segment.
    \item Each interior face is a quadrilateral with orthogonal diagonals.
\end{itemize}
We allow non- convex quadrilaterals, whose diagonals do not intersect. We endow this with a conformal metric $c:E^{\bullet}\sqcup{E^{\circ}}\rightarrow{(0,\infty)}$ defined as follows: 
\begin{align*}
    c(e^{\bullet})&=\frac{|e^{\circ}|}{|e^{\bullet}|}, & c(e^{\circ})&=\frac{|e^{\bullet}|}{|e^{\circ}|} 
\end{align*}
for $e^{\circ}\in{E^{\circ}}$, $e^{\bullet}\in{E^{\bullet}}$ so that $e^{\circ}$ is the dual edge corresponding to the primal edge $e^{\bullet}$. For any edge $e\in{E^{\bullet}\sqcup{E^{\circ}}}$, $|e|$ is the length of the edge $e$ in our embedding. Recall that $E^{\bullet}$ and $E^{\circ}$ are the edges of $G^{\bullet}$ and $G^{\circ}$ respectively, not the edges of $G$. That is, they correspond to the diagonals of interior faces of $G$.   \\ \\
To make it clear that the discussion that follows is not totally vacuous, observe that the square lattice, the triangular lattice and the hexagonal lattice all have this property that primal and dual edges are orthogonal. More generally, finite subdomains of isoradial lattices, which have been widely studied in the context of critical statistical physics in 2D (i.e. see \cite{Ke02} and \cite{CS11}), are precisely orthodiagonal maps whose faces are all rhombii. Furthermore, as a consequence of the double circle packing theorem, a wide variety of planar graphs admit an orthodiagonal embedding (see Section 2 of \cite{GJN20}).   
\\ \\
While our choice of conformal metric might seem strange at first, observe that if $G$ is an orthodiagonal map with conformal metric $c$ as above:
\begin{itemize}
     \item simple random walk on $(G^{\bullet}, c^{\bullet})$ and $(G^{\circ},c^{\circ})$ is a martingale. 
     \item as a Markov chain, simple random walk on $(G^{\bullet}, c^{\bullet})$ and $(G^{\circ},c^{\circ})$ is reversible.
\end{itemize}
The orthogonality of edges and dual edges gives us a natural way to write down the Cauchy- Riemann equations on an orthodiagonal map. Namely, a function $F:V^{\bullet}\sqcup{V^{\circ}}\rightarrow{\mathbb{C}}$ is said to be \textbf{discrete holomorphic} if for every interior face $Q$ of $G$ with primal diagonal $e^{\bullet}=\{v_{1},v_{2}\}$ and dual diagonal $e^{\circ}=\{w_{1}, w_{2}\}$ we have: 
\begin{equation}
\label{OD_CR}
    \frac{F(v_{2})-F(v_{1})}{v_{2}-v_{1}}= \frac{F(w_{2})-F(w_{1})}{w_{2}-w_{1}}
\end{equation}
From this definition, it follows that:
\begin{itemize}
    \item discrete contour integrals vanish if the integrand is discrete holomorphic. That is, if $F$ is discrete holomorphic on $G$ and $\gamma$ is a simple, closed, directed curve in $G$ so that  the faces of $G$ enclosed by $\gamma$ are all interior faces of $G$, then: 
    $$
    \sum_{\substack{\vec{e}\hspace{1pt}\in{\gamma} \\ \vec{e}=(e^{-}, e^{+})}}\big(F(e^{-})+F(e^{+})\big)(e^{+}-e^{-})=0
    $$
    \item the real and imaginary parts of any discrete holomorphic function are harmonic with respect to the edge weights $c^{\bullet}$ and $c^{\circ}$. That is, $\text{Re}(F)|_{V^{\bullet}}, \text{Im}(F)|_{V^{\bullet}}$ are harmonic on $(G^{\bullet}, c^{\bullet})$ and $\text{Re}(F)|_{V^{\circ}}, \text{Im}(F)|_{V^{\circ}}$ are harmonic on $(G^{\circ}, c^{\circ})$. Moreover, $\text{Im}(F)|_{V^{\circ}}$ is the conjugate harmonic function of $\text{Re}(F)|_{V^{\bullet}}$ and $\text{Re}(F)|_{V^{\circ}}$ is the conjugate harmonic function of $\text{Im}(F)|_{V^{\bullet}}$.
\end{itemize}
In short, orthodiagonal maps provide us with a notion of discrete complex analysis. \\ \\
An \textbf{orthodiagonal rectangle} is an orthodiagonal map $G$ with a unique distinguished outer face whose boundary is a simple closed curve with four distinguished boundary points $A^{\bullet},B^{\bullet},C^{\bullet},D^{\bullet}\subseteq\partial{V}^{\bullet}$ listed in counterclockwise order. As in the case of discrete conformal rectangles, these give rise to primal boundary arcs $[A^{\bullet}, B^{\bullet}],[C^{\bullet}, D^{\bullet}]$ and corresponding dual arcs $[B^{\circ}, C^{\circ}], [D^{\circ}, A^{\circ}]$. Given an orthodiagonal map $G$, let $\widehat{G}$ denote the subdomain of $\mathbb{C}$ formed by taking the interior of the union of the faces, vertices and edges of $G$. %If $G$ is an orthodiagonal rectangle, given any pair of points $x,y\in\partial{\widehat{G}}$, let $[x,y]_{\partial{\widehat{G}}}$ denote the arc along $\partial{\widehat{G}}$ that travels clockwise from $x$ to $y$.
\begin{figure}[H]
\centering
\includegraphics[scale=0.55]{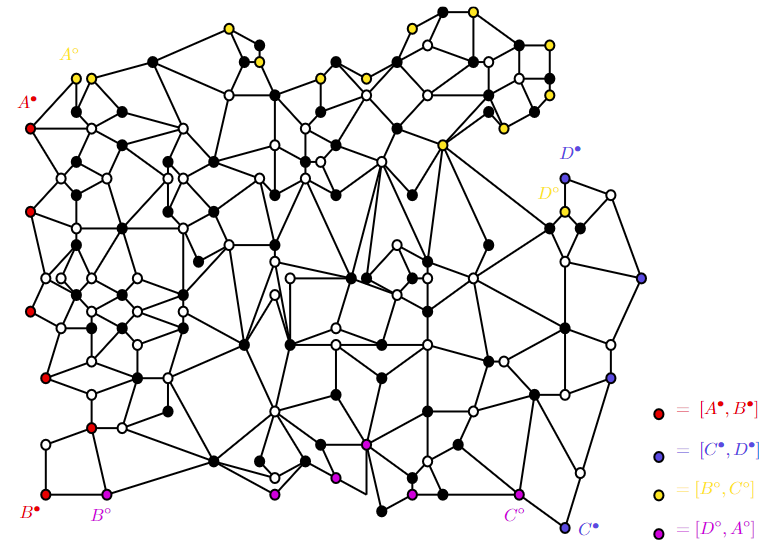}
\caption{An orthodiagonal rectangle with its four distinguished boundary arcs.}
\end{figure}
\noindent Suppose $\Omega$ is a connected, proper subdomain of $\mathbb{C}$. $\gamma\subseteq{\overline{\Omega}}$ is a \textbf{crosscut} of $\Omega$ if $\gamma=\eta([0,1])$ for some injective, continuous function $\eta:[0,1]\rightarrow{\overline{\Omega}}$ such that $\eta(0,1)\subseteq{\Omega}$ and $\eta(0), \eta(1)\in{\partial{\Omega}}$, where $\eta(0)\neq{\eta(1)}$. If $\gamma$ is a crosscut of $\Omega$, $\Omega\setminus{\gamma}$ has two connected components. By the Jordan curve theorem, the same is true of $\Omega\setminus{\gamma}$ when $\gamma$ is a simple closed curve in $\Omega$. Given disjoint subsets $A,B\subseteq{\Omega}$ we say that a simple closed curve or crosscut $\gamma$ \textbf{separates} $A$ and $B$ in $\Omega$ if $\gamma\cap{A}=\gamma\cap{B}=\emptyset$ and $A$ and $B$ lie in distinct connected components of $\Omega\setminus{\gamma}$. \\ \\
Fix $z_{0}\in{\Omega}$. For any $z,w\in{\Omega\setminus{\{z_{0}\}}}$, their \textbf{Carath\'eodory distance} with respect to the reference point $z_{0}$ is given by:
\begin{equation*}
    d_{Cara}^{z_{0}}(z,w):=\inf\{\text{length}(\gamma): \text{$\gamma$ is a simple closed curve or crosscut that separates $z$ and $w$ from $z_{0}$}\}
\end{equation*}
$d_{Cara}^{z_{0}}$ is a metric on $\Omega\setminus{\{z_{0}\}}$ that is locally equivalent to the usual Euclidean metric. The \textbf{Carath\'eodory compactification} $\Omega^{*}$ of $\Omega$ is the completion of $\Omega\setminus{\{z_{0}\}}$ with respect to $d_{Cara}^{z_{0}}$. As a topological space, the Carath\'eodory compactification $\Omega^{*}$ is independent of our choice of reference point $z_{0}\in{\Omega}$.  $\partial{\Omega^{*}}$ is known as the space of \textbf{prime ends} of $\Omega$. The prime ends of $\Omega$ can be interpretted geometrically as equivalence classes of chains of open sets in $\Omega$ converging to a point on the boundary. For details, see Section 3.1 of \cite{BRY14} or Section 2.4 of \cite{pommerenke}. Given disjoint subsets $A,B\subseteq{\partial{\Omega^{\ast}}}$, we say that a crosscut $\gamma$ of $\Omega$ \textbf{joins} $A$ and $B$ in $\Omega$ if one of the endpoints of $\gamma$ lies in $A$ and the other lies in $B$. If $A\subseteq{\Omega}$, $B\subseteq{\partial{\Omega^{\ast}}}$, we say that a crosscut $\gamma$ of $\Omega$  \textbf{joins} $A$ and $B$ in $\Omega$ if $A\cap{\gamma}=\emptyset$, $A$ is contained in one of the two connected components of $\Omega^{\ast}\setminus{\gamma}$, and $B$ intersects the connected component of $\Omega^{\ast}\setminus{\gamma}$ containing $A$.\\ \\
If $\Omega_{1}, \Omega_{2}$ are proper, simply connected subdomains of $\mathbb{C}$ and $\phi:\Omega_{1}\rightarrow{\Omega_{2}}$ is conformal, then $\phi$ extends to a homeomorphism $\phi:\Omega_{1}^{*}\rightarrow{\Omega_{2}^{*}}$. This tells us that, from the standpoint of complex analysis, the space of prime ends is the right notion of boundary for a proper, simply connected subdomain of $\mathbb{C}$. In particular, if $\phi:\Omega\rightarrow{\mathbb{D}}$ is the uniformizing conformal map that maps a simply connected domain  $\Omega\subsetneq{\mathbb{C}}$ to the unit disk, we have the following estimates for the modulus of continuity of $\phi$ and $\phi^{-1}$ with respect to the Carath\'eodory metric: 
\begin{prop}
    Suppose $\Omega\subsetneq{\mathbb{C}}$ is a bounded simply connected domain, $z_{0}\in{\Omega}$ and $\phi:\Omega\rightarrow{\mathbb{D}}$ is a uniformizing conformal that maps $\Omega$ to the unit disk so that $\phi(z_{0})=0$. Then there exists an absolute constant $C_{1}>0$ such that for any $x,y\in{\Omega}$:
    \begin{equation}
    \label{eqn: modulus of continuity of map to disk}
        |\phi(y)-\phi(x)|\leq{C_{1}\sqrt{\frac{d^{z_{0}}_{Cara}(x,y)}{|\phi'(z_{0})|}}}
    \end{equation}
   If additionally we know that $\Omega$ is bounded, there exists an absolute constant $C_{2}>0$ such that for any $x,y\in{\mathbb{D}}$, we have that:
    \begin{equation}
    \label{eqn: modulus of continuity of inverse map}
        d_{Cara}^{z_{0}}(\phi^{-1}(x), \phi^{-1}(y))\leq{C_{2}\sqrt{\frac{\text{Area}(\Omega)}{\log\big(\frac{1}{|x-y|}\big)}}}
    \end{equation}
\end{prop}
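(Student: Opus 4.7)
The plan is to prove both inequalities via the length--area (Gr\"otzsch--Ahlfors) principle, applied to $\phi$ or $\phi^{-1}$ depending on which side is convenient, with Koebe's distortion theorem as the auxiliary tool when the conformal radius $1/|\phi'(z_0)|$ enters.

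For inequality (2) the argument is direct and explicit. Put $\delta := |x-y|$ and pick a center $c \in \mathbb{D}$ (for instance $c = (x+y)/2$ in the generic case, adjusted in degenerate configurations) together with a radius $R$ bounded below by an absolute constant so that for every $\rho \in (\delta, R)$ the circle $C_\rho := \{w : |w-c| = \rho\}$ lies inside $\mathbb{D}$ and separates $\{x, y\}$ from $0$. Its preimage $\phi^{-1}(C_\rho)$ is a simple closed curve in $\Omega$ separating $\phi^{-1}(x)$ and $\phi^{-1}(y)$ from $z_0$, so writing $L(\rho)$ for its length, $L(\rho) \geq d := d^{z_0}_{Cara}(\phi^{-1}(x), \phi^{-1}(y))$. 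The Cauchy--Schwarz bound $L(\rho)^2 \leq 2\pi\rho \int_0^{2\pi} |(\phi^{-1})'(c + \rho e^{i\theta})|^2 \rho\, d\theta$ and polar integration in $\rho$ yield
\begin{equation*}
d^2 \log(R/\delta) \;\leq\; \int_\delta^R \frac{L(\rho)^2}{\rho}\, d\rho \;\leq\; 2\pi \int_{\{\delta < |w-c| < R\}} |(\phi^{-1})'(w)|^2 \, dA(w) \;\leq\; 2\pi\, \mathrm{Area}(\Omega).
\end{equation*}
Since $R$ is bounded below and $\log(R/\delta) \asymp \log(1/\delta)$, rearranging yields the desired bound.

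For inequality (1) I would argue symmetrically, now pushing forward via $\phi$. Take a near-minimizing separating curve $\gamma \subseteq \overline{\Omega}$ of length within $\varepsilon$ of $d := d^{z_0}_{Cara}(x,y)$; by conformal invariance $\phi(\gamma)$ separates $\{\phi(x), \phi(y)\}$ from $0$ in $\overline{\mathbb{D}}$. An elementary isoperimetric observation in $\mathbb{D}$ (a short crosscut or closed curve cuts off a region of Euclidean diameter controlled by its length) gives $|\phi(x) - \phi(y)| \leq \mathrm{length}(\phi(\gamma)) = \int_\gamma |\phi'(z)|\,|dz|$. Rather than fixing one $\gamma$, I would average over a tubular family of near-minimizing separating curves and invoke Cauchy--Schwarz together with $\int_\Omega |\phi'|^2 \, dA = \pi$ and Koebe's distortion theorem for $\phi^{-1}$ (which is univalent on $\mathbb{D}$ with $(\phi^{-1})'(0) = 1/\phi'(z_0)$) to convert the pointwise line integral into a bound involving $|\phi'(z_0)|$. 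This produces the stated square-root estimate.

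The main obstacle is geometric rather than analytic. For (2) one must arrange that every circle $C_\rho$ genuinely separates $\{x, y\}$ from $0$ throughout the annulus $(\delta, R)$ while remaining inside $\mathbb{D}$; this requires a short case analysis in degenerate configurations (e.g.\ $x, y$ near-antipodal about $0$, or one of them close to $\partial \mathbb{D}$) and selecting a non-midpoint center in those cases. For (1), the analogous task is producing the tubular family of near-minimizing separating curves over which to average; a natural choice is the pullback under $\phi$ of concentric circles $\{|w| = r\}$ for $r$ in a small interval just above $\max(|\phi(x)|, |\phi(y)|)$, which reduces the Fubini step to essentially the same length--area calculation as in (2).
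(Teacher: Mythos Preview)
Your argument for inequality~\eqref{eqn: modulus of continuity of inverse map} is essentially the proof of Wolff's lemma, which is precisely what the paper cites for that half of the proposition; so that part is fine and matches the paper's intended route.

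For inequality~\eqref{eqn: modulus of continuity of map to disk}, however, there is a real gap. The paper does not give a proof but cites Beurling's estimate (Proposition~3.85 of Lawler's book), and this is not an accident: the square-root dependence $|\phi(x)-\phi(y)|\lesssim\sqrt{d_{Cara}^{z_0}(x,y)\,|\phi'(z_0)|}$ is a Beurling-projection phenomenon, not a length--area one. If you run the length--area machine with any reasonable family of separating curves in $\Omega$ (Euclidean circles $C_\rho$ about a point on a near-minimizer, say, for $\rho$ ranging from $d=d_{Cara}^{z_0}(x,y)$ up to $R\asymp 1/|\phi'(z_0)|$), Cauchy--Schwarz and $\int_\Omega|\phi'|^2=\pi$ give you only
\[
\inf_\rho \operatorname{length}\bigl(\phi(C_\rho\cap\Omega)\bigr)\;\lesssim\;\frac{1}{\sqrt{\log\bigl(1/(d\,|\phi'(z_0)|)\bigr)}},
\]
which is strictly weaker than $\sqrt{d\,|\phi'(z_0)|}$ as $d\,|\phi'(z_0)|\to 0$. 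Your ``tubular family of near-minimizing curves'' cannot do better: to get a square root out of length--area you would need a family of separating curves of length $\approx d$ foliating a strip of width comparable to the conformal radius, and no such family exists when $d$ is much smaller than that radius. Separately, the concrete family you propose --- pullbacks of $\{|w|=r\}$ for $r$ just above $\max(|\phi(x)|,|\phi(y)|)$ --- does not separate $\{x,y\}$ from $z_0$ at all (those circles separate from $\partial\mathbb{D}$, i.e.\ their preimages separate $\{x,y\}$ from $\partial\Omega$). To obtain the stated bound you really do need Beurling's projection theorem or an equivalent harmonic-measure estimate.
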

\noindent The modulus of continuity for $\phi$ in Equation \ref{eqn: modulus of continuity of map to disk} is a consequence of Beurling's estimate (see Proposition 3.85 of \cite{CIPbook}). The modulus of continuity for $\phi^{-1}$ follows from Wolff's lemma (see Proposition 2.2 of \cite{pommerenke}).  As a consequence, if $\Omega\subsetneq{\mathbb{C}}$ is simply connected, $\Omega^{*}$ is homeomorphic to the closed unit disk $\overline{\mathbb{D}}$ and the space of prime ends $\partial{\Omega^{*}}$ is homeomorphic to $S^{1}$. If $x,y\in{\partial{\Omega^{\ast}}}$ are prime ends, let $[x,y]_{\partial{\Omega^{\ast}}}$ denote the arc along $\partial{\Omega^{\ast}}$ that travels from $x$ to $y$, counterclockwise.\\ \\
A \textbf{conformal rectangle} is a bounded, simply connected domain $\Omega$ along with four distinguished prime ends $A,B,C,D$, listed in counterclockwise order. Recalling our discussion in Section \ref{subsec: Organization}, given a conformal rectangle $(\Omega,A,B,C,D)$ and a sequence of orthodiagonal rectangles $\big((G_{n},A_{n}^{\bullet}, B_{n}^{\bullet}, C_{n}^{\bullet}, D_{n}^{\bullet})\big)_{n=1}^{\infty}$ that are better and better approximations of $(\Omega,A,B,C,D)$, we want to show that the associated tiling maps converge to the conformal map from $\Omega$ to a rectangle $\mathcal{R}_{L}$ so that the prime ends $A,B,C,D$ are mapped to the four corners of $\mathcal{R}_{L}$, and in particular, $\phi(A)=i$. This of course begs the question: what does it mean for an orthodiagonal rectangle $(G,A^{\bullet}, B^{\bullet}, C^{\bullet}, D^{\bullet})$ to be a good approximation of $(\Omega,A,B,C,D)$? One natural requirement is that the boundary arcs $[A^{\bullet},B^{\bullet}]$, $[B^{\circ},C^{\circ}]$, $[C^{\bullet},D^{\bullet}]$, $[D^{\circ},A^{\circ}]$ of $G$ should be close to the corresponding continuous boundary arcs $[A,B]_{\partial{\Omega^{\ast}}}$, $[B,C]_{\partial{\Omega^{\ast}}}$, $[C,D]_{\partial{\Omega^{\ast}}}$, $[D,A]_{\partial{\Omega^{\ast}}}$ of $\Omega$. To be precise, since the Carath\'eodory metric is the right notion of distance for a general simply connected domain, a natural requirement is that the discrete boundary arcs are close to the corresponding continuous boundary arcs in Carath\'eodory metric. However, to define the Carath\'eodory metric, we need to introduce a reference point, which a priori isn't part of our setup. To avoid this, we will instead use a closely related quantity, whose definition doesn't require the introduction of a reference point.\\ \\  
If $\gamma$ is a crosscut of $\Omega$, for any $z\in{\Omega\setminus{\gamma}}$, let $\mathcal{N}_{\gamma}^{z}$ denote the component of $\Omega\setminus{\gamma}$ containing $z$. Let $S, S'$ be disjoint compact subsets of $\partial{\Omega^{\ast}}$. Then for any $z\in{\Omega^{\ast}}$, we define the \textbf{crosscut distance} $d_{cc}^{\Omega}(z,S;S')$ from $z$ to $S$, away from $S'$ in $\Omega$, by: 
\begin{equation*}
d_{cc}^{\Omega}(z,S;S'):=\inf\{\text{length}(\gamma): \text{$\gamma$ is a crosscut of $\Omega$ that separates $z$ from $S'$ such that $\overline{\mathcal{N}_{\gamma}^{z}}\cap{S}\neq{\emptyset}$}\}
\end{equation*}
where $\overline{\mathcal{N}_{\gamma}^{z}}$ is the closure of $\mathcal{N}_{\gamma}^{z}$ with respect to the Carath\'eodory metric. Similarly, for a compact subset $C$ of $\Omega^{\ast}$, its crosscut distance to $S$, away from $S'$, is given by: 
\begin{equation*}
d_{cc}^{\Omega}(C,S;S'):=\sup_{z\in{C}}d^{\Omega}_{cc}(z,S;S')
\end{equation*}
The supremum on the right hand side is actually a maximum. To see this, observe that $d^{\Omega}_{cc}(z,S;S')$ is locally Lipschitz as a function of $z$. Namely,  
\begin{equation*}
|d_{cc}^{\Omega}(z,S;S')-d^{\Omega}_{cc}(w,S;S')|\leq{2d_{\Omega}(z,w)}
\end{equation*}
where: 
\begin{equation*}
d_{\Omega}(z,w):=\inf\{\text{length}(\gamma): \text{$\gamma$ is a smooth curve from $z$ to $w$ in $\Omega$} \}
\end{equation*}
In particular, if the line segment from $z$ to $w$ is contained in $\Omega$, we have that:
\begin{equation*}
|d_{cc}^{\Omega}(z,S;S')-d_{cc}^{\Omega}(w,S;S')|\leq{2|z-w|}
\end{equation*}
Given a conformal rectangle $(\Omega,A,B,C,D)$, we say that the orthodiagonal rectangle $(G,A^{\bullet}, B^{\bullet}, C^{\bullet}, D^{\bullet})$ is a \textbf{($\delta$, $\varepsilon$)- good interior approximation} of $(\Omega, A,B,C,D)$ if $\widehat{G}\subseteq{\Omega}$, $|e|<\varepsilon$ for all edges $e\in{E}$, and:
\begin{align*} 
    &d_{cc}^{\Omega}([A^{\bullet},B^{\bullet}],[A,B]_{\partial{\Omega^{\ast}}}; [C,D]_{\partial{\Omega^{\ast}}})<\delta, & &d_{cc}^{\Omega}([C^{\bullet},D^{\bullet}], [C,D]_{\partial{\Omega^{\ast}}}; [A,B]_{\partial{\Omega^{\ast}}})<\delta \\
    &d_{cc}^{\Omega}([B^{\circ},C^{\circ}], [B,C]_{\partial{\Omega^{\ast}}}; [D,A]_{\partial{\Omega^{\ast}}})<\delta, & &d_{cc}^{\Omega}([D^{\circ},A^{\circ}], [D,A]_{\partial{\Omega^{\ast}}}; [B,C]_{\partial{\Omega^{\ast}}})<\delta
\end{align*} 
\noindent This notion of what it means for an orthodiagonal map with four distinguished boundary points to approximate a conformal rectangle is more involved than the approximation scheme used in \cite{ALP23} where it is only required that $[A^{\bullet}, B^{\bullet}]$, $[B^{\circ}, C^{\circ}]$, $[C^{\bullet}, D^{\bullet}]$, and $[D^{\circ}, A^{\circ}]$ are close to the corresponding continuous boundary arcs $[A,B]_{\partial{\Omega^{\ast}}}$, $[B,C]_{\partial{\Omega^{\ast}}}$, $[C,D]_{\partial{\Omega^{\ast}}}$, $[D,A]_{\partial{\Omega^{\ast}}}$ in Hausdorff distance. Additionally, in \cite{ALP23}, the approximation doesn't need to be be an interior approximation: it is not required that $\widehat{G}\subseteq{\Omega}$. \\ \\
The upside of our approach is that it is more general. Firstly, observe that if we restrict our attention to interior approximations, if $[A^{\bullet}, B^{\bullet}]$ and $[A,B]_{\partial{\Omega^{\ast}}}$ are within $\delta$ of one another in Hausdorff distance, then: 
\begin{equation}
    d_{cc}^{\Omega}([A^{\bullet},B^{\bullet}], [A,B]_{\partial{\Omega^{\ast}}}; [C,D]_{\partial{\Omega^{\ast}}})<2\delta
\end{equation}
The same is true for the other three boundary arcs. More importantly, using this schema, we can approximate any conformal rectangle $(\Omega,A,B,C,D)$. If we replace crosscut distance with Hausdorff distance, this is no longer true. To unpack this statement, we'll first need to make a brief digression. Recall that a bounded, simply connected domain is \textbf{Jordan} if its boundary is a simple closed curve. If $(\Omega,A,B,C,D)$ is a conformal rectangle, and $\Omega'$ is a Jordan subdomain of $\Omega$ with distinguished boundary points $A',B',C',D'$ in counterclockwise order, we say that $(\Omega',A',B',C',D')$ is a \textbf{$\delta$- good interior approximation} of $\Omega$ if: 
\begin{align*}
    d_{cc}^{\Omega}([A',B']_{\partial{\Omega'}},[A,B]_{\partial_{\Omega^{\ast}}};[C,D]_{\partial{\Omega^{\ast}}})&<\delta & d_{cc}^{\Omega}([B',C']_{\partial{\Omega'}},[B,C]_{\partial_{\Omega^{\ast}}};[D,A]_{\partial{\Omega^{\ast}}})&<\delta \\ 
    d_{cc}^{\Omega}([C',D']_{\partial{\Omega'}},[C,D]_{\partial_{\Omega^{\ast}}};[A,B]_{\partial{\Omega^{\ast}}})&<\delta & d_{cc}^{\Omega}([D',A']_{\partial{\Omega'}},[D,A]_{\partial_{\Omega^{\ast}}};[B,C]_{\partial{\Omega^{\ast}}})&<\delta
\end{align*} 
Our reason for restricting to Jordan domains here has to do with the fact that for a bounded, simply connected domain $\Omega$, $\partial{\Omega}$ and $\partial{\Omega^{\ast}}$ are homeomorphic if and only if $\Omega$ is Jordan. In other words, for a Jordan domain, the space of prime ends coincides with its boundary in the usual sense (with respect to the Euclidean metric). Otherwise, it is not clear what it means for a prime end of $(\Omega')^{\ast}$ to be close to a boundary arc of $\Omega$ in crosscut distance, since a prime end of $(\Omega')^{\ast}$ is not a point in $\Omega$. With this in mind, we have the following result:
\begin{prop}
\label{prop: we can approximate any simply connected domain}
    For any conformal rectangle $(\Omega,A,B,C,D)$ and any $\delta>0$ we can find a Jordan subrectangle $(\Omega',A',B',C',D')$ with analytic boundary arcs so that $(\Omega',A',B',C',D')$ is a $\delta$- good interior approximation of $(\Omega,A,B,C,D)$.
\end{prop}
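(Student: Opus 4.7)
The plan is to pull the construction back to the canonical rectangle via the uniformizing conformal map $\phi:\Omega\to\mathcal{R}_L$ normalized to send $A,B,C,D$ to the corners $i,0,L,L+i$. Since $\Omega$ is a proper simply connected subdomain of $\mathbb{C}$, $\phi$ extends to a homeomorphism of Carath\'eodory compactifications $\Omega^{\ast}\to\overline{\mathcal{R}_L}$. I will take $\Omega'$ to be the $\phi^{-1}$-image of a slightly shrunken open rectangle, with the shrinkage parameters chosen so that vertical and horizontal segments sitting just outside the smaller rectangle pull back to crosscuts of $\Omega$ of length less than $\delta$.

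The quantitative input is a standard length-area argument. Writing $g(x):=\int_0^1|(\phi^{-1})'(x+iy)|^2\,dy$, Cauchy-Schwarz yields $\text{length}(\phi^{-1}(\{x\}\times(0,1)))^2\le g(x)$, and Fubini together with the boundedness of $\Omega$ gives $\int_0^L g(x)\,dx=\text{Area}(\Omega)<\infty$. In particular $\int_0^\epsilon g(x)\,dx\to 0$ as $\epsilon\to 0^+$, so by Chebyshev the set $\{x\in(0,\epsilon):g(x)<\delta^2\}$ has positive Lebesgue measure for all sufficiently small $\epsilon$. Pick $x_1^{\ast}\in(0,L/4)$ from this set and then any $x_1\in(0,x_1^{\ast})$. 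Symmetric applications on the other three sides of $\mathcal{R}_L$ produce $x_2^{\ast}\in(3L/4,L)$, $y_1^{\ast}\in(0,1/4)$, $y_2^{\ast}\in(3/4,1)$ whose corresponding pulled-back segments also have length less than $\delta$, together with auxiliary parameters $x_2\in(x_2^{\ast},L)$, $y_1\in(0,y_1^{\ast})$, $y_2\in(y_2^{\ast},1)$.

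Now set $\Omega':=\phi^{-1}((x_1,x_2)\times(y_1,y_2))$ with $A':=\phi^{-1}(x_1+iy_2)$, $B':=\phi^{-1}(x_1+iy_1)$, $C':=\phi^{-1}(x_2+iy_1)$, $D':=\phi^{-1}(x_2+iy_2)$. Because the closed sub-rectangle $[x_1,x_2]\times[y_1,y_2]$ is compactly contained in $\mathcal{R}_L$, $\phi^{-1}$ is analytic and injective on an open neighborhood of it, so $\Omega'$ is a Jordan subdomain of $\Omega$ whose four boundary arcs are analytic. I verify the first of the four crosscut-distance inequalities; the other three follow by symmetry. For $z\in[A',B']_{\partial\Omega'}=\phi^{-1}(\{x_1\}\times[y_1,y_2])$, take $\gamma:=\phi^{-1}(\{x_1^{\ast}\}\times(0,1))$, a curve in $\Omega$ of length less than $\delta$ whose endpoints in the Carath\'eodory compactification are the prime ends $\phi^{-1}(x_1^{\ast})\in[B,C]_{\partial\Omega^{\ast}}$ and $\phi^{-1}(x_1^{\ast}+i)\in[D,A]_{\partial\Omega^{\ast}}$. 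Since $x_1<x_1^{\ast}$, $\gamma$ separates $z$ from $[C,D]_{\partial\Omega^{\ast}}$, and since $\phi^{-1}$ is a Carath\'eodory homeomorphism, the Carath\'eodory closure of $\mathcal{N}_\gamma^z=\phi^{-1}((0,x_1^{\ast})\times(0,1))$ contains $[A,B]_{\partial\Omega^{\ast}}$, which is exactly what is required by the definition of $d^\Omega_{cc}$.

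The one technical point to dispatch is that the paper's definition of a crosscut requires the two endpoints of $\eta$ to be distinct points of the Euclidean boundary $\partial\Omega$, not merely distinct prime ends; finite length of $\gamma$ forces the two endpoint limits to exist in $\partial\Omega$, but for a non-Jordan $\Omega$ two distinct prime ends could in principle share a common boundary impression. This collapse occurs for at most a negligible set of $x_1^{\ast}$, so the positive-measure selection above can be refined by an arbitrarily small perturbation to stay inside the strict crosscut regime, and then the argument goes through without modification.
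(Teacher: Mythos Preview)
Your argument has a genuine gap at the Chebyshev step. From $\int_0^{\epsilon} g(x)\,dx\to 0$ you cannot conclude that $\{x\in(0,\epsilon):g(x)<\delta^2\}$ has positive measure for small $\epsilon$; integrability of $g$ says nothing about $g$ being small anywhere. The simplest counterexample already kills the whole approach: take $\Omega=\mathcal{R}_L$ and $\phi=\mathrm{id}$. Then $|(\phi^{-1})'|\equiv 1$, so $g\equiv 1$ and every full-height vertical segment $\phi^{-1}(\{x\}\times(0,1))$ has length exactly $1$. For any $\delta<1$ there is no $x_1^{\ast}$ with the property you need, and indeed no crosscut of the type you propose has length below $1$. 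The issue is structural: images of full-height vertical segments cross $\Omega$ from one Neumann arc to the other and their lengths are bounded below by the quantity $d'$ from Theorem~\ref{thm: modulus of continuity for conformal map}, which is a fixed positive number independent of $\delta$.

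The paper avoids this by using a Wolff-type length--area argument on \emph{circular} arcs centred at the boundary point $z_0$ of the shrunken rectangle (Lemma~\ref{lem: Wolff's Lemma for rectangles}). For $z_0$ at distance $r$ from the relevant side of $\mathcal{R}_L$, one integrates $\mathrm{length}(\psi(\gamma_s))^2/s$ over radii $s\in(r,\tfrac{1\wedge L}{2})$; the logarithmic divergence of $\int ds/s$ as $r\to 0$ forces some $\psi(\gamma_s)$ to have length at most $C/\sqrt{\log(1/r)}$, which can be made smaller than any prescribed $\delta$ by shrinking $r$. The circular arcs are short in the model rectangle (length $O(s)$), which is exactly what your vertical segments lack. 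If you want to repair your proof along its present lines, you would need to replace the segments $\{x_1^{\ast}\}\times(0,1)$ by short crosscuts of $\mathcal{R}_L$ lying near the side $\{0\}\times[0,1]$ and then run the length--area bound on those; at that point you have essentially reproduced Lemma~\ref{lem: Wolff's Lemma for rectangles}.
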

\noindent For a proof of Proposition \ref{prop: we can approximate any simply connected domain}, see Appendix \ref{appendix: elementary results in topology and complex analysis}. Observe that if the orthodiagonal rectangle $(G,A^{\bullet},B^{\bullet}, C^{\bullet},D^{\bullet})$ is a $(\delta,\varepsilon)$- good interior approximation of the conformal rectangle $(\Omega,A,B,C,D)$, then the corresponding conformal rectangle $(\widehat{G},A^{\bullet},B^{\bullet}, C^{\bullet},D^{\bullet})$, is a $(\delta+2\varepsilon)$-good interior approximation of $(\Omega,A,B,C,D)$. Thus, at least heuristically, Proposition \ref{prop: we can approximate any simply connected domain} tells us that given a conformal rectangle $(\Omega,A,B,C,D)$ and $\delta, \varepsilon>0$, we can find a $(\delta,\varepsilon)$- good orthodiagonal approximation of $(\Omega,A,B,C,D)$, provided the mesh of our orthodiagonal map is sufficiently fine. \\ \\
In contrast, consider the comb domain $\Omega$ defined as follows: 
\begin{equation*}
\Omega:=(0,2)\times{(0,1)}\setminus{\Big(\big(\bigcup_{n=0}^{\infty}\{2^{-2n}\}\times{[2^{-2n-1},1)}\big)\bigcup{\big(\bigcup_{n=0}^{\infty}\{2^{-2n-1}\}\times{(0,1-2^{-2n-1}]}\big)}\Big)}
\end{equation*}
The points $(1,\frac{1}{2})$, $(2,0)$, $(2,1)$ on the boundary of $\Omega$ as well as the line segment $\{0\}\times{[0,1]}$ all correspond to a single prime end. Take $A=\{0\}\times{[0,1]}$, $B=(1,\frac{1}{2})$, $C=(2,1)$, $D=(2,0)$. Suppose we want to approximate the topological rectangle $(\Omega,A,B,C,D)$ by a suborthodiagonal map of $\varepsilon\mathbb{Z}^{2}$ for some very small $\varepsilon>0$. Since the line segment $\{0\}\times{[0,1]}$ corresponds to a single prime end $A$ of $\Omega$, regardless of $\varepsilon$ and our choice of suborthodiagonal map $G$ of $\varepsilon\mathbb{Z}^{2}$, it is not possible to choose $A^{\bullet}, B^{\bullet}, C^{\bullet}, D^{\bullet}\in{\partial{V}^{\bullet}}$ so that the corresponding discrete boundary arcs are all within $\delta$ of their continuum analogues, for $\delta$ sufficiently small.\\ \\ 
\begin{figure}[H]
    \centering
    \includegraphics[scale=0.42]{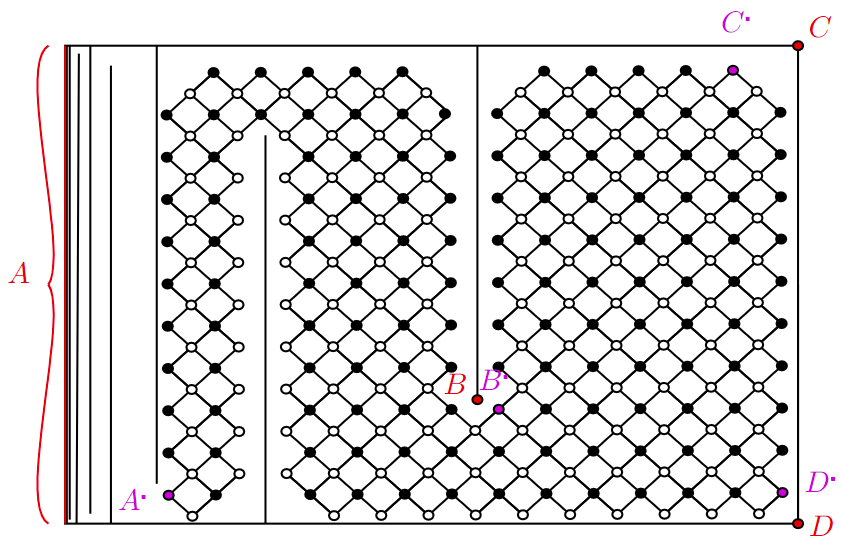}  
    \caption{The comb domain $\Omega$ with its four distinguished prime ends and an orthodiagonal approximation $G$ of $\Omega$. Notice that regardless of how fine the mesh of $G$ is, there is no choice of $A^{\bullet}, B^{\bullet}, C^{\bullet}, D^{\bullet}\in{\partial{V^{\bullet}}}$ so that each of the discrete boundary arcs of $(G,A^{\bullet}, B^{\bullet}, C^{\bullet}, D^{\bullet})$ is close to its continuous analogue.}
\end{figure}
\noindent In Section \ref{subsec: Tilings of Rectangles} we defined the tiling map associated with a discrete rectangle. Without a fixed embedding, the faces of a discrete rectangle are purely combinatorial objects. Having fixed an embedding, the faces of an orthodiagonal rectangle are honest- to- goodness subsets of the plane. Hence, we can think of the tiling map $\phi$ associated with an orthodiagonal rectangle $G$, as a function $\phi:\widehat{G}\rightarrow{\mathbb{C}}$. We do this by choosing for each interior face $f$ a homeomorphism that maps the quadrilateral $f$ to the corresponding rectangle $\phi(f)$. Unfortunately, this means that the tiling map  $\phi:\widehat{G}\rightarrow{\mathbb{C}}$ depends on our choice of homeomorphism. There is also some ambiguity as to the definition of $\phi$ on the edges of $G$, since each edge is shared by two distinct faces. That said, by the regularity estimates in Section \ref{subsec: modulus of continuity for tiling maps} we'll see that this isn't a concern, since our choice of homeomorphism doesn't impact the convergence we are looking for.
\begin{figure}[H]
    \centering
    \includegraphics[scale=0.54]{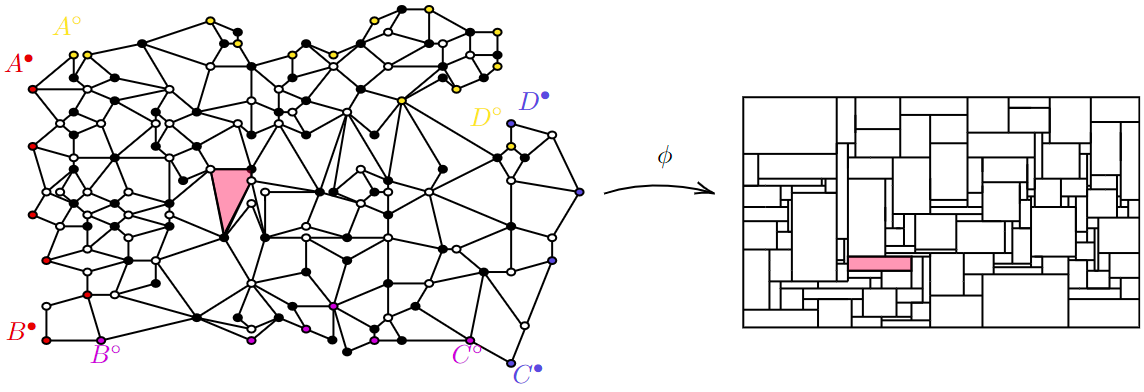}
\caption{The tiling map $\phi$ associated with an orthodiagonal rectangle.} 
\end{figure}
\noindent Having established all the requisite terminology, we can now state precisely the main theorem we intend to prove:  
\begin{thm}
\label{thm: convergence of rectangle tiling maps}
    Suppose $(\Omega,A,B,C,D)$ is a conformal rectangle and $\big((G_{n},A^{\bullet}_{n}, B^{\bullet}_{n},C^{\bullet}_{n},D^{\bullet}_{n})\big)_{n=1}^{\infty}$ is a sequence of orthodiagonal rectangles so that for each $n\in{\mathbb{N}}$, $(G_{n}, A^{\bullet}_{n}, B^{\bullet}_{n}, C^{\bullet}_{n}, D^{\bullet}_{n})$ is a $(\delta_{n}, \varepsilon_{n})$- good interior approximation of $\Omega$, where:
    $$
    (\delta_{n},\varepsilon_{n})\rightarrow{(0,0)} \hspace{5pt}\text{as $n\rightarrow{\infty}$}
    $$
    Let $\phi_{n}:\widehat{G}_{n}\rightarrow{[0,L_{n}]\times{[0,1]}}$ be the corresponding tiling maps, where $L_{n}$ is the discrete extremal length between $[A_{n}^{\bullet},B_{n}^{\bullet}]$ and $[C_{n}^{\bullet}, D_{n}^{\bullet}]$ in $G_{n}^{\bullet}$. Then: 
    \begin{equation*}
    \phi_{n}\rightarrow{\phi} \hspace{5pt}\text{uniformly on compacts as $n\rightarrow{\infty}$}
    \end{equation*}
    where $\phi$ is the conformal map from $\Omega$ to the rectangle $(0,L)\times{(0,1)}$ so that the prime ends $A,B,C,D$ are mapped to the four corners of the rectangle and in particular, $\phi(A)=i$. Here, $L$ is the extremal length between the arcs $[A,B]_{\partial{\Omega^{\ast}}}$ and $[C,D]_{\partial{\Omega^{\ast}}}$ in $\Omega$. In particular, it follows that: 
   \begin{equation*}
   L_{n}\rightarrow{L} \hspace{5pt} \text{as $n\rightarrow{\infty}$}
   \end{equation*}
\end{thm}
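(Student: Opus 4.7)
The plan is the classical three–step compactness/identification argument. I first establish that the family $(\phi_n)$ is uniformly precompact on every compact $K\Subset\Omega$; extract a subsequential limit $\psi$; show $\psi$ is holomorphic on $\Omega$; and identify $\psi$ uniquely as $\phi$. Since every subsequence will then contain a further subsequence converging to the same $\phi$, the full sequence converges uniformly on compacts.

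For precompactness, note that $\phi_n$ takes values in $[0,L_n]\times[0,1]$, so uniform boundedness on compacts reduces to a uniform two–sided bound on $L_n$. Here I would use Dirichlet's and Thomson's Principles: suitable test metrics and flows built from the continuous extremal metric for $\lambda([A,B]_{\partial\Omega^*}\leftrightarrow[C,D]_{\partial\Omega^*};\Omega)=L$ can be transferred to the discrete network via the $(\delta_n,\varepsilon_n)$–good approximation, yielding $L_n$ uniformly bounded away from $0$ and $\infty$. Equicontinuity on $K\Subset\Omega$ (which lies in $\widehat{G}_n$ for $n$ large) follows from the modulus of continuity estimates to be developed in Section~\ref{subsec: modulus of continuity for tiling maps}. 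Arzelà–Ascoli together with a diagonal argument then furnishes a subsequence $\phi_{n_k}\to\psi$ uniformly on compacts. Holomorphicity of $\psi$ follows by Morera: the discrete Cauchy–Riemann identity \eqref{OD_CR} implies that discrete contour integrals of $\phi_n$ vanish around any polygonal loop bounding interior faces of $\widehat{G}_n$, and approximating a smooth closed curve $\Gamma\subset\Omega$ by such discrete loops (with error controlled by $\varepsilon_n$) shows that the continuum contour integral of $\psi$ along $\Gamma$ vanishes.

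The crux, and the main obstacle, is the identification of $\psi$ with $\phi$. I will show that $\operatorname{Re}\psi$ continuously attains the boundary value $0$ on the prime–end arc $[A,B]_{\partial\Omega^*}$ and $L':=\lim_k L_{n_k}$ on $[C,D]_{\partial\Omega^*}$, while $\operatorname{Im}\psi$ attains $0$ on $[B,C]_{\partial\Omega^*}$ and $1$ on $[D,A]_{\partial\Omega^*}$. This is where the $(\delta_n,\varepsilon_n)$–good hypothesis is used in its full strength. For $z\in\Omega$ with $d_{cc}^\Omega(z,[A,B]_{\partial\Omega^*};[C,D]_{\partial\Omega^*})$ small, there is for all large $n$ a short crosscut of $\widehat{G}_n$ separating $z$ from $[C^\bullet_n,D^\bullet_n]$ and touching $[A^\bullet_n,B^\bullet_n]$; a thin annular region around such a crosscut has small extremal length, and Proposition~\ref{Skopenkov's trick} (applied to the current flow and a distance–to–crosscut metric) together with the discrete maximum principle then forces the harmonic function $h_n$ (identically $0$ on $[A^\bullet_n,B^\bullet_n]$) to be small at $z$. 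The other three arcs are handled symmetrically, using the dual harmonic $\tilde{h}_n$ on the two arcs where the Neumann condition appears in the continuum.

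Once the boundary behavior is in hand, $\psi$ is a bounded holomorphic function on $\Omega$ whose real and imaginary parts solve the mixed Dirichlet–Neumann boundary value problems recalled in Section~\ref{subsec: Tilings of Rectangles} (the Neumann condition is automatic because $\operatorname{Im}\psi$ is locally constant on the two relevant prime–end arcs). Uniqueness of this boundary value problem—equivalently, uniqueness of the conformal map of a conformal rectangle onto a Euclidean rectangle with prescribed corner correspondence—identifies $\psi$ as the conformal map from $\Omega$ onto $(0,L')\times(0,1)$ sending $A,B,C,D$ to the corners with $\psi(A)=i$. Since the extremal length of $(\Omega,A,B,C,D)$ equals $L$, this forces $L'=L$ and $\psi=\phi$, giving both $L_n\to L$ and $\phi_n\to\phi$ uniformly on compacts.
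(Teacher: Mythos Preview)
Your three--step compactness/identification outline matches the paper's argument almost exactly: the paper also bounds $L_n$ two--sidedly (Proposition~\ref{prop: Discrete and Continuous EL Comparable}, using simply the Euclidean edge--length metric $\rho(e)=|e|$ rather than transferring the continuous extremal metric as you propose), invokes the modulus--of--continuity estimates of Theorem~\ref{thm: modulus of continuity for tiling maps} for equicontinuity, applies Arzel\`a--Ascoli, shows subsequential limits are holomorphic, and then identifies the limit from its boundary values on the four prime--end arcs (the paper closes with the argument principle rather than uniqueness of the mixed Dirichlet--Neumann problem, but this is cosmetic).

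The one place your sketch understates the difficulty is the Morera step. The discrete contour integral of $F_n$ along a closed loop in $G_n$ splits as a sum over dual vertices weighted by primal--edge increments plus a sum over primal vertices weighted by dual--edge increments. For a loop whose \emph{primal} vertices form a short nearest--neighbour path in $G_n^\bullet$, only the second of these sums is directly comparable to a continuum line integral of controlled length; the dual vertices along that same loop need not form a path in $G_n^\circ$, and one has no a priori control on $\sum_i|x_{i+1}-x_i|$. The paper resolves this in Theorem~\ref{thm: limit of discrete holomorphic functions is holomorphic} by taking \emph{two} nearby discrete contours---one with a short primal path, one with a short dual path---and bounding the discrepancy between the two real--part sums by Cauchy--Schwarz against the discrete Dirichlet energy $\mathcal{E}^\bullet(h_n)=L_n$ times the area of the thin strip between the contours. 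So the error in passing from discrete to continuum Morera is controlled by $\varepsilon_n$ \emph{and} the uniform energy bound, not by $\varepsilon_n$ alone as your proposal suggests.
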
 

\section{Precompactness of the Tiling Maps}
\label{sec: Precompactness of the Tiling Maps}

To prove Theorem \ref{thm: convergence of rectangle tiling maps} following the framework outlined in Section \ref{subsec: Organization}, we need to show that tiling maps corresponding to finer and finer orthodiagonal approximations of a conformal rectangle $(\Omega,A,B,C,D)$ are equicontinuous and uniformly bounded on compacts in $\Omega$. In this section, we address this by proving estimates for the norm and modulus of continuity of our tiling maps.\\ \\\
For both the norm and modulus of continuity, when doing this, we begin by proving the corresponding result in the continuous setting. The proof in the continuous setting motivates the proof in the discrete setting. Furthermore, we will need the continuous analogues of our tiling map estimates to prove Lemma \ref{lem: K in Omega_n for n sufficiently large} in Appendix \ref{appendix: elementary results in topology and complex analysis} and thereby all the results thereafter that rely on it. 

\subsection{Modulus of Continuity for the Limiting Conformal Map}
\label{subsec: Modulus of Continuity for Limiting Conformal Map}  

Suppose $(\Omega, A,B,C,D)$ is a conformal rectangle. Using the notation of Section \ref{subsec: OD map intro}, if $z,w\in{\Omega}$, 
\begin{align*}
    d_{cc}^{\Omega}(z,w):=\min\{&d^{\Omega}_{cc}(\{z,w\}, [A,B]_{\partial{\Omega^{\ast}}}; [C,D]_{\partial{\Omega^{\ast}}}), d_{cc}^{\Omega}(\{z,w\}, [B,C]_{\partial{\Omega^{\ast}}}; [D,A]_{\partial{\Omega^{\ast}}}), \\ &d_{cc}^{\Omega}(\{z,w\}, [C,D]_{\partial{\Omega^{\ast}}}; [A,B]_{\partial{\Omega^{\ast}}}), d_{cc}^{\Omega}(\{z,w\}, [D,A]_{\partial{\Omega^{\ast}}}; [B,C]_{\partial{\Omega^{\ast}}})\}
\end{align*}
In other words, $d_{cc}(z,w)$ is the length of the shortest crosscut of $\Omega$ that joins $z$ and $w$ to one boundary arc of $(\Omega,A,B,C,D)$ and separates them from the opposite boundary arc. Recall that for $z,w\in{\Omega}$, 
\begin{equation*}
    d_{\Omega}(z,w)=\inf\{\text{length}(\gamma): \text{$\gamma$ is a smooth curve from $z$ to $w$ in $\Omega$}\}
\end{equation*}
That is, $d_{\Omega}$ is the ambient metric on $\Omega$. Let $\phi:\Omega\rightarrow{\mathcal{R}_{L}}$ be the conformal map from $\Omega$ to the rectangle $\mathcal{R}_{L}$ such that the four prime ends $A,B,C,D$ of $\Omega$, listed in counterclockwise order, are mapped to the four corners of $\mathcal{R}_{L}$ and in particular, $\phi(A)=i$. The following theorem gives us a modulus of continuity for the real and imaginary parts of $\phi$ and therefore $\phi$ itself: 
\begin{thm}
\label{thm: modulus of continuity for conformal map}
    Suppose $(\Omega,A,B,C,D)$ is a conformal rectangle and $\phi:\Omega\rightarrow{\mathcal{R}_{L}}$ be the conformal map from $\Omega$ to the rectangle $\mathcal{R}_{L}$ so that the four prime ends $A,B,C,D$ of $\Omega$ are mapped to the four corners of $\mathcal{R}_{L}$ and in particular, $\phi(A)=i$. Here, $L$ is the extremal length between the boundary arcs $[A,B]_{\partial{\Omega^{\ast}}}$ and $[C,D]_{\partial{\Omega^{\ast}}}$ in $\Omega$. Define: 
    \begin{align*}
        d&=\inf\{\text{diameter}(\gamma): \text{$\gamma$ is a crosscut of $\Omega$ joining $[A,B]_{\partial{\Omega^{\ast}}}$ and $[C,D]_{\partial{\Omega^{\ast}}}$ in $\Omega$}\} \\ 
        d'&=\inf\{\text{diameter}(\gamma): \text{$\gamma$ is a crosscut of $\Omega$ joining $[B,C]_{\partial{\Omega^{\ast}}}$ and $[D,A]_{\partial{\Omega^{\ast}}}$ in $\Omega$}\}
    \end{align*}
    Let $h$ and $\widetilde{h}$ be the real and imaginary parts of $\phi$, respectively. Then for any $x,y\in{\Omega}$ so that $(d_{\Omega}(x,y)\wedge{d_{cc}^{\Omega}(x,y)})<\frac{d\wedge{d'}}{2}$, we have that:
    \begin{align*}
        |h(y)-h(x)|&\leq{\frac{2\pi}{\log\Big(\frac{d'}{2(d_{\Omega}(x,y)\wedge{d_{cc}^{\Omega}(x,y)})}\Big)}}, & |\widetilde{h}(y)-\widetilde{h}(x)|&\leq{\frac{2\pi{L}}{\log\Big(\frac{d}{2(d_{\Omega}(x,y)\wedge{d_{cc}^{\Omega}(x,y)})}\Big)}}
    \end{align*}
\end{thm}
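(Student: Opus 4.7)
The plan is to prove the bound on $|h(y)-h(x)|$; the bound on $|\widetilde h(y)-\widetilde h(x)|$ will then follow by applying the same argument to $\mathcal R_L$ rotated by ninety degrees, which swaps $h\leftrightarrow\widetilde h/L$, interchanges the two pairs of opposite boundary arcs, swaps $d$ with $d'$, and produces the additional factor of $L$. Throughout, set $r:=d_\Omega(x,y)\wedge d_{cc}^\Omega(x,y)$ and $R:=d'/2$; by hypothesis $r<R$.

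The overall strategy is a concentric circular-arc argument in $\Omega$ combined with a conformal extremal length estimate. First, pick a reference point $x_0\in\overline\Omega$ so that $\{x,y\}\subseteq\overline{B(x_0,r)}$: when $r=d_\Omega(x,y)$, $x_0$ can be taken on a near-minimizing curve from $x$ to $y$; when $r=d_{cc}^\Omega(x,y)$, the short realizing crosscut has length close to $r$ and both points lie in the small region it cuts off adjacent to one of the four boundary arcs, so $x_0$ can be placed near this crosscut. Next, for each $\rho\in(r,R)$ consider the arc $\sigma_\rho:=\{|\zeta-x_0|=\rho\}\cap\Omega$ and extract a connected component $\gamma_\rho\subseteq\sigma_\rho$ which is a crosscut of $\Omega$ separating $\{x,y\}$ from the complement of $B(x_0,\rho)\cap\Omega$. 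Since $2\rho<d'$, the definition of $d'$ forbids the $\{x,y\}$-side of $\gamma_\rho$ from simultaneously meeting both $[B,C]$ and $[D,A]$ in its closure (otherwise one could construct a crosscut joining those two arcs of diameter less than $d'$ inside $B(x_0,\rho)$), so $\gamma_\rho$ must separate $\{x,y\}$ from at least one of $[B,C]$ or $[D,A]$.

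To convert this into the sharp bound, I would use Ford--Fulkerson-type duality. Set $H:=\phi^{-1}\bigl((\min\{h(x),h(y)\},\max\{h(x),h(y)\})\times(0,1)\bigr)$, so by conformal invariance $|h(y)-h(x)|$ equals the reciprocal of the extremal length of the family $\Gamma^{\perp}$ of curves in $H$ joining the two portions of $\partial H\cap\partial\Omega$ coming from $\{v=0\}$ and $\{v=1\}$. I would estimate the extremal length of $\Gamma^{\perp}$ from below using the admissible metric
\[
\rho_0(\zeta)\;:=\;\frac{1}{\log(R/r)\,|\zeta-x_0|}\,\mathbf{1}_{\{r<|\zeta-x_0|<R\}}.
\]
A direct computation gives $\rho_0$-area equal to $2\pi/\log(R/r)$, and if every curve $\gamma\in\Gamma^{\perp}$ is forced to traverse the Euclidean annulus $\{r<|\zeta-x_0|<R\}$ in the sense that its radial coordinate $|\zeta-x_0|$ sweeps the entire interval $[r,R]$, then the $\rho_0$-length of $\gamma$ is at least $1$, so Pfluger's inequality yields extremal length at least $\log(R/r)/(2\pi)$ for $\Gamma^{\perp}$ and hence $|h(y)-h(x)|\le 2\pi/\log(d'/(2r))$. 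The main obstacle is precisely this radial-traversal claim: the circular crosscuts $\{\gamma_\rho\}_{\rho\in(r,R)}$ constructed above separate $\{x,y\}$ from $[B,C]$ or $[D,A]$ in $\Omega$, but one must upgrade this to show that inside $H$ the arcs $\gamma_\rho\cap H$ separate the bottom and top portions of $\partial H\cap\partial\Omega$. Arranging this requires careful positioning of $x_0$ and a topological argument that is especially delicate in the $d_{cc}^\Omega$ case, where $x_0$ lies close to $\partial\Omega$ and the annular arcs are heavily truncated; this is the step I expect to be the technical heart of the proof.
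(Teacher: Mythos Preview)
Your overall architecture matches the paper's: reinterpret $|h(y)-h(x)|$ as the extremal length across the strip $H=\phi^{-1}\bigl((h(x),h(y))\times(0,1)\bigr)$, bound the dual extremal length from below using the annular metric $|z-x_0|^{-1}\mathbf 1_{\mathcal A}$, and invoke the fact that any curve from the $[B,C]$-side to the $[D,A]$-side has diameter at least $d'$. In the case $r=d_\Omega(x,y)$ your choice of $x_0$ on a near-geodesic is exactly what the paper does, and the separation argument goes through cleanly because that near-geodesic yields a short crosscut of $H$ joining the two level sets $\{h=h(x)\}$ and $\{h=h(y)\}$.

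The gap is in the $d_{cc}^\Omega$ case, and it is precisely the step you flagged. Your sentence ``both points lie in the small region it cuts off'' is false in general: a crosscut of length $r$ can cut off a region of arbitrarily large Euclidean diameter (picture a long thin fjord), so $x$ and $y$ need not lie in $\overline{B(x_0,r)}$ for any $x_0$ near the crosscut. Consequently there is no obvious short curve joining $\{h=h(x)\}$ to $\{h=h(y)\}$, and your circular arcs $\gamma_\rho\cap H$ need not separate the top and bottom of $H$. The paper resolves this not by cleverer placement of $x_0$ but by a maximum-principle step: since $h$ is harmonic on the cut-off region $N_{x,y}^\gamma$ (with the appropriate boundary behaviour on the adjacent arc), $h(x)$ and $h(y)$ are trapped between $\min_\gamma h$ and $\max_\gamma h$. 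One then replaces $x,y$ by points $u,v$ \emph{on the crosscut} realizing these extrema, works with the strip between $\{h=h(u)\}$ and $\{h=h(v)\}$ (or between $\{h=0\}$ and $\{h=h(v)\}$ when the crosscut abuts a Dirichlet arc), and now the crosscut itself furnishes the required short curve between the two level sets. After that substitution the annulus argument runs exactly as you wrote it. This maximum-principle replacement is the missing idea in your proposal.
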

\begin{proof}
    As per the theorem statement, let $h$ be the real part of $\phi$. Fix $x,y\in{\Omega}$. If $h(x)=h(y)$, the desired result holds. Otherwise, suppose WLOG that $h(x)<h(y)$. We now consider two cases: \\ \\
    \textbf{\underline{Case 1: $d_{\Omega}(x,y)\leq{d_{cc}^{\Omega}(x,y)}$.}} \\ \\ 
    Consider the region: 
    \begin{equation*}
        \Omega_{x,y}=\{z\in{\Omega}:h(x)<h(z)<h(y)\}
    \end{equation*}
    This is simply connected. Furthermore, since $\phi$ maps $\Omega$ to the rectangle $(0,L)\times{(0,1)}$, $\phi$ maps $\Omega_{x,y}$ to the rectangle $(h(x),h(y))\times{(0,1)}$. Thus, if we think of $\Omega_{x,y}$ as a conformal rectangle with the distinguished boundary arcs: 
    \begin{align*}
        N&=\Omega_{z,w}^{\ast}\cap{[D,A]_{\partial{\Omega^{\ast}}}}, & E&=\{z\in{\Omega}:h(z)=h(y)\} \\ 
        S&= \Omega_{z,w}^{\ast}\cap{[D,A]_{\partial{\Omega^{\ast}}}}, & W&=\{z\in{\Omega}: h(z)=h(x)\}
    \end{align*}
    Here, $N$, $E$, $S$ and $W$ stand for ``North," ``East," ``South," and ``West." This is to emphasize that our picture is as follows: 
    \begin{figure}[H]
    \centering
    \includegraphics[scale=0.55]{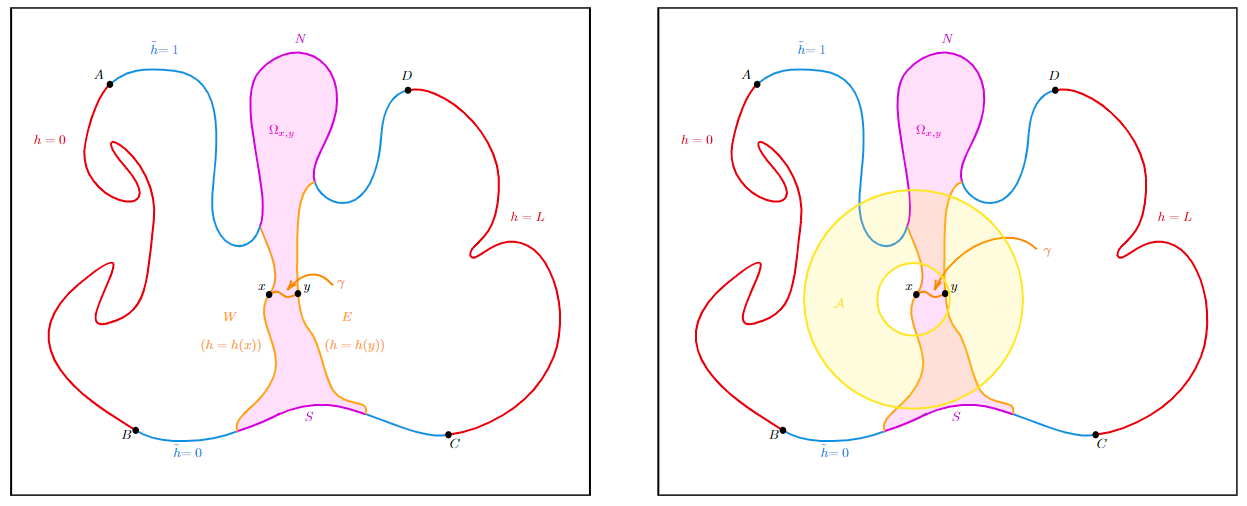}  
    \caption{On the left: The subrectangle $\Omega_{x,y}$ of $\Omega$ with its distinguished boundary arcs $N, E, S$ and $W$. On the right: Notice that any path from $N$ to $S$ in $\Omega_{x,y}$ must cross the annulus $\mathcal{A}$.}
    \end{figure}
    \noindent Then:
    \begin{equation*}
        \lambda(W\leftrightarrow{E};\Omega)=h(y)-h(x)
    \end{equation*}
    Having reinterpreted the quantity we're interested in as an extremal length, we can bound it from above by bounding the dual extremal length from below. \\ \\
    Fix $\varepsilon$ so that $0<\varepsilon<\frac{d'}{2}-d_{\Omega}(x,y)$. By the definition of $d_{\Omega}(x,y)$, we can find a smooth curve $\gamma$ in $\Omega$ from $x$ to $y$ so that $\text{length}(\gamma)<d_{\Omega}(x,y)+\varepsilon$. Since $x$ and $y$ lie on opposite boundary arcs of $\Omega_{x,y}$, while $\gamma$ may not be a crosscut of $\Omega_{x,y}$, there must exist a subarc $\gamma'$ of $\gamma$ with endpoints $x'\in{W}$, $y'\in{E}$ so that $\gamma'$ is a crosscut of $\Omega_{x,y}$. Consider the annulus: 
    \begin{equation*}
        \mathcal{A}=\{u\in{\mathbb{C}}: d_{\Omega}(x,y)+\varepsilon<|u-x'|<\frac{d'}{2}\}
    \end{equation*}
    where: 
    \begin{equation*}
        d'=\inf\{\text{diameter}(\gamma): \text{$\gamma$ is a curve joining $[B,C]_{\partial{\Omega^{\ast}}}$ and $[D,A]_{\partial{\Omega^{\ast}}}$ in $\Omega$}\}
    \end{equation*}
    Observe that: 
    \begin{enumerate}
        \item Since $\text{length}(\gamma')\leq{\text{length}(\gamma)}<d_{\Omega}(x,y)+\varepsilon$, the diameter of $\gamma'$ is at most $d_{\Omega}(x,y)+\varepsilon$. Hence, $\gamma'\subseteq{B(x',d_{\Omega}(x,y)+\varepsilon)}$. Since $\gamma'$ separates the boundary arcs $N$ and $S$ in $\Omega_{x,y}$, any path from $N$ to $S$ in $\Omega_{x,y}$ must intersect $\gamma'$ and therefore $B(x',d_{\Omega}(x,y)+\varepsilon)$. 
        \item On the other hand, since any curve from $N$ to $S$ in $\Omega_{x,y}$ is a curve from $[B,C]_{\partial{\Omega^{\ast}}}$ to $[D,A]_{\partial{\Omega^{\ast}}}$ in $\Omega$, such a curve must have diameter $\geq{d'}$. Hence, any curve from $N$ to $S$ in $\Omega_{x,y}$ must at some point lie outside the ball $B(x',\frac{d'}{2})$.
    \end{enumerate}
    Putting all this together, we see that any curve from $N$ to $S$ in $\Omega_{x,y}$ must cross the annulus $\mathcal{A}$. Consider the metric: 
    \begin{equation*}
        \rho(z)=\frac{1}{|z-x'|}1_{\mathcal{A}\cap{\Omega_{x,y}}}(z)
    \end{equation*}
    on $\Omega_{x,y}$. Observe that if $\eta$ is a $C^{1}$ curve that crosses the annulus $\mathcal{A}$ at least once, then:
    \begin{equation*}
        l_{\rho}(\eta)=\int_{\eta}\frac{|dz|}{|z-x'|}\geq{\int_{d_{\Omega}(x,y)+\varepsilon}^{d'/2}}\frac{dr}{r}=\log\Big(\frac{d'}{2(d_{\Omega}(x,y)+\varepsilon)}\Big)
    \end{equation*}
    Furthermore: 
    \begin{equation*}
        A(\rho)=\int_{\mathcal{A}\cap{\Omega_{x,y}}}\frac{1}{|z-x'|^{2}}dz_{1}dz_{2}\leq{\int_{\mathcal{A}}\frac{1}{|z-x'|^{2}}dz_{1}dz_{2}}=\int_{0}^{2\pi}\int_{d_{\Omega}(x,y)+\varepsilon}^{d'/2}\frac{1}{r}drd\theta=2\pi\log\Big(\frac{d'}{2(d_{\Omega}(x,y)+\varepsilon)}\Big)
    \end{equation*}    
    Hence, plugging $\rho$ into the variational problem for $\lambda(N\leftrightarrow{S};\Omega_{x,y})$, we have that: 
    \begin{align*}
        \lambda(N\leftrightarrow{S};\Omega_{x,y})\geq{\frac{\inf\limits_{\eta}\big(l_{\rho}(\eta)\big)^{2}}{A(\rho)}}\geq{\frac{\Big(\log\Big(\frac{d'}{2(d_{\Omega}(x,y)+\varepsilon)}\Big)\Big)^{2}}{2\pi\log\Big(\frac{d'}{2(d_{\Omega}(x,y)+\varepsilon)}\Big)}}=\frac{1}{2\pi}\log\Big(\frac{d'}{2(d_{\Omega}(x,y)+\varepsilon)}\Big) 
    \end{align*}
    By duality for continuous extremal extremal length: 
    \begin{equation*}
        \lambda(W\leftrightarrow{E};\Omega_{x,y})\cdot\lambda(N\leftrightarrow{S}; \Omega_{x,y})=1
    \end{equation*}
    Hence: 
    \begin{equation*}
        \lambda(N\leftrightarrow{S}; \Omega_{x,y})\leq{\frac{2\pi}{\log\Big(\frac{d'}{2(d_{\Omega}(x,y)+\varepsilon)}\Big) }}
    \end{equation*}
    Since $\varepsilon>0$ was arbitrary, letting $\varepsilon$ tend to $0$ in the above inequality, the desired result follows.\\ \\
    \textbf{\underline{Case 2: $d_{cc}^{\Omega}(x,y)\leq{d_{\Omega}(x,y)}$.}} \\ \\
    Fix $\varepsilon$ so that $0<\varepsilon<\frac{d'}{2}-d_{cc}^{\Omega}(x,y)$. By the definition of $d_{cc}(x,y)$ we can find a crosscut $\gamma$ of $\Omega$ that joins $x$ and $y$ to one of the four distinguished boundary arcs of $(\Omega,A,B,C,D)$ and separates it from the opposite boundary arc, so that $\text{length}(\gamma)<d_{cc}^{\Omega}(x,y)+\varepsilon$. We now split our problem into two further cases, depending on whether the relevant boundary arcs of $(\Omega,A,B,C,D)$ are Dirichlet arcs, where $h$ is constant, or Neumann arcs, along which $h$ is monotone.\\ \\
    \textbf{\underline{Case 2.1}}: $\gamma$ joins $x$ and $y$ to one of the Dirichlet arcs and separates it from the opposite Dirichlet arc. \\ \\
    WLOG, suppose that $\gamma$ joins $x$ and $y$ to $[A,B]_{\partial{\Omega^{\ast}}}$ and separates $x$ and $y$ from $[C,D]_{\partial{\Omega^{\ast}}}$. Let $N_{x,y}^{\gamma}$ denote the connected component of $\Omega^{\ast}\setminus{\gamma}$ containing $x$ and $y$. By the maximum principle for harmonic functions:
    \begin{equation*}
        \sup_{z\in{N_{x,y}^{\gamma}}}h(z)=\max_{z\in{\gamma}}h(z)
    \end{equation*}
    Let $v$ be a point of $\gamma$ so that:
    \begin{equation*}
    h(v)=\max_{z\in{\gamma}}h(z)    
    \end{equation*}
    Since $0<h(x)<h(y)<h(v)$, it follows that:
    \begin{equation*}
        h(y)-h(x)<h(v)
    \end{equation*}
    Consider the region: 
    \begin{equation*}
        \Omega_{v}=\{z\in{\Omega}: 0<h(z)<h(v)\}
    \end{equation*}
    $\phi$ maps $\Omega_{v}$ to the rectangle $(0,h(v))\times{(0,1)}$. Thinking of $\Omega_{v}$ as a conformal rectangle with the distinguished boundary arcs: 
    \begin{align*}
        N&=\Omega_{v}^{\ast}\cap{[D,A]_{\partial{\Omega^{\ast}}}} & E&=\Omega_{v}^{\ast}\cap{[A,B]_{\partial{\Omega^{\ast}}}} \\
        S&=\Omega_{v}^{\ast}\cap{[B,C]_{\partial{\Omega^{\ast}}}} & W&=\{z\in{\Omega}:h(z)=h(v)\}
    \end{align*}
    it follows that: 
    \begin{equation*}
        \lambda(W\leftrightarrow{E};\Omega_{v})=h(v)
    \end{equation*}
   Similar to case 1, having reinterpreted $h(v)$ as an extremal length, we can bound it from above by bounding the dual extremal length from below.
    \begin{figure}[H]
    \centering
    \includegraphics[scale=0.54]{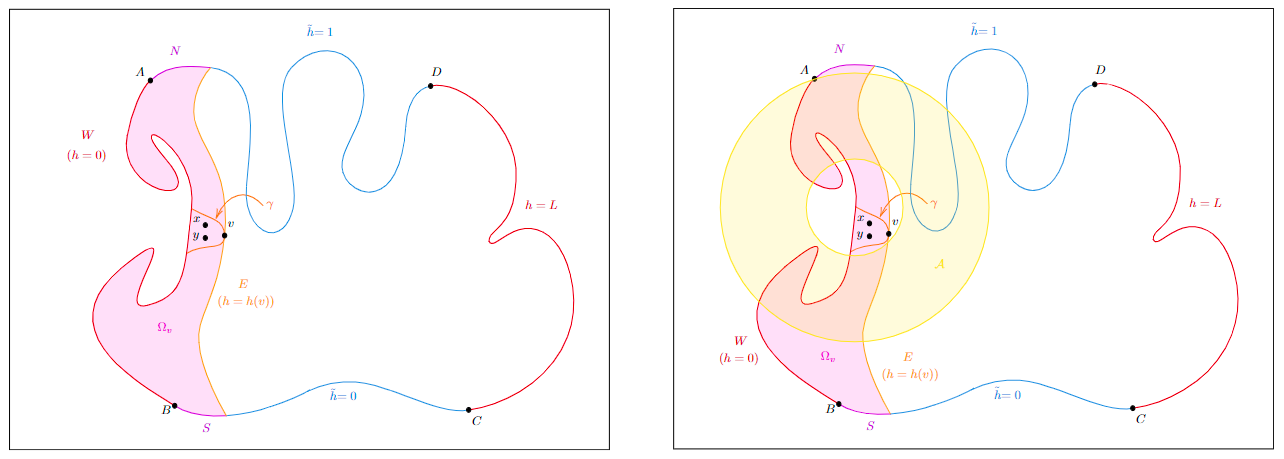}
    \caption{On the left: The subrectangle $\Omega_{v}$ of $\Omega$ with its distinguished boundary arcs $N, E, S$ and $W$. On the right: Observe that any path from $N$ to $S$ in $\Omega_{v}$ must cross the annulus $\mathcal{A}$.}
    \end{figure}
    
    \noindent Since $v\in{W}$ and $v$ lies along a crosscut $\gamma$ of $\Omega$ that starts and ends along $[A,B]_{\partial{\Omega^{\ast}}}$, there must exist a subarc $\gamma'$ of $\gamma$ of length at most $\frac{1}{2}(d_{cc}^{\Omega}(x,y)+\varepsilon)$ travelling from $E$ to $W$ and thereby separating $N$ and $S$ in $\Omega_{v}$. Let $v'$ be the endpoint of $\gamma'$ that lies in $W$. Consider the annulus: 
    \begin{equation*}
        \mathcal{A}=\{u\in{\mathbb{C}}:\frac{d_{cc}^{\Omega}(x,y)+\varepsilon}{2}<|u-v'|<\frac{d'}{2}\}
    \end{equation*}
    Just as in case 1: 
    \begin{itemize}
        \item Since $\text{length}(\gamma')\leq{\frac{d_{cc}^{\Omega}(x,y)+\varepsilon}{2}}$, the diameter of $\gamma'$ is at most $\frac{d_{cc}^{\Omega}(x,y)+\varepsilon}{2}$. Hence, $\gamma\subseteq{B(v',\frac{d_{cc}^{\Omega}(x,y)+\varepsilon}{2})}$. Since $\gamma'$ separates the boundary arcs $N$ and $S$ in $\Omega_{v}$, any path from $N$ to $S$ in $\Omega_{v}$ must intersect $\gamma'$ and therefore $B(v',\frac{d_{cc}^{\Omega}(x,y)+\varepsilon}{2})$
        \item Since any curve from $N$ to $S$ in $\Omega_{v}$ is a curve from $[B,C]_{\partial{\Omega^{\ast}}}$ to $[D,A]_{\partial{\Omega^{\ast}}}$ in $\Omega_{v}$, such a curve must have diameter $\geq{d'}$. In particular, any such curve must at some point lie outside the ball $B(v',\frac{d'}{2})$.
    \end{itemize}
    Putting all this together, we conclude that any curve from $N$ to $S$ in $\Omega_{v}$ must cross the annulus $\mathcal{A}$ at least once. Hence, by the same argument as in case 1, plugging the metric:
    \begin{equation*}
        \rho(z)=\frac{1}{|z-v'|}1_{\mathcal{A}\cap{\Omega_{v}}}(z)
    \end{equation*}
    into the variational problem for $\lambda(N\leftrightarrow{S};\Omega_{v})$, we have that: 
    \begin{equation*}
        \lambda(N\leftrightarrow{S};\Omega_{v})\geq{\frac{1}{2\pi}\log\Big(\frac{d'}{d_{cc}(x,y)+\varepsilon}\Big)}
    \end{equation*}
    By duality for continuous extremal length: 
    \begin{equation*}
        \lambda(W\leftrightarrow{E};\Omega_{v})\cdot\lambda(N\leftrightarrow{S};\Omega_{v})=1
    \end{equation*}
    Hence: 
    \begin{equation*}
        h(y)-h(x)<h(v)\leq{\frac{2\pi}{\log\Big(\frac{d'}{d_{cc}(x,y)+\varepsilon}\Big)}}
    \end{equation*}
    Since $\varepsilon>0$ was arbitrary, letting $\varepsilon$ tend to $0$ in the above inequality, the desired result follows.    \\ \\
    \textbf{\underline{Case 2.2}}: $\gamma$ joins $x$ and $y$ to one of the Neumann arcs and separates it from the opposite Neumann arc.\\ \\
    WLOG, suppose that $\gamma$ joins $x$ and $y$ to $[B,C]_{\partial{\Omega^{\ast}}}$ and separates $x$ and $y$ from $[D,A]_{\partial{\Omega^{\ast}}}$. Let $N_{x,y}^{\gamma}$ denote the connected component of $\Omega^{\ast}\setminus{\gamma}$ containing $x$ and $y$. By the maximum principle for harmonic functions: 
    \begin{align*}
        \sup_{z\in{N_{x,y}^{\gamma}}}h(z)&=\max_{z\in{\gamma}}h(z), &\inf_{z\in{N_{x,y}^{\gamma}}}h(z)&=\min_{z\in{\gamma}}h(z) 
    \end{align*}
    Let $u$ and $v$ be points of $\gamma$ so that: 
    \begin{align*}
        h(u)&=\min_{z\in{\gamma}}h(z) , & h(v)&=\max_{z\in{\gamma}}h(z) 
    \end{align*}
    Since $h(u)<h(x)<h(y)<h(v)$, it follows that: 
    \begin{equation*}
        h(y)-h(x)<h(v)-h(u)
    \end{equation*}
    Consider the region:
    \begin{equation*}
        \Omega_{u,v}=\{z\in{\mathbb{C}}: h(u)<h(z)<h(v)\}
    \end{equation*}
    $\phi$ maps $\Omega_{u,v}$ to the rectangle $(h(u),h(v))\times{(0,1)}$. Thinking of $\Omega_{u,v}$ as a conformal rectangle with the distinguished boundary arcs:
    \begin{align*}
        N&=\Omega_{u,v}^{\ast}\cap{[D,A]_{\partial{\Omega^{\ast}}}} & E&=\{z\in{\Omega}:h(z)=h(v)\} \\ 
        S&=\Omega_{u,v}^{\ast}\cap{[B,C]_{\partial{\Omega^{\ast}}}} & W&=\{z\in{\Omega}:h(z)=h(u)\}
    \end{align*}
    we have that: 
    \begin{equation*}
        \lambda(W\leftrightarrow{E};\Omega_{u,v})=h(v)-h(u)
    \end{equation*}
    \begin{figure}[H]
        \centering
        \includegraphics[scale=0.7]{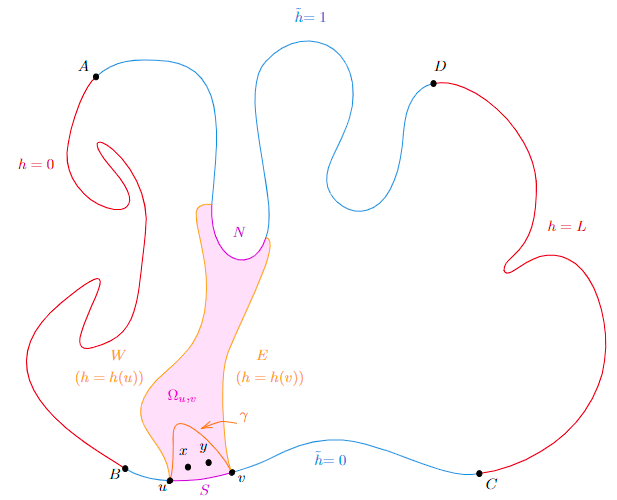}  
        \caption{The subrectangle $\Omega_{u,v}$ of $\Omega$ with its distinguished boundary arcs $N, E, S$ and $W$.}
    \end{figure}
    \noindent Thus, to get an upper bound for $h(v)-h(u)$ and therefore $h(y)-h(x)$, it suffices to bound the dual extremal length $\lambda(N\leftrightarrow{S};\Omega_{u,v})$ from below. Observe that:
    \begin{itemize}
        \item Since $u$ and $v$ both lie along a crosscut of $\Omega$ of length at most $d_{cc}^{\Omega}(x,y)+\varepsilon$ and $u\in{W}$, $w\in{E}$, we can join $W$ and $E$ by a crosscut of $\Omega_{u,v}$ of length at most $d_{cc}^{\Omega}(x,y)+\varepsilon$.
        \item Since any curve from $N$ to $S$ in $\Omega_{u,v}$ is a curve from $[A,D]_{\partial{\Omega^{\ast}}}$ to $[B,C]_{\partial{\Omega^{\ast}}}$ in $\Omega$, such a curve will have diameter $\geq{d'}$. 
    \end{itemize}
    Putting all this together, by the same argument as in case 1, verbatim, it follows that: 
    \begin{equation*}
        \lambda(N\leftrightarrow{S};\Omega_{u,v})\geq{\frac{1}{2\pi}\log\Big(\frac{d'}{2(d_{cc}^{\Omega}(x,y)+\varepsilon)}\Big)}
    \end{equation*}
    Hence: 
    \begin{equation*}
        h(y)-h(x)<h(v)-h(u)\leq{\lambda(W\leftrightarrow{E};\Omega_{u,v})}\leq{\frac{2\pi}{\log\Big(\frac{d'}{2(d_{cc}^{\Omega}(x,y)+\varepsilon)}\Big)}}
    \end{equation*}
    Letting $\varepsilon$ tend to $0$ in the above inequality, the desired result follows. The analogous estimate for $\widetilde{h}$ follows by the same argument.
\end{proof}

\subsection{Modulus of Continuity for Tiling Maps} 
\label{subsec: modulus of continuity for tiling maps}

Suppose $(G,A^{\bullet}, B^{\bullet}, C^{\bullet}, D^{\bullet})$ is an orthodiagonal rectangle and let $\widetilde{h}$ be the unique solution to the following boundary value problem on $G^{\circ}$: 
\begin{align*}
    \widetilde{h}(x)&=0 \hspace{5pt} \text{for all $x\in{[D^{\circ}, A^{\circ}]}$} \\ 
    \widetilde{h}(x)&=1 \hspace{5pt} \text{for all $x\in{[B^{\circ}, C^{\circ}]}$} \\
    \Delta^{\circ}\widetilde{h}(x)&=0 \hspace{5pt} \text{for all $x\in{V^{\circ}\setminus{([D^{\circ},A^{\circ}]\cup{[B^{\circ}, C^{\circ}]})}}$}
\end{align*}
Let $h$ be the solution to the following boundary value problem on $G^{\bullet}$:
\begin{align*}
    h(x)&=0 \hspace{5pt} \text{for all $x\in{[A^{\bullet}, B^{\bullet}]}$} \\ 
    h(x)&=L \hspace{5pt} \text{for all $x\in{[C^{\bullet}, D^{\bullet}]}$} \\
    \Delta^{\bullet}h(x)&=0 \hspace{5pt} \text{for all $x\in{V^{\bullet}\setminus{([A^{\bullet},B^{\bullet}]\cup{[C^{\bullet}, D^{\bullet}]})}}$}
\end{align*}
where $L$ is the effective resistance between $[A^{\bullet},B^{\bullet}]$ and $[C^{\bullet},D^{\bullet}]$ in $G^{\bullet}$. $h$ and $\widetilde{h}$ are the conjugate discrete harmonic functions that correspond to the real and imaginary parts of the tiling map $\phi:\widehat{G}\rightarrow{(0,L)\times{(0,1)}}$. 
In this section we will prove regularity estimates for $h$ and $\widetilde{h}$ analogous to the regularity estimates we proved for the real and imaginary parts of the corresponding conformal map in Section \ref{subsec: Modulus of Continuity for Limiting Conformal Map}. We do this by adapting our argument in Section \ref{subsec: Modulus of Continuity for Limiting Conformal Map} to the discrete, orthodiagonal setting. To do this, we first need to establish the following lemma which gives us an estimate for the gradients of $h$ and $\widetilde{h}$ across an edge:    
\begin{lem}
\label{lem: gradient estimate for tiling map} 
Suppose $(G,A^{\bullet}, B^{\bullet}, C^{\bullet}, D^{\bullet})$ is an orthodiagonal rectangle so that the edges of $G$ all have length at most $\varepsilon$. Let $\widetilde{h}$ be the unique solution to the following boundary value problem on $G^{\circ}$: 
\begin{align*}
\widetilde{h}(x)&=0 \hspace{5pt} \text{for all $x\in{[D^{\circ}, A^{\circ}]}$} \\ 
\widetilde{h}(x)&=1 \hspace{5pt} \text{for all $x\in{[B^{\circ}, C^{\circ}]}$} \\
\Delta^{\circ}\widetilde{h}(x)&=0 \hspace{5pt} \text{for all $x\in{V^{\circ}\setminus{([D^{\circ},A^{\circ}]\cup{[B^{\circ}, C^{\circ}]})}}$}
\end{align*}
Let $h$ be the harmonic conjugate of $\widetilde{h}$ which solves the following boundary value problem on $G^{\bullet}$:
\begin{align*}
h(x)&=0 \hspace{5pt} \text{for all $x\in{[A^{\bullet}, B^{\bullet}]}$} \\ 
h(x)&=L \hspace{5pt} \text{for all $x\in{[C^{\bullet}, D^{\bullet}]}$} \\
\Delta^{\bullet}h(x)&=0 \hspace{5pt} \text{for all $x\in{V^{\bullet}\setminus{([A^{\bullet},B^{\bullet}]\cup{[C^{\bullet}, D^{\bullet}]})}}$}
\end{align*}
where $L$ is the effective resistance between $[A^{\bullet},B^{\bullet}]$ and $[C^{\bullet}, D^{\bullet}]$ in $G^{\bullet}$. Define:
\begin{align*}
    d&= \inf\{\text{diameter}(\gamma): \text{$\gamma$ is a curve in $\widehat{G}$ from $[A^{\bullet},B^{\bullet}]_{\partial{\widehat{G}^{\ast}}}$ to $[C^{\bullet},D^{\bullet}]_{\partial{\widehat{G}^{\ast}}}$}\}\\
    d'&=\inf\{\text{diameter}(\gamma): \text{$\gamma$ is a curve in $\widehat{G}$ from $[B^{\bullet},C^{\bullet}]_{\partial{\widehat{G}^{\ast}}}$ to $[D^{\bullet},A^{\bullet}]_{\partial{\widehat{G}^{\ast}}}$}\}    
\end{align*}
If $\varepsilon\leq{\frac{d\wedge{d'}}{16}}$, there exists an absolute constant $K>0$ so that if $x,y\in{V^{\bullet}}$ are neighboring edges in $G^{\bullet}$ and $u,v\in{V^{\circ}}$ are neighboring edges in $G^{\circ}$, then:
\begin{align*}
    |h(y)-h(x)|&\leq{\frac{K}{\log\big(\frac{d'}{\varepsilon}\big)}} & |\widetilde{h}(u)-\widetilde{h}(v)|&\leq{\frac{K\hspace{1pt}L}{\log\big(\frac{d}{\varepsilon}\big)}}
\end{align*}
\end{lem}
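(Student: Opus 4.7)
My strategy is to mimic the proof of Theorem 3.1.1 in the discrete orthodiagonal setting. Conformal invariance of extremal length is replaced by Ford-Fulkerson duality (Proposition 2.2.4), and the continuous polar-coordinates annular integral is replaced by a discrete Dirichlet-energy estimate on the dual network. The primal-dual symmetry of the orthodiagonal setup reduces the bound for $\widetilde h$ to the bound for $h$ applied to the rotated orthodiagonal rectangle $(G, B^\bullet, C^\bullet, D^\bullet, A^\bullet)$, so I focus on $h$.

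WLOG $h(x) < h(y)$. Define the sub- and super-level sets $V^\bullet_W = \{v \in V^\bullet : h(v) \le h(x)\}$ and $V^\bullet_E = \{v \in V^\bullet : h(v) \ge h(y)\}$. Because $h$ is harmonic on $V^\bullet \setminus ([A^\bullet, B^\bullet] \cup [C^\bullet, D^\bullet])$ and $[A^\bullet, B^\bullet] \subseteq V^\bullet_W$, $[C^\bullet, D^\bullet] \subseteq V^\bullet_E$, identifying each of $V^\bullet_W$ and $V^\bullet_E$ to a single vertex yields a network on which $h$ is a voltage carrying the original unit current flow. Hence
$$
h(y) - h(x) \;=\; R_{\mathrm{eff}}(V^\bullet_W \leftrightarrow V^\bullet_E;\, G^\bullet).
$$
By Ford-Fulkerson duality together with the planar cut-path correspondence (a direct extension of Lemma 2.2.3 to the vertex sets $V^\bullet_W, V^\bullet_E$), this equals $1/R_{\mathrm{eff}}(\mathcal N \leftrightarrow \mathcal S;\, G^\circ)$, where $\mathcal N \subseteq [B^\circ, C^\circ]$ and $\mathcal S \subseteq [D^\circ, A^\circ]$ are the dual arcs sitting at the two transitions between $V^\bullet_W$ and $V^\bullet_E$ along the Neumann arcs. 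The target inequality thus becomes $R_{\mathrm{eff}}(\mathcal N \leftrightarrow \mathcal S;\, G^\circ) \ge \log(d'/\varepsilon)/K$.

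I obtain this lower bound via Dirichlet's principle applied to the truncated-logarithm test function
$$
g(w) \;=\; \max\bigl(0,\; \min(1,\; \log(\mathrm{dist}(w,\mathcal N)/2\varepsilon)/\log(d'/(4\varepsilon)))\bigr).
$$
This function equals $0$ in a $2\varepsilon$-neighborhood of $\mathcal N$ (hence on $\mathcal N$) and $1$ outside $B(\mathcal N, d'/2)$ (hence on $\mathcal S$, since any crosscut of $\widehat G$ joining the two Neumann arcs has diameter $\ge d'$, placing $\mathcal S$ at Euclidean distance $\gtrsim d'$ from $\mathcal N$). So $R_{\mathrm{eff}}(\mathcal N \leftrightarrow \mathcal S;\, G^\circ) \ge 1/\mathcal E^\circ(g)$, and the required estimate reduces to the discrete Dirichlet-energy bound $\mathcal E^\circ(g) \lesssim 1/\log(d'/\varepsilon)$.

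The main obstacle is this discrete Dirichlet-energy estimate, the counterpart of the continuous identity $\int_{B(0, d'/2) \setminus B(0, 2\varepsilon)} |z|^{-2}\, dA = 2\pi \log(d'/(4\varepsilon))$. Using the orthodiagonality identity $c^\circ(e^\circ)|e^\circ|^2 = |e^\bullet||e^\circ| = 2\,\mathrm{Area}(f(e^\circ))$, the discrete Dirichlet sum turns into a Riemann-sum approximation of the continuous Dirichlet integral of the piecewise-affine extension of $g$, and the desired bound follows provided one can bucket dual edges into dyadic distance shells around $\mathcal N$ and bound the total contribution of each shell by the standard orthodiagonal packing estimates in Section 2 of \cite{GJN20} and the energy-comparison techniques of \cite{ALP23}. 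A secondary technical point, to be addressed in the details, is verifying that $\mathcal N$ itself has diameter $O(\varepsilon)$ under the hypothesis $\varepsilon \le (d \wedge d')/16$; this follows from the fact that the single primal edge $\{x,y\}$, of length $\le \varepsilon$, witnesses one of the $V^\bullet_W$-$V^\bullet_E$ transitions and, by planarity, bounds how far along the Neumann arc other transitions can occur without forcing an extra crosscut of diameter less than $d'$.
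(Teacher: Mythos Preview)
Your proposal has two genuine gaps, both of which the paper repairs with ideas you are missing.

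\textbf{The identity $h(y)-h(x)=R_{\mathrm{eff}}(V^\bullet_W\leftrightarrow V^\bullet_E;G^\bullet)$ is false.} After collapsing $V^\bullet_W$ and $V^\bullet_E$, $h$ does \emph{not} become a voltage on the quotient network: it is not constant on either set, so $c\,d(\text{collapsed }h)$ differs from the restriction of the original current flow on every edge touching the collapsed vertices, and the cycle law fails. A small explicit example (five vertices, series--parallel) already gives $h(y)-h(x)\neq R_{\mathrm{eff}}$. The paper's route around this is a bootstrap: work with the \emph{maximal} edge-gradient $\chi=\max_{\{x,y\}\in E^\bullet}|h(y)-h(x)|$, apply Proposition~\ref{Skopenkov's trick} with $g=h$ to obtain $\chi\le \lambda^{1/2}\,\mathcal E^\bullet(h;H)^{1/2}$ where $\lambda=\lambda(O^\bullet\leftrightarrow W^\bullet;H^\bullet)$, and then use the tiling interpretation --- every rectangle in the BSST tiling has width at most $\chi$ --- to get $\mathcal E^\bullet(h;H)\le 3\chi$. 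These combine to $\chi\le 3\lambda$, which is the inequality you actually need. Working with the maximal edge is essential here; for a generic edge the energy bound reads $\mathcal E^\bullet(h;H)\le (h(y)-h(x))+2\chi$ and the bootstrap does not close.

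\textbf{There is no reason for $\mathcal N$ to have diameter $O(\varepsilon)$.} The arc $N^\circ=[B^\circ,C^\circ]\cap\partial V_H^\circ$ sits over all primal boundary vertices with $h$-value in $(h(x),h(y))$ along the Neumann arc, and there can be many of these, spread out in space; the location of the interior edge $\{x,y\}$ places no constraint on them. Your test function $g(w)=\log(\mathrm{dist}(w,\mathcal N)/2\varepsilon)/\log(d'/4\varepsilon)$ therefore cannot give a logarithmic lower bound on the dual resistance. The paper instead observes that because the single primal edge $\{x,y\}$ joins $O^\bullet$ to $W^\bullet$, it belongs to \emph{every} $O^\bullet$--$W^\bullet$ cut, and hence (by Lemma~\ref{lem: min cuts are dual paths rectangles}) the dual edge $\{u,v\}$ lies on \emph{every} simple path from $N^\circ$ to $S^\circ$ in $H^\circ$. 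The annular metric is then centered at $\tfrac{x+y}{2}$, not at $\mathcal N$, and the usual length--area estimate goes through regardless of the size of $N^\circ$.
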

\begin{proof}
    Suppose $(G,A^{\bullet}, B^{\bullet}, C^{\bullet}, D^{\bullet})$ is a orthodiagonal rectangle with edges of length at most $\varepsilon$ and $h:V^{\bullet}\rightarrow{\mathbb{R}}$ be the solution to the Dirichlet- Neumann problem on this orthodiagonal map that is $0$ on $[A^{\bullet}, B^{\bullet}]$, $L$ on $[C^{\bullet}, D^{\bullet}]$ and harmonic elsewhere, where $L$ is the discrete extremal length from $[A^{\bullet}, B^{\bullet}]$ to $[C^{\bullet}, D^{\bullet}]$ in $G^{\bullet}$. Define: 
    \begin{equation*}
        \chi:=\max_{\{x,y\}\in{E^{\bullet}}}|h(y)-h(x)|
    \end{equation*}
    If $\chi=0$, we're done. Otherwise, select neighboring vertices $x,y\in{V^{\bullet}}$ so that $\big(h(y)-h(x)\big)=\chi$. Consider the sets $S_{x}$ and $S_{y}$ defined as follows: 
\begin{align*}
    S_{x}&:=\{z\in{V^{\bullet}}:h(z)\leq{h(x)}\} & S_{y}&:=\{z\in{V^{\bullet}}:h(z)\geq{h(y)}\}
\end{align*}
By the maximum principle for harmonic functions, $S_{x}$, $S_{y}$ and $V^{\bullet}\setminus{(S_{x}\sqcup{S_{y}})}$ are all connected subsets of $G^{\bullet}$. Furthermore, $[A^{\bullet}, B^{\bullet}]\subseteq{S_{x}}$, $[C^{\bullet}, D^{\bullet}]\subseteq{S_{y}}$. Let $H=(V_{H}^{\bullet}\sqcup{V_{H}^{\circ}},E_{H})$ be the suborthodiagonal map of $G$ formed by gluing together all the faces of $G$ that are incident to at least one vertex of $V^{\bullet}\setminus{(S_{x}\sqcup{S_{y}})}$. By the maximum principle, $H$ is simply connected with a unique, distinguished exterior face. Moreover, let: 
\begin{align*}
    O^{\bullet}&:=S_{x}\cap{\partial{V_{H}^{\bullet}}}, & W^{\bullet}&:=S_{y}\cap{\partial{V_{H}^{\bullet}}}
\end{align*}
Then $O^{\bullet}$ and $W^{\bullet}$ are primal boundary arcs of $H$ with corresponding dual arcs: 
\begin{align*}
    N^{\circ}&:=[B^{\circ}, C^{\circ}]\cap{\partial{V_{H}^{\circ}}}, & S^{\circ}&:=[D^{\circ}, A^{\circ}]\cap{\partial{V_{H}^{\circ}}}
\end{align*}
\begin{figure}[H]
    \centering
    \includegraphics[scale=0.60]{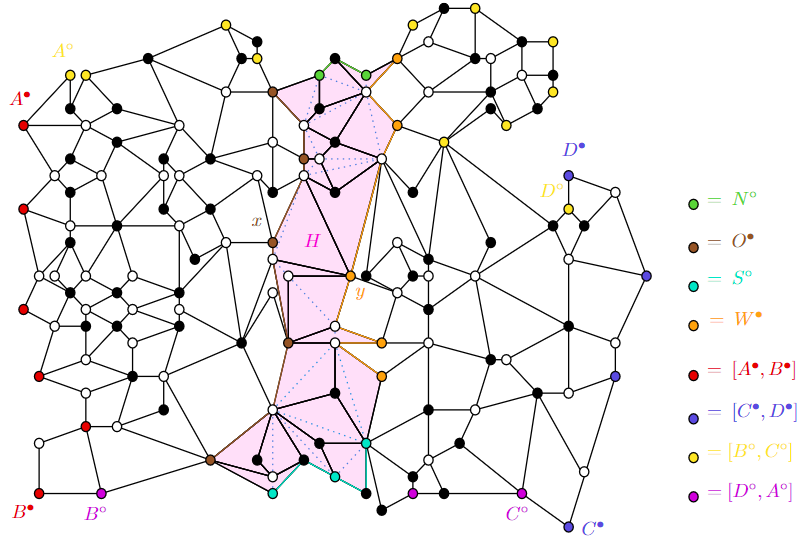}  
    \caption{The suborthodiagonal map $H$ of $G$ with its four distinguished boundary arcs. Notice that any path from $N^{\circ}$ to $S^{\circ}$ in $H^{\circ}$ has to use the dual edge $\{u,v\}\in{E^{\circ}}$ corresponding to $\{x,y\}\in{E^{\bullet}}$. (The blue dotted lines are the edges of $H^{\circ}$.)}
\end{figure}
\noindent Similar to the proof of Theorem \ref{thm: modulus of continuity for conformal map}, $N$, $O$, $S$ and $W$ stand for "North," "Orient," "South," and "West." We'd have used $E$ for "East," however in the discrete setting, $E$ is already being used to denote the edges of $G$. Proposition \ref{Skopenkov's trick} tells us that for any function $g:V^{\bullet}_{H}\rightarrow{\mathbb{R}}$ with $\text{gap}_{O^{\bullet},W^{\bullet}}(g)\geq{0}$ and any flow $\theta$ from $O^{\bullet}$ to $W^{\bullet}$ in $H^{\bullet}$:
\begin{equation*}
    \text{strength}(\theta)\cdot\text{gap}_{O^{\bullet},W^{\bullet}}(g)\leq{\mathcal{E}^{\bullet}(\theta;H)^{1/2}\mathcal{E}^{\bullet}(g;H)^{1/2}}
\end{equation*}
Plugging $g=h$ into the inequality above, we have that for any choice of flow $\theta$ from $O^{\bullet}$ to $W^{\bullet}$ in $H^{\bullet}$:
\begin{equation*}
    \chi=|h(y)-h(x)|\leq{\frac{\mathcal{E}^{\bullet}(\theta;H)^{1/2}\mathcal{E}^{\bullet}(h;H)^{1/2}}{\text{strength}(\theta)}}
\end{equation*}
By Thomson's principle, taking the infimum over all flows $\theta$ from $O^{\bullet}$ to $W^{\bullet}$ in $H^{\bullet}$ in the expression on the RHS, we have that: 
\begin{equation*}
    \chi\leq{\mathcal{E}^{\bullet}(h;H)^{1/2}\cdot{\lambda(O^{\bullet}\leftrightarrow{W^{\bullet}};H^{\bullet})^{1/2}}}
\end{equation*}
In Section \ref{subsec: Tilings of Rectangles} we saw that $\mathcal{E}^{\bullet}(h)$ is the total area of rectangles in the tiling associated with the orthodiagonal rectangle $(G,A^{\bullet},B^{\bullet},C^{\bullet},D^{\bullet})$. Hence, by the definition of $H$, the restriction $\mathcal{E}^{\bullet}(h;H)$ is the total area of rectangles in our tiling that intersect $(h(x),h(y))\times{(0,1)}$. Since $|h(y)-h(x)|=\chi$ and any rectangle in our tiling has width at most $\chi$, it follows that: 
\begin{equation*}
    \mathcal{E}^{\bullet}(h;H)\leq{3\chi}
\end{equation*}
Hence: 
\begin{equation}
\label{eqn: chi bounded by extremal length}    \chi\leq{3\hspace{1pt}\lambda(O^{\bullet}\leftrightarrow{W^{\bullet}};H^{\bullet})}
\end{equation}
By duality: 
\begin{equation*}
    \lambda(N^{\circ}\leftrightarrow{S^{\circ}};H^{\circ})\cdot\lambda(O^{\bullet}\leftrightarrow{W^{\bullet}};H^{\bullet})=1
\end{equation*}
Thus, to bound $\lambda(O^{\bullet}\leftrightarrow{W^{\bullet}};H^{\bullet})$ 
 and therefore $\chi$ from above, it suffices to bound the dual extremal length $\lambda(N^{\circ}\leftrightarrow{S^{\circ}};H^{\circ})$ from below. We will do this by picking by picking a good metric to plug into the variational problem for $\lambda(N^{\circ}\leftrightarrow{S^{\circ}};H^{\circ})$. \\ \\ 
Consider the metric $\rho:E_{H}^{\circ}\rightarrow{[0,\infty)}$ defined as follows: 
\begin{equation*}
    \rho(e^{\circ})=\int_{e^{\circ}}\frac{|dz|}{|z-\frac{x+y}{2}|}
\end{equation*}
for $e^{\circ}\in{E_{H}^{\circ}}$ contained in the annulus $\widetilde{\mathcal{A}}=\{z\in{\mathbb{C}}:4\varepsilon\leq{\left|z-\frac{x+y}{2}\right|}\leq{\frac{d'}{2}}\}$, where we think of $e^{\circ}$ as a line segment in the plane. If $e^{\circ}$ is not contained in $\widetilde{\mathcal{A}}$, then $\rho(e^{\circ})=0$. Let $\{u,v\}\in{E^{\circ}_{H}}$ be the dual edge corresponding to the primal edge $\{x,y\}$. Observe that: 
\begin{enumerate}
    \item \label{edge (u,v) always used} By Lemma \ref{lem: min cuts are dual paths rectangles}, minimal $O^{\bullet}$- $W^{\bullet}$ cuts in $H^{\bullet}$ correspond to simple paths from $N^{\circ}$ to $S^{\circ}$ in $H^{\circ}$ and vice versa. Since $\{x,y\}$ joins $O^{\bullet}$ and $W^{\bullet}$, this edge is part of any $O^{\bullet}$- $W^{\bullet}$ cut. Hence, any path from $N^{\circ}$ to $S^{\circ}$ in $H^{\circ}$ must use the edge $\{u,v\}$.
    \item \label{any path exits annulus} Any path from $N^{\circ}$ to $S^{\circ}$ in $H^{\circ}$ is a path from $[D^{\circ}, A^{\circ}]$ to $[B^{\circ}, C^{\circ}]$ in $G^{\circ}$. Using the notation in the statement of Lemma \ref{lem: gradient estimate for tiling map}, it follows that the diameter of any such path is at least $d'$.
\end{enumerate}
By \ref{edge (u,v) always used}, since $\{u,v\}\subseteq{B(\frac{x+y}{2},4\varepsilon)}$, any path in $H^{\circ}$ from $N^{\circ}$ to $S^{\circ}$ must at some point lie in the bounded component of $\mathbb{C}\setminus{\widetilde{\mathcal{A}}}$. By \ref{any path exits annulus}, any path in $H^{\circ}$ from $N^{\circ}$ to $S^{\circ}$ must at some point lie in the unbounded component of $\mathbb{C}\setminus{\widetilde{\mathcal{A}}}$.  
Putting all this together, we conclude that any path from $N^{\circ}$ to $S^{\circ}$ in $H^{\circ}$ crosses the annulus $\widetilde{\mathcal{A}}$ at least once. If $\gamma$ is a piecewise $C^{1}$ curve that crosses the annulus $\{z\in{\mathbb{C}}:r<|z-z_{0}|<R\}$ at least once, where $z_{0}\in{\mathbb{C}}$, $R>r>0$, then: 
\begin{equation*}
\int_{\gamma}\frac{|dz|}{|z-z_{0}|}\geq{\log\Big(\frac{R}{r}\Big)}
\end{equation*}
If $\gamma$ is a path from $N^{\circ}$ to $S^{\circ}$ in $H^{\circ}$, we know that $\gamma$ must cross the annulus $\widetilde{A}$ at least once. However, when doing this, it is possible that $\gamma$ uses edges of $E^{\circ}_{H}$ that are not entirely contained in $\widetilde{\mathcal{A}}$ and so have zero mass with respect to $\rho$. Since edges of $G$ all have length at most $\varepsilon$, edges of $G^{\circ}$ all have length at most $2\varepsilon$. Thus, we can be sure that all of the edges $e^{\circ}\in{E^{\circ}_{H}}$ that we use when crossing the slightly smaller annulus $\{z\in{\mathbb{C}}:6\varepsilon\leq{\left|z-\frac{x+y}{2}\right|}\leq{\frac{d'}{2}-2\varepsilon}\}$ do indeed have positive mass. Hence: 
\begin{equation*}
\ell_{\rho}(\gamma)=\int_{\gamma}\frac{|dz|}{|z-\frac{x+y}{2}|}\geq{\log{\Big(\frac{d'-4\varepsilon}{12\varepsilon}\Big)}} 
\end{equation*}
Let $F_{in}(H)$ denote the set of inner faces of $H$. Given an inner face $Q\in{F_{in}(H)}$, let $e_{Q}^{\bullet}\in{E^{\bullet}_{H}}, e_{Q}^{\circ}\in{E^{\circ}_{H}}$ denote the primal and dual diagonals of $Q$. Next, we estimate the area of the metric $\rho$: 
\begin{align*}
    A(\rho)&=\sum_{\substack{e^{\circ}\in{E_{H}^{\circ}} \\ \rho(e^{\circ})>0}}\frac{|e^{\bullet}|}{|e^{\circ}|}\Big(\int_{e^{\circ}}\frac{|dz|}{|z-\frac{x+y}{2}|}\Big)^{2}\leq{\sum_{\substack{e^{\circ}\in{E_{H}^{\circ}} \\ \rho(e^{\circ})>0}}\frac{|e^{\bullet}|}{|e^{\circ}|}\cdot{\frac{|e^{\circ}|^{2}}{\min\limits_{z\in{e^{\circ}}}|z-\frac{x+y}{2}|^{2}}}}\stackrel{(i)}{\leq}{2\sum_{\substack{Q\in{F_{in}(H)} \\ \rho(e_{Q}^{\circ})>0}}\frac{\text{Area}(Q)}{\min\limits_{z\in{Q}}|z-\frac{x+y}{2}|^{2}}} \\ 
    &\stackrel{(ii)}{\leq}{2\sum_{\substack{Q\in{F_{in}(H)} \\ \rho(e_{Q}^{\circ})>0}}\int_{Q}\frac{1}{(|z-\frac{x+y}{2}|-2\varepsilon)^{2}}dz_{1}dz_{2}}\stackrel{(iii)}{\leq}{2\int_{3\varepsilon<|z-\frac{x+y}{2}|<\frac{d'}{2}+\varepsilon}\frac{1}{(|z-\frac{x+y}{2}|-2\varepsilon)^{2}}dz_{1}dz_{2} }  \\ 
    &=4\pi\int_{3\varepsilon}^{\frac{d'}{2}+\varepsilon}\frac{s}{(s-2\varepsilon)^{2}}ds=4\pi\int_{\varepsilon}^{\frac{d'}{2}-\varepsilon}\frac{1}{u}du + 8\pi\varepsilon\int_{\varepsilon}^{\frac{d'}{2}-\varepsilon}\frac{1}{u^{2}}du \\
    &=4\pi\log\Big(\frac{d'-2\varepsilon}{\varepsilon}\Big)+8\pi\varepsilon\left(\frac{1}{\varepsilon}-\frac{2}{(d'-2\varepsilon)}\right) \\ 
    &\leq{4\pi\log\Big(\frac{d'-2\varepsilon}{\varepsilon}\Big)+8\pi}
\end{align*}
In the calculations above: 
\begin{itemize}
    \item (i) follows from the trivial observation that $\min\limits_{z\in{Q}}|z-\frac{x+y}{2}|\leq{\min\limits_{z\in{e^{\circ}_{Q}}}|z-\frac{x+y}{2}|}$, as well as the fact that for any inner face $Q$ of $H$, since $Q$ is a quadrilateral with orthogonal diagonals, $\text{Area}(Q)=\frac{1}{2}|e_{Q}^{\bullet}||e_{Q}^{\circ}|$.
    \item (ii) follows from the fact that for any inner face $Q$ of $H$, since the edges of $Q$ all have length at most $\varepsilon$, $Q$ has diameter at most $2\varepsilon$. Hence, $|z-\frac{x+y}{2}|-2\varepsilon\leq{\min\limits_{z\in{Q}}|z-\frac{x+y}{2}|}$ for any $z\in{Q}$. 
    \item (iii) follows from the fact that if $\rho(e^{\circ}_{Q})>0$ for some inner face $Q$ of $H$, this means that the dual edge $e^{\circ}_{Q}$ is contained in the annulus $\widetilde{\mathcal{A}}$. Since the corresponding primal edge $e^{\bullet}_{Q}$ has length at most $\varepsilon$, any point of $Q$ lies in an $\varepsilon$- neighborhood of $e^{\circ}_{Q}$. In particular, any inner face $Q$ of $H$ such that $\rho(e^{\circ}_{Q})>0$ is contained in the slightly larger annulus $\{z\in{\mathbb{C}}:3\varepsilon<|z-\frac{x+y}{2}|<\frac{d'}{2}+\varepsilon\}$.   
\end{itemize}
Since $d'\geq{16\varepsilon}$, it follows that:
\begin{equation*}
    A(\rho)\leq{4\pi\log\Big(\frac{d'-2\varepsilon}{\varepsilon}\Big)+8\pi}\asymp{\log\Big(\frac{d'}{\varepsilon}\Big)}    
\end{equation*}
If $\gamma$ is a path from $N^{\circ}$ to $S^{\circ}$ in $H^{\circ}$, using the fact that $d'\geq{16\varepsilon}$:
$$
\ell_{\rho}(\gamma)\geq{\log{\Big(\frac{d'-4\varepsilon}{16\varepsilon}\Big)}}\asymp{\log\Big(\frac{d'}{\varepsilon}\Big)}
$$
Putting all this together, we have that: 
\begin{equation*}
    \lambda(N^{\circ}\leftrightarrow{S^{\circ}};H^{\circ})\geq{\inf\limits_{\gamma}\frac{\ell^{2}_{\rho}(\gamma)}{A(\rho)}}\gtrsim{\log\Big(\frac{d'}{\varepsilon}\Big)}
\end{equation*}
where our infimum is over all paths $\gamma$ in $H^{\circ}$ from $N^{\circ}$ to $S^{\circ}$. By duality: 
\begin{equation*}
    \lambda(O^{\bullet}\leftrightarrow{W^{\bullet}};H^{\bullet})=\frac{1}{\lambda(N^{\circ}\leftrightarrow{S^{\circ}};H^{\circ})}\lesssim{\frac{1}{\log\big(\frac{d'}{\varepsilon}\big)}}
\end{equation*}
Plugging this into Equation \ref{eqn: chi bounded by extremal length}, the desired result follows. The corresponding estimate for $\widetilde{h}$ follows by the same argument.
\end{proof}
\noindent Having established our estimate for the gradient of $h$ and $\widetilde{h}$ across an edge, we are now ready to state and prove the discrete analogue of Theorem \ref{thm: modulus of continuity for conformal map}:
\begin{thm}
\label{thm: modulus of continuity for tiling maps}
Suppose $(G,A^{\bullet}, B^{\bullet}, C^{\bullet}, D^{\bullet})$ is an orthodiagonal rectangle with edges of length at most $\varepsilon$ and let $h$ and $\widetilde{h}$ be the real and imaginary parts of the associated tiling map. That is, $\widetilde{h}$ is the unique solution to the following boundary value problem on $G^{\circ}$: 
\begin{align*}
    \widetilde{h}(x)&=0 \hspace{5pt} \text{for all $x\in{[D^{\circ}, A^{\circ}]}$} \\ 
    \widetilde{h}(x)&=1 \hspace{5pt} \text{for all $x\in{[B^{\circ}, C^{\circ}]}$} \\
    \Delta^{\circ}\widetilde{h}(x)&=0 \hspace{5pt} \text{for all $x\in{V^{\circ}\setminus{([D^{\circ},A^{\circ}]\cup{[B^{\circ}, C^{\circ}]})}}$}
\end{align*}
and $h$ be the harmonic conjugate of $\widetilde{h}$ which solves the following boundary value problem on $G^{\bullet}$:
\begin{align*}
    h(x)&=0 \hspace{5pt} \text{for all $x\in{[A^{\bullet}, B^{\bullet}]}$} \\ 
    h(x)&=L \hspace{5pt} \text{for all $x\in{[C^{\bullet}, D^{\bullet}]}$} \\
    \Delta^{\bullet}h(x)&=0 \hspace{5pt} \text{for all $x\in{V^{\bullet}\setminus{([A^{\bullet},B^{\bullet}]\cup{[C^{\bullet}, D^{\bullet}]})}}$}
\end{align*}
where $L$ is the effective resistance between $[A^{\bullet},B^{\bullet}]$ and $[C^{\bullet}, D^{\bullet}]$ in $G^{\bullet}$. Define:
\begin{align*}
        d&= \inf\{\text{diameter}(\gamma): \text{$\gamma$ is a curve in $\widehat{G}$ joining $[A^{\bullet},B^{\bullet}]_{\partial{\widehat{G}}}$ and $[C^{\bullet},D^{\bullet}]_{\partial{\widehat{G}}}$}\}\\
        d'&=\inf\{\text{diameter}(\gamma): \text{$\gamma$ is a curve in $\widehat{G}$ joining $[B^{\bullet},C^{\bullet}]_{\partial{\widehat{G}}}$ and $[D^{\bullet},A^{\bullet}]_{\partial{\widehat{G}}}$}\}   
    \end{align*}
Then there exists an absolute constant $K>0$ so that:
\begin{align*}
    |h(y)-h(x)|&\leq{\frac{K}{\log{\big(\frac{d'}{(d_{\widehat{G}}(x,y)\wedge{d_{cc}^{\widehat{G}}(x,y)})\vee{\varepsilon}}\big)}}}, & |\widetilde{h}(u)-\widetilde{h}(v)|&\leq{\frac{KL}{\log{\big(\frac{d}{(d_{\widehat{G}}(u,v)\wedge{d_{cc}^{\widehat{G}}(u,v)})\vee{\varepsilon}}\big)}}}    
\end{align*}
for any $x,y\in{V^{\bullet}}$, $u,v\in{V^{\circ}}$ such that . In particular, in the bulk (when $|x-y|\leq{\text{dist}(x,\partial{\widehat{G}}), \text{dist}(y,\partial{\widehat{G}})}$ and $|u-v|\leq{\text{dist}(u,\partial{\widehat{G}}), \text{dist}(v,\partial{\widehat{G}})}$), we have that: 
\begin{align*}
    |h(y)-h(x)|&\leq{\frac{K}{\log{\big(\frac{d'}{|y-x|\vee{\varepsilon}}\big)}}}, & |\widetilde{h}(u)-\widetilde{h}(v)|&\leq{\frac{KL}{\log{\big(\frac{d}{|u-v|\vee{\varepsilon}}\big)}}}    
\end{align*}
\end{thm}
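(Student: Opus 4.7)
The plan is to transfer the proof of Theorem \ref{thm: modulus of continuity for conformal map} to the orthodiagonal setting, using Lemma \ref{lem: gradient estimate for tiling map} to dispose of distances near the mesh scale and Ford--Fulkerson duality (Corollary \ref{FF_discrete_rectangles}) to supply the annular lower bound on the dual extremal length. I will focus on the bound for $h$; the estimate for $\widetilde h$ follows by swapping the roles of the primal and dual graphs. Fix $x,y\in V^{\bullet}$ with $h(x)<h(y)$, set $\Delta:=h(y)-h(x)$, and let $\delta:=(d_{\widehat G}(x,y)\wedge d_{cc}^{\widehat G}(x,y))\vee\varepsilon$. If $\delta\leq C\varepsilon$ for a fixed absolute constant $C$, the result follows by chaining Lemma \ref{lem: gradient estimate for tiling map} along a bounded number of primal edges joining $x$ to $y$, so I may assume $\delta\gg\varepsilon$ below.

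In the bulk regime $d_{\widehat G}(x,y)\leq d_{cc}^{\widehat G}(x,y)$, I mimic Case 1 of Theorem \ref{thm: modulus of continuity for conformal map}. Put $S_x:=\{v\in V^{\bullet}:h(v)\leq h(x)\}$, $S_y:=\{v\in V^{\bullet}:h(v)\geq h(y)\}$, and let $H$ be the suborthodiagonal map built from all faces of $G$ incident to some vertex of $V^{\bullet}\setminus(S_x\cup S_y)$. This is exactly the construction from Lemma \ref{lem: gradient estimate for tiling map}, and the discrete maximum principle forces $H$ to be a simply connected orthodiagonal rectangle with Dirichlet arcs $O^{\bullet}\subseteq S_x$, $W^{\bullet}\subseteq S_y$, and Neumann arcs $N^{\circ}, S^{\circ}$ sitting inside $[B^{\circ},C^{\circ}]$ and $[D^{\circ},A^{\circ}]$. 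Proposition \ref{Skopenkov's trick} applied to $h$ and an optimal flow from $O^{\bullet}$ to $W^{\bullet}$, together with Thomson's principle, yields
\begin{equation*}
\Delta\;\leq\;\mathcal E^{\bullet}(h;H)^{1/2}\cdot\lambda(O^{\bullet}\leftrightarrow W^{\bullet};H^{\bullet})^{1/2}.
\end{equation*}
Since $\mathcal E^{\bullet}(h;H)$ equals the total area of the tiles whose horizontal projection meets the strip $(h(x),h(y))$, it is at most $\Delta+2\chi$ where $\chi\lesssim 1/\log(d'/\varepsilon)$ from Lemma \ref{lem: gradient estimate for tiling map}. I then invoke duality to replace $\lambda(O^{\bullet}\leftrightarrow W^{\bullet};H^{\bullet})$ by $1/\lambda(N^{\circ}\leftrightarrow S^{\circ};H^{\circ})$, and bound the latter from below by plugging the logarithmic metric $\rho(e^{\circ})=\int_{e^{\circ}}|dz|/|z-z_0|$ into the variational formula. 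The annulus is centered at a point $z_0$ on a short curve joining $x$ to $y$ in $\widehat G$, with inner radius $\asymp d_{\widehat G}(x,y)\vee\varepsilon$ and outer radius $\asymp d'/2$; the $\ell_\rho$ and $A(\rho)$ estimates are verbatim copies of those already done in Lemma \ref{lem: gradient estimate for tiling map}. Solving the resulting inequality $\Delta^{2}\lesssim(\Delta+2\chi)/\log(d'/\delta)$ gives $\Delta\lesssim 1/\log(d'/\delta)$.

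In the remaining regime $d_{cc}^{\widehat G}(x,y)<d_{\widehat G}(x,y)$, I fix a crosscut $\gamma$ of $\widehat G$ of length just above $d_{cc}^{\widehat G}(x,y)$ joining $x,y$ to one distinguished boundary arc and separating them from the opposite one, and split into subcases 2.1 (Dirichlet arc) and 2.2 (Neumann arc) as in Theorem \ref{thm: modulus of continuity for conformal map}. Let $N_\gamma^{x,y}$ be the component of $\widehat G\setminus\gamma$ containing $x,y$, and let $H_\gamma$ be the submap made of faces of $G$ lying in $N_\gamma^{x,y}$. The discrete maximum principle on $H_\gamma^{\bullet}$ guarantees that $\max h$ (and in case 2.2 also $\min h$) over primal vertices of $H_\gamma$ is attained either on a Dirichlet arc of $G$, where $h$ is constant, or at a primal vertex within one edge of $\gamma$. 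Choosing such extremizers $v$ (and $u$ in 2.2) and re-running the level-set construction of the bulk case produces a suborthodiagonal subrectangle of $G$ in which $v$ (respectively $u,v$) sits on a Dirichlet side; the duality-plus-annulus argument then controls $h(v)$ (resp.\ $h(v)-h(u)$), which dominates $\Delta$, by $K/\log(d'/\delta)$. The bulk specialization is immediate because $d_{\widehat G}(x,y)=|x-y|$ whenever the segment $[x,y]$ lies in $\widehat G$, which holds under the distance-to-boundary hypothesis.

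The main obstacle is the bookkeeping in Case 2: I must verify that the extremizing vertices $u,v$ supplied by the discrete maximum principle on $H_\gamma^{\bullet}$ lie within $O(\varepsilon)$ of $\gamma$, so that the annulus centered at $v$ retains inner radius $\asymp\delta$ and outer radius $\asymp d'/2$; and I must check that the submap extracted from $H_\gamma$ by the level-set construction inherits the correct conformal-rectangle structure so that Proposition \ref{Skopenkov's trick} and Corollary \ref{FF_discrete_rectangles} apply. The mesh-size hypothesis $\varepsilon\leq(d\wedge d')/16$ from Lemma \ref{lem: gradient estimate for tiling map} gets used precisely here, to absorb $O(\varepsilon)$ mesh-scale corrections into the logarithm without degrading the final bound.
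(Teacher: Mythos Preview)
Your proposal follows the paper's proof essentially verbatim: the same level-set submap $H$, the same application of Proposition \ref{Skopenkov's trick} plus Thomson's principle, the same energy bound $\mathcal{E}^{\bullet}(h;H)\leq\Delta+2\chi$ via Lemma \ref{lem: gradient estimate for tiling map}, the same logarithmic annulus metric fed into the dual extremal length, and the same three-case split governed by which boundary arc the short crosscut $\gamma$ hits. Your identification of the Case 2 bookkeeping (extremizers lie within $O(\varepsilon)$ of $\gamma$, and the extracted submap is a genuine orthodiagonal rectangle) matches exactly what the paper verifies.

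One small correction: your disposal of the regime $\delta\leq C\varepsilon$ by ``chaining Lemma \ref{lem: gradient estimate for tiling map} along a bounded number of primal edges'' does not work as stated. In a general orthodiagonal map, $d_{\widehat G}(x,y)\leq C\varepsilon$ (or $d_{cc}^{\widehat G}(x,y)\leq C\varepsilon$) does not force $x$ and $y$ to be within a bounded graph distance in $G^{\bullet}$, since edge lengths are only bounded \emph{above} by $\varepsilon$. The paper instead observes that the main annulus argument already covers this regime: the inner radius of the annulus is always at least a fixed multiple of $\varepsilon$ (because of the $+O(\varepsilon)$ corrections you absorb), so when $\delta\asymp\varepsilon$ the estimate $\lambda(N^{\circ}\leftrightarrow S^{\circ};H^{\circ})\gtrsim\log(d'/\varepsilon)$ still goes through and delivers the claimed bound $K/\log(d'/\varepsilon)$. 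In other words, your side remark is unnecessary and the case distinction can simply be dropped.
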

\noindent A slightly weaker estimate (with $\sqrt{log(\cdot)}$ in place of $\log(\cdot)$) is proven in \cite{ALP23} in the case where our orthodiagonal rectangle $(G,A^{\bullet},B^{\bullet}, C^{\bullet},D^{\bullet})$ is a good approximation of a conformal rectangle $(\Omega,A,B,C,D)$, where $\Omega$ is a Jordan domain and $[A,B]_{\partial{\Omega}}, [B,C]_{\partial{\Omega}}, [C,D]_{\partial{\Omega}}, [D,A]_{\partial{\Omega}}$ are analytic arcs that don't meet at cusps. In the context of \cite{ALP23}, ``good" means that the edges of $G$ have length at most $\varepsilon$ and discrete boundary arcs are $\delta$- close to the corresponding continuous boundary arcs in Hausdorff distance for some small $\varepsilon, \delta>0$. For more details, see Theorem 3 of \cite{ALP23}. \\ \\  
We should also point out that while the estimates for the modulus of continuity of $h$ and $\widetilde{h}$ in Theorem \ref{thm: modulus of continuity for tiling maps} are novel for points near the boundary of $G$, even when our orthodiagonal map is just a chunk of the square grid, in the bulk we can do significantly better. Namely, in \cite{CLR23}, Chelkak, Laslier and Russkikh show that bounded discrete harmonic and discrete holomorphic functions on $t$- embeddings whose corresponding origami map is $\kappa$- Lipschitz on large scales for some $\kappa\in{(0,1)}$, are $\beta$- H\"older in the bulk for some absolute constant $\beta\in{(0,1)}$ (see Proposition 6.13 of \cite{CLR23}). The condition on the origami map here is known as ``$Lip(\kappa,\delta)$" where $\delta>0$ is the scale on which our origami map is $\kappa$- Lipschitz. $t$- embeddings are a more general class of graphs embedded in the plane that accommodate a notion of discrete complex analysis. That is, every orthodiagonal map is a $t$- embedding (see Section 8.1 of \cite{CLR23}). While it is not explicitly stated in their paper, it is not hard to show that if $G$ is an orthodiagonal map with edges of length at most $\varepsilon$, then for any $\kappa\in{(0,1)}$ there exists an absolute constant $c=c(\kappa)>0$ such that, as a $t$ -embedding, $G$ satisfies the condition $Lip(\kappa,c\varepsilon)$. While the estimates in the bulk provided by Theorem \ref{thm: modulus of continuity for tiling maps} are sufficient for proving our main result, Theorem \ref{thm: convergence of rectangle tiling maps}, as an immediate consequence of Proposition 6.13 of \cite{CLR23} we have that:    
\begin{prop}
    Suppose $(G, A^{\bullet}, B^{\bullet}, C^{\bullet}, D^{\bullet})$ is an orthodiagonal rectangle with edges of length at most $\varepsilon$ and $h$ and $\widetilde{h}$ are the real and imaginary parts of the corresponding tiling map. If $x\in{V}$, let $d_{x}$ denote the distance from $x$ to $\partial{\widehat{G}}$. Then there exists absolute constants $C,C'>0$, $\beta\in{(0,1)}$ such that: 
    \begin{align*}
        |h(y)-h(x)|&\leq{CL\left(\frac{|x-y|}{d_{x}\wedge{d_{y}}}\right)^{\beta}} & |\widetilde{h}(u)-\widetilde{h}(v)|&\leq{C\left(\frac{|u-v|}{d_{u}\wedge{d_{v}}}\right)^{\beta}}
    \end{align*}
    for $x,y\in{V^{\bullet}}$, $u,v\in{V^{\circ}}$ such that $|x-y|,|u-v|\geq{C'\varepsilon}$.
\end{prop}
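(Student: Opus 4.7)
The plan is to deduce this as a direct application of Proposition 6.13 of \cite{CLR23}, which supplies interior H\"older continuity for bounded discrete harmonic functions on $t$-embeddings whose origami map satisfies the $Lip(\kappa,\delta)$ condition for some $\kappa \in (0,1)$. Two inputs are required: (i) that the orthodiagonal rectangle $G$ is itself a $t$-embedding, and (ii) that the associated origami map $\mathcal{O}$ is $\kappa$-Lipschitz on a scale of order $\varepsilon$.

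For (i), one cites Section 8.1 of \cite{CLR23}, which explicitly records that every orthodiagonal map is a $t$-embedding; this relies only on the fact that interior faces are quadrilaterals whose diagonals meet at right angles. For (ii), the key observation is that in any $t$-embedding, each edge increment of the origami map has the same modulus as the corresponding edge increment of the identity embedding, so on $G$ one has $|\mathcal{O}(y) - \mathcal{O}(x)| \leq \varepsilon$ for every edge $\{x,y\}$. The orthogonality of primal and dual diagonals on each face forces the directions of these increments to average out as one moves across many faces. Fixing $\kappa \in (0,1)$ and taking $R = c(\kappa)\varepsilon$ with $c(\kappa)$ sufficiently large, this averaging produces the bound $|\mathcal{O}(y) - \mathcal{O}(x)| \leq \kappa|y-x|$ whenever $|y-x| \geq c(\kappa)\varepsilon$, verifying $Lip(\kappa, c(\kappa)\varepsilon)$ with an absolute constant $c(\kappa)$.

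With both inputs in hand, Proposition 6.13 of \cite{CLR23} supplies absolute constants $\widetilde{C} > 0$ and $\beta \in (0,1)$ such that every bounded discrete harmonic function $u$ on $G^\bullet$ or $G^\circ$ satisfies
$$|u(y) - u(x)| \leq \widetilde{C}\,\|u\|_{\infty}\left(\frac{|x-y|}{d_{x} \wedge d_{y}}\right)^{\beta}$$
for any pair of vertices $x,y$ with $|x-y| \geq C'\varepsilon$, where $C'$ is a fixed absolute multiple of $c(\kappa)$. Applied to $\widetilde{h}$, which takes values in $[0,1]$ on $V^{\circ}$, this yields the second inequality with $C = \widetilde{C}$. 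Applied to $h$, which takes values in $[0,L]$ on $V^{\bullet}$, this yields the first inequality with the additional factor of $L$ coming from $\|h\|_{\infty} \leq L$.

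The one nontrivial step is (ii): extracting a clean, quantitatively verifiable statement of the $Lip(\kappa, c\varepsilon)$ condition directly from the definition of the origami map on an orthodiagonal map. As noted in the paragraph preceding the proposition, this verification is not carried out explicitly in \cite{CLR23}, so this is where a short auxiliary argument would be needed; it essentially amounts to showing that on scales much larger than $\varepsilon$, the discrete vector field of edge increments of $\mathcal{O}$ on $G$ averages out, which follows from the rigidity imposed by the orthogonality of the diagonals on each interior face. Everything else is mechanical.
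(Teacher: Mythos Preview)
Your proposal is correct and matches the paper's own approach exactly: the paper does not give a separate proof of this proposition but states it as ``an immediate consequence of Proposition 6.13 of \cite{CLR23}'' after recording precisely the two inputs you identify (that orthodiagonal maps are $t$-embeddings via Section 8.1 of \cite{CLR23}, and that they satisfy $Lip(\kappa, c(\kappa)\varepsilon)$). Your observation that the verification of the $Lip$ condition is the only step requiring a short auxiliary argument also mirrors the paper's remark that this fact ``is not explicitly stated in their paper'' but ``is not hard to show.''
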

\noindent All that having been said, we now turn to the proof of Theorem \ref{thm: modulus of continuity for tiling maps}:
\begin{proof}
    Suppose $(G,A^{\bullet}, B^{\bullet}, C^{\bullet}, D^{\bullet})$ is a orthodiagonal rectangle with edges of length at most $\varepsilon$ and $h$ is the real part of the corresponding tiling map. Fix $x,y\in{V^{\bullet}}$. Since we are taking the maximum of $d_{\widehat{G}}(x,y)\wedge{d_{cc}^{\widehat{G}}(x,y)}$ and $\varepsilon$ in our estimate for $|h(y)-h(x)|$ in Theorem \ref{thm: modulus of continuity for tiling maps}, we can assume WLOG that $d_{\widehat{G}}(x,y)\wedge{d_{cc}^{\widehat{G}}(x,y)}\geq{\varepsilon}$, since the relevant estimate in the case where $d_{\widehat{G}}(x,y)\wedge{d_{cc}^{\widehat{G}}(x,y)}<\varepsilon$ follows from the case where $d_{\widehat{G}}(x,y)\wedge{d_{cc}^{\widehat{G}}(x,y)}\geq{\varepsilon}$, by the maximum principle. If $h(y)=h(x)$, we're done. Otherwise, suppose WLOG that $h(x)<h(y)$. We now consider two cases: \\ \\
    \underline{\textbf{Case 1:} $d_{\widehat{G}}(x,y)\leq{d_{cc}^{\widehat{G}}(x,y)}$} \\ \\
    Similar to the proofs of Theorem \ref{thm: modulus of continuity for conformal map} and Lemma \ref{lem: gradient estimate for tiling map}, we begin by reinterpreting the quantity we want to estimate, $\big(h(y)-h(x)\big)$, as an extremal length. Consider the sets $S_{x}$ and $S_{y}$ defined as follows: 
\begin{align*}
    S_{x}&:=\{z\in{V^{\bullet}}:h(z)\leq{h(x)}\} & S_{y}&:=\{z\in{V^{\bullet}}:h(z)\geq{h(y)}\}
\end{align*}
By the maximum principle for harmonic functions, $S_{x}$, $S_{y}$ and $V^{\bullet}\setminus{(S_{x}\sqcup{S_{y}})}$ are all connected subsets of $G^{\bullet}$. Furthermore, $[A^{\bullet}, B^{\bullet}]\subseteq{S_{x}}$, $[C^{\bullet}, D^{\bullet}]\subseteq{S_{y}}$. Let $H=(V_{H}^{\bullet}\sqcup{V_{H}^{\circ}},E_{H})$ be the suborthodiagonal map of $G$ formed by gluing together all the faces of $G$ that are incident to at least one vertex of $V^{\bullet}\setminus{(S_{x}\sqcup{S_{y}})}$. By the maximum principle, $H$ is simply connected with a unique, distinguished exterior face. Furthermore, let: 
\begin{align*}
    O^{\bullet}&:=S_{x}\cap{\partial{V_{H}^{\bullet}}}, & W^{\bullet}&:=S_{y}\cap{\partial{V_{H}^{\bullet}}}
\end{align*}
Then $O^{\bullet}$ and $W^{\bullet}$ are primal boundary arcs of $H$ with corresponding dual arcs: 
\begin{align*}
    N^{\circ}&:=[D^{\circ}, A^{\circ}]\cap{\partial{V_{H}^{\circ}}}, & S^{\circ}&:=[B^{\circ}, C^{\circ}]\cap{\partial{V_{H}^{\circ}}}
\end{align*}
Proposition \ref{Skopenkov's trick} tells us that for any function $g:V^{\bullet}_{H}\rightarrow{\mathbb{R}}$ with $\text{gap}_{O^{\bullet},W^{\bullet}}(g)\geq{0}$ and any flow $\theta$ from $O^{\bullet}$ to $W^{\bullet}$ in $H^{\bullet}$:
\begin{equation*}
    \text{strength}(\theta)\cdot\text{gap}_{O^{\bullet},W^{\bullet}}(g)\leq{\mathcal{E}^{\bullet}(\theta;H)^{1/2}\mathcal{E}^{\bullet}(g;H)^{1/2}}
\end{equation*}
Plugging $g=h$ into the inequality above, we have that for any choice of flow $\theta$ from $O^{\bullet}$ to $W^{\bullet}$ in $H^{\bullet}$:
\begin{equation*}
    |h(y)-h(x)|\leq{\frac{\mathcal{E}^{\bullet}(\theta;H)^{1/2}\mathcal{E}^{\bullet}(h;H)^{1/2}}{\text{strength}(\theta)}}
\end{equation*}
By Thomson's principle, taking the infimum over all flows $\theta$ from $O^{\bullet}$ to $W^{\bullet}$ in $H^{\bullet}$ in the expression on the RHS, we have that: 
\begin{equation}
\label{eqn: modulus of continuity in terms of Dirichlet energy and extremal length}
    |h(y)-h(x)|\leq{\mathcal{E}^{\bullet}(h;H)^{1/2}\cdot{\lambda(O^{\bullet}\leftrightarrow{W^{\bullet}};H^{\bullet})^{1/2}}}
\end{equation}
In Section \ref{subsec: Tilings of Rectangles} we saw that $\mathcal{E}^{\bullet}(h)$ is the total area of rectangles in the tiling associated with the orthodiagonal rectangle $(G,A^{\bullet},B^{\bullet},C^{\bullet},D^{\bullet})$. Hence, by the definition of $H$, the restriction $\mathcal{E}^{\bullet}(h;H)$ is the total area of rectangles in our tiling that intersect $(h(x),h(y))\times{(0,1)}$. Since the edges of $G$ have length at most $\varepsilon$, Lemma \ref{lem: gradient estimate for tiling map} tells us that the width of any rectangle in our tiling is at most $K\big(\log\big(\frac{d'}{\varepsilon}\big)\big)^{-1}$, where $K>0$ is an absolute constant. Hence:
\begin{equation}
\label{eqn: estimate for Dirichlet energy}
    \mathcal{E}^{\bullet}(h;H)\leq{|h(y)-h(x)|+\frac{2K}{\log\big(\frac{d'}{\varepsilon}\big)}}
\end{equation}
Using the shorthand $\lambda^{\bullet}=\lambda(O^{\bullet}\leftrightarrow{W^{\bullet}};H^{\bullet})$, plugging our estimate in Equation \ref{eqn: estimate for Dirichlet energy} into Equation \ref{eqn: modulus of continuity in terms of Dirichlet energy and extremal length} and rearranging, we have that:
\begin{equation}  
\label{eqn: |h(y)-h(x)| bounded by extremal length}
    |h(y)-h(x)|\leq{\frac{1}{2}\Big(\lambda^\bullet+\sqrt{(\lambda^{\bullet})^{2}+\frac{8K\lambda^{\bullet}}{\log\big(\frac{d'}{\varepsilon}\big)}}\Big)}\leq{\lambda^{\bullet}+\frac{2K}{\log\big(\frac{d'}{\varepsilon}\big)}}=\lambda^{\bullet}(O^{\bullet}\leftrightarrow{W^{\bullet}};H^{\bullet})+\frac{2K}{\log\big(\frac{d'}{\varepsilon}\big)}
\end{equation}
By duality: 
\begin{equation*}
    \lambda(N^{\circ}\leftrightarrow{S^{\circ}};H^{\circ})\cdot\lambda(O^{\bullet}\leftrightarrow{W^{\bullet}};H^{\bullet})=1
\end{equation*}
Thus, to bound $\lambda(O^{\bullet}\leftrightarrow{W^{\bullet}};H^{\bullet})$ 
 and therefore $|h(y)-h(x)|$ from above, it suffices to bound the dual extremal length $\lambda(N^{\circ}\leftrightarrow{S^{\circ}};H^{\circ})$ from below. We will do this by picking by picking a good metric to plug into the variational problem for $\lambda(N^{\circ}\leftrightarrow{S^{\circ}};H^{\circ})$.
 \begin{figure}[H]
    \centering
    \includegraphics[scale=0.53]{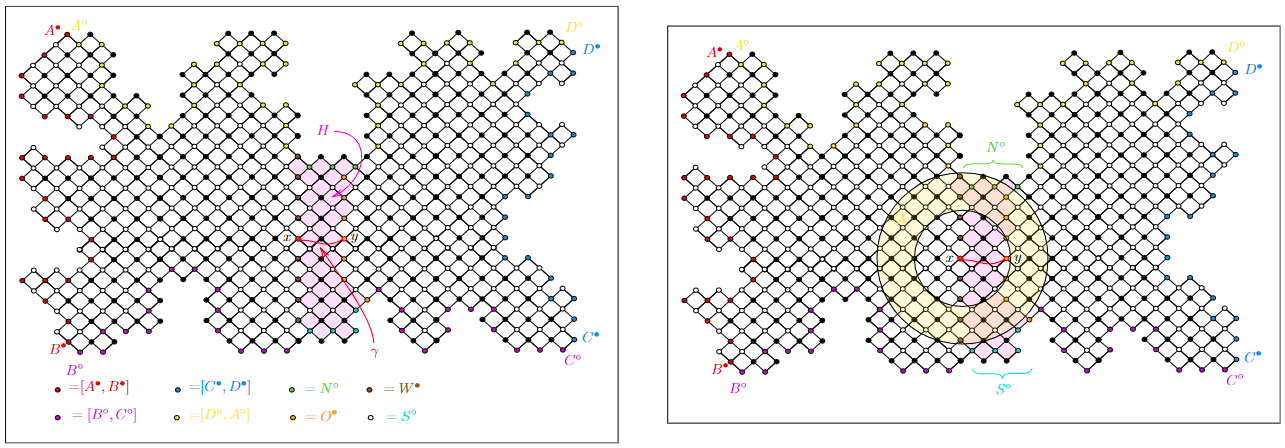}  
    \caption{On the left: The subrectangle $H$ of $G$ with its distinguished boundary arcs $N^{\circ}, O^{\bullet}, S^{\circ}$, and $W^{\bullet}$. On the right: Notice that any path from $N^{\circ}$ to $S^{\circ}$ in $H^{\circ}$ must cross the annulus $\widetilde{\mathcal{A}}$. For simplicity, our orthodiagonal map $G$ here is just a chunk of the square grid but, in general, this need not be the case.}
\end{figure}
\noindent By the definition of $d_{\widehat{G}}(x,y)$ we can find a smooth curve $\gamma$ in $\widehat{G}$ from $x$ to $y$ so that $\text{length}(\gamma)<d_{\Omega}(x,y)+\varepsilon$. By the maximum principle for discrete harmonic functions, $\widehat{G}\setminus{\widehat{H}}$ consists of two connected components $\Omega_{1}$ and $\Omega_{2}$ so that WLOG, $[A^{\bullet},B^{\bullet}]\subseteq{\partial{\Omega_{1}}}$ and $[C^{\bullet},D^{\bullet}]\subseteq{\partial{\Omega_{1}}}$. Thinking of $\widehat{H}$ as a conformal rectangle with the distinguished boundary arcs: 
\begin{align*}
    \widehat{W}&:=\partial{\Omega_{1}}\cap{\partial{\widehat{H}}} & \widehat{O}&:=\partial{\Omega_{2}}\cap{\partial{\widehat{H}}} \\
    \widehat{N}&:=[D^{\bullet}, A^{\bullet}]_{\partial{\widehat{G}}}\cap{\partial{\widehat{H}}} & \widehat{S}&:=[B^{\bullet}, C^{\bullet}]_{\partial{\widehat{G}}}\cap{\partial{\widehat{H}}}
\end{align*}
our curve $\gamma$ from $x$ to $y$ in $\widehat{G}$ must have a subarc $\gamma'$ with endpoints $x'\in{\widehat{W}}, y'\in{\widehat{O}}$, so that $\gamma'$ is a crosscut of $\widehat{H}$. By the definition of $\widehat{H}$, every point of $\widehat{W}$ lies within $\varepsilon$ of a point of $W^{\bullet}$ and every point of $\widehat{O}$ lies within $\varepsilon$ of a point in $O^{\bullet}$. Thus, perturbing $\gamma'$ slightly, we can produce a crosscut $\gamma''$ of $\widehat{H}$ with endpoints $x''\in{\widehat{W}}, y''\in{\widehat{O}}$ so that $\text{length}(\gamma'')\leq{d_{\widehat{G}}(x,y)+3\varepsilon}$. Since $\gamma''$ is a crosscut of $\widehat{H}$ joining $O^{\bullet}$ and $W^{\bullet}$, $\gamma''$ separates $N^{\circ}$ and $S^{\circ}$ in $\widehat{H}$. Hence, any path in $\widehat{H}$ and therefore $H^{\circ}$ from $N^{\circ}$ to $S^{\circ}$ must intersect $\gamma''$. Consider the annulus: 
\begin{equation*}
    \widetilde{\mathcal{A}}:=\{u\in{\mathbb{C}}: d_{\widehat{G}}(x,y)+3\varepsilon<|u-x''|<\frac{d'}{2}\}
\end{equation*}
Observe that: 
\begin{enumerate}
    \item Since $\text{length}(\gamma'')\leq{d_{\widehat{G}}(x,y)+3\varepsilon}$, it follows that $\gamma''\subseteq{B(x'',d_{\widehat{G}}(x,y)+3\varepsilon)}$. Since any path from $N^{\circ}$ to $S^{\circ}$ in $\widehat{H}$ must intersect $\gamma''$, such a path must also intersect the ball $B(x'',d_{\widehat{G}}(x,y)+3\varepsilon)$.
    \item On the other hand, since any path from $N^{\circ}$ to $S^{\circ}$ in $\widehat{H}$ is a also path from $[D^{\bullet}, A^{\bullet}]_{\partial{\widehat{G}}}$ to $[B^{\bullet},C^{\bullet}]_{\partial{\widehat{G}}}$ in $\widehat{G}$, any such curve has diameter $\geq{d'}$. Hence, any curve from $N^{\circ}$ to $S^{\circ}$ in $\widehat{H}$ must at some point exit the ball $B(x'',\frac{d'}{2})$.  
\end{enumerate}
Putting all this together, we have that any path from $N^{\circ}$ to $S^{\circ}$ in $H^{\circ}$ must cross the annulus $\widetilde{\mathcal{A}}$. Having established this fact, consider the metric $\rho:E^{\circ}_{H}\rightarrow{[0,\infty)}$ defined as follows: 
\begin{equation*}
    \rho(e^{\circ})=\int_{e^{\circ}}\frac{|dz|}{|z-x''|}
\end{equation*}
for $e^{\circ}\in{E_{H}^{\circ}}$ contained in the annulus $\widetilde{\mathcal{A}}$, where we think of $e^{\circ}$ as a line segment in the plane. If $e^{\circ}$ is not contained in $\widetilde{\mathcal{A}}$, then $\rho(e^{\circ})=0$. Suppose $\gamma$ is a path in $H^{\circ}$ from $N^{\circ}$ to $S^{\circ}$. We saw earlier that $\gamma$ must cross the annulus $\widetilde{\mathcal{A}}$ at least once. However, when it does this, it is possible that $\gamma$ uses edges $e^{\circ}\in{E^{\circ}_{H}}$ that are not entirely contained in $\widetilde{\mathcal{A}}$ and so have zero mass with respect to $\rho$. Since edges of $G$ have length at most $\varepsilon$, edges of $G^{\circ}$ have length at most $2\varepsilon$. Thus, we can be sure that all of the edges we use when crossing the slightly smaller annulus $\{u\in{\mathbb{C}}:d_{\widehat{G}}(x,y)+5\varepsilon<|u-x''|<\frac{d'}{2}-2\varepsilon\}$ do indeed have positive mass. Hence: 
\begin{equation*}
    l_{\rho}(\gamma)=\int_{\gamma}\frac{|dz|}{|z-x''|}\geq{\log\Big(\frac{d'-4\varepsilon}{2(d_{\widehat{G}}(x,y)+5\varepsilon)}\Big)}
\end{equation*}
By the same argument as in the proof of Lemma \ref{lem: gradient estimate for tiling map}:
\begin{equation*}
    A(\rho)\leq{4\pi\log\Big(\frac{d'-2\varepsilon}{2\hspace{1pt}d_{\widehat{G}}(x,y)}\Big)}+\frac{8\pi\varepsilon}{d_{\widehat{G}}(x,y)}
\end{equation*}
By our assumption that $\varepsilon\leq{d_{\widehat{G}}(x,y)}<\frac{d'}{4}$ and $\varepsilon<\frac{d'}{12}$, we have that: 
\begin{equation*}
    A(\rho)\leq{4\pi\log\Big(\frac{d'-2\varepsilon}{2\hspace{1pt}d_{\widehat{G}}(x,y)}\Big)}+\frac{8\pi\varepsilon}{d_{\widehat{G}}(x,y)}\lesssim{\log\Big(\frac{d'}{d_{\widehat{G}}(x,y)}\Big)}
\end{equation*}
Similarly, if $\gamma$ is a path from $N^{\circ}$ to $S^{\circ}$ in $H^{\circ}$, we have that:
\begin{equation*}
    l_{\rho}(\gamma)=\int_{\gamma}\frac{|dz|}{|z-x''|}\geq{\log\Big(\frac{d'-4\varepsilon}{2(d_{\widehat{G}}(x,y)+5\varepsilon)}\Big)}\gtrsim{\log\Big(\frac{d'}{d_{\widehat{G}}(x,y)}\Big)}
\end{equation*} 
Putting all this together, we have that: 
\begin{equation*}
    \lambda(N^{\circ}\leftrightarrow{S^{\circ}};H^{\circ})\geq{\inf\limits_{\gamma}\frac{\ell^{2}_{\rho}(\gamma)}{A(\rho)}}\gtrsim{\log\Big(\frac{d'}{d_{\widehat{G}}(x,y)}\Big)}
\end{equation*}
where our infimum is over all paths $\gamma$ in $H^{\circ}$ from $N^{\circ}$ to $S^{\circ}$. By duality: 
\begin{equation*}
    \lambda(O^{\bullet}\leftrightarrow{W^{\bullet}};H^{\bullet})=\frac{1}{\lambda(N^{\circ}\leftrightarrow{S^{\circ}};H^{\circ})}\lesssim{\frac{1}{\log\Big(\frac{d'}{d_{\widehat{G}}(x,y)}\Big)}}
\end{equation*}
Plugging this into Equation \ref{eqn: |h(y)-h(x)| bounded by extremal length}, the desired result follows. \\ \\
\underline{\textbf{Case 2:} $d_{cc}^{\widehat{G}}(x,y)\leq{d_{\widehat{G}}(x,y)}$}
By the definition of $d_{cc}^{\widehat{G}}(x,y)$, we can find a crosscut $\gamma$ of $\widehat{G}$ that joins $x$ and $y$ to one of the four distinguished boundary arcs of $(\widehat{G},A^{\bullet},B^{\bullet},C^{\bullet},D^{\bullet})$ and separates it from the opposite boundary arc, such that $\text{length}(\gamma)<d_{cc}^{\widehat{G}}(x,y)+\varepsilon$. We now split our problem into two further cases, depending on whether the relevant boundary arcs of $(\widehat{G},A^{\bullet},B^{\bullet},C^{\bullet},D^{\bullet})$ are Dirichlet arcs where $h$ is constant, or Neumann arcs along which $h$ is monotone. \\ \\
\underline{\textbf{Case 2.1}}: $\gamma$ joins $x$ and $y$ to one of the Dirichlet arcs and separates them from the opposite Dirichlet arc.\\ \\
WLOG, suppose $\gamma$ joins $x$ and $y$ to $[A^{\bullet}, B^{\bullet}]_{\partial{\widehat{G}}}$ and separates them from $[C^{\bullet}, D^{\bullet}]_{\partial{\widehat{G}}}$. Let $\widehat{N}_{x,y}^{\gamma}$ be the connected component of $\widehat{G}\setminus{\gamma}$ containing $x$ and $y$. Define: 
\begin{align*}
    (N_{x,y}^{\gamma})^{\bullet}&:=V^{\bullet}\cap{\widehat{N}_{x,y}^{\gamma}}, & \gamma^{\bullet}&:=\{z\in{V^{\bullet}}: \text{dist}(z,\gamma)\leq{\varepsilon}\}
\end{align*}
By the maximum principle for discrete harmonic functions,
\begin{equation*}
    \max_{z\in{(N_{x,y}^{\gamma})^{\bullet}}}h(z)\leq{\max_{z\in{\gamma^{\bullet}}}h(z)}
\end{equation*}
Let $v$ be a vertex of $\gamma^{\bullet}$ so that $h(v)=\max\limits_{z\in{\gamma^{\bullet}}}h(z)$ and for some neighboring vertex $u$ of $v$ in $V^{\bullet}$, $h(u)<h(v)$. Since $x,y\in{(N_{x,y}^{\gamma})^{\bullet}}$, we have that:
\begin{equation*}
    (h(y)-h(x))<h(v)
\end{equation*}
Similar to our argument in case 1, let $H$ be the suborthodiagonal map of $G$ formed by gluing together all of the quadrilaterals of $G$ that are tangent to a vertex $z$ of $V^{\bullet}$ such that $0\leq{h(z)}<{h(v)}$. By the maximum principle, $H$ is simply connected with a unique, distinguished exterior face. Furthermore, let: 
\begin{align*}
    O^{\bullet}&:=[A^{\bullet}, B^{\bullet}], & W^{\bullet}&:=\{z\in{V^{\bullet}}: h(z)\geq{h(v)}\}\cap{\partial{V_{H}^{\bullet}}}
\end{align*}
Then $O^{\bullet}$ and $W^{\bullet}$ are primal boundary arcs of $H$ with corresponding dual arcs: 
\begin{align*}
    N^{\circ}&:=[B^{\circ}, C^{\circ}]\cap{\partial{V_{H}^{\circ}}}, & S^{\circ}&:=[D^{\circ}, A^{\circ}]\cap{\partial{V_{H}^{\circ}}}
\end{align*}
With these pairs of distinguished primal and dual arcs, $H$ is an orthodiagonal rectangle. By the same argument as in case 1, almost verbatim, we have that:
\begin{equation}
\label{eqn: |h(y)-h(x)| bounded by extremal length case 2.1}
    |h(y)-h(x)|\leq{h(v)}\leq{\lambda^{\bullet}(O^{\bullet}\leftrightarrow{W^{\bullet}};H^{\bullet})+\frac{2K}{\log\big(\frac{d'}{\varepsilon}\big)}}
\end{equation}
By duality: 
\begin{equation*}
    \lambda(N^{\circ}\leftrightarrow{S^{\circ}};H^{\circ})\cdot\lambda(O^{\bullet}\leftrightarrow{W^{\bullet}};H^{\bullet})=1
\end{equation*}
Thus, to bound $\lambda(O^{\bullet}\leftrightarrow{W^{\bullet}};H^{\bullet})$ 
 and therefore $|h(y)-h(x)|$ from above, it suffices to bound the dual extremal length $\lambda(N^{\circ}\leftrightarrow{S^{\circ}};H^{\circ})$ from below. We will do this by picking a good metric to plug into the variational formula for $\lambda(N^{\circ}\leftrightarrow{S^{\circ}};H^{\circ})$. 
 \begin{figure}[H]
    \centering
    \includegraphics[scale=0.53]{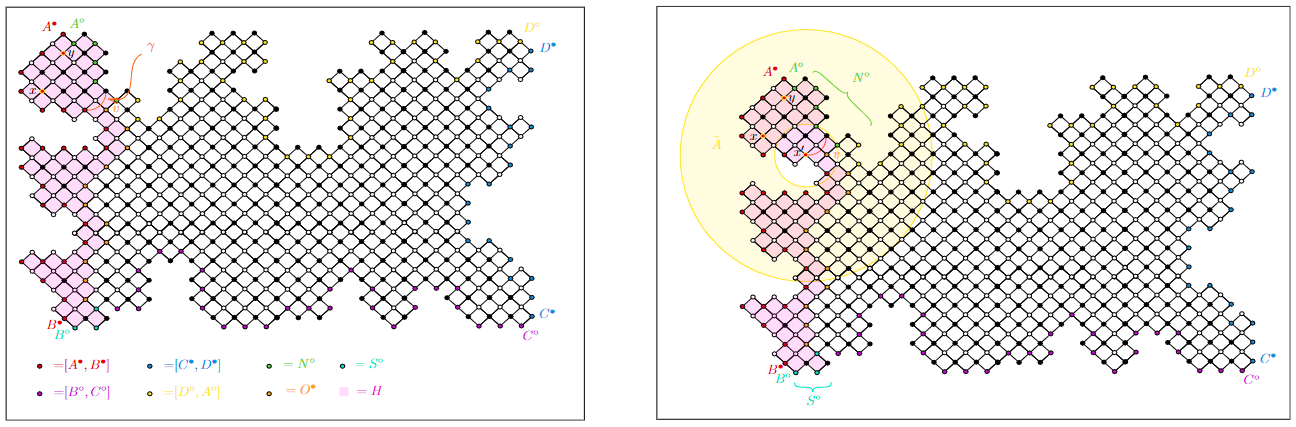}  
    \caption{On the left: The subrectangle $H$ of $G$ with its distinguished boundary arcs $N^{\circ}, O^{\bullet}, S^{\circ}$ and $W^{\bullet}=[A^{\bullet},B^{\bullet}]$. On the right: Notice that any path from $N^{\circ}$ to $S^{\circ}$ in $H^{\circ}$ must cross the annulus $\widetilde{\mathcal{A}}$. For simplicity, our orthodiagonal map $G$ here is just a chunk of the square grid but, in general, this need not be the case.}
\end{figure}
 \noindent Observe that $\gamma$ is a crosscut of $\widehat{G}$ that starts and ends at a point of $[A^{\bullet}, B^{\bullet}]_{\partial{\widehat{G}}}$ and, by the definition of $\gamma^{\bullet}$, $v$ lies within $\varepsilon$ of $\gamma$. Since $v$ has a neighboring vertex $u$ with the property that $h(u)<h(v)$, we also have that $v\in{W^{\bullet}}$. Since the edges of $G$ have length at most $\varepsilon$, putting all this together, it follows that there exists a crosscut $\gamma'$ of $\widehat{H}$ of length at most $\frac{1}{2}(d_{cc}^{\widehat{G}}(x,y)+5\varepsilon)$ joining $W^{\bullet}$ and $O^{\bullet}$ in $\widehat{H}$, thereby separating $N^{\circ}$ and $S^{\circ}$ in $\widehat{H}$. Pick a point $x'$ in $\gamma'$ and consider the annulus: 
\begin{equation*}
    \widetilde{A}=\{u\in{\mathbb{C}}: \frac{1}{2}(d_{cc}^{\widehat{G}}(x,y)+5\varepsilon)<{|u-x'|<\frac{d'}{2}}\}
\end{equation*}
Since any path in $\widehat{G}$ from $[B^{\circ},C^{\circ}]_{\partial{\widehat{G}}}$ to $[A^{\circ}, D^{\circ}]_{\partial{\widehat{G}}}$ has diameter greater than or equal to $d'$, the same is true of any path in $\widehat{H}$ from $N^{\circ}$ to $S^{\circ}$. On the other hand, since $\gamma'$ separates $N^{\circ}$ and $S^{\circ}$ in $\widehat{H}$, any such path must intersect $\gamma'$ and therefore $B(x',\frac{1}{2}(d_{cc}^{\widehat{G}}(x,y)+5\varepsilon))$. Putting all this together, we conclude that any path from $N^{\circ}$ to $S^{\circ}$ in $\widehat{H}$ must cross the annulus $\widetilde{A}$. Hence, if we define the metric $\rho:E_{H}^{\circ}\rightarrow{(0,\infty)}$ by the formula: 
\begin{equation*}
    \rho(e^{\circ})=\begin{cases}
        \int_{e^{\circ}}\frac{|dz|}{|z-x'|} \hspace{5pt}\text{for edges $e^{\circ}\in{E^{\circ}_{H}}$ contained in the annulus $\widetilde{A}$} \\ 
        0 \hspace{5pt}\text{otherwise} 
    \end{cases}
\end{equation*}
 by the same argument as in case 1, almost verbatim, plugging $\rho$ into the variational formula for $\lambda(N^{\circ}\leftrightarrow{S^{\circ}};H^{\circ})$, we have that:
\begin{equation*}
    \lambda(N^{\circ}\leftrightarrow{S^{\circ}};H^{\circ})\gtrsim{\log\Big(\frac{d'}{d_{cc}^{\widehat{G}}(x,y)}\Big)}
\end{equation*}
By duality: 
\begin{equation*}
    \lambda(O^{\bullet}\leftrightarrow{W^{\bullet}};H^{\bullet})\lesssim{\frac{1}{\log\Big(\frac{d'}{d_{cc}^{\widehat{G}}(x,y)}\Big)}}    
\end{equation*}
Plugging this into Equation \ref{eqn: |h(y)-h(x)| bounded by extremal length case 2.1}, the desired result follows. \\ \\
\underline{\textbf{Case 2.2}:} $\gamma$ joins $x$ and $y$ to one of the Neumann arcs and separates them from the opposite Neumann arc. \\ \\
WLOG, suppose $\gamma$ joins $x$ and $y$ to $[B^{\bullet}, C^{\bullet}]_{\partial{\widehat{G}}}$ and separates them from $[D^{\bullet}, A^{\bullet}]_{\partial{\widehat{G}}}$. Similar to case 2.1, let $\widehat{N}_{x,y}^{\gamma}$ be the connected component of $\widehat{G}\setminus{\gamma}$ containing $x$ and $y$ and define: 
\begin{align*}
    (N^{\gamma}_{x,y})^{\bullet}&:=V^{\bullet}\cap{\widehat{N}^{\gamma}_{x,y}}, & \gamma^{\bullet}&:=\{z\in{V^{\bullet}}: \text{dist}(z,\gamma)\leq{\varepsilon}\}    
\end{align*}
By the maximum principle for harmonic functions, since $h^{\bullet}$ is harmonic on the part of the boundary of $(N^{\gamma}_{x,y})^{\bullet}$ that intersects $[B^{\bullet}, C^{\bullet}]$, we have that:
\begin{align*}
    \min_{z\in{\gamma^{\bullet}}}h(z)&\leq{\min_{z\in{(\widehat{N}^{\gamma}_{x,y})^{\bullet}}}h(z)}, & \max_{z\in{(\widehat{N}^{\gamma}_{x,y})^{\bullet}}}h(z)&\leq{\max_{z\in{\gamma^{\bullet}}}h(z)}
\end{align*}
Let $v_{1}$ and $v_{2}$ be vertices of $\gamma^{\bullet}$ so that $h(v_{1})=\min_{z\in{\gamma^{\bullet}}}h(z)$, $h(v_{2})=\max_{z\in{\gamma^{\bullet}}}h(z)$ and $v_{1}$ and $v_{2}$ have neighboring vertices $u_{1}$ and $u_{2}$ so that $h(v_{1})<h(u_{1})$ and $h(v_{2})>h(u_{2})$. Since $x,y\in{(N_{\gamma}^{x,y})^{\bullet}}$, we have that: 
\begin{equation*}
    \big(h(y)-h(x)\big)\leq{\big(h(v_{2})-h(v_{1})\big)}
\end{equation*}
Similar to our argument in case 1, let $H$ be the suborthodiagonal map of $G$ formed by gluing together all of the quadrilaterals of $G$ that are tangent to a vertex $z$ of $V^{\bullet}$ such that $h(v_{1})\leq{h(z)}<{h(v_{2})}$. By the maximum principle, $H$ is simply connected with a unique, distinguished exterior face. Furthermore, let: 
\begin{align*}
    O^{\bullet}&:=\{z\in{V^{\bullet}}: h(z)\geq{h(v_{1})}\}\cap{\partial{V_{H}^{\bullet}}}, & W^{\bullet}&:=\{z\in{V^{\bullet}}: h(z)\leq{h(v_{2})}\}\cap{\partial{V_{H}^{\bullet}}}
\end{align*}
Then $O^{\bullet}$ and $W^{\bullet}$ are primal boundary arcs of $H$ with corresponding dual arcs: 
\begin{align*}
    N^{\circ}&:=[D^{\circ}, A^{\circ}]\cap{\partial{V_{H}^{\circ}}}, & S^{\circ}&:=[B^{\circ}, C^{\circ}]\cap{\partial{V_{H}^{\circ}}}
\end{align*}
With these pairs of distinguished primal and dual arcs, $H$ is an orthodiagonal rectangle. By the same argument as in case 1, almost verbatim, we have that:
\begin{equation}
\label{eqn: |h(y)-h(x)| bounded by extremal length case 2.2}
    |h(y)-h(x)|\leq{h(v_{2})-h(v_{1})}\leq{\lambda^{\bullet}(O^{\bullet}\leftrightarrow{W^{\bullet}};H^{\bullet})+\frac{2K}{\log\big(\frac{d'}{\varepsilon}\big)}}
\end{equation}
where $K>0$ is an absolute constant. By duality: 
\begin{equation*}
    \lambda(N^{\circ}\leftrightarrow{S^{\circ}};H^{\circ})\cdot\lambda(O^{\bullet}\leftrightarrow{W^{\bullet}};H^{\bullet})=1
\end{equation*}
Thus, to bound $\lambda(O^{\bullet}\leftrightarrow{W^{\bullet}};H^{\bullet})$ 
 and therefore $|h(y)-h(x)|$ from above, it suffices to bound the dual extremal length $\lambda(N^{\circ}\leftrightarrow{S^{\circ}};H^{\circ})$ from below.
 \begin{figure}[H]
    \centering
    \includegraphics[scale=0.5]{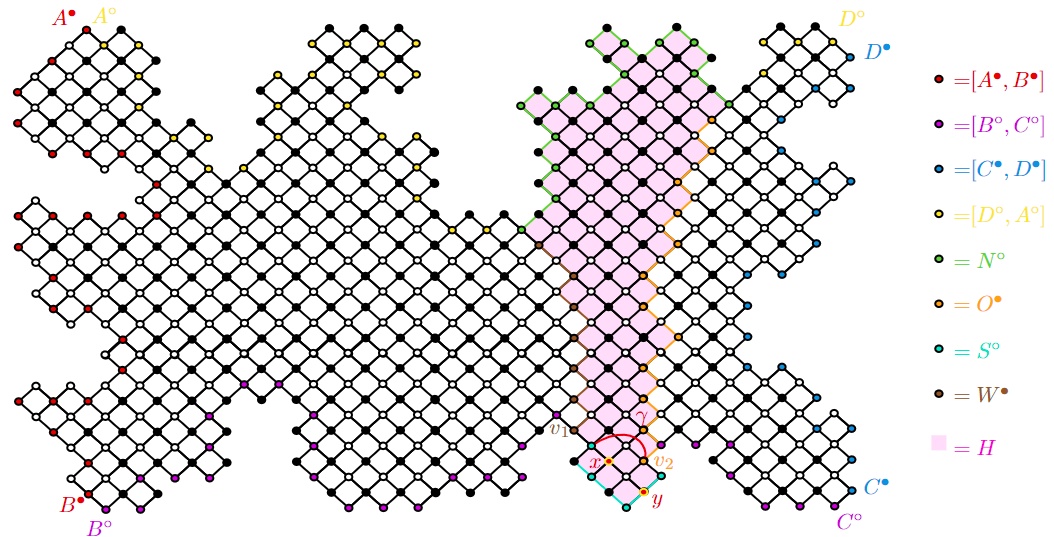}  
    \caption{The subrectangle $H$ of $G$ with its distinguished boundary arcs $N^{\circ}, O^{\bullet}, S^{\circ}$ and $W^{\bullet}$. For simplicity, our orthodiagonal map $G$ here is just a chunk of the square grid but, in general, this need not be the case.}
\end{figure}
 \noindent Since $v_{1}\in{W^{\bullet}}$, $v_{2}\in{O^{\bullet}}$ and each of these points lies within $\varepsilon$ of $\gamma$, there exists a crosscut $\gamma'$ of $\widehat{H}$ of length at most $d_{cc}^{\widehat{G}}(x,y)+3\varepsilon$ joining $O^{\bullet}$ and $W^{\bullet}$ in $\widehat{H}$ and thereby separating $N^{\circ}$ and $S^{\circ}$ in $\widehat{H}$. Hence, by the same argument we used in cases 1 and 2.1, we have that: 
 \begin{equation*}
    \lambda(N^{\circ}\leftrightarrow{S^{\circ}};H^{\circ})\gtrsim{\log\Big(\frac{d'}{d_{cc}^{\widehat{G}}(x,y)}\Big)}
 \end{equation*}
 By duality: 
\begin{equation*}
    \lambda(O^{\bullet}\leftrightarrow{W^{\bullet}};H^{\bullet})\lesssim{\frac{1}{\log\Big(\frac{d'}{d_{cc}^{\widehat{G}}(x,y)}\Big)}}    
\end{equation*}
Plugging this into Equation \ref{eqn: |h(y)-h(x)| bounded by extremal length case 2.2}, the desired result follows. \\ \\
Having shown that $h$ has the prescribed modulus of continuity, observe that the analogous estimate for $\tilde{h}$ follows by the same argument. This completes our proof. 
\end{proof}

\subsection{Two- Sided Estimates for Extremal Length on Orthodiagonal Maps}
\label{subsec: Bounds for EL and Dirichlet Energy}

In this section, we prove two- sided estimates for the discrete extremal length between opposite boundary arcs for an orthodiagonal rectangle that is a $(\delta, \varepsilon)$- good approximation of some conformal rectangle $(\Omega, A,B,C,D)$. This gives us uniform boundedness of the tiling maps $(\phi_{n})_{n=1}^{\infty}$ in Theorem \ref{thm: convergence of rectangle tiling maps}. 
\begin{prop}
\label{prop: Discrete and Continuous EL Comparable}
    Suppose $(\Omega,A,B,C,D)$ is a conformal rectangle and $(G,A^{\bullet}, B^{\bullet}, C^{\bullet}, D^{\bullet})$ is a $(\delta, \varepsilon)$- good interior approximation of $(\Omega, A,B,C,D)$. Suppose $\delta<\frac{\ell\wedge{\ell'}}{2}$, where: 
    \begin{align*}
        \ell=\inf\{\text{length}(\gamma):\text{$\gamma$ is a curve in $\Omega$ joining $[A,B]_{\partial{\Omega^{\ast}}}$ and $[C,D]_{\partial{\Omega^{\ast}}}$}\}, \\
        \ell'=\inf\{\text{length}(\gamma):\text{$\gamma$ is a curve in $\Omega$ joining $[B,C]_{\partial{\Omega^{\ast}}}$ and $[D,A]_{\partial{\Omega^{\ast}}}$}\}
    \end{align*}
    Then:
    \begin{equation*}
        \frac{(\ell-2\delta)^{2}}{2\cdot{\text{Area}(\Omega)}}\leq{\lambda([A^{\bullet},B^{\bullet}]\leftrightarrow{[C^{\bullet},D^{\bullet}]};G^{\bullet})}\leq{\frac{2\cdot\text{Area}(\Omega)}{(\ell'-2\delta)^{2}}}
    \end{equation*}  
\end{prop}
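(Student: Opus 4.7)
The proposition is the discrete counterpart of the elementary continuous bound $\lambda\geq \ell^2/\mathrm{Area}(\Omega)$ obtained by feeding $\rho\equiv 1$ into the extremal length variational problem. I plan to prove the lower bound directly via Dirichlet's Principle and recover the upper bound by running the identical argument on the dual graph $G^\circ$ and inverting via Ford--Fulkerson duality (Corollary \ref{FF_discrete_rectangles}).

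\textbf{Lower bound.} Let $d^\bullet(x)$ denote the graph distance from $x$ to $[A^\bullet,B^\bullet]$ in $G^\bullet$ weighted by the Euclidean primal edge lengths $|e^\bullet|$, and set $h(x):=\min\bigl(d^\bullet(x)/(\ell-2\delta),\,1\bigr)$. By construction $h\equiv 0$ on $[A^\bullet,B^\bullet]$; the geometric claim $d^\bullet(x)\geq \ell-2\delta$ for $x\in[C^\bullet,D^\bullet]$ (proved below) forces $h\equiv 1$ there. Since $d^\bullet$ is $1$-Lipschitz along primal edges, $|h(y)-h(x)|\leq |e^\bullet|/(\ell-2\delta)$ for every $\{x,y\}=e^\bullet\in E^\bullet$. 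Using the orthodiagonality identity $|e^\bullet||e^\circ|=2\,\mathrm{Area}(Q)$, where $Q$ is the face of $G$ whose primal diagonal is $e^\bullet$, and summing over inner faces,
\begin{equation*}
\mathcal{E}^\bullet(h)\;=\;\sum_{\{x,y\}\in E^\bullet}\!\frac{|e^\circ|}{|e^\bullet|}\bigl(h(y)-h(x)\bigr)^2 \;\leq\; \frac{1}{(\ell-2\delta)^2}\sum_{Q}2\,\mathrm{Area}(Q) \;\leq\; \frac{2\,\mathrm{Area}(\Omega)}{(\ell-2\delta)^2},
\end{equation*}
where the final inequality uses $\widehat G\subseteq\Omega$. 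Dirichlet's Principle then gives $\lambda^\bullet\geq 1/\mathcal{E}^\bullet(h)\geq (\ell-2\delta)^2/(2\,\mathrm{Area}(\Omega))$. To establish the geometric claim, consider any polygonal path $\eta\subseteq\widehat G\subseteq\Omega$ from $y\in[A^\bullet,B^\bullet]$ to $x\in[C^\bullet,D^\bullet]$. At $y$, the hypothesis $d_{cc}^\Omega([A^\bullet,B^\bullet],[A,B]_{\partial\Omega^\ast};[C,D]_{\partial\Omega^\ast})<\delta$ produces a crosscut $\gamma_A$ of length $<\delta$ that separates $y$ from $[C,D]_{\partial\Omega^\ast}$ and with $\overline{\mathcal{N}_{\gamma_A}^{y}}\cap[A,B]_{\partial\Omega^\ast}\neq\emptyset$. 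Since $x$ lies on the opposite side of $\gamma_A$, the curve $\eta$ must cross $\gamma_A$ at some point $p_A$; I splice the sub-arc of $\eta$ from $x$ to $p_A$ with a sub-arc of $\gamma_A$ running from $p_A$ to an endpoint of $\gamma_A$ on $\partial\Omega$ realising a prime end in $[A,B]_{\partial\Omega^\ast}$, adding length at most $\delta$. A symmetric splice at $x$ using the crosscut supplied by the $[C^\bullet,D^\bullet]$ hypothesis produces an extended curve $\tilde\eta$ joining $[A,B]_{\partial\Omega^\ast}$ and $[C,D]_{\partial\Omega^\ast}$ in $\Omega$ with $\mathrm{length}(\tilde\eta)\leq\mathrm{length}(\eta)+2\delta$, and the definition of $\ell$ yields $\mathrm{length}(\eta)\geq\ell-2\delta$.

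\textbf{Upper bound.} By Corollary \ref{FF_discrete_rectangles}, $\lambda^\bullet=1/\lambda^\circ$ where $\lambda^\circ:=\lambda([B^\circ,C^\circ]\leftrightarrow[D^\circ,A^\circ];G^\circ)$. Running the identical Dirichlet argument in $G^\circ$ with graph distance to $[D^\circ,A^\circ]$ weighted by $|e^\circ|$, and the dual-arc crosscut hypotheses driving the extension step, gives $\lambda^\circ\geq (\ell'-2\delta)^2/(2\,\mathrm{Area}(\Omega))$. Inverting yields $\lambda^\bullet\leq 2\,\mathrm{Area}(\Omega)/(\ell'-2\delta)^2$.

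\textbf{Main obstacle.} The analytically delicate step is the splice in the geometric claim: one must verify that some endpoint of the short crosscut $\gamma_A$ on $\partial\Omega$ truly corresponds to a prime end in $[A,B]_{\partial\Omega^\ast}$ (and analogously at $x$). This reduces to a careful topological analysis of how the arc of $\partial\Omega^\ast$ bordering $\mathcal{N}_{\gamma_A}^{y}$ sits within the cyclic decomposition $[A,B]\cup[B,C]\cup[C,D]\cup[D,A]$, in particular ruling out the bad configuration where both endpoints of $\gamma_A$ land on a single neighboring Neumann arc, which would force $\overline{\mathcal{N}_{\gamma_A}^{y}}$ to miss $[A,B]_{\partial\Omega^\ast}$ and contradict the crosscut distance hypothesis. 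Once this topological check is in hand, the remainder of the proof is the bookkeeping above.
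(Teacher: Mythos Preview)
Your approach is correct and essentially the same as the paper's. The paper plugs the edge-length metric $\rho(e^\bullet)=|e^\bullet|$ directly into the extremal-length variational formula, while you construct the test function $h$ from weighted graph distance and invoke Dirichlet's Principle; since $\lambda=R_{\mathrm{eff}}$ (Proposition \ref{Effective Resistance is Extremal Length}) these are two phrasings of the same computation, both reducing to $A(\rho)=\sum|e^\bullet||e^\circ|=2\,\mathrm{Area}(\widehat G)\leq 2\,\mathrm{Area}(\Omega)$ and the same geometric claim $\mathrm{length}(\eta)\geq\ell-2\delta$. The passage to the dual for the upper bound is identical.

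On your ``main obstacle'': the paper simply asserts the splice works without comment, so you are being more careful here. Your case analysis is nearly complete but misses one configuration: the two endpoints of $\gamma_A$ could land on \emph{different} Neumann arcs, one in $(B,C)_{\partial\Omega^\ast}$ and one in $(D,A)_{\partial\Omega^\ast}$, with the $y$-component bounded by the $\partial\Omega^\ast$-arc through $[A,B]$. In that case neither endpoint is in $[A,B]_{\partial\Omega^\ast}$, so your splice as written fails. However, $\gamma_A$ would then itself be a curve in $\Omega$ joining $[B,C]_{\partial\Omega^\ast}$ and $[D,A]_{\partial\Omega^\ast}$ of length $<\delta$, forcing $\ell'\leq\delta$ and contradicting the hypothesis $\delta<\ell'/2$. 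This is precisely where that hypothesis enters, and once noted the obstacle dissolves.
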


\begin{proof}
    Let: 
    \begin{align*}
    \lambda^{\bullet}&=\lambda([A^{\bullet}, B^{\bullet}]\leftrightarrow{[C^{\bullet}, D^{\bullet}]};G^{\bullet}), & \lambda^{\circ}&=\lambda([B^{\circ},C^{\circ}]\leftrightarrow{[D^{\circ}, A^{\circ}]};G^{\circ})
    \end{align*}
    Plugging the metric $\rho(e^{\bullet})=|e^{\bullet}|$ into the variational problem for $\lambda^{\bullet}$, we have that: 
    \begin{align*}
        \lambda^{\bullet}&\geq{\frac{\Big(\inf\{\sum\limits_{e^{\bullet}\in{\gamma}}|e^{\bullet}|: \text{$\gamma$ is a path from $[A^{\bullet}, B^{\bullet}]$ to $[C^{\bullet},D^{\bullet}]$ in $G^{\bullet}$}\}\Big)^{2}}{\sum\limits_{e^{\bullet}\in{E^{\bullet}}}\frac{|e^{\circ}|}{|e^{\bullet}|}|e^{\bullet}|^{2}}} \\ 
        &=\frac{\Big(\inf\{\text{length}(\gamma): \text{$\gamma$ is a path from $[A^{\bullet}, B^{\bullet}]$ to $[C^{\bullet},D^{\bullet}]$ in $G^{\bullet}$}\}\Big)^{2}}{2\cdot\text{Area}(\widehat{G})} \\ 
        &\geq{\frac{(\ell-2\delta)^{2}}{2\cdot{\text{Area}(\Omega)}}}
    \end{align*}
    The equality on the second line follows from the fact that if $Q$ is an inner face of $G$ with primal diagonal $e^{\bullet}$ and dual diagonal $e^{\circ}$, then $\text{Area}(Q)=\frac{1}{2}|e^{\bullet}|{|e^{\circ}|}$. Thus: 
    $$
    \sum_{e^{\bullet}\in{E^{\bullet}}}|e^{\bullet}||e^{\circ}|=2\sum_{Q\in{F_{in}}}\text{Area}(Q)=2\cdot{\text{Area}(\widehat{G})}
    $$ 
    In the inequality on the third line, two things are going on. On the one hand, since $G$ is an interior approximation of $\Omega$, $\text{Area}(\widehat{G})\leq{\text{Area}(\Omega)}$. On the other hand, suppose $\gamma$ is a path from $[A^{\bullet}, B^{\bullet}]$ to $[C^{\bullet}, D^{\bullet}]$ in $G^{\bullet}$. Since $(G,A^{\bullet}, B^{\bullet}, C^{\bullet}, D^{\bullet})$ is a $(\delta, \varepsilon)$- good approximation of $(\Omega,A,B,C,D)$ (the ``$\delta$" is really the relevant part here), we can modify any such path to get a path of length at most $\text{length}(\gamma)+2\delta$ from $[A,B]_{\partial{\Omega^{\ast}}}$ to $[C,D]_{\partial{\Omega^{\ast}}}$ in $\Omega$. By the definition of $\ell$: 
    \begin{equation*}
         \text{length}(\gamma)+2\delta\geq{\ell}      
    \end{equation*}
    Since this is true of any curve $\gamma$ from $[A^{\bullet}, B^{\bullet}]$ to $[C^{\bullet}, D^{\bullet}]$ in $G^{\bullet}$, the desired result follows. \\ \\
    Similarly, plugging the metric $\rho(e^{\circ})=|e^{\circ}|$ into the variational problem for $\lambda^{\circ}$ we have that: 
     \begin{align*}
        \lambda^{\circ}&\geq{\frac{\Big(\inf\{\sum\limits_{e^{\circ}\in{\gamma}}|e^{\circ}|: \text{$\gamma$ is a path from $[B^{\circ}, C^{\circ}]$ to $[D^{\circ},A^{\circ}]$ in $G^{\circ}$}\}\Big)^{2}}{\sum\limits_{e^{\bullet}\in{E^{\circ}}}\frac{|e^{\bullet}|}{|e^{\circ}|}|e^{\circ}|^{2}}} \\ 
        &=\frac{\Big(\inf\{\text{length}(\gamma): \text{$\gamma$ is a path from $[B^{\circ}, C^{\circ}]$ to $[D^{\circ},A^{\circ}]$ in $G^{\circ}$}\}\Big)^{2}}{2\cdot\text{Area}(\widehat{G})} \\ 
        &\geq{\frac{(\ell'-2\delta)^{2}}{2\cdot{\text{Area}(\Omega)}}}
    \end{align*}
    By Ford- Fulkerson duality for discrete rectangles (Corollary \ref{FF_discrete_rectangles}):
    \begin{equation*}
        \lambda^{\bullet}=\frac{1}{\lambda^{\circ}}\leq{\frac{2\cdot\text{Area}(\Omega)}{(\ell'-2\delta)^{2}}}
    \end{equation*}
\end{proof}
\noindent The two- sided estimate in Proposition \ref{prop: Discrete and Continuous EL Comparable} is very coarse, but it is sufficient for our purposes. It is however worth noting that, as a consequence of our estimates for the modulus of continuity of rectangle tiling maps as well as the corresponding limiting conformal map, we can do significantly better if we are dealing with continuous approximations of a continuous domain or discrete approximations of a discrete domain.
\begin{prop}
\label{prop: Rate of Convergence for Continuous Extremal Length}
   If $(\Omega',A',B',C',D')$ is a $\delta$- good approximation of the conformal rectangle $(\Omega,A,B,C,D)$, $L$ is the extremal length between $[A,B]_{\partial{\Omega^{\ast}}}$ and $[C,D]_{\partial{\Omega^{\ast}}}$ in $\Omega$, $L'$ is the extremal length between $[A',B']_{\partial{\Omega'}}$ and $[C',D']_{\partial{\Omega'}}$ in $\Omega'$, and $\delta>0$ satisfies $\delta\leq{\frac{d'}{2}e^{-\frac{4\pi}{L}}}$ and $\delta\leq{\frac{d}{2}e^{-16\pi{L}}}$, where:
        \begin{align*}
        d&= \inf\{\text{diameter}(\gamma): \text{$\gamma$ is a curve in $\Omega$ joining $[A,B]_{\partial{\Omega^{\ast}}}$ and $[C,D]_{\partial{\Omega^{\ast}}}$}\}\\
        d'&=\inf\{\text{diameter}(\gamma): \text{$\gamma$ is a curve in $\Omega$ joining $[B,C]_{\partial{\Omega^{\ast}}}$ and $[D,A]_{\partial{\Omega^{\ast}}}$}\}    
    \end{align*}
    Then: 
    \begin{equation*}
        -\frac{8\pi}{\log\big(\frac{d'}{2\delta}\big)}\leq{L'-L}\leq{\frac{16\pi{L}^{2}}{\log\big(\frac{d}{2\delta}\big)}}
    \end{equation*}
\end{prop}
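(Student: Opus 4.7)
The plan is to compare $L$ with $L'$ by transporting the extremal metric of $\Omega$ to $\Omega'$ via the uniformizing conformal map $\phi\colon \Omega \to \mathcal{R}_L$ supplied by Theorem \ref{thm: modulus of continuity for conformal map}. Writing $h = \mathrm{Re}(\phi)$ and $\tilde{h} = \mathrm{Im}(\phi)$, the pullback $\rho(z) := |\phi'(z)|$ is the extremal metric witnessing $\lambda([A,B]_{\partial\Omega^\ast} \leftrightarrow [C,D]_{\partial\Omega^\ast}; \Omega) = L$: it has $A(\rho) = \mathrm{Area}(\mathcal{R}_L) = L$, every path from $[A,B]_{\partial\Omega^\ast}$ to $[C,D]_{\partial\Omega^\ast}$ in $\Omega$ has $\rho$-length at least $L$, and every path from $[B,C]_{\partial\Omega^\ast}$ to $[D,A]_{\partial\Omega^\ast}$ has $\rho$-length at least $1$. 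I will restrict $\rho$ to $\Omega'$ and plug it into the variational formulas for $L'$ and for the dual extremal length $L'_\perp$.

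The key preliminary step is to use Theorem \ref{thm: modulus of continuity for conformal map} to show that $\phi$ sends the four boundary arcs of $\Omega'$ close to the four sides of $\mathcal{R}_L$. Setting $\epsilon_1 := \frac{2\pi}{\log(d'/(2\delta))}$ and $\epsilon_2 := \frac{2\pi L}{\log(d/(2\delta))}$, the goal is to establish
\begin{align*}
    0 \leq h \leq \epsilon_1 \text{ on } [A',B']_{\partial\Omega'}, &\qquad L - \epsilon_1 \leq h \leq L \text{ on } [C',D']_{\partial\Omega'}, \\
    0 \leq \tilde{h} \leq \epsilon_2 \text{ on } [B',C']_{\partial\Omega'}, &\qquad 1 - \epsilon_2 \leq \tilde{h} \leq 1 \text{ on } [D',A']_{\partial\Omega'}.
\end{align*}
For the first estimate (the others being symmetric), fix $z \in [A',B']_{\partial\Omega'}$. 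The $\delta$-goodness hypothesis supplies a crosscut $\gamma$ of $\Omega$ of length less than $\delta$ separating $z$ from $[C,D]_{\partial\Omega^\ast}$ with $\overline{\mathcal{N}^z_\gamma} \cap [A,B]_{\partial\Omega^\ast} \neq \emptyset$. Picking a sequence $w_n \in \mathcal{N}^z_\gamma$ converging to a prime end in $[A,B]_{\partial\Omega^\ast}$, the same crosscut $\gamma$ witnesses $d_{cc}^\Omega(z, w_n) < \delta$, so Theorem \ref{thm: modulus of continuity for conformal map} gives $|h(z) - h(w_n)| \leq \epsilon_1$; continuous extension of $\phi$ to $\Omega^\ast$ (where $h \equiv 0$ on $[A,B]_{\partial\Omega^\ast}$) then gives $h(w_n) \to 0$, so $h(z) \leq \epsilon_1$.

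For the lower bound on $L'-L$, any path $\gamma'$ from $[A',B']_{\partial\Omega'}$ to $[C',D']_{\partial\Omega'}$ in $\Omega'$ has image $\phi(\gamma') \subseteq \mathcal{R}_L$ with real-part endpoints differing by at least $L - 2\epsilon_1$, so
\begin{equation*}
    \int_{\gamma'} \rho(z) \, |dz| \, = \, \mathrm{length}(\phi(\gamma')) \, \geq \, L - 2\epsilon_1.
\end{equation*}
Since $A(\rho|_{\Omega'}) \leq A(\rho) = L$, the variational formula gives $L' \geq (L-2\epsilon_1)^2/L \geq L - 4\epsilon_1$, so $L' - L \geq -8\pi/\log(d'/(2\delta))$. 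The hypothesis $\delta \leq \frac{d'}{2} e^{-4\pi/L}$ ensures $2\epsilon_1 \leq L$, which legitimizes the expansion.

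For the upper bound, the identical argument applied to the dual extremal length $L'_\perp := \lambda([B',C']_{\partial\Omega'} \leftrightarrow [D',A']_{\partial\Omega'}; \Omega')$ using the bounds on $\tilde{h}$ yields $L'_\perp \geq (1-2\epsilon_2)^2/L$; continuous Ford-Fulkerson duality $L' \cdot L'_\perp = 1$ then gives $L' \leq L/(1-2\epsilon_2)^2$. The hypothesis $\delta \leq \frac{d}{2} e^{-16\pi L}$ forces $\epsilon_2 \leq 1/8$, so $2\epsilon_2 \leq 1/4$, and the elementary inequality $1/(1-x)^2 \leq 1 + 4x$ for $x \in [0, 1/4]$ (applied with $x = 2\epsilon_2$) produces $L' \leq L(1 + 8\epsilon_2) = L + 16\pi L^2/\log(d/(2\delta))$. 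The principal technical delicacy is the preliminary step: bootstrapping the interior modulus of continuity of $\phi$ to bounds on boundary points of $\Omega'$; once that is done, the remainder is a routine application of the variational principle combined with extremal length duality.
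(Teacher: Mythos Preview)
Your proof is correct and follows essentially the same approach as the paper: both use the conformal map $\phi:\Omega\to\mathcal{R}_L$, invoke Theorem \ref{thm: modulus of continuity for conformal map} to bound $h$ and $\tilde h$ on the four boundary arcs of $\Omega'$, and then plug the extremal metric $|\phi'|$ into the variational formulas for $L'$ and its dual. The only differences are cosmetic---you spell out the crosscut argument behind the boundary estimates that the paper leaves implicit, and you handle the final algebraic step via $1/(1-x)^2\le 1+4x$ directly whereas the paper chains $(1-x)^2\ge 1-2x$ with $1/(1-y)\le 1+2y$.
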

\begin{proof}
    Let $\phi:\Omega\rightarrow{\mathcal{R}_{L}}$ be the conformal map from $\Omega$ to the rectangle $\mathcal{R}_{L}$ so that the prime ends $A,B,C,D$ of $\Omega$ are mapped to the four corners of $\mathcal{R}_{L}$ and in particular, $\phi(A)=i$. We write $\phi=h+i\widetilde{h}$ where $h$ is the real part and $\widetilde{h}$ is the imaginary part of $\phi$. Then $\|\nabla{h}\|=\|\nabla{\widetilde{h}}\|$ is the extremal metric giving us the extremal length between $[A,B]_{\partial{\Omega^{\ast}}}$ and $[C,D]_{\partial{\Omega^{\ast}}}$ in $\Omega$. By Theorem \ref{thm: modulus of continuity for conformal map}:
    \begin{align*}
        h(z)&\leq{\frac{2\pi}{\log\big(\frac{d'}{2\delta}\big)}} \hspace{5pt}\text{for $z\in{[A',B']_{\partial{\Omega'}}}$} & h(z)&\geq{L-\frac{2\pi}{\log\big(\frac{d'}{2\delta}\big)}} \hspace{5pt}\text{for $z\in{[C',D']_{\partial{\Omega'}}}$} \\
        \widetilde{h}(z)&\leq{\frac{2\pi{L}}{\log\big(\frac{d}{2\delta}\big)}} \hspace{5pt}\text{for $z\in{[B',C']_{\partial{\Omega'}}}$} & \widetilde{h}(z)&\geq{1-\frac{2\pi{L}}{\log\big(\frac{d}{2\delta}\big)}} \hspace{5pt}\text{for $z\in{[D',A']_{\partial{\Omega'}}}$}
    \end{align*}
    The condition that $\delta\leq{\frac{d'}{2}e^{-\frac{4\pi}{L}}}$ ensures that the values of $h$ on $[C',D']_{\partial{\Omega'}}$ are larger than the values of $h$ on $[A',B']_{\partial{\Omega'}}$. Analogously, the condition that $\delta\leq{\frac{d'}{2}e^{-16\pi{L}}}$ ensures that the values of $\widetilde{h}$ on $[D',A']_{\partial{\Omega'}}$ are larger than the values of $\widetilde{h}$ on $[B',C']_{\partial{\Omega'}}$. Plugging the metric $\|\nabla{h}\|$ into the variational formula for $L'$, we have that: 
    \begin{equation}
    \label{eqn: lower bound for cts EL}
        L'\geq{\frac{\left(L-\frac{4\pi}{\log\big(\frac{d'}{2\delta}\big)}\right)^{2}}{L}}\geq{L-\frac{8\pi}{\log\big(\frac{d'}{2\delta}\big)}}
    \end{equation}
    Similarly, plugging $\|\nabla{\widetilde{h}}\|$ into the variational formula for the dual problem, we have that: 
    \begin{equation}
    \label{eqn: upper bound for cts EL}   
        \frac{1}{L'}\geq{\frac{\left(1-\frac{4\pi{L}}{\log\big(\frac{d}{2\delta}\big)}\right)^{2}}{L}}  \hspace{20pt}\implies\hspace{20pt} L'\leq{\frac{L}{\left(1-\frac{8\pi{L}}{\log\big(\frac{d}{2\delta}\big)}\right)}}\leq{L+\frac{16\pi{L}^{2}}{\log\big(\frac{d}{2\delta}\big)}}
    \end{equation}
    Combining Equations \ref{eqn: lower bound for cts EL} and \ref{eqn: upper bound for cts EL}, we arrive at the desired result. 
\end{proof}

\begin{prop}
\label{prop: Rate of Convergence of Discrete Extremal Length}
    Suppose $(H,W^{\bullet}, X^{\bullet}, Y^{\bullet}, Z^{\bullet})$ is a suborthodiagonal rectangle of $(G,A^{\bullet}, B^{\bullet}, C^{\bullet}, D^{\bullet})$ so that \\$(\widehat{H},W^{\bullet},X^{\bullet},Y^{\bullet}, Z^{\bullet})$ is a $\delta$- good interior approximation of $(\widehat{G},A^{\bullet}, B^{\bullet}, C^{\bullet}, D^{\bullet})$ and $G=(V^{\bullet}\sqcup{V^{\circ}}, E)$ is an orthodiagonal map with edges of length at most $\varepsilon$. Let $L$ denote the extremal length between $[A^{\bullet},B^{\bullet}]$ and $[C^{\bullet}, D^{\bullet}]$ in $G^{\bullet}$ and let $L'$ denote the extremal length between $[W^{\bullet}, X^{\bullet}]$ and $[Y^{\bullet}, Z^{\bullet}]$ in $H^{\bullet}$. Suppose also that $\delta$ and $\varepsilon$ satisfy $(\delta\vee{\varepsilon})\leq{d'e^{-\frac{2K}{L}}}$ and $(\delta\vee{\varepsilon})\leq{de^{-8KL}}$, where: 
    \begin{align*}
        d&= \inf\{\text{diameter}(\gamma): \text{$\gamma$ is a curve in $\Omega$ joining $[A^{\bullet},B^{\bullet}]_{\partial{\widehat{G}}}$ and $[C^{\bullet},D^{\bullet}]_{\partial{\widehat{G}}}$}\}\\
        d'&=\inf\{\text{diameter}(\gamma): \text{$\gamma$ is a curve in $\Omega$ joining $[B^{\bullet},C^{\bullet}]_{\partial{\widehat{G}}}$ and $[D^{\bullet},A^{\bullet}]_{\partial{\widehat{G}}}$}\}   
    \end{align*}
    and $K>0$ is the absolute constant from Theorem \ref{thm: modulus of continuity for tiling maps}. Then:
    \begin{equation*}
        -\frac{4K}{\log\big(\frac{d'}{\delta\vee{\varepsilon}}\big)}\leq{L'-L\leq{\frac{8KL^{2}}{\log\big(\frac{d}{\delta\vee{\varepsilon}}\big)}}}
    \end{equation*}
\end{prop}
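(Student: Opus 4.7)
The plan is to mirror the proof of Proposition \ref{prop: Rate of Convergence for Continuous Extremal Length} almost verbatim, substituting Theorem \ref{thm: modulus of continuity for tiling maps} for Theorem \ref{thm: modulus of continuity for conformal map}, Skopenkov's trick (Proposition \ref{Skopenkov's trick}) for the variational formula, and Ford--Fulkerson duality (Corollary \ref{FF_discrete_rectangles}) for continuous duality. Let $h:V^{\bullet}\to\mathbb{R}$ and $\widetilde{h}:V^{\circ}\to\mathbb{R}$ be the conjugate discrete harmonic functions associated with the tiling map on $(G,A^{\bullet},B^{\bullet},C^{\bullet},D^{\bullet})$. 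Recall from the remark in Section \ref{Extremal Length and Planar Networks} that $\mathcal{E}^{\bullet}(h;G)=L$ and $\mathcal{E}^{\circ}(\widetilde{h};G)=L$.

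The first step is to use the $\delta$-good interior approximation hypothesis to control $h$ on the new boundary arcs. For any $v\in[W^{\bullet},X^{\bullet}]$, by the hypothesis there is a crosscut of $\widehat{G}$ of length less than $\delta$ separating $v$ from $[C^{\bullet},D^{\bullet}]_{\partial\widehat{G}}$ whose component containing $v$ touches $[A^{\bullet},B^{\bullet}]_{\partial\widehat{G}}$. Since the boundary polygon of $\widehat{G}$ is made of edges of length at most $\varepsilon$, one can produce a vertex $w\in V^{\bullet}\cap[A^{\bullet},B^{\bullet}]$ with $d_{cc}^{\widehat{G}}(v,w)\lesssim \delta\vee\varepsilon$. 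Applying Theorem \ref{thm: modulus of continuity for tiling maps} then gives $h(v)=|h(v)-h(w)|\leq K/\log(d'/(\delta\vee\varepsilon))$. The symmetric estimate $h(v)\geq L-K/\log(d'/(\delta\vee\varepsilon))$ holds for $v\in[Y^{\bullet},Z^{\bullet}]$, and the analogous estimates for $\widetilde{h}$ on the dual arcs of $H$ (with $d$ in place of $d'$) follow identically.

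The second step reinterprets $L'$ and plugs $h$ and $\widetilde{h}$ into Skopenkov's trick. On $H^{\bullet}$, for any flow $\theta$ of unit strength from $[W^{\bullet},X^{\bullet}]$ to $[Y^{\bullet},Z^{\bullet}]$, Proposition \ref{Skopenkov's trick} combined with the gap estimate above and the trivial bound $\mathcal{E}^{\bullet}(h;H)\leq\mathcal{E}^{\bullet}(h;G)=L$ yields
\begin{equation*}
L-\frac{2K}{\log(d'/(\delta\vee\varepsilon))}\leq \text{gap}(h)\leq \sqrt{\mathcal{E}^{\bullet}(\theta;H)}\cdot\sqrt{L}.
\end{equation*}
Taking the infimum over $\theta$ and invoking Thomson's principle ($\inf_{\theta}\mathcal{E}^{\bullet}(\theta;H)=L'$) gives $L'\geq L-4K/\log(d'/(\delta\vee\varepsilon))$ after squaring and simplifying, exactly as in Equation \ref{eqn: lower bound for cts EL}. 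For the upper bound, I would run the same computation on $H^{\circ}$ with $\widetilde{h}$, using $\mathcal{E}^{\circ}(\widetilde{h};H)\leq L$ and the Ford--Fulkerson identity $\inf_{\omega}\mathcal{E}^{\circ}(\omega;H)=1/L'$ (Corollary \ref{FF_discrete_rectangles}) to obtain
\begin{equation*}
\left(1-\frac{2K}{\log(d/(\delta\vee\varepsilon))}\right)^{2}\leq \frac{L}{L'},
\end{equation*}
which rearranges, via the elementary inequality $1/(1-x)^{2}\leq 1+4x$ valid for sufficiently small $x$, into $L'\leq L+8KL^{2}/\log(d/(\delta\vee\varepsilon))$. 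The smallness hypotheses $(\delta\vee\varepsilon)\leq d'e^{-2K/L}$ and $(\delta\vee\varepsilon)\leq de^{-8KL}$ are exactly what is needed to make the discrete gap positive and to place $2K/\log(d/(\delta\vee\varepsilon))$ in the regime where this elementary inequality applies.

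The only genuinely non-routine step is the crosscut-distance bookkeeping in step one: passing from the $\delta$-good approximation condition (which concerns $d_{cc}^{\widehat{G}}$ from a point to a boundary arc) to the symmetric two-point crosscut distance $d_{cc}^{\widehat{G}}(v,w)$ appearing in Theorem \ref{thm: modulus of continuity for tiling maps}, while absorbing the $\varepsilon$-scale discretization of the boundary polygon of $\widehat{G}$. Everything else is a faithful transcription of the continuous proof.
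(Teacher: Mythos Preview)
Your proposal is correct and essentially the same as the paper's proof: the paper plugs the metric $\rho^{\bullet}(e)=|h(e^{+})-h(e^{-})|$ directly into the extremal-length variational formula for $L'$ (and $\rho^{\circ}$ into the dual problem on $H^{\circ}$), which is exactly your Skopenkov-plus-Thomson computation rewritten via Proposition~\ref{Effective Resistance is Extremal Length}. One small slip: the gap for $\widetilde{h}$ should read $1-\frac{2KL}{\log(d/(\delta\vee\varepsilon))}$ rather than $1-\frac{2K}{\log(d/(\delta\vee\varepsilon))}$, per the $\widetilde{h}$-estimate in Theorem~\ref{thm: modulus of continuity for tiling maps}; with that correction the algebra and the smallness hypothesis on $\delta\vee\varepsilon$ line up exactly with the paper's.
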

\begin{proof}
    Let $h:V^{\bullet}\rightarrow{\mathbb{R}}$ and $\widetilde{h}:V^{\circ}\rightarrow{\mathbb{R}}$ be the real and imaginary parts of the tiling map associated to the orthodiagonal rectangle $(G,A^{\bullet}, B^{\bullet}, C^{\bullet}, D^{\bullet})$. Then the metric $\rho^{\bullet}$ on $G^{\bullet}$ given by the formula $\rho^{\bullet}(u,v)=|h(u)-h(v)|$ for any edge $\{u,v\}\in{E^{\bullet}}$ is the extremal metric giving us the extremal length between $[A^{\bullet}, B^{\bullet}]$ and $[C^{\bullet}, D^{\bullet}]$ in $G^{\bullet}$. Similarly, the metric $\rho^{\circ}$ on $G^{\circ}$ given by the formula $\rho^{\circ}(u,v)=|\widetilde{h}(u)-\widetilde{h}(v)|$ for any edge $\{u,v\}\in{E^{\circ}}$ is the extremal metric giving us the extremal length between $[B^{\circ}, C^{\circ}]$ and $[D^{\circ}, A^{\circ}]$ in $G^{\circ}$. By Theorem \ref{thm: modulus of continuity for tiling maps}, 
    \begin{align*}
        h(z)&\leq{\frac{K}{\log\big(\frac{d'}{\delta\vee{\varepsilon}}\big)}} \hspace{5pt}\text{for $z\in{[W^{\bullet},X^{\bullet}]}$} & h(z)&\geq{L-\frac{K}{\log\big(\frac{d'}{\delta\vee{\varepsilon}}\big)}} \hspace{5pt}\text{for $z\in{[Y^{\bullet},Z^{\bullet}]}$} \\
        \widetilde{h}(z)&\leq{\frac{K\hspace{1pt}L}{\log\big(\frac{d}{\delta\vee{\varepsilon}}\big)}} \hspace{5pt}\text{for $z\in{[X^{\circ},Y^{\circ}]}$} & \widetilde{h}(z)&\geq{1-\frac{K\hspace{1pt}L}{\log\big(\frac{d}{\delta\vee{\varepsilon}}\big)}} \hspace{5pt}\text{for $z\in{[Z^{\circ},W^{\circ}]}$}
    \end{align*}
    The condition that $(\delta\vee{\varepsilon})\leq{d'e^{-\frac{2K}{L}}}$ ensures that the values of $h$ on $[Y^{\bullet}, Z^{\bullet}]$ are larger than the values of $h$ on $[W^{\bullet},X^{\bullet}]$. Similarly, the condition that $(\delta\vee{\varepsilon})\leq{de^{-8KL}}$ ensures that the values of $\widetilde{h}$ on $[Z^{\circ},W^{\circ}]$ are larger than the values of $\widetilde{h}$ on $[X^{\circ},Y^{\circ}]$. Plugging the metric $\rho^{\bullet}$ into the variational formula for $L'$, we have that: 
    \begin{equation}
    \label{eqn: lower bound for discrete EL}
        L'\geq{\frac{\left(L-\frac{2K}{\log\big(\frac{d'}{\delta\vee{\varepsilon}}\big)}\right)^{2}}{L}}\geq{L-\frac{4K}{\log\big(\frac{d'}{\delta\vee{\varepsilon}}\big)}}
    \end{equation}
    Similarly, plugging $\rho^{\circ}$ into the variational formula for the dual problem, we have that: 
    \begin{equation}
    \label{eqn: upper bound for discrete EL}   
        \frac{1}{L'}\geq{\frac{\left(1-\frac{2KL}{\log\big(\frac{d}{\delta\wedge{\varepsilon}}\big)}\right)^{2}}{L}}  \hspace{20pt}\implies\hspace{20pt} L'\leq{\frac{L}{\left(1-\frac{4KL}{\log\big(\frac{d}{\delta\wedge{\varepsilon}}\big)}\right)}}\leq{L+\frac{8K{L}^{2}}{\log\big(\frac{d}{\delta\vee{\varepsilon}}\big)}}
    \end{equation}
    Combining Equations \ref{eqn: lower bound for discrete EL} and \ref{eqn: upper bound for discrete EL}, we arrive at the desired result.
\end{proof}

\section{Limits of Discrete Holomorphic Functions are Holomorphic}
\label{sec: Limits of Discrete Holomorphic Functions are Holomorphic}

Given an orthodiagonal map $G=(V^{\bullet}\sqcup{V^{\circ}}, E)$ recall that a function $F:V^{\bullet}\sqcup{V^{\circ}}\rightarrow{\mathbb{C}}$ is discrete holomorphic on $G$ if for any interior face $Q$ of $G$ with primal diagonal $e^{\bullet}=\{u_{1}, u_{2}\}$ and dual diagonal $e^{\circ}=\{v_{1}, v_{2}\}$ we have that: 
\begin{equation*}
\frac{F(u_{2})-F(u_{1})}{u_{2}-u_{1}}=\frac{F(v_{2})-F(v_{1})}{v_{2}-v_{1}}
\end{equation*}
In particular, notice that if $F|_{V^{\bullet}}$ is strictly real then $F|_{V^{\circ}}$ is strictly imaginary up to an additive constant. This situation is typical of applications of discrete complex analysis. That is, the real part of our discrete holomorphic function typically lives on the primal graph $G^{\bullet}=(V^{\bullet},E^{\bullet})$ and the imaginary part lives on the dual graph $G^{\circ}=(V^{\circ},E^{\circ})$. \\ \\
In this section, we prove the following result of independent interest:
\begin{thm}
\label{thm: limit of discrete holomorphic functions is holomorphic}
    Let $\Omega$ be a subdomain of $\mathbb{C}$ and let $\Omega_{n}=(V_{n}^{\bullet}\sqcup{V_{n}^{\circ}},E_{n})$ be a sequence of orthodiagonal maps so that the edges of $\Omega_{n}$ are of length at most $\varepsilon_{n}$ and $\varepsilon_{n}\rightarrow{0}$ as $n\rightarrow{\infty}$. Suppose that for any compact set $K\subseteq{\Omega}$ there exists $N\in{\mathbb{N}}$ such that for all $n\geq{N}$, $K\subseteq{\widehat{\Omega}_{n}}$. For each $n\in{\mathbb{N}}$, let $F_{n}:V_{n}^{\bullet}\sqcup{V_{n}^{\circ}}\rightarrow{\mathbb{C}}$ be a discrete holomorphic function on $\Omega_{n}$ so that:
    \begin{equation*}
        \text{Re}(F_{n}(z))=0 \hspace{5pt}\text{for all $z\in{V^{\circ}}$}, \hspace{20pt}  \text{Im}(F_{n}(z))=0 \hspace{5pt}\text{ for all $z\in{V^{\bullet}}$}  
    \end{equation*}
    That is, the real part of $F_{n}$ lives on the primal graph $\Omega_{n}^{\bullet}$ and the imaginary part of $F_{n}$ lives on the dual graph $\Omega_{n}^{\circ}$. Suppose also that the Dirichlet energies of the $F_{n}$'s are uniformly bounded on compacts. That is, for any compact set $K\subseteq{\Omega}$ and $N\in{\mathbb{N}}$ such that $K\subseteq{\Omega_{n}}$ for all $n\geq{N}$ we have that: 
    \begin{equation*}
        \sup_{n\geq{N}}\mathcal{E}^{\bullet}_{K}(\text{Re}(F_{n}))=\frac{1}{2}\sup_{n\geq{N}}\Big(\sum_{\substack{Q\in{F_{in}(\Omega_{n})} \\ Q\subseteq{K}}}\frac{|e^{\circ}_{Q}|}{|e^{\bullet}_{Q}|}\Big(\text{Re}(F_{n}((e_{Q}^{\bullet})^{+}))-\text{Re}(F_{n}((e_{Q}^{\bullet})^{-}))\Big)^{2}\Big)<\infty
    \end{equation*}
    Or equivalently, since $\text{Re}(F_{n})$ and $\text{Im}(F_{n})$ are conjugate harmonic functions: 
    \begin{equation*}
        \sup_{n\geq{N}}\mathcal{E}^{\circ}_{K}(\text{Im}(F_{n}))=\frac{1}{2}\sup_{n\geq{N}}\Big(\sum_{\substack{Q\in{F_{in}(\Omega_{n})} \\ Q\subseteq{K}}}\frac{|e^{\bullet}_{Q}|}{|e^{\circ}_{Q}|}\Big(\text{Im}(F_{n}((e_{Q}^{\circ})^{+}))-\text{Im}(F_{n}((e_{Q}^{\circ})^{-}))\Big)^{2}\Big)<\infty
    \end{equation*}
    Let $\widehat{F}_{n}:\widehat{\Omega}_{n}\rightarrow{\mathbb{C}}$ be a sequence of continuous functions on $\widehat{\Omega}_{n}$ so that: 
    \begin{equation*}
        \text{Re}(\widehat{F}_{n}(z))=F_{n}(z) \hspace{5pt}\text{for all $z\in{V^{\bullet}_{n}}$}, \hspace{20pt} \text{Im}(\widehat{F}_{n}(z))=F_{n}(z) \hspace{5pt}\text{for all $z\in{V^{\circ}_{n}}$}
    \end{equation*}
    That is, for each $n\in{\mathbb{N}}$, $\widehat{F}_{n}$ is some sort of sensible extension of $F_{n}$ to a continuous function on $\widehat{\Omega}_{n}$. If: 
    \begin{equation*}
        \widehat{F}_{n}\rightarrow{F} \hspace{4pt}\text{uniformly on compacts in $\Omega$}
    \end{equation*}
    Then $F:\Omega\rightarrow{\mathbb{C}}$ is holomorphic. 
\end{thm}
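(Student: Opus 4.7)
The plan is to apply Morera's theorem. Since $\widehat{F}_n \to F$ uniformly on compacts and each $\widehat{F}_n$ is continuous, the limit $F$ is continuous on $\Omega$; it therefore suffices to verify that $\oint_\gamma F(z)\,dz = 0$ for every closed curve $\gamma$ in a sufficiently rich class -- for instance, boundaries of axis-aligned rectangles whose closure is contained in $\Omega$. Fix such a $\gamma$ and choose a compact neighborhood $K$ of $\gamma$ with $K \subseteq \Omega$. By the exhaustion hypothesis, for all sufficiently large $n$ we have $K \subseteq \widehat{\Omega}_n$ and $\widehat{F}_n \to F$ uniformly on $K$.

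Next, I will approximate $\gamma$ by a simple discrete closed walk $\gamma_n = (v_0, v_1, \ldots, v_{k_n-1}, v_0)$ in $\Omega_n$ whose vertices lie within $O(\varepsilon_n)$ of $\gamma$ and whose total Euclidean length exceeds that of $\gamma$ by $O(\varepsilon_n)$. Since $\gamma$ has positive distance to $\partial \widehat{\Omega}_n$ for $n$ large, $\gamma_n$ can be arranged to lie in the interior of $\widehat{\Omega}_n$ and to enclose only interior faces of $\Omega_n$. The discrete Cauchy-type vanishing identity recalled in Section 2.4 then yields
\begin{equation*}
S_n \;:=\; \sum_{j=0}^{k_n-1}\bigl(F_n(v_j)+F_n(v_{j+1})\bigr)(v_{j+1}-v_j) \;=\; 0,
\end{equation*}
where indices are taken modulo $k_n$.

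The heart of the proof is identifying $\lim_{n \to \infty} S_n$ with $\oint_\gamma F(z)\,dz$. A straightforward reindexing (grouping each $F_n(v_j)$ with the two edges meeting at $v_j$) converts $S_n$ into a ``midpoint-rule'' sum
\begin{equation*}
S_n \;=\; \sum_{j=0}^{k_n-1} F_n(v_j)\bigl(v_{j+1}-v_{j-1}\bigr).
\end{equation*}
Since $\gamma_n$ is a walk in the bipartite graph $\Omega_n$ and $F_n$ is real-valued on $V_n^\bullet$ and purely imaginary on $V_n^\circ$, this splits as
\begin{equation*}
S_n \;=\; \sum_{\substack{0 \le j < k_n \\ v_j \in V_n^\bullet}} \text{Re}(\widehat{F}_n(v_j))\bigl(v_{j+1}-v_{j-1}\bigr) \;+\; i\sum_{\substack{0 \le j < k_n \\ v_j \in V_n^\circ}} \text{Im}(\widehat{F}_n(v_j))\bigl(v_{j+1}-v_{j-1}\bigr).
\end{equation*}
Each summand is a midpoint-rule Riemann sum for a contour integral along $\gamma_n$, sampling at every other vertex, with complex step $v_{j+1}-v_{j-1}$ of order $\varepsilon_n$. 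Combining the uniform convergence $\widehat{F}_n \to F$ on $K$ with the uniform continuity of $F$ on $K$ and the length-approximation $\text{length}(\gamma_n) \to \text{length}(\gamma)$ shows that the two sums converge to $\oint_\gamma \text{Re}(F)\,dz$ and $\oint_\gamma \text{Im}(F)\,dz$ respectively. Hence $S_n \to \oint_\gamma F(z)\,dz$, and since $S_n = 0$ for all large $n$, we conclude $\oint_\gamma F(z)\,dz = 0$. Morera's theorem then implies that $F$ is holomorphic on $\Omega$.

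The main obstacle is the convergence of the two midpoint-rule sums in the last step: each samples only every other vertex along $\gamma_n$, the sample point $v_j$ need not sit precisely at the midpoint of its effective step $v_{j+1}-v_{j-1}$, and the discrete curve $\gamma_n$ only approximates $\gamma$. These discrepancies produce errors bounded by the modulus of continuity of $F$ at the mesh scale $\varepsilon_n$ times the (bounded) length of $\gamma_n$, all of which tend to zero. The uniform Dirichlet energy bound does not appear to be needed for this Morera-based argument itself; I expect it enters the authors' framework to guarantee the uniform convergence hypothesis in the first place (via an Arzel\`a--Ascoli-type compactness argument in the applications they have in mind).
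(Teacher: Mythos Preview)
Your high-level strategy---Morera plus the vanishing of discrete contour integrals for discrete holomorphic $F_n$---is exactly the paper's, and your reindexing of $S_n$ into $\sum_j F_n(v_j)(v_{j+1}-v_{j-1})$ is correct. The genuine gap is the sentence ``approximate $\gamma$ by a simple discrete closed walk $\gamma_n$ in $\Omega_n$ whose total Euclidean length exceeds that of $\gamma$ by $O(\varepsilon_n)$.'' For a general orthodiagonal map this is not justified, and the paper does not establish it: Lemma~\ref{lem: finding short contours in orthodiagonal maps} produces nearest-neighbour paths of controlled length in $\Omega_n^{\bullet}$ or in $\Omega_n^{\circ}$ separately, \emph{not} in the bipartite graph $\Omega_n$. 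If one builds a bipartite $\gamma_n$ by taking a primal contour $\gamma_n^{\bullet}=(w_1,\dots,w_{k_n})$ of bounded length and interpolating a dual vertex $x_i$ on each shared face, the resulting walk can have arbitrarily large total length, since a face may have a very short primal diagonal while its four sides have length $\asymp\varepsilon_n$. This is precisely the obstruction the paper flags (``we don't have any control over the quantity $\sum_i|x_{i+1}-x_i|$''), and it is why the proof uses \emph{two} bipartite contours: an outer $\gamma_n$ with $\gamma_n^{\bullet}$ of bounded length (handling the imaginary-part sum, whose steps are $w_{i+1}-w_i$) and an inner $\eta_n$ with $\eta_n^{\circ}$ of bounded length (handling the real-part sum). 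The transfer of the real-part sum from $\gamma_n$ to $\eta_n$ is carried out by a discrete Green identity over the thin annulus $\widehat H$ between them, and Cauchy--Schwarz bounds the discrepancy by $\mathcal{E}_K^{\bullet}(\mathrm{Re}\,F_n)^{1/2}\cdot(2\,\mathrm{Area}(\widehat H))^{1/2}=O(\varepsilon_n^{1/2})$. So, contrary to your closing remark, the uniform Dirichlet-energy bound is used inside the Morera argument itself, exactly at this step.

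There is also a smaller gap at the end. Even after you identify a half of $S_n$ with (say) a Riemann sum for $\oint_{\gamma_n^{\bullet}}\mathrm{Im}(\widehat F_n)\,dz$, you still need $\oint_{\gamma_n^{\bullet}} g\,dz\to\oint_{\gamma} g\,dz$ for $g=\mathrm{Im}\,F$, which at this stage is only known to be continuous. Hausdorff closeness together with bounded (or even convergent) length does not by itself force convergence of contour integrals of a merely continuous integrand. The paper closes this by approximating $g$ uniformly on $K$ by a smooth $\psi$, applying Green's theorem to $\psi$ to bound $\bigl|\oint_{\gamma_n^{\bullet}}\psi-\oint_\gamma\psi\bigr|$ by $\|\partial_{\bar z}\psi\|_\infty$ times the $O(\varepsilon_n)$ area of the symmetric difference, and then letting $\psi\to g$.
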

\noindent In Section \ref{sec: Convergence of Tiling Maps} we will use this result to show that any subsequential limit of our tiling maps is holomorphic. This is crucial to showing that our tiling maps converge to the relevant conformal map.
\begin{rem}
    Note that while Theorem \ref{thm: limit of discrete holomorphic functions is holomorphic} is sufficient for our purposes, stronger results already exist in the literature. Namely, in \cite{CLR23}, Chelkak, Laslier, and Russkikh prove that local uniform limits of discrete holomorphic functions on t- embeddings are holomorphic (see Proposition 6.15 of \cite{CLR23}). Since every orthodiagonal map is a t-embedding (see Section 8.1 of \cite{CLR23}), it follows that the same is true of discrete holomorphic functions on orthodiagonal maps. \\ \\ Having been initially unfamiliar with their work, we found an independent proof of this result in the more restrictive orthodiagonal setting, with the additional condition that the Dirichlet energies of your discrete holomorphic functions must be uniformly bounded on compacts. While this is a strictly weaker result in a less general setting, we still think it is worthwhile to present the proof. The reasons for this are twofold:  
    \begin{enumerate}
        \item The proof is elementary and takes place in a simpler setting. 
        \item It gives us an excuse to introduce Lemma \ref{lem: finding short contours in orthodiagonal maps} which tells us that we can approximate continuous contours by discrete contours in our orthodiagonal map, that are close to the corresponding continuous contour in Hausdorff distance and have comparable length. We use this in \cite{P24} to show that the discrete Dirichlet problem on orthodiagonal maps converges with a polynomial rate to the corresponding continuous Dirichlet problem, for H\"older boundary data.
    \end{enumerate}
\end{rem}
\noindent As per the discussion above, to prove Theorem \ref{thm: limit of discrete holomorphic functions is holomorphic}, we first need the following lemma:
\begin{lem}
\label{lem: finding short contours in orthodiagonal maps}
Suppose $G=(V^{\bullet}\sqcup{V^{\circ}}, E)$ is an orthodiagonal map with mesh at most $\varepsilon$, and $\delta$ is a positive real number so that $\delta\geq{4\varepsilon}$. Suppose $\ell$ is a line segment in $\mathbb{C}$ so that $L=\text{length}(\ell)\geq{8\varepsilon}$ and $\widehat{G}$ contains a $\delta$- neighborhood of $\ell$. Then there exist nearest- neighbor paths $\gamma^{\bullet}$ in $G^{\bullet}$ and $\gamma^{\circ}$ in $G^{\circ}$ that are $\delta$- close to $\ell$ in Hausdorff distance, and:
\begin{equation*}
    \text{length}(\gamma^{\bullet}), \text{length}(\gamma^{\circ})\leq{2L\Big(1+\frac{4\varepsilon}{\delta}\Big)}    
\end{equation*}
Furthermore, the endpoints of $\gamma^{\bullet}$ and $\gamma^{\circ}$ both lie within $\delta$ of the endpoints of $\ell$. 
\end{lem}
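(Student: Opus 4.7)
The plan is to build $\gamma^\bullet$ by tracing the sequence of faces of $G$ crossed by $\ell$, recording at each face transition the primal endpoint of the edge of $G$ being crossed; $\gamma^\circ$ is built analogously from dual endpoints. After a small perturbation of $\ell$ within its $\delta/2$-neighborhood, permissible since $\delta\geq 4\varepsilon$, I may assume $\ell$ meets no vertex of $G$ and is transverse to each edge it crosses. Let $Q_1,\ldots,Q_k$ be the sequence of interior faces traversed by $\ell(t)$ for $t\in[0,L]$, and let $e_i\in E$ be the edge shared between $Q_i$ and $Q_{i+1}$, with primal endpoint $p_i\in V^\bullet$ and dual endpoint $q_i\in V^\circ$.

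The key structural fact is that $p_{i-1}$ and $p_i$ are both primal vertices of the common face $Q_i$, so they either coincide or are the two primal vertices of $Q_i$ and hence joined by the primal diagonal $e_{Q_i}^\bullet\in E^\bullet$. Thus $\gamma^\bullet:=(p_1,\ldots,p_{k-1})$, with consecutive repetitions collapsed, is a nearest-neighbor walk in $G^\bullet$, and likewise for $\gamma^\circ:=(q_1,\ldots,q_{k-1})$ in $G^\circ$. Every $p_i$ is a vertex of a face of diameter at most $2\varepsilon\leq\delta/2$ meeting $\ell$, so every vertex and every edge of $\gamma^\bullet$ lies within $\delta$ of $\ell$ in Hausdorff distance; in particular, $p_1$ and $p_{k-1}$ lie within $\delta$ of the endpoints $a$ and $b$ of $\ell$.

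The main work is the length bound. The plan is an averaging argument over parallel lines. Let $\vec{n}$ be a unit normal to $\ell$, and for $s$ in an interval $I\subset(-\delta/2,\delta/2)$ of length at least $\delta-4\varepsilon$, set $\ell_s=\ell+s\vec{n}$; each $\ell_s$ still lies in the $\delta$-neighborhood of $\ell$ inside $\widehat{G}$. Applying the construction to $\ell_s$ yields walks $\gamma^\bullet(s)$ of length $\Lambda(s)$. For each face $Q$ with primal diagonal $e_Q^\bullet=[P_1,P_2]$, the walk $\gamma^\bullet(s)$ uses $e_Q^\bullet$ precisely when $\ell_s$ separates $P_1$ from $P_2$, equivalently when $\ell_s$ crosses the segment $[P_1,P_2]$; the Lebesgue measure of such $s$ is exactly $|e_Q^\bullet\cdot\vec{n}|$. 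By Fubini,
\begin{equation*}
\int_I \Lambda(s)\,ds \;\leq\; \sum_{Q}|e_Q^\bullet|\cdot|e_Q^\bullet\cdot\vec{n}|,
\end{equation*}
where the sum is over faces $Q$ meeting the strip around $\ell$ of width $\delta$, and pigeonhole produces some $s^\ast\in I$ with $\Lambda(s^\ast)\leq(\delta-4\varepsilon)^{-1}\sum_{Q}|e_Q^\bullet|\cdot|e_Q^\bullet\cdot\vec{n}|$.

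The hardest step will be bounding $\sum_{Q}|e_Q^\bullet|\cdot|e_Q^\bullet\cdot\vec{n}|$ by $2L\delta$. Using $|e_Q^\bullet|\leq 2\varepsilon$, this reduces to controlling $\sum_{Q}|e_Q^\bullet\cdot\vec{n}|$, the total normal-variation of the primal diagonals in the strip. Here I would combine the pairwise disjointness of the faces with the area identity $2\operatorname{Area}(Q)=|e_Q^\bullet|\cdot|e_Q^\circ|$ coming from the orthogonality of primal and dual diagonals, re-expressing the sum as a weighted area and bounding it by the strip area $L\delta$. The additive $4\varepsilon/\delta$ correction in the stated bound then emerges from the normalization factor $(\delta-4\varepsilon)^{-1}$ and from boundary effects in small neighborhoods of $a$ and $b$. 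The walk $\gamma^\circ$ and its length bound are obtained by the same argument with the roles of primal and dual exchanged throughout.
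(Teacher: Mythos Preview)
Your averaging scheme is sound in spirit, but the Fubini step contains a concrete error that makes the ``hardest step'' you flag actually false as stated. You claim $\gamma^\bullet(s)$ uses the primal diagonal $e_Q^\bullet=[P_1,P_2]$ precisely when $\ell_s$ crosses $[P_1,P_2]$. In fact the opposite is true: the walk uses $e_Q^\bullet$ exactly when the entry and exit edges of $\ell_s$ through $\partial Q$ carry \emph{different} primal endpoints, and a case check on the cyclic quadrilateral $P_1,w_1,P_2,w_2$ shows this happens iff $\ell_s$ separates the \emph{dual} vertices $w_1,w_2$, i.e.\ iff $\ell_s$ crosses $e_Q^\circ$. (If $\ell_s$ cuts off $w_1$ alone, the two crossed edges have primal endpoints $P_1$ and $P_2$, so the diagonal is used even though $P_1,P_2$ lie on the same side; if $\ell_s$ cuts off $P_1$ alone, both crossed edges have primal endpoint $P_1$, so the diagonal is not used even though $P_1,P_2$ lie on opposite sides.) The correct Fubini bound is therefore
\[
\int_I\Lambda(s)\,ds\ \le\ \sum_Q|e_Q^\bullet|\cdot|e_Q^\circ\cdot\vec n|\ \le\ \sum_Q|e_Q^\bullet|\,|e_Q^\circ|\ =\ 2\sum_Q\operatorname{Area}(Q),
\]
and the area bound by the strip area is immediate --- no further work needed. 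By contrast, your stated sum $\sum_Q|e_Q^\bullet|\cdot|e_Q^\bullet\cdot\vec n|$ is of order $\sum_Q|e_Q^\bullet|^2$ and cannot be controlled by area: on an orthodiagonal map whose primal diagonals are all near-maximal and nearly parallel to $\vec n$ while dual diagonals are very short, the ratio of your sum to the strip area blows up. Neither the crude bound $|e_Q^\bullet|\le 2\varepsilon$ nor the area identity $2\operatorname{Area}(Q)=|e_Q^\bullet||e_Q^\circ|$ can rescue it.

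For comparison, the paper bypasses face-tracing altogether: it forms the sub\-orthodiagonal rectangle $H$ inside $R=[0,L]\times[-\delta,\delta]$, plugs the Euclidean metric $\rho(e^\bullet)=|e^\bullet|$ into the extremal-length variational formula between the short sides, and observes $A(\rho)=\sum|e^\bullet||e^\circ|=2\operatorname{Area}(\widehat H)\le 4\delta L$; the upper bound on $\lambda$ from Proposition~\ref{prop: Discrete and Continuous EL Comparable} then forces a short primal path. This is the same $\sum|e^\bullet||e^\circ|$ identity your corrected argument reaches, but packaged through extremal length with no explicit averaging.
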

\begin{proof}
    Let $G=(V^{\bullet}\sqcup{V^{\circ}}, E)$ be an orthodiagonal map with edges of length at most $\varepsilon$. It follows that the edges of $G^{\bullet}$ and $G^{\circ}$, which correspond to diagonals of inner faces of $G$, have length at most $2\varepsilon$. Let $\ell$ be a line segment in $\mathbb{C}$ so that $\widehat{G}$ contains a $\delta$- neighborhood of $\ell$. Without loss of generality, suppose $\ell=[0,L]$, the line segment between $0$ and $L$ in $\mathbb{C}$. Let $R=[0,L]\times{[-\delta,\delta]}$. Since $\widehat{G}$ contains a $\delta$- neighborhood of $\ell$, $R\subseteq{\widehat{G}}$. \\ \\
    Let $H=(V_{H}^{\bullet}\sqcup{V_{H}^{\circ}},E_{H})$ be the suborthodiagonal map of $G$ formed by taking the union of all the inner faces of $G$ contained in $R$. Since $R$ is convex, $H$ is simply- connected with a unique, distinguished boundary face. Moreover, observe that any pair of neighboring vertices along the boundary of $H$, must be part of an inner face of $G$ that intersects the boundary of the rectangle $R$. Thus, the vertices and edges on the boundary of $H$ all lie within $\varepsilon$ of $\partial{R}$. Let $A^{\bullet}, B^{\bullet}, C^{\bullet}, D^{\bullet}$ be the points of $\partial{V}_{H}^{\bullet}$ closest to the four corners $(0,-\delta), (0,\delta), (L,\delta), (L,-\delta)$ of $R$. Then $(H,A^{\bullet}, B^{\bullet}, C^{\bullet}, D^{\bullet})$ is a $(2\varepsilon, \varepsilon)$- good interior approximation of $R$. 
    \begin{figure}[H]
        \centering
        \includegraphics[scale=0.47]{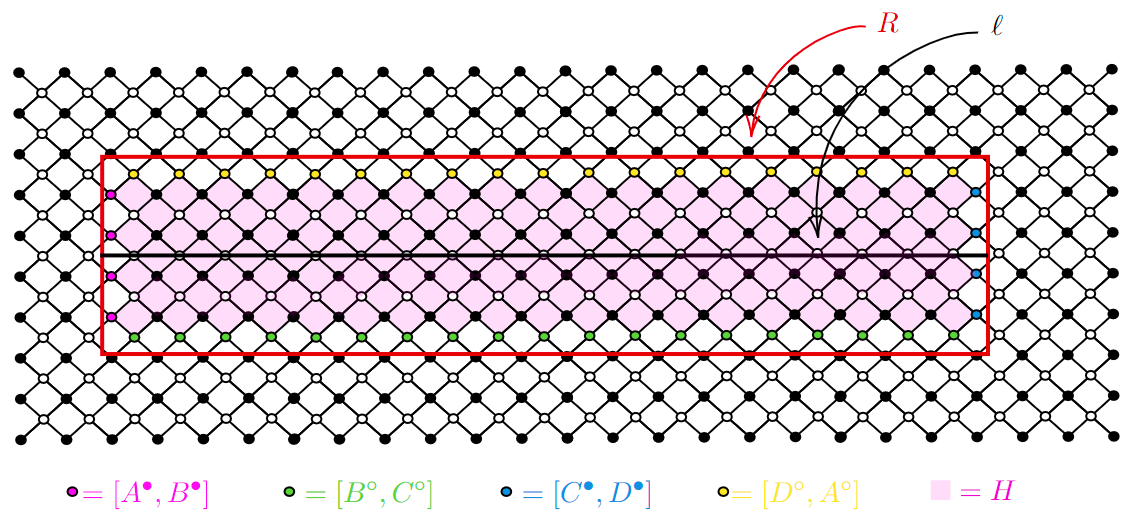}  
        \caption{The $(\varepsilon,2\varepsilon)$- good approximation of the conformal rectangle $R$ (with its four corners as the distinguished boundary points) that we construct in the proof of Lemma \ref{lem: finding short contours in orthodiagonal maps}. For simplicity, our orthodiagonal map $G$ here is just a chunk of the square grid but, in general, this need not be the case.}
    \end{figure}
    \noindent By Proposition \ref{prop: Discrete and Continuous EL Comparable}:
    \begin{equation*}
        \frac{(L-4\varepsilon)^{2}}{4\delta{L}}\leq{\lambda([A^{\bullet},B^{\bullet}]\leftrightarrow{[C^{\bullet},D^{\bullet}]};H^{\bullet})}\leq{\frac{4\delta{L}}{(2\delta-4\varepsilon)^{2}}}
    \end{equation*}
    Plugging the metric $\rho(e^{\bullet})=|e^{\bullet}|$ into the variational problem for $\lambda([A^{\bullet},B^{\bullet}]\leftrightarrow{[C^{\bullet},D^{\bullet}]};H^{\bullet})$ we have that: 
    \begin{align}
        \frac{4\delta{L}}{(2\delta-4\varepsilon)^{2}}&\geq{\lambda([A^{\bullet},B^{\bullet}]\leftrightarrow{[C^{\bullet},D^{\bullet}]};G^{\bullet})} \\ &\geq{\frac{\big(\inf\{\text{length}(\gamma^{\bullet}):\text{$\gamma^{\bullet}$ is a curve in $H^{\bullet}$ joining $[A^{\bullet},B^{\bullet}]$ and $[C^{\bullet},D^{\bullet}]$}\}\big)^{2}}{A(\rho)}} \label{eqn: proof of short contours in orthodiagonal maps}
    \end{align}
    where:
    \begin{align*}
        A(\rho)&=\sum_{e^{\bullet}\in{E^{\bullet}}}\frac{|e^{\circ}|}{|e^{\bullet}|}|e^{\bullet}|^{2}\stackrel{(\ast)}{=}2\sum_{Q\in{F_{in}(H)}}\text{Area}(Q)=2\hspace{1pt}\text{Area}(\widehat{H})\leq{4\delta{L}}
    \end{align*}
    Equality ``$(\ast)$" follows from the fact that if $Q$ is an inner face of $G$ with primal diagonal $e^{\bullet}$ and dual diagonal $e^{\circ}$, then $\text{Area}(Q)=\frac{1}{2}|e^{\bullet}||e^{\circ}|$. Plugging our estimate for $A(\rho)$ into Equation \ref{eqn: proof of short contours in orthodiagonal maps}, we have that: 
    \begin{align*}
        \inf\{\text{length}(\gamma^{\bullet}):\text{$\gamma^{\bullet}$ is a curve in $H^{\bullet}$ joining $[A^{\bullet},B^{\bullet}]$ and $[C^{\bullet},D^{\bullet}]$}\}\leq{\frac{2L}{\big(1-\frac{2\varepsilon}{\delta}\big)}}\leq{2L\Big(1+\frac{4\varepsilon}{\delta}\Big)}
    \end{align*}
    Since any curve in $H^{\bullet}$ joining $[A^{\bullet},B^{\bullet}]$ and $[C^{\bullet},D^{\bullet}]$ is $\delta$- close to $\ell$ in Hausdorff distance, this completes our proof. The proof that we can find a curve $\gamma^{\circ}$ in $G^{\circ}$ with the desired properties follows by the same argument, verbatim, with $G^{\circ}$ in place of $G^{\bullet}$.  
\end{proof}
\noindent Armed with this lemma, we are now ready to prove Theorem \ref{thm: limit of discrete holomorphic functions is holomorphic}:
\begin{proof}
    Since $F$ is the local uniform limit of continuous functions, $F$ must be continuous. Hence, to prove that $F$ is holomorphic, by Problem 3 in Chapter 2 of \cite{SteinShakarchi}, it suffices to check that: 
    $$
    \oint_{\gamma}F(z)dz=0
    $$
    for every simple closed curve $\gamma$ in $\Omega$ that traces out the boundary of a rectangle. \\ \\ 
    With this in mind, let $\gamma$ be a simple closed curve in $\Omega$, oriented counterclockwise, that traces out the boundary of a rectangle $R$ with length $l$ and width $w$. Let $d$ be the distance between the rectangle $R$ and the boundary of $\Omega$. Let $K$ be a compact subset of $\Omega$ that contains a $d/2$- neighborhood of $R$ and let $N$ be a natural number so that $K\subseteq{\widehat{\Omega}_{n}}$ for all $n\geq{N}$.  \\ \\
    Since $(\widehat{F}_{n})_{n=1}^{\infty}$ is a sequence of continuous functions that converges to $F$ uniformly on compacts, by Arzela-Ascoli, it follows that the functions $\widehat{F}_{n}$ are equicontinuous and uniformly bounded on compacts. With this in mind, for any $\delta>0$, let $\omega(\delta)$ denote the modulus of continuity of the family of functions  $(\widehat{F}_{n})_{n=N}^{\infty}$ on $K$. That is, for any $n\geq{N}$ and any points $x,y\in{K}$, we have that:
    $$
    |F_{n}(y)-F_{n}(x)|\leq{\omega(|x-y|)}
    $$
   where:
   $$
   \lim\limits_{\delta\rightarrow{0^{+}}}\omega(\delta)=0
   $$
    Recall that by discrete Morera's theorem, for each $n\in{\mathbb{N}}$, the fact that $F_{n}$ is discrete holomorphic on $\Omega_{n}$ tells us that for any simple closed directed curve $\eta_{n}$ in $\Omega_{n}$: 
    \begin{equation*}
        \sum_{\substack{\vec{e}\in{\eta_{n}} \\ \vec{e}=(e^{-}, e^{+})}}\big(F_{n}(e^{-})+F_{n}(e^{+})\big)(e^{+}-e^{-})=0
    \end{equation*}
    By Lemma \ref{lem: finding short contours in orthodiagonal maps}, for $n\geq{N}$ sufficiently large, we can pick simple closed contours $\gamma_{n}$ and $\eta_{n}$ in $\Omega_{n}$ so that: 
    \begin{enumerate}
        \item $\gamma_{n}$ and $\eta_{n}$ are both oriented counterclockwise.
        \item $\gamma_{n}$ lies outside of the rectangle $R$ and $d_{Haus}(\gamma_{n},\partial{R})=O(\varepsilon_{n})$. 
        \item $\eta_{n}$ lies inside the rectangle $R$ and $d_{Haus}(\eta_{n},\partial{R})=O(\varepsilon_{n})$.
        \item If $\gamma_{n}=(w_{1}, x_{1}, w_{2}, x_{2},...,w_{k_{n}}, x_{k_{n}},w_{1})$ where $w_{1}, w_{2}, ..., w_{k_{n}}\in{V_{n}^{\bullet}}$ and $x_{1}, x_{2}, ..., x_{k_{n}}\in{V_{n}^{\circ}}$, then $w_{i}\sim{w_{i+1}}$ for all $i$, where our indices $i$ are viewed modulo $k_{n}$, so that $\gamma_{n}^{\bullet}=(w_{1}, w_{2}, ..., w_{k_{n}},w_{1})$ is a simple closed contour in $\Omega_{n}^{\bullet}$. Furthermore:
        \begin{equation*}
            \text{length}(\gamma_{n}^{\bullet})=\sum_{i=1}^{k_{n}}|w_{i+1}-w_{i}|=O(l+w)
        \end{equation*}
        \item If $\eta_{n}=(y_{1},z_{1},y_{2},z_{2},..,y_{m_{n}},z_{m_{n}},y_{1})$ where $y_{1},y_{2},...,y_{m_{n}}\in{V_{n}^{\bullet}}$, $z_{1},z_{2},...,z_{m_{n}}\in{V_{n}^{\circ}}$, then $z_{i}\sim{z_{i+1}}$ for all $i$, where our indices are viewed modulo $m_{n}$, so that $\eta_{n}^{\circ}=(z_{1},z_{2},...,z_{m_{n}},z_{1})$ is a simple closed contour in $\Omega_{n}^{\circ}$. Furthermore: 
        \begin{equation*}
            \text{length}(\eta_{n}^{\circ})=\sum_{i=1}^{m_{n}}|z_{i+1}-z_{i}|=O(l+w)
        \end{equation*}
    \end{enumerate}
    \begin{figure}[H]
        \centering
        \includegraphics[scale=0.60]{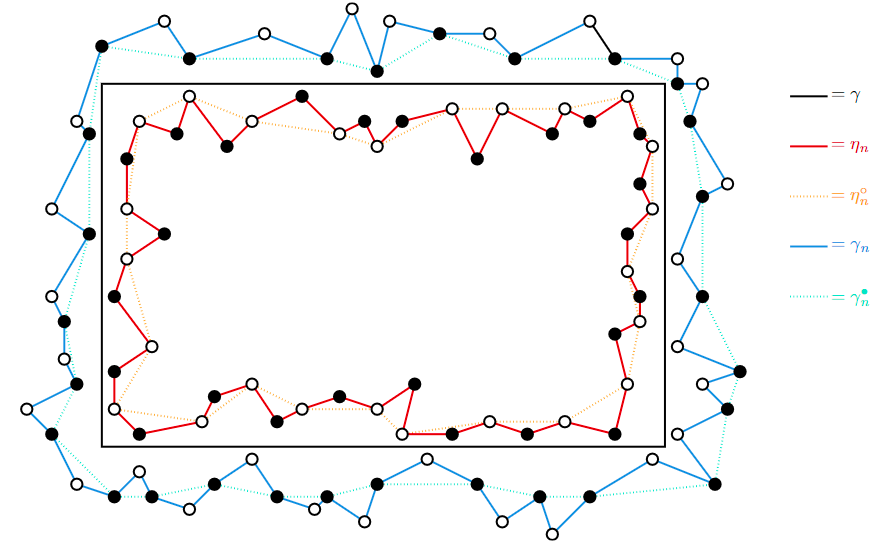} 
        \caption{The contours $\gamma$,  $\gamma_{n}$, $\gamma_{n}^{\bullet}$, $\eta_{n}$, and $\eta_{n}^{\circ}$ for some orthodiagonal map.}
\end{figure}
    \noindent Consider the discrete contour integral of $F_{n}$ over $\gamma_{n}$:
    \begin{equation}
    \label{discrete Morera}
        \sum_{\substack{\vec{e}\hspace{1pt}\in{\gamma_{n}}\\ \vec{e}=(e^{-},e^{+})}}\big(F_{n}(e^{-})+F_{n}(e^{+})\big)(e^{+}-e^{-})=\sum_{i=1}^{k_{n}}F_{n}(x_{i})(w_{i+1}-w_{i})+\sum_{i=1}^{k_{n}}F_{n}(w_{i})(x_{i}-x_{i-1})=0
    \end{equation}
    Th first equality follows by rewriting the original sum over directed edges as a sum over vertices. The second equality is just discrete Morera. Since $\text{Im}(\widehat{F}_{n})$ agrees with $F_{n}$ on $V^{\circ}$, for each directed edge $(w_{i},w_{i+1})$, we have that:  
    \begin{equation*}
        \big|\int_{w_{i}}^{w_{i+1}}\text{Im}(\widehat{F}_{n}(z))dz-F_{n}(x_{i})(w_{i+1}-w_{i})\big|\leq{\omega(\varepsilon_{n})\cdot|w_{i+1}-w_{i}|}
    \end{equation*}
    provided that $n$ is large enough so that the contour $\gamma_{n}$ is contained in $K$. Summing over directed edges we have that: 
    \begin{equation}
    \label{estimate of imaginary part}
        \Big|\oint_{\gamma_{n}^{\bullet}}\text{Im}(\widehat{F}_{n}(z))dz-\sum_{i=1}^{k_{n}}F_{n}(x_{i})(w_{i+1}-w_{i})\Big|=O(\omega(\varepsilon_{n})\cdot{(l+w)})
    \end{equation}
    In other words, we see that the discrete contour integral of $\text{Im}(F_{n})$ over $\gamma_{n}$ is close to the continuous contour integral of $\text{Im}(\widehat{F}_{n})$ over $\gamma_{n}^{\bullet}$. In contrast, since the $x_{i}$'s don't form a simple closed contour in $\Omega_{n}^{\circ}$ and we don't have any control over the quantity: 
    \begin{equation*}
        \sum_{i=1}^{k_{n}}|x_{i+1}-x_{i}|
    \end{equation*}
    it is not clear that we have a similar result comparing the discrete contour integral of $\text{Re}(F_{n})$ over $\gamma_{n}$ to some continuous contour integral of $\text{Re}(\widehat{F}_{n})$. This is where our second contour $\eta_{n}$ comes in.\\ \\ 
    By the same argument we used to handle the behavior of $\text{Im}(F_{n})$ on $\gamma_{n}$, since $\text{Re}(\widehat{F}_{n})$ agrees with $F_{n}$ on $V^{\bullet}$, for any directed edge $(z_{i-1},z_{i})$ we have that:
    \begin{equation*}
        \big|\int_{z_{i-1}}^{z_{i}}\text{Re}(\widehat{F}_{n}(z))dz-F_{n}(y_{i})(z_{i}-z_{i-1})\big|\leq{\omega(\varepsilon_{n})\cdot|z_{i}-z_{i-1}|}
    \end{equation*}
    Summing over directed edges we have that: 
    \begin{equation}
    \label{estimate of real part 1} \Big|\oint_{\eta_{n}^{\circ}}\text{Re}(\widehat{F}_{n}(z))dz-\sum_{i=1}^{m_{n}}F_{n}(y_{i})(z_{i}-z_{i-1})\Big|=O(\omega(\varepsilon_{n})\cdot{(l+w)})
    \end{equation}
    Let $H$ be the suborthodiagonal map of $G$ bounded by the curves $\eta_{n}$ and $\gamma_{n}$. By discrete Green's theorem applied to the function $\text{Re}(F_{n})$ on $H$, we have that: 
    \begin{equation*}
        \sum_{i=1}^{k_{n}}F_{n}(w_{i})(x_{i}-x_{i-1})-\sum_{i=1}^{m_{n}}F_{n}(y_{i})(z_{i}-z_{i-1})=\sum_{\substack{Q\in{F_{in}(H)} \\ Q=[u_{1},v_{1},u_{2},v_{2}]}}\big(F_{n}(u_{2})-F_{n}(u_{1})\big)(v_{2}-v_{1})
    \end{equation*}
    Applying the Cauchy-Schwarz inequality, we have that:
    \begin{align*}
        &\hspace{2pt}\Big|\sum_{\substack{Q\in{F_{in}(H)} \\ Q=[u_{1},v_{1},u_{2},v_{2}]}}\big(F_{n}(u_{2})-F_{n}(u_{1})\big)(v_{2}-v_{1})\Big| \\ \leq&\hspace{2pt}\Big(\sum_{\substack{Q\in{F_{in}(H)} \\ Q=[u_{1},v_{1},u_{2},v_{2}]}}\frac{|v_{2}-v_{1}|}{|u_{2}-u_{1}|}\big(F_{n}(u_{2})-F_{n}(u_{1})\big)^{2}\Big)^{1/2}\Big(\sum_{\substack{Q\in{F_{in}(H)} \\ Q=[u_{1},v_{1},u_{2},v_{2}]}}\frac{|u_{2}-u_{1}|}{|v_{2}-v_{1}|}|v_{2}-v_{1}|^{2}\Big)^{1/2} \\ 
        \leq&\hspace{2pt}\mathcal{E}_{K}^{\bullet}(\text{Re}(F_{n}))^{1/2}\Big(\sum_{\substack{Q\in{F_{in}(H)}  \\ Q=[u_{1},v_{1},u_{2},v_{2}]}}|u_{2}-u_{1}||v_{2}-v_{1}|\Big)^{1/2}=\sqrt{2}\hspace{2pt}\mathcal{E}_{K}^{\bullet}(\text{Re}(F_{n}))^{1/2}(\text{Area}(\widehat{H}))^{1/2} \\
        =&\hspace{2pt}\mathcal{E}_{K}^{\bullet}(\text{Re}(F_{n}))^{1/2}(l+w)^{1/2}\cdot{O(\varepsilon_{n}^{1/2})}\leq{\big(\sup_{n\geq{N}}\mathcal{E}_{K}^{\bullet}(\text{Re}(F_{n}))\big)^{1/2}}(l+w)^{1/2}\cdot{O(\varepsilon_{n}^{1/2})}
    \end{align*}
    Thus: 
    \begin{equation}
    \label{estimate of real part 2}
        \Big|\sum_{i=1}^{k_{n}}F_{n}(w_{i})(x_{i}-x_{i-1})-\sum_{i=1}^{m_{n}}F_{n}(y_{i})(z_{i}-z_{i-1})\big|\leq{\big(\sup_{n\geq{N}}\mathcal{E}_{K}^{\bullet}(\text{Re}(F_{n}))\Big)^{1/2}}(l+w)^{1/2}\cdot{O(\varepsilon_{n}^{1/2})}
    \end{equation}
    By \ref{estimate of imaginary part}, we have that:
    \begin{equation}
    \label{imaginary limit}
        \lim_{n\rightarrow{\infty}}\Big(\oint_{\gamma_{n}^{\bullet}}\text{Im}(\widehat{F}_{n}(z))dz-\sum_{i=1}^{k_{n}}F_{n}(x_{i})(w_{i+1}-w_{i})\Big)=0
    \end{equation}
    Similarly, since the Dirichlet energies of the $F_{n}$'s are uniformly bounded on compacts, combining \ref{estimate of real part 1} and \ref{estimate of real part 2}, we have that:
    \begin{equation}
    \label{real limit}
        \lim_{n\rightarrow{\infty}}\Big(\oint_{\eta_{n}^{\circ}}\text{Re}(\widehat{F}_{n}(z))dz-\sum_{i=1}^{k_{n}}F_{n}(w_{i})(x_{i}-x_{i-1})\Big)=0
    \end{equation}
   Combining \ref{imaginary limit} and \ref{real limit}, we have that:
    \begin{equation*}                   \lim_{n\rightarrow{\infty}}\Big(\oint_{\eta_{n}^{\circ}}\text{Re}(\widehat{F}_{n}(z))dz+\oint_{\gamma_{n}^{\bullet}}\text{Im}(\widehat{F}_{n}(z))dz\Big)=0
    \end{equation*}
    Thus, to prove that:
    \begin{equation*}
        \oint_{\gamma}F(z)dz=\oint_{\gamma}\text{Re}(F(z))dz+\oint_{\gamma}\text{Im}(F(z))dz=0
    \end{equation*}
    It suffices to show that: 
    \begin{equation*}
        \lim_{n\rightarrow{\infty}}\oint_{\eta_{n}^{\circ}}\text{Re}(\widehat{F}_{n}(z))dz=\oint_{\gamma}\text{Re}(F(z))dz
    \end{equation*}
    and: 
     \begin{equation*}
        \lim_{n\rightarrow{\infty}}\oint_{\gamma_{n}^{\bullet}}\text{Im}(\widehat{F}_{n}(z))dz=\oint_{\gamma}\text{Im}(F(z))dz
    \end{equation*}
    To this effect, let $\psi$ be any smooth, real- valued function on $\Omega$. By the triangle inequality: 
    \begin{align*}
    \Big|\oint_{\eta_{n}^{\circ}}\text{Re}(\widehat{F}_{n}(z))dz-\oint_{\gamma}\text{Re}(F(z))dz\Big|\leq&\hspace{2pt}{\Big|\oint_{\eta_{n}^{\circ}}\text{Re}(\widehat{F}_{n}(z))dz-\oint_{\eta_{n}^{\circ}}\text{Re}(F(z))dz\Big|} +\Big|\oint_{\eta_{n}^{\circ}}\text{Re}(F(z))dz-\oint_{\eta_{n}^{\circ}}\psi(z)dz\Big|  \\
    &+\Big|\oint_{\eta_{n}^{\circ}}\psi(z)dz-\oint_{\gamma}\psi(z)dz\Big|+\Big|\oint_{\gamma}\psi(z)dz-\oint_{\gamma}\text{Re}(F(z))dz\Big|
    \end{align*}
    We bound the first term as follows: 
     \begin{align*}
    \Big|\oint_{\eta_{n}^{\circ}}\text{Re}(\widehat{F}_{n}(z))dz-\oint_{\eta_{n}^{\circ}}\text{Re}(F(z))dz\Big|&\leq{\text{length}(\eta_{n}^{\circ})\cdot{\|\text{Re}(\widehat{F}_{n})-\text{Re}(F)\|}}=O\big((l+w)\cdot\|\text{Re}(\widehat{F}_{n})-\text{Re}(F)\|\big)
    \end{align*}
     where $\|\text{Re}(\widehat{F}_{n})-\text{Re}(F)\|$ is the sup norm of $\text{Re}(\widehat{F}_{n})-\text{Re}(F)$ on $K$. We can handle the second and fourth terms analogously:
      \begin{align*}
    \Big|\oint_{\eta_{n}^{\circ}}\text{Re}(F(z))dz-\oint_{\eta_{n}^{\circ}}\psi(z)dz\Big|&\leq{\text{length}(\eta_{n}^{\circ})\cdot{\|\text{Re}(F)-\psi\|}}=O\big((l+w)\cdot{\|\text{Re}(F)-\psi\|}\big)
    \end{align*}
    \begin{equation*}
        \Big|\oint_{\gamma}\psi(z)dz-\oint_{\gamma}\text{Re}(F(z))dz\Big|\leq{\text{length}(\gamma)\cdot{\|\psi-\text{Re}(F)\|}}=2(l+w)\cdot{\|\psi-\text{Re}(F)\|}
    \end{equation*}
    To bound the third term, recall that if $\sigma$ is a simple closed Lipschitz curve, oriented counterclockwise, and $h$ is a complex- valued function whose real and imaginary part are both $C^{2}$, Green's theorem tells us that:
    \begin{equation*}
        \oint_{\sigma}h(z)dz=\int_{D}\partial_{\overline{z}}h(x+iy)dxdy
    \end{equation*}
    where $D$ is the region bounded by $\sigma$. We know that $\gamma$ bounds the rectangle $R$. Let $R_{n}$ denote the region bounded by $\eta_{n}^{\circ}$. By Green's theorem,
    \begin{align*}
        \Big|\oint_{\gamma}\psi(z)dz-\oint_{\eta_{n}^{\circ}}\psi(z)dz\Big|&=\Big|\int_{R}\partial_{\overline{z}}\psi(x+iy)dxdy-\int_{R_{n}}\partial_{\overline{z}}\psi(x+iy)dxdy\Big| \\
        &=O\big((l+w)\cdot\|\partial_{\overline{z}}\psi\|\cdot\varepsilon_{n}\big)
    \end{align*}
    Letting $n\rightarrow{\infty}$, the first and third terms vanish, leaving us with: 
    \begin{equation*}
        \limsup_{n\rightarrow{\infty}}\Big|\oint_{\eta_{n}^{\circ}}\text{Re}(\widehat{F}_{n}(z))dz-\oint_{\gamma}\text{Re}(F(z))dz\Big|=O\big((l+w)\cdot\|\text{Re}(F)-\psi\|\big)    
    \end{equation*}
    Since smooth functions are dense in the space of continuous functions on a compact set with the sup norm, we can choose $\psi$ so that $\|\text{Re}(F)-\psi\|$ is arbitrarily small. Thus: 
    \begin{equation*}
        \limsup_{n\rightarrow{\infty}}\Big|\oint_{\eta_{n}^{\circ}}\text{Re}(\widehat{F}_{n}(z))dz-\oint_{\gamma}\text{Re}(F(z))dz\Big|=0
    \end{equation*}
    From which we conclude that: 
    \begin{equation*}
        \lim_{n\rightarrow{\infty}}\oint_{\eta_{n}^{\circ}}\text{Re}(\widehat{F}_{n}(z))dz=\oint_{\gamma}\text{Re}(F(z))dz
    \end{equation*}
    Applying the same argument to $\big(\oint_{\gamma_{n}^{\bullet}}\text{Im}(\widehat{F}_{n}(z))dz-\oint_{\gamma}\text{Im}(F(z))dz\big)$, we get that:
    \begin{equation*}
        \lim_{n\rightarrow{\infty}}\oint_{\gamma_{n}^{\bullet}}\text{Im}(\widehat{F}_{n}(z))dz=\oint_{\gamma}\text{Im}(F(z))dz
    \end{equation*}
\end{proof} 
   
    \section{Convergence of Tilings to Conformal Maps}
    \label{sec: Convergence of Tiling Maps}
    \subsection{Proof of Theorem \ref{thm: convergence of rectangle tiling maps}}
    \begin{proof}
    Suppose $\Omega\subsetneq{\mathbb{C}}$  is a simply connected domain with distinguished prime ends $A,B,C,D$ listed in counterclockwise order and $\delta_{n}, \varepsilon_{n}>0$ are sequences of positive reals so that: 
    \begin{equation*}
        (\delta_{n},\varepsilon_{n})\rightarrow{(0,0)} \hspace{5pt}\text{as $n\rightarrow{\infty}$}
    \end{equation*}
    For each $n\in{\mathbb{N}}$, let $\Omega_{n}=(V_{n}^{\bullet}\sqcup{V_{n}^{\circ}},E)$ be an orthodiagonal map so that $(\Omega_{n},A_{n}^{\bullet}, B_{n}^{\bullet}, C_{n}^{\bullet}, D_{n}^{\bullet})$ is a $(\delta_{n}, \varepsilon_{n})$- good interior approximation to $(\Omega,A,B,C,D)$ for some choice of distinguished boundary points $A_{n}^{\bullet}, B_{n}^{\bullet}, C_{n}^{\bullet}, D_{n}^{\bullet}\in{\partial{V_{n}^{\bullet}}}$. Let $\phi_{n}$ be the tiling map associated to the orthodiagonal rectangle $(\Omega_{n},A_{n}^{\bullet}, B_{n}^{\bullet}, C_{n}^{\bullet}, D_{n}^{\bullet})$ with real part $h_{n}$ and imaginary part $\widetilde{h}_{n}$. That is, $\widetilde{h}_{n}:V_{n}^{\circ}\rightarrow{\mathbb{R}}$ solves the following boundary value problem on $\Omega_{n}^{\circ}$: 
    \begin{align*}
         \widetilde{h}_{n}(x)&=1 \hspace{5pt}\text{for all $x\in{[B_{n}^{\circ}, C_{n}^{\circ}]}$} \\
         \widetilde{h}_{n}(x)&=0 \hspace{5pt}\text{for all $x\in{[D_{n}^{\circ}, A_{n}^{\circ}]}$} \\
         \Delta^{\circ}\widetilde{h}_{n}(x)&=0 \hspace{5pt}\text{for all $x\in{V_{n}^{\circ}\setminus{\big([B_{n}^{\circ}, C_{n}^{\circ}]\cup{[D_{n}^{\circ}, A_{n}^{\circ}]}\big)}}$}
    \end{align*}
    and $h_{n}:V^{\bullet}\rightarrow{\mathbb{R}}$ is the harmonic conjugate of $\widetilde{h}_{n}$ which solves the following boundary value problem on $\Omega_{n}^{\bullet}$: 
     \begin{align*}
         h_{n}(x)&=0 \hspace{5pt}\text{for all $x\in{[A_{n}^{\bullet}, B_{n}^{\bullet}]}$} \\
        h_{n}(x)&=L_{n} \hspace{5pt}\text{for all $x\in{[C_{n}^{\bullet}, D_{n}^{\bullet}]}$} \\
         \Delta^{\bullet}h_{n}(x)&=0 \hspace{5pt}\text{for all $x\in{V_{n}^{\bullet}\setminus{\big([A_{n}^{\bullet}, B_{n}^{\bullet}]\cup{[C_{n}^{\bullet}, D_{n}^{\bullet}]}\big)}}$}
    \end{align*}
   where $L_{n}$ is the extremal length from $[A_{n}^{\bullet}, B_{n}^{\bullet}]$ to $[C_{n}^{\bullet}, D_{n}^{\bullet}]$ in $\Omega_{n}^{\bullet}$. Let $F_{n}$ be the discrete holomorphic function on $\Omega_{n}$ that agrees with $h_{n}$ on $V_{n}^{\bullet}$ and agrees with $i\widetilde{h}_{n}$ on $V_{n}^{\circ}$. Let $\widehat{F}_{n}$ be any sensible extension of $F_{n}$ to a continuous function on $\widehat{\Omega}_{n}$ so that:
   \begin{equation*}
       \text{Re}(\widehat{F}_{n}(z))=h_{n}(z) \hspace{5pt}\text{for all $z\in{V^{\bullet}_{n}}$}\hspace{20pt} \text{Im}(\widehat{F}_{n}(z))=\widetilde{h}_{n}(z) \hspace{5pt}\text{for all $z\in{V^{\circ}_{n}}$}
   \end{equation*}
  One natural way to do this is to triangulate the faces of $\Omega_{n}^{\bullet}$ and define the real part of $\widehat{F}_{n}$ on each triangle by interpolating linearly between the values of $h_{n}$ at the corner vertices. Analogously, triangulating each face of $\Omega_{n}^{\circ}$, we can define the imaginary part of $\widehat{F}_{n}$ on each triangle by interpolating linearly between the values of $\widetilde{h}_{n}$ at the corner vertices. If $\phi_{n}$ is the tiling map associated with the orthodiagonal rectangle $(\Omega_{n}^{\bullet}, A^{\bullet}_{n}, B^{\bullet}_{n}, C^{\bullet}_{n},D^{\bullet}_{n})$, our estimates for the modulus of continuity of $h_{n}$ and $\widetilde{h}_{n}$ in Theorem \ref{thm: modulus of continuity for tiling maps} tell us that for any $n\in{\mathbb{N}}$ and any $z\in{\widehat{\Omega}_{n}}$, we have that:
  \begin{equation}
  \label{eqn: difference between tiling map and F_n-hat at a point}
      |\phi_{n}(z)-\widehat{F}_{n}(z)|\leq{\frac{K(L_{n}\vee{1})}{\log\big(\frac{d_{n}\wedge{d_{n}'}}{\varepsilon_{n}}\big)}}
  \end{equation}
   where:
   \begin{align*}
       d_{n}&= \inf\{\text{diameter}(\gamma):\text{$\gamma$ is a curve in $\widehat{\Omega}_{n}$ joining $[A_{n}^{\bullet},B_{n}^{\bullet}]_{\partial{\widehat{\Omega}}_{n}}$ and $[C_{n}^{\bullet},D_{n}^{\bullet}]_{\partial{\widehat{\Omega}}_{n}}$}\} \\ 
       d_{n}'&= \inf\{\text{diameter}(\gamma):\text{$\gamma$ is a curve in $\widehat{\Omega}_{n}$ joining $[B_{n}^{\bullet},C_{n}^{\bullet}]_{\partial{\widehat{\Omega}_{n}}}$ and $[D_{n}^{\bullet},A_{n}^{\bullet}]_{\partial{\widehat{\Omega}_{n}}}$}\}
   \end{align*}
   and $K>0$ is an absolute constant. By Proposition \ref{prop: Discrete and Continuous EL Comparable}, the sequence $(L_{n})_{n=1}^{\infty}$ is uniformly bounded. Meanwhile, Lemma \ref{lem: diamaters of curves crossing rectangles converge} tells us that $d_{n}\wedge{d_{n}'}\rightarrow{d\wedge{d'}}$ as $n\rightarrow{\infty}$. Hence, the quantity on the RHS of this inequality tends to $0$ uniformly in $z$ as $n\rightarrow{\infty}$. Thus, to show that the tiling maps $\phi_{n}$ converge to the relevant conformal map $\phi$, it suffices to show this is true of the functions $\widehat{F}_{n}$.\\ \\
  The uniform boundedness of the $L_{n}$'s also tells us that the functions $\widehat{F}_{n}$ are uniformly bounded. By Theorem \ref{thm: modulus of continuity for tiling maps}, these functions are also equicontinuous on compacts. Hence, by Arzela-Ascoli, the functions $\widehat{F}_{n}$ are precompact with respect to the topology of uniform convergence on compacts in $\Omega$. Since $L_{n}$ is precisely the discrete Dirichlet energy of $\widehat{F}_{n}$ and the sequence $(L_{n})_{n=1}^{\infty}$ is uniformly bounded, Theorem \ref{thm: limit of discrete holomorphic functions is holomorphic} tells us that any subsequential limit of our discrete holomorphic functions $\widehat{F}_{n}$ is holomorphic. \\ \\
As per our discussion in Section \ref{subsec: Organization}, we will now identify our limiting function $F$ by its boundary behavior. Suppose $(\widehat{F}_{n_{k}})_{k\geq{1}}$ is a subsequence of $(\widehat{F}_{n})_{n\geq{1}}$ such that $\widehat{F}_{n_{k}}$ converges uniformly on compacts to some holomorphic function $F$ and $L_{n_{k}}$ converges to some $L>0$ as $k\rightarrow{\infty}$. We can always pick such a subsequence because the sequence $(\widehat{F}_{n})_{n\geq{1}}$ is precompact and the sequence $(L_{n})_{n\geq{1}}$ is uniformly bounded away from $0$ and $\infty$. Fix $x,y\in{\Omega}$. By Theorem \ref{thm: modulus of continuity for tiling maps}: 
\begin{equation*}
    |\widehat{F}_{n_{k}}(y)-\widehat{F}_{n_{k}}(x)|\leq{\frac{K(L_{n_{k}}\vee{1})}{\log{\Big(\frac{d_{n_{k}}\wedge{d_{n_{k}}'}}{(d_{\widehat{\Omega}_{n_{k}}}(x,y)\wedge{d_{cc}^{\widehat{\Omega}_{n_{k}}}(x,y)})\vee{\varepsilon_{n_{k}}}}\Big)}}}
\end{equation*}
for all $k$ large enough so that $x,y\in{\widehat{\Omega}_{n_{k}}}$. Taking the limit as $k\rightarrow{\infty}$ of both sides of this inequality, by Corollary \ref{lem: convergence of crosscut and ambient distance} and Lemma \ref{lem: diamaters of curves crossing rectangles converge} of Appendix \ref{appendix: elementary results in topology and complex analysis}, we have that: 
\begin{equation}
\label{eqn: modulus of continuity for limiting holomorphic function}
    |F(y)-F(x)|\leq{\frac{K(L\vee{1})}{\log\Big(\frac{d\wedge{d'}}{d_{\Omega}(x,y)\wedge{d_{cc}^{\Omega}(x,y)}}\Big)}}
\end{equation}
Thus, $F:\Omega\rightarrow{\mathbb{C}}$ extends continuously to $\Omega^{\ast}$. What is more, as we alluded to earlier, our understanding of the boundary behavior of the $\widehat{F}_{n_{k}}$'s will allow us to recover enough of the boundary behavior of $F$ to uniquely identify $F$ as the conformal map $\phi$ in the statement of Theorem \ref{thm: convergence of rectangle tiling maps}.\\ \\
Suppose $z\in{(A,B)_{\partial{\Omega^{\ast}}}}$. Since $z$ does not lie on the boundary arcs $[B,C]_{\partial{\Omega^{\ast}}}$, $[C,D]$ or $[D,A]_{\partial{\Omega^{\ast}}}$: 
\begin{equation*}
    c(z)=\min\{d_{cc}^{\Omega}(z,[B,C]_{\partial{\Omega^{\ast}}};[A,D]_{\partial{\Omega^{\ast}}}), d_{cc}^{\Omega}(z,[C,D]_{\partial{\Omega^{\ast}}};[A,B]_{\partial{\Omega^{\ast}}}), d_{cc}^{\Omega}(z,[D,A]_{\partial{\Omega^{\ast}}};[B,C]_{\partial{\Omega^{\ast}}})\}>0
\end{equation*}
Let $(z_{m})_{m=1}^{\infty}$ be a sequence of points in $\Omega$ so that $z_{m}\rightarrow{z}$ in Carath\`eodory distance. Equivalently, fixing some point $z_{0}\in\Omega$, for each $z_{m}$ there exists a crosscut $\gamma_{m}$ of $\Omega$ so that $z$ and $z_{m}$ lie in the same connected component of $\Omega^{\ast}\setminus{\gamma_{m}}$, $z_{0}$ lies in the other connected component, and $\ell_{m}=\text{length}(\gamma_{m})\rightarrow{0}$. By the same reasoning we used to arrive at Equation \ref{eqn: modulus of continuity for limiting holomorphic function}, restricting our attention to the real part of $F$, we have that: 
\begin{equation*}
    |Re(F)(z)-Re(F)(z_{m})|\leq{\frac{K}{\log\big(\frac{d'}{\ell_{m}}\big)}}
\end{equation*}
where $K>0$ is an absolute constant. Since $\widehat{F}_{n_{k}}$ converges to $F$ uniformly on compacts: 
\begin{equation*}
    |Re(F)(z_{m})-Re(\widehat{F}_{n_{k}})(z_{m})|\rightarrow{0} \hspace{10pt}\text{as $k\rightarrow{\infty}$}
\end{equation*}
Choose $m$ large enough so that $\ell_{m}<\frac{c(z)}{3}$. Having done this, choose $k$ large enough so that $z_{m}\in{\widehat{\Omega}_{n_{k}}}$ and $\delta_{n_{k}},\varepsilon_{n_{k}}<\frac{c(z)}{3}$. Since $\partial{\widehat{\Omega}}_{n_{k}}$ is a closed simple curve separating $z_{m}$ from $\partial{\Omega^{\ast}}$, $\gamma_{m}$ must intersect $\partial{\widehat{\Omega}}_{n_{k}}$. The resulting points of intersections must all lie on the arc $[A_{n_{k}}^{\bullet},B_{n_{k}}^{\bullet}]_{\partial{\widehat{\Omega}_{n_{k}}}}$. To understand why this is the case, suppose to the contrary that one of these points of intersection lies on the arc $[B^{\bullet}_{n_{k}}, C^{\bullet}_{n_{k}}]_{\partial{\widehat{\Omega}_{n_{k}}}}$. Let $x_{k}$ be a point of intersection between $\gamma_{m}$ and $\partial{\widehat{\Omega}}_{n_{k}}$ that lies on $[B^{\bullet}_{n_{k}}, C^{\bullet}_{n_{k}}]_{\partial{\widehat{\Omega}_{n_{k}}}}$. Then we can find a point $x_{k}'\in{[B^{\circ}_{n_{k}}, C^{\circ}_{n_{k}}]}\subseteq{\partial{V^{\circ}}}$ that lies within $\varepsilon_{n_{k}}$ of $x_{k}$. Since $(\Omega_{n_{k}},A_{n_{k}}^{\bullet},B_{n_{k}}^{\bullet},C_{n_{k}}^{\bullet},D_{n_{k}}^{\bullet})$ is a $(\delta_{n_{k}},\varepsilon_{n_{k}})$- good interior approximation of $(\Omega,A,B,C,D)$, $x_{k}'$ is $\delta_{n_{k}}$- close to $[B,C]_{\partial{\Omega^{\ast}}}$ in crosscut distance. That is, we can find a crosscut $\eta_{k}$ of $\Omega$ of length at most $\delta_{n_{k}}$, joining $x_{k}'$ to $[B,C]_{\partial{\Omega^{\ast}}}$, and separating $x_{k}'$ from $[D,A]_{\partial{\Omega^{\ast}}}$. Stitching together $\gamma_{m}$, $\eta_{k}$, and the line segment between $x_{k}$ and $x_{k}'$, we conclude that $z$ is $(\ell_{m}+\delta_{n_{k}}+\varepsilon_{n_{k}})$- close to $[B,C]_{\partial{\Omega^{\ast}}}$ in crosscut distance, where:
\begin{equation*}
    \ell_{m}+\delta_{n_{k}}+\varepsilon_{n_{k}}<c(z)    
\end{equation*}
This is a contradiction, since:
\begin{equation*}
    c(z)\leq{d_{cc}^{\Omega}}(z,[B,C]_{\partial{\Omega^{\ast}}};[D,A]_{\partial{\Omega^{\ast}}})
\end{equation*}
where the quantity on the RHS of this inequality is precisely the crosscut distance between $z$ and $[B,C]_{\partial{\Omega^{\ast}}}$ in $\Omega$. The exact same argument allows us to rule out points of intersection lying on the arcs $[C^{\bullet}_{n_{k}}, D^{\bullet}_{n_{k}}]_{\partial{\widehat{\Omega}_{n_{k}}}}$ and $[D^{\bullet}_{n_{k}}, A^{\bullet}_{n_{k}}]_{\partial{\widehat{\Omega}_{n_{k}}}}$. Hence, $(\gamma_{m}\cap{\partial{\widehat{\Omega}_{n_{k}}}})\subseteq{[A_{n_{k}}^{\bullet},B_{n_{k}}^{\bullet}]_{\partial{\widehat{\Omega}_{n_{k}}}}}$. \\ \\
Since the points of intersection of $\gamma_{m}$ and $\partial{\widehat{\Omega}_{n_{k}}}$ all lie along $[A_{n_{k}}^{\bullet},B_{n_{k}}^{\bullet}]_{\partial{\widehat{\Omega}_{n_{k}}}}$, $z_{m}$ is $\ell_{m}$- close to the boundary arc $[A_{n_{k}}^{\bullet},B_{n_{k}}^{\bullet}]_{\partial{\widehat{\Omega}_{n_{k}}}}$ of $\widehat{\Omega}_{n_{k}}$, along which $Re(\widehat{F}_{n_{k}})$ is identically $0$. Hence, by our regularity estimates in Theorem \ref{thm: modulus of continuity for tiling maps}:
\begin{equation*}
    |Re(\widehat{F}_{n_{k}})(z_{m})|\leq{\frac{K}{\log\big(\frac{d'}{\ell_{m}}\big)}}
\end{equation*}
Putting all this together, we have that: 
\begin{equation*}
    |Re(F)(z)|\leq{\frac{2K}{\log\big(\frac{d'}{\ell_{m}}\big)}}
\end{equation*}
for any $m$ sufficiently large. Since $\ell_{m}\rightarrow{0}$ as $m\rightarrow{\infty}$, we conclude that $Re(F)(z)=0$. Since the point $z\in{(A,B)_{\partial{\Omega^{\ast}}}}$ was arbitrary, $Re(F)(z)=0$ for all $z\in{[A,B]_{\partial{\Omega^{\ast}}}}$. The fact that $Re(F)|_{[C,D]_{\partial{\Omega^{\ast}}}}=L$, $Im(F)|_{[B,C]_{\partial{\Omega^{\ast}}}}=0$, and $Im(F)|_{[D,A]_{\partial{\Omega^{\ast}}}}=1$, all follow by the same argument. Via the argument principle, this information about the boundary behavior of $F$ tells us that $F$ is a conformal map from $\Omega$ to the rectangle $\mathcal{R}_{L}$. Since $F(A)=i$, $F(B)=0$, $F(C)=L$, and $F(D)=L+i$, we conclude that $F=\phi$. Furthermore, $L_{n}$ converges to $L$, where $L$ is the extremal length between $[A,B]_{\partial{\Omega^{\ast}}}$ and $[C,D]_{\partial{\Omega^{\ast}}}$ in $\Omega$. \end{proof}

\begin{appendices}
\section{Appendix}
\subsection{Elementary Results in Topology and Complex Analysis}
\label{appendix: elementary results in topology and complex analysis}
In this section, we prove Proposition \ref{prop: we can approximate any simply connected domain} which tells us that any conformal rectangle admits a $\delta$-good interior approximation, along with a number of elementary results in topology and complex analysis that are needed for the proof of our main result, Theorem \ref{thm: convergence of rectangle tiling maps}. While the proofs of these results are all fairly straightforward, so as not to distract the reader from the main thrust of our argument, we've relegated them to this Appendix.
\begin{lem}
\label{lem: K in Omega_n for n sufficiently large}
    For each $n\in{\mathbb{N}}$, let $(\Omega_{n},A_{n},B_{n},C_{n},D_{n})$ be a $\delta_{n}$- good approximation of the conformal rectangle $(\Omega,A,B,C,D)$, where $\delta_{n}\rightarrow{0}$ as $n\rightarrow{\infty}$. Then for any compact set $K\subseteq{\Omega}$, there exists $N\in{\mathbb{N}}$ such that $K\subseteq{\Omega_{n}}$ for all $n\geq{N}$.  
\end{lem}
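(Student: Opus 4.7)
The plan is to argue by contradiction using the quantity
\[
c(z) := \min\!\bigl\{d_{cc}^\Omega(z,[A,B]_{\partial\Omega^*};[C,D]_{\partial\Omega^*}),\ d_{cc}^\Omega(z,[B,C]_{\partial\Omega^*};[D,A]_{\partial\Omega^*}),\ d_{cc}^\Omega(z,[C,D]_{\partial\Omega^*};[A,B]_{\partial\Omega^*}),\ d_{cc}^\Omega(z,[D,A]_{\partial\Omega^*};[B,C]_{\partial\Omega^*})\bigr\}
\]
attached to each $z \in \Omega$. My first step is to verify that $c$ is continuous on $\Omega$ and strictly positive there, so that $c_K := \min_{z \in K} c(z) > 0$. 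Continuity is immediate from the local Lipschitz bound $|d_{cc}^\Omega(z,S;S') - d_{cc}^\Omega(w,S;S')| \le 2 d_\Omega(z,w)$ already recorded in Section \ref{subsec: OD map intro}. Positivity at an interior $z \in \Omega$ follows from the observation that a sequence of crosscuts of $\Omega$ with lengths tending to zero must collapse to a single prime end of $\Omega^*$, while a nondegenerate prime end arc such as $[C,D]_{\partial\Omega^*}$ contains the two distinct prime ends $C$ and $D$ and so cannot be engulfed by the shrinking small component of $\Omega \setminus \gamma$. Compactness of $K$ then yields $c_K > 0$.

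Next, I assume for contradiction that $K \not\subseteq \Omega_n$ along a subsequence, pick $z_n \in K \setminus \Omega_n$ and, passing to a further subsequence, $z_n \to z^\star \in K$. I will show that for $n$ large enough one necessarily has $c(z_n) < \delta_n$; combined with $c(z_n) \ge c_K$ and $\delta_n \to 0$ this contradicts the existence of such $z_n$. The case $z_n \in \partial \Omega_n$ is immediate: $z_n$ lies on one of the four arcs of $\partial \Omega_n$, and the $\delta_n$-good approximation condition directly gives $c(z_n) < \delta_n$.

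The remaining case, where $z_n$ lies in some connected component $U_n$ of $\Omega \setminus \overline{\Omega_n}$, is the heart of the matter. The plan is to establish the following geometric fact: for $\delta_n$ small enough, the closure of $U_n$ inside $\Omega^*$ meets exactly one of the four prime end arcs of $\partial \Omega^*$ (say $[A,B]_{\partial\Omega^*}$), the set $\partial U_n \cap \Omega$ lies entirely on the corresponding boundary arc $[A_n, B_n]_{\partial \Omega_n}$, and $U_n$ itself is covered by the union, over $w \in \partial U_n \cap \partial\Omega_n$, of the $w$-components (``pockets'') of $\Omega \setminus \gamma_w$ produced by the short crosscuts $\gamma_w$ witnessing $d_{cc}^\Omega(w, [A,B]_{\partial\Omega^*}; [C,D]_{\partial\Omega^*}) < \delta_n$. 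Granting this, $z_n$ lies in at least one such pocket, so $c(z_n) \le d_{cc}^\Omega(z_n, [A,B]_{\partial\Omega^*}; [C,D]_{\partial\Omega^*}) < \delta_n$, as desired.

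The hardest part will be establishing this geometric fact. The plan is to exploit that each corner $A_n, B_n, C_n, D_n$ of $\partial \Omega_n$, being the common endpoint of two adjacent arcs of $\partial \Omega_n$ that are each close in crosscut distance to an adjacent prime end arc of $\Omega^*$, itself sits close in crosscut distance to the corresponding prime end of $\Omega$. A careful application of the Jordan curve theorem to $\partial \Omega_n$, combined with this near-matching of corners, should show that the complementary components of $\Omega \setminus \overline{\Omega_n}$ are funneled between individual matched pairs of arcs; the covering-by-pockets assertion then follows by noting that a component sandwiched between $[A_n,B_n]_{\partial\Omega_n}$ and $[A,B]_{\partial\Omega^*}$ cannot avoid being swept out by the short crosscuts whose existence is guaranteed at every point of $[A_n, B_n]_{\partial\Omega_n}$.
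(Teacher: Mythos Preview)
Your approach is genuinely different from the paper's, and the hardest step---as you yourself flag---has a real gap. The claim that, for $\delta_n$ small, each connected component $U_n$ of $\Omega\setminus\overline{\Omega_n}$ meets exactly one prime end arc and that $\partial U_n\cap\Omega$ lies on a single arc of $\partial\Omega_n$ is simply false. Take $\Omega=(0,1)^2$ with its four corners and $\Omega_n=(\epsilon_n,1-\epsilon_n)^2$ with matched corners; one checks easily that this is a $C\epsilon_n$-good interior approximation, yet $\Omega\setminus\overline{\Omega_n}$ is a single connected annular region whose closure in $\Omega^*$ meets all four boundary arcs and whose inner boundary is all of $\partial\Omega_n$. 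So the ``funneling'' picture you describe does not hold, and the covering-by-pockets argument as outlined cannot be carried out. It is plausible that the weaker statement you actually need---that every $z\in\Omega\setminus\Omega_n$ satisfies $c(z)\lesssim\delta_n$---is true, but establishing it directly requires either a different decomposition of $U_n$ or an argument that tracks which pocket a path from $z$ to the interior first enters; neither is supplied, and neither is obviously short.

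The paper sidesteps all of this topology by pushing forward to the rectangle. It applies the conformal map $\phi:\Omega\to\mathcal{R}_L$ and uses the modulus-of-continuity estimate of Theorem~\ref{thm: modulus of continuity for conformal map} to show that $\phi$ sends each arc of $\partial\Omega_n$ into a thin strip along the corresponding side of $\mathcal{R}_L$, with the images of the corners $A_n,B_n,C_n,D_n$ landing near the corners of $\mathcal{R}_L$. It follows that $\phi(\partial\Omega_n)$ winds once around any slightly shrunken rectangle $[\eta,L-\eta]\times[\eta,1-\eta]$, so this rectangle lies in $\phi(\Omega_n)$; since $\phi(K)$ is compactly contained in $\mathcal{R}_L$, one gets $K\subseteq\Omega_n$ for $n$ large. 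The point is that in $\mathcal{R}_L$ the Jordan-curve argument is trivial, whereas in $\Omega$ it is exactly the step you were unable to complete.
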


\begin{proof}
    Let $\phi:\Omega\rightarrow{\mathcal{R}_{L}}$ be the conformal map from $\Omega$ to the rectangle $\mathcal{R}_{L}$ so that the prime ends $A,B,C,D$ of $\Omega$ are mapped to the four corners of $\mathcal{R}_{L}$ and in particular, $\phi(A)=i$. Here, $L$ is the extremal length between $[A,B]_{\partial{\Omega^{\ast}}}$ and $[C,D]_{\partial{\Omega^{\ast}}}$ in $\Omega$. We write $\phi=h+i\widetilde{h}$ where $h$ and $\widetilde{h}$ are the real and imaginary parts of $\phi$. Since $[A_{n},B_{n}]_{\partial{\Omega_{n}}}$ and $[C_{n}, D_{n}]_{\partial{\Omega_{n}}}$ are $\delta_{n}$-close to $[A,B]_{\partial{\Omega^{\ast}}}$ and $[C,D]_{\partial{\Omega^{\ast}}}$ in crosscut distance, our regularity estimates for $h$ in Theorem \ref{thm: modulus of continuity for conformal map} tell us that: 
    \begin{align*}
        h(z)&\leq{\frac{2\pi}{\log\big(\frac{d'}{2\delta_{n}}\big)}} \hspace{5pt}\text{for all $z\in{[A_{n},B_{n}]_{\partial{\Omega_{n}}}}$}, &
        h(z)&\geq{L-\frac{2\pi}{\log\big(\frac{d'}{2\delta_{n}}\big)}} \hspace{5pt}\text{for all $z\in{[C_{n},D_{n}]_{\partial{\Omega_{n}}}}$}
    \end{align*}
    Similarly, since $[B_{n},C_{n}]_{\partial{\Omega_{n}}}$ and $[D_{n},A_{n}]_{\partial{\Omega_{n}}}$ are $\delta_{n}$- close to $[B,C]$ and $[D,A]$ in crosscut distance, our regularity estimates for $\widetilde{h}$ in Theorem \ref{thm: modulus of continuity for conformal map} tell us that: 
    \begin{align*}
        \widetilde{h}(z)&\geq{1-\frac{2\pi{L}}{\log\big(\frac{d}{2\delta_{n}}\big)}} \hspace{5pt}\text{for all $z\in{[B_{n},C_{n}]_{\partial{\Omega_{n}}}}$}, & \widetilde{h}(z)&\leq{\frac{2\pi{L}}{\log\big(\frac{d}{2\delta_{n}}\big)}} \hspace{5pt}\text{for all $z\in{[B_{n},C_{n}]_{\partial{\Omega_{n}}}}$}
    \end{align*}
    Since $A_{n}$ lies on both $[A_{n}, B_{n}]_{\partial{\widehat{\Omega}_{n}}}$ and $[C_{n}, D_{n}]_{\partial{\widehat{\Omega}_{n}}}$, we have that:  
    \begin{equation*}
        |\phi(A_{n})-\phi(A)|\leq{\frac{2\pi}{\log\big(\frac{d'}{2\delta_{n}}\big)}+\frac{2\pi{L}}{\log\big(\frac{d}{2\delta_{n}}\big)}}\leq{\frac{4\pi(L\vee{1})}{\log\big(\frac{d\wedge{d'}}{2\delta_{n}}\big)}}
    \end{equation*}
    By the same reasoning: 
    \begin{equation*}
        |\phi(B_{n})-\phi(B)|, |\phi(C_{n})-\phi(C)|, |\phi(D_{n})-\phi(D)|\leq{\frac{4\pi(L\vee{1})}{\log\big(\frac{d\wedge{d'}}{2\delta_{n}}\big)}} 
    \end{equation*}
    Consider the simple closed curve $\phi(\partial{\Omega_{n}})$ whose interior is precisely $\phi(\Omega_{n})$. We can split this curve into four pieces as follows: 
    \begin{equation*}
        \phi(\partial{\Omega}_{n})=\phi([A_{n}, B_{n}]_{\partial{\Omega_{n}}})\cup{\phi([B_{n}, C_{n}]_{\partial{\Omega_{n}}})}\cup{\phi([C_{n}, D_{n}]_{\partial{\Omega_{n}}})}\cup{\phi([D_{n}, A_{n}]_{\partial{\Omega_{n}}})}
    \end{equation*}
    By our estimates from earlier:
    \begin{itemize}
        \item The curve $\phi([A_{n}, B_{n}]_{\partial{\Omega}_{n}})$ travels from $\phi(A_{n})$ to $\phi(B_{n})$, where $\phi(A_{n})$ lies within $\frac{4\pi(L\vee{1})}{\log(\frac{d\wedge{d'}}{2\delta_{n}})}$ of $i$ and $\phi(B_{n})$ lies within $\frac{4\pi(L\vee{1})}{\log(\frac{d\wedge{d'}}{2\delta_{n}})}$ of $0$. In between, the curve $\phi([A_{n}, B_{n}]_{\partial{\Omega}})$ lies within $\frac{2\pi}{\log(\frac{d'}{2\delta_{n}})}$ of the boundary arc $\{0\}\times{[0,1]}$ of $\mathcal{R}_{L}$.
         \item The curve $\phi([B_{n}, C_{n}]_{\partial{\Omega}_{n}})$ travels from $\phi(B_{n})$ to $\phi(C_{n})$, where $\phi(B_{n})$ lies within $\frac{4\pi(L\vee{1})}{\log(\frac{d\wedge{d'}}{2\delta_{n}})}$ of $0$ and $\phi(B_{n})$ lies within $\frac{4\pi(L\vee{1})}{\log(\frac{d\wedge{d'}}{2\delta_{n}})}$ of $L$. In between, the curve $\phi([B_{n}, C_{n}]_{\partial{\Omega}})$ lies within $\frac{2\pi{L}}{\log(\frac{d}{2\delta_{n}})}$ of the boundary arc $[0,L]\times{\{0\}}$ of $\mathcal{R}_{L}$.
          \item The curve $\phi([C_{n}, D_{n}]_{\partial{\Omega}_{n}})$ travels from $\phi(C_{n})$ to $\phi(D_{n})$, where $\phi(C_{n})$ lies within $\frac{4\pi(L\vee{1})}{\log(\frac{d\wedge{d'}}{2\delta_{n}})}$ of $L$ and $\phi(D_{n})$ lies within $\frac{4\pi(L\vee{1})}{\log(\frac{d\wedge{d'}}{2\delta_{n}})}$ of $L+i$. In between, the curve $\phi([C_{n}, D_{n}]_{\partial{\Omega}})$ lies within $\frac{2\pi}{\log(\frac{d'}{2\delta_{n}})}$ of the boundary arc $\{L\}\times{[0,1]}$ of $\mathcal{R}_{L}$.
            \item The curve $\phi([D_{n}, A_{n}]_{\partial{\Omega}_{n}})$ travels from $\phi(D_{n})$ to $\phi(A_{n})$, where $\phi(D_{n})$ lies within $\frac{4\pi(L\vee{1})}{\log(\frac{d\wedge{d'}}{2\delta_{n}})}$ of $L+i$ and $\phi(A_{n})$ lies within $\frac{4\pi(L\vee{1})}{\log(\frac{d\wedge{d'}}{2\delta_{n}})}$ of $i$. In between, the curve $\phi([D_{n}, A_{n}]_{\partial{\Omega}})$ lies within $\frac{2\pi{L}}{\log(\frac{d}{2\delta_{n}})}$ of the boundary arc $[0,L]\times{\{1\}}$ of $\mathcal{R}_{L}$.
    \end{itemize}
   Putting all this together, it follows that:
   \begin{equation*}
       \left[\frac{4\pi(L\vee{1})}{\log(\frac{d\wedge{d'}}{2\delta_{n}})}, L-\frac{4\pi(L\vee{1})}{\log(\frac{d\wedge{d'}}{2\delta_{n}})}\right]\times{\left[\frac{4\pi(L\vee{1})}{\log(\frac{d\wedge{d'}}{2\delta_{n}})}, 1-\frac{4\pi(L\vee{1})}{\log(\frac{d\wedge{d'}}{2\delta_{n}})}\right]}\subseteq{\phi(\Omega_{n})}
   \end{equation*}
    Fix a compact set $K\subseteq{\Omega}$ and let $D=\text{dist}(\phi(K),\mathcal{R}_{L})$. Then for any $n$ sufficiently large so that $\frac{4\pi(L\vee{1})}{\log(\frac{d\wedge{d'}}{2\delta_{n}})}\leq{D}$, we have that $K\subseteq{\Omega_{n}}$. 
\end{proof}
\begin{rem}
    Notice that if the orthodiagonal rectangle $(G,A^{\bullet},B^{\bullet},C^{\bullet},D^{\bullet})$ is a $(\delta_{n}, \varepsilon_{n})$- good interior approximation of the conformal rectangle $(\Omega,A,B,C,D)$ then $(\widehat{G},A^{\bullet},B^{\bullet},C^{\bullet},D^{\bullet})$ is a $(\delta_{n}+2\varepsilon_{n})$- good interior approximation of $(\Omega,A,B,C,D)$. Hence, the analogue of Lemma \ref{lem: K in Omega_n for n sufficiently large} for discrete approximations of a continuous domain also holds.
\end{rem}
%Stuff I cut out of the remark above: 
% 1) This explanation: "This is because any point of the continuous boundary arc $[A^{\bullet}, B^{\bullet}]_{\partial{\widehat{G}}}$ lies within $\varepsilon$ of a point of the discrete boundary arc $[A^{\bullet}, B^{\bullet}]$. The same is true if we consider any of the three other continuous boundary arcs and their discrete counterparts."
% 2) This remark on what exactly is meant by analogue: "That is, if for each $n\in{\mathbb{N}}$, $(G_{n}, A_{n}^{\bullet}, B_{n}^{\bullet}, C_{n}^{\bullet}, D_{n}^{\bullet})$ is a $(\delta_{n}, \varepsilon_{n})$-good interior approximation of the conformal rectangle $(\Omega,A,B,C,D)$, where $\delta_{n}, \varepsilon_{n}\rightarrow{0}$ as $n\rightarrow{\infty}$, it follows that for any compact set $K\subseteq{\Omega}$ there exists $N\in{\mathbb{N}}$ so that $K\subseteq{\widehat{G}_{n}}$ for all $n\geq{N}$."
\begin{lem}
\label{lem: convergence of crosscut and ambient distance}
    Suppose $(\Omega,A,B,C,D)$ is a conformal rectangle and for each $n\in{\mathbb{N}}$,  $(\Omega_{n},A_{n},B_{n},C_{n},D_{n})$ is a $\delta_{n}$-good interior approximation of $(\Omega,A,B,C,D)$, where $\delta_{n}\rightarrow{0}$ as $n\rightarrow{\infty}$. Then: 
    \begin{align*}
        \lim_{n\rightarrow{\infty}}d_{\Omega_{n}}(x,y)&=d_{\Omega}(x,y), & \lim_{n\rightarrow{\infty}}d_{cc}^{\Omega_{n}}(x,y)&=d_{cc}^{\Omega}(x,y)
    \end{align*}
    for any $x,y\in{\Omega}$. 
\end{lem}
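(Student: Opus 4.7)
The proof naturally splits into handling $d_{\Omega_n}$ and $d_{cc}^{\Omega_n}$ separately. In each case, since $\Omega_n \subseteq \Omega$ is an interior approximation, one direction of the inequality is straightforward (more curves/crosscuts are available in the bigger domain $\Omega$), while the reverse direction requires exploiting the $\delta_n$-closeness of the boundary arcs. I begin with the ambient distance. The inequality $d_{\Omega_n}(x,y) \geq d_\Omega(x,y)$ is immediate since any smooth curve in $\Omega_n$ is a smooth curve in $\Omega$. For the reverse, fix $\epsilon>0$ and choose a smooth curve $\gamma \subseteq \Omega$ from $x$ to $y$ of length $< d_\Omega(x,y) + \epsilon$. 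The image $\gamma$ is compact in $\Omega$, so Lemma \ref{lem: K in Omega_n for n sufficiently large} gives $\gamma \subseteq \Omega_n$ for all $n$ sufficiently large, whence $d_{\Omega_n}(x,y) \leq \mathrm{length}(\gamma)$; letting $\epsilon \to 0$ finishes this part.

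Next, $d_{cc}^\Omega(x,y)$ is a minimum of four quantities $d_{cc}^\Omega(\{x,y\}, S; S')$ over opposite-arc pairs, so it suffices to show each such quantity converges to its $\Omega_n$-counterpart. Fix, say, $S = [A,B]_{\partial\Omega^\ast}$ and $S' = [C,D]_{\partial\Omega^\ast}$, with corresponding arcs $S_n, S_n'$ of $\Omega_n$. For the upper bound, take a crosscut $\gamma$ of $\Omega$ realizing $d_{cc}^\Omega(\{x,y\}, S; S')$ up to $\epsilon$, parametrized as $\eta: [0,1] \to \overline{\Omega^\ast}$ with $\eta(0), \eta(1) \in \partial\Omega^\ast$ and $\eta((0,1)) \subseteq \Omega$. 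For small $\tau > 0$, the compact sub-arc $\eta([\tau, 1-\tau])$ lies in $\Omega_n$ for $n$ large by Lemma \ref{lem: K in Omega_n for n sufficiently large}; taking the connected component of $\gamma \cap \Omega_n$ containing this sub-arc and closing it off at $\partial\Omega_n$ yields a crosscut $\gamma_n$ of $\Omega_n$ with $\mathrm{length}(\gamma_n) \leq \mathrm{length}(\gamma)$. A topological argument using the $\delta_n$-closeness of $S_n, S_n'$ to $S, S'$ shows that $\gamma_n$ still separates $\{x,y\}$ from $S_n'$ and that the component containing $\{x,y\}$ still touches $S_n$, so it witnesses $d_{cc}^{\Omega_n}(\{x,y\}, S_n; S_n') \leq \mathrm{length}(\gamma) < d_{cc}^\Omega(\{x,y\}, S; S') + \epsilon$.

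For the reverse inequality, take a near-optimal crosscut $\gamma_n$ of $\Omega_n$ with endpoints $p_n, q_n \in \partial\Omega_n$. Since each endpoint lies on a boundary arc of $\Omega_n$ that is $\delta_n$-close in crosscut distance to the corresponding arc of $\partial\Omega^\ast$, there exist crosscuts of $\Omega$ of length $<\delta_n$ attached to each endpoint which, concatenated with $\gamma_n$, produce a curve running from $\partial\Omega^\ast$ to $\partial\Omega^\ast$; extracting a simple sub-arc gives an honest crosscut $\tilde\gamma$ of $\Omega$ of length $\leq \mathrm{length}(\gamma_n) + 2\delta_n$, which (again after checking that the separation/touching conditions are preserved) witnesses $d_{cc}^\Omega(\{x,y\}, S; S') \leq \mathrm{length}(\gamma_n) + 2\delta_n$. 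Letting $n \to \infty$ and $\epsilon \to 0$ yields the desired convergence. The main obstacle in both directions is purely topological: verifying that the separation and touching conditions transfer between $\Omega$ and $\Omega_n$ under small perturbations of the boundary in crosscut distance. In particular, one must argue that the endpoints of the modified crosscut cannot land on the ``wrong'' boundary arc, since doing so would produce, via concatenation with the short crosscuts from the approximation property, a short curve in $\Omega$ connecting arcs that are a definite crosscut distance apart -- a contradiction for $n$ large. Once this bookkeeping is in place, the analytic content of the argument reduces to straightforward length control.
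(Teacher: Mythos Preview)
Your proposal is correct and follows essentially the same approach as the paper: the ambient-distance argument is identical, and for the crosscut distance you use the same two-sided estimate --- stitching a near-optimal $\Omega_n$-crosscut with the short $\delta_n$-crosscuts to get the $\liminf$ bound, and restricting an $\Omega$-crosscut to $\Omega_n$ via Lemma~\ref{lem: K in Omega_n for n sufficiently large} for the $\limsup$ bound. The key topological obstacle you flag (endpoints landing on the wrong arc, ruled out by a contradiction with the definite crosscut distance to the other arcs) is precisely what the paper handles by introducing the quantities $c(u),c(v)>0$ and the case analysis on which arcs the endpoints lie on.
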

\begin{proof}
    First we address the convergence of ambient distance. Fix $x,y\in{\Omega}$. By Lemma \ref{lem: K in Omega_n for n sufficiently large}, we know that for any $n$ sufficiently large, $x,y\in{\Omega_{n}}$. For such $n$, $d_{\Omega}(x,y)\leq{d_{\Omega_{n}}(x,y)}$, since $\Omega_{n}\subseteq{\Omega}$. Hence: 
    \begin{equation*}
        d_{\Omega}(x,y)\leq{\liminf_{n\rightarrow{\infty}}d_{\Omega_{n}}(x,y)}
    \end{equation*}
    To get the reverse inequality, fix $\varepsilon>0$ and let $\gamma$ be a smooth curve from $x$ to $y$ in $\Omega$ so that $\text{length}(\gamma)\leq{d_{\Omega}(x,y)+\varepsilon}$. Since $\gamma$ is a compact subset of $\Omega$, Lemma \ref{lem: K in Omega_n for n sufficiently large} tells us that for all $n$ sufficiently large, $\gamma\subseteq{\Omega_{n}}$, and so $d_{\Omega_{n}}(x,y)\leq{d_{\Omega}(x,y)+\varepsilon}$. Hence: 
    \begin{equation*}
        \limsup_{n\rightarrow{\infty}}d_{\Omega_{n}}(x,y)\leq{d_{\Omega}(x,y)+\varepsilon}
    \end{equation*}
    Since $\varepsilon>0$ was arbitrary, letting $\varepsilon$ tend to $0$, we have that: 
    \begin{equation*}
        \limsup_{n\rightarrow{\infty}}d_{\Omega_{n}}(x,y)\leq{d_{\Omega}(x,y)}
    \end{equation*}
    Regarding the convergence of crosscut distance, again, fix $x,y\in{\Omega}$ and suppose $n$ is sufficiently large so that $x,y\in{\Omega_{n}}$. We will show that: 
    \begin{equation}
    \label{eqn: convergence of crosscut distance wrt one boundary arc}
        \lim_{n\rightarrow{\infty}}d_{cc}^{\Omega_{n}}(\{x,y\},[A_{n},B_{n}]_{\partial{\Omega_{n}}}; [C_{n},D_{n}]_{\partial{\Omega_{n}}})=d_{cc}^{\Omega}(\{x,y\},[A,B]_{\partial{\Omega^{\ast}}};[C,D]_{\partial{\Omega^{\ast}}})
    \end{equation}
    The corresponding result for the other three pairs of boundary arcs follows by the exact same argument. Hence, if we can verify Equation \ref{eqn: convergence of crosscut distance wrt one boundary arc}, it follows that: 
    \begin{equation*}
        \lim_{n\rightarrow{\infty}}d_{cc}^{\Omega_{n}}(x,y)=d_{cc}^{\Omega}(x,y)
    \end{equation*}
    Suppose $\gamma_{n}$ is a crosscut of $\Omega_{n}$ that joins $x$ and $y$ to $[A_{n},B_{n}]_{\partial{\Omega_{n}}}$ and separates $x$ and $y$ from $[C_{n},D_{n}]_{\partial{\Omega_{n}}}$. Let $u_{n}$ and $v_{n}$ be the endpoints of $\gamma_{n}$. Then there are three possibilities: 
    \begin{enumerate}
        \item Both $u_{n}$ and $v_{n}$ lie on $[A_{n},B_{n}]_{\partial{\Omega_{n}}}$.
        \item One point lies on $[A_{n},B_{n}]_{\partial{\Omega_{n}}}$, while the other lies on a neighboring arc: either $[B_{n},C_{n}]_{\partial{\Omega_{n}}}$ or $[D_{n},A_{n}]_{\partial{\Omega_{n}}}$.
        \item One points lies on $[B_{n},C_{n}]$ and the other point lies on $[D_{n},A_{n}]_{\partial{\Omega_{n}}}$.
    \end{enumerate}
    In all three cases, the argument is analogous, so for simplicity, let's assume we are in the first situation. Since $(\Omega_{n},A_{n},B_{n},C_{n},D_{n})$ is a $\delta_{n}$-good interior approximation of $(\Omega,A,B,C,D)$, there exist crosscuts $\eta_{n}^{(1)}$ and $\eta_{n}^{(2)}$ of $\Omega$ so that: 
    \begin{itemize}
        \item  $\eta_{n}^{(1)}$ joins $u_{n}$ to $[A,B]_{\partial{\Omega^{\ast}}}$ and separates $u_{n}$ from $[C,D]_{\partial{\Omega^{\ast}}}$
        \item $\eta_{n}^{(2)}$ joins $v_{n}$ to $[A,B]_{\partial{\Omega^{\ast}}}$ and separates $v_{n}$ from $[C,D]_{\partial{\Omega^{\ast}}}$
        \item $\text{length}(\eta_{n}^{(1)}), \text{length}(\eta_{n}^{(2)})<{\delta_{n}}$
    \end{itemize}
    Stitching together $\eta_{n}^{(1)}, \gamma_{n}$ and $\eta_{n}^{(2)}$, we can produce a crosscut of $\Omega$ that joins $x$ and $y$ to $[A,B]_{\partial{\Omega^{\ast}}}$ and separates $x$ and $y$ from $[C,D]_{\partial{\Omega^{\ast}}}$ with length at most $\text{length}(\gamma_{n})+2\delta_{n}$. Since this is true for any such crosscut $\gamma_{n}$ of $\Omega_{n}$, it follows that: 
    \begin{equation*}
        d_{cc}^{\Omega}(\{x,y\},[A,B]_{\partial{\Omega^{\ast}}};[C,D]_{\partial{\Omega^{\ast}}})<d_{cc}^{\Omega_{n}}(\{x,y\},[A_{n},B_{n}]_{\partial{\Omega_{n}}};[C_{n},D_{n}]_{\partial{\Omega_{n}}})+2\delta_{n}
    \end{equation*}
    for all $n$ sufficiently large so that $x,y\in{\Omega_{n}}$. Hence: 
    \begin{equation*}
        d_{cc}^{\Omega}(\{x,y\},[A,B]_{\partial{\Omega^{\ast}}};[C,D]_{\partial{\Omega^{\ast}}})\leq{\liminf_{n\rightarrow{\infty}}d_{cc}^{\Omega_{n}}(\{x,y\},[A_{n},B_{n}]_{\partial{\Omega_{n}}};[C_{n},D_{n}]_{\partial{\Omega_{n}}})}
    \end{equation*}
    \begin{figure}[H]
        \centering
        \includegraphics[scale=0.65]{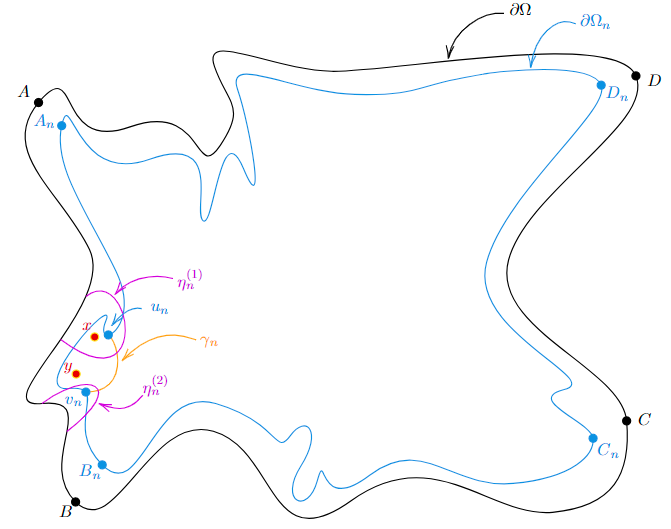}  
        \caption{Stitching together $\eta_{n}^{(1)}$, $\eta_{n}^{(2)}$, and $\gamma_{n}$ gives us a crosscut of $\Omega$ that joins $x$ and $y$ to $[A,B]_{\partial{\Omega^{\ast}}}$ and separates $x$ and $y$ from $[C,D]_{\partial{\Omega^{\ast}}}$.}
    \end{figure}
    \noindent To get the reverse inequality, suppose $\gamma$ is a crosscut of $\Omega$ that joins $x$ and $y$ to $[A,B]_{\partial{\Omega^{\ast}}}$ and separates $x$ and $y$ from $[C,D]_{\partial{\Omega^{\ast}}}$. Let $\gamma:[0,1]\rightarrow{\Omega^{\ast}}$ be an explicit parametrization of $\gamma$ and let $u,v\in{\partial{\Omega^{\ast}}}$ be the endpoints of $\gamma$: $u=\gamma(0)$, $v=\gamma(1)$. Since any crosscut of $\Omega$ that starts or ends at a corner $A,B,C,D$ of our conformal rectangle is close to a crosscut of $\Omega$ with almost the same length that doesn't use any of the corners of $(\Omega,A,B,C,D)$, we can assume without loss of generality that neither $u$ nor $v$ is a corner of $(\Omega,A,B,C,D)$. From here, there are three possibilities: 
    \begin{enumerate}
        \item Both $u$ and $v$ lie on $(A,B)_{\partial{\Omega^{\ast}}}$. 
        \item One point lies on $(A,B)_{\partial{\Omega^{\ast}}}$ while the other lies on a neighboring arc: either $(B,C)_{\partial{\Omega^{\ast}}}$ or $(D,A)_{\partial{\Omega^{\ast}}}$.
        \item One point lies on $(B,C)_{\partial{\Omega^{\ast}}}$ and the other point lies on $(D,A)_{\partial{\Omega^{\ast}}}$.
    \end{enumerate}
    The argument in all three cases is analogous, so for simplicity, suppose we are in the first situation. Then: 
    \begin{align*}
        c(u)&=\min\{d_{cc}^{\Omega}(u,[B,C]_{\partial{\Omega^{\ast}}};[A,D]_{\partial{\Omega^{\ast}}}), d_{cc}^{\Omega}(u,[C,D]_{\partial{\Omega^{\ast}}};[A,B]_{\partial{\Omega^{\ast}}}), d_{cc}^{\Omega}(u,[D,A]_{\partial{\Omega^{\ast}}};[B,C]_{\partial{\Omega^{\ast}}})\}>0 \\ 
        c(v)&=\min\{d_{cc}^{\Omega}(v,[B,C]_{\partial{\Omega^{\ast}}};[A,D]_{\partial{\Omega^{\ast}}}), d_{cc}^{\Omega}(v,[C,D]_{\partial{\Omega^{\ast}}};[A,B]_{\partial{\Omega^{\ast}}}), d_{cc}^{\Omega}(v,[D,A]_{\partial{\Omega^{\ast}}};[B,C]_{\partial{\Omega^{\ast}}})\}>0
    \end{align*}
    Choose $s,t\in(0,1)$ so that:
    \begin{align*}
        \text{length}(\gamma[0,s])&<\frac{c(u)}{4\pi}, & \text{length}(\gamma[t,1])&<\frac{c(v)}{4\pi}
    \end{align*}
     Since $\gamma[s,t]$ is a compact subset of $\Omega$, by Lemma \ref{lem: K in Omega_n for n sufficiently large}, $\gamma[s,t]\subseteq{\Omega_{n}}$ for all $n$ sufficiently large. In this case, since $\gamma$ starts and ends outside $\Omega_{n}$, $\gamma$ must intersect $\Omega_{n}$ at least twice: once when it enters $\Omega_{n}$ and once when it exits. Since $\gamma[s,t]\subseteq{\Omega_{n}}$, the points of intersection of $\gamma$ with $\partial{\Omega_{n}}$ must all lie in $\gamma[0,s]$ or $\gamma[t,1]$. Since $u=\gamma(0)\in{(A,B)_{\partial{\Omega^{\ast}}}}$ and $\text{diam}(\gamma[0,s])<\frac{c(u)}{4\pi}$, there is a subarc $\chi$ of $\{z\in{\Omega^{\ast}}:|z-u|=\frac{c(u)}{4\pi}\}$ so that $[C,D]_{\partial{\Omega^{\ast}}}$ and $\gamma[0,s]$ lie in distinct connected components of $\Omega^{\ast}\setminus{\chi}$. In other words: 
    \begin{equation*}
        d_{cc}^{\Omega}(\gamma[0,s],[A,B]_{\partial{\Omega^{\ast}}};[C,D]_{\partial{\Omega^{\ast}}})<\frac{c(u)}{2}
    \end{equation*}
     \begin{figure}[h]
        \centering
        \includegraphics[scale=0.52]{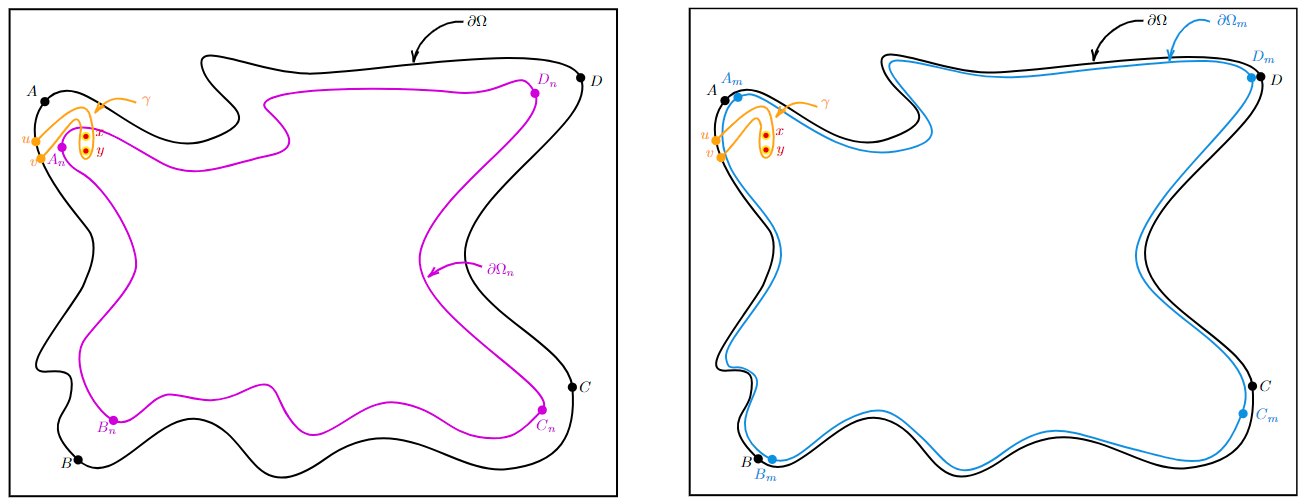}  
        \caption{On the left: The situation we are trying to avoid: the crosscut $\gamma$ which joins $x$ and $y$ to $[A,B]_{\partial{\Omega^\ast}}$ and separates $x$ and $y$ from $[C,D]_{\partial{\Omega^\ast}}$ intersects $\partial{\Omega_{n}}$ along the arc $[D_{n},A_{n}]_{\partial{\Omega_{n}}}$ rather than $[A_{n},B_{n}]_{\partial{\Omega_{n}}}$. On the right: For a fixed crosscut $\gamma$, we can avoid the picture on the left by taking $m>n$ sufficiently large.}
    \end{figure}
    Hence, the points of intersection of $\gamma[0,s]$ and $\partial{\Omega_{n}}$ must all lie on $[A_{n},B_{n}]_{\partial{\Omega_{n}}}$. To see why, suppose $w_{n}$ is a point of intersection of $\partial{\Omega_{n}}$ and $\gamma[0,s]$, and $w_{n}$ lies on one of the other boundary arcs of $(\Omega_{n},A_{n},B_{n},C_{n},D_{n})$. Without loss of generality, suppose $w_{n}\in{[B_{n},C_{n}]_{\partial{\Omega_{n}}}}$. Since $(\Omega_{n},A_{n},B_{n},C_{n},D_{n})$ is a $\delta_{n}$-good interior approximation of $(\Omega,A,B,C,D)$, there exists a crosscut $\zeta_{n}$ of $\Omega$ joining $w_{n}$ to $[B,C]_{\partial{\Omega^{\ast}}}$, and separating $w_{n}$ from $[D,A]_{\partial{\Omega^{\ast}}}$ so that $\text{length}(\zeta_{n})<\delta_{n}$. Stitching together $\chi$ and $\zeta_{n}$ we can produce a crosscut of $\Omega$ with length at most $\delta_{n}+\frac{c(u)}{2}$ that joins $u$ to $[B,C]_{\partial{\Omega^{\ast}}}$ and separates $u$ from $[D,A]_{\partial{\Omega^{\ast}}}$. Hence, for $n$ sufficiently large so that $\delta_{n}<\frac{c(u)}{2}$, we have that: 
    \begin{equation*}
        d_{cc}^{\Omega}(u,[B,C]_{\partial{\Omega^{\ast}}},[D,A]_{\partial{\Omega^{\ast}}})<\delta_{n}+\frac{c(u)}{2}<c(u)
    \end{equation*}
    This contradicts the definition of $c(u)$. Thus, the points of intersection of $\gamma[0,s]$ and $\partial{\Omega_{n}}$ all lie on $[A_{n},B_{n}]_{\partial{\Omega^{\ast}}}$. By the exact same argument, the points of intersection of $\gamma[t,1]$ and $\partial{\Omega_{n}}$ also lie on $[A_{n},B_{n}]_{\partial{\Omega^{\ast}}}$, for $n$ sufficiently large. Hence, there exists a subarc $\gamma_{n}$ of $\gamma$ so that $\gamma_{n}$ is a crosscut of $\Omega_{n}$ that joins $x$ and $y$ to $[A_{n},B_{n}]_{\partial{\Omega_{n}}}$ and separates $x$ and $y$ from $[C_{n},D_{n}]_{\partial{\Omega_{n}}}$. Since $\text{length}(\gamma_{n})\leq{\text{length}(\gamma)}$, and we can build such a crosscut $\gamma_{n}$ of $\Omega_{n}$ for any crosscut $\gamma$ of $\Omega$ that joins $x$ and $y$ to $[A,B]_{\partial{\Omega^{\ast}}}$ in $\Omega$ and separates $x$ and $y$ from $[C,D]_{\partial{\Omega^{\ast}}}$, provided that $n$ is sufficiently large, it follows that: 
    \begin{equation*}
        \limsup_{n\rightarrow{\infty}}d^{\Omega_{n}}_{cc}(\{x,y\},[A_{n},B_{n}]_{\partial{\Omega_{n}}};[C_{n},D_{n}]_{\partial{\Omega_{n}}})\leq{d^{\Omega}_{cc}(\{x,y\},[A,B]_{\partial{\Omega^{\ast}}};[C,D]_{\partial{\Omega^{\ast}}})}
    \end{equation*}
\end{proof}

\begin{lem}
\label{lem: diamaters of curves crossing rectangles converge}
    Suppose $(\Omega,A,B,C,D)$ is a conformal rectangle and for each $n\in{\mathbb{N}}$,  $(\Omega_{n},A_{n},B_{n},C_{n},D_{n})$ is a $\delta_{n}$-good interior approximation of $(\Omega,A,B,C,D)$, where $\delta_{n}\rightarrow{0}$ as $n\rightarrow{\infty}$. Suppose: 
    \begin{equation*}
        d=\inf\{\text{diam}(\gamma): \text{$\gamma$ is a curve in $\Omega$ joining $[A,B]_{\partial{\Omega^{\ast}}}$ and $[C,D]_{\partial{\Omega^{\ast}}}$}\}
    \end{equation*}
    and for each $n\in{\mathbb{N}}$: 
    \begin{equation*}
        d_{n}=\inf\{\text{diam}(\gamma): \text{$\gamma$ is a curve in $\Omega_{n}$ joining $[A_{n},B_{n}]_{\partial{\Omega_{n}}}$ and $[C_{n},D_{n}]_{\partial{\Omega_{n}}}$}\}
    \end{equation*}
    Then: 
    \begin{equation*}
        \lim_{n\rightarrow{\infty}}d_{n}=d
    \end{equation*}
\end{lem}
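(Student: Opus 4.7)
The plan is to establish the two inequalities $\liminf_{n\to\infty} d_n \ge d$ and $\limsup_{n\to\infty} d_n \le d$ separately, each by a short-crosscut stitching argument that exploits the $\delta_n$-good interior approximation hypothesis together with the convergence of crosscut distances from Lemma~\ref{lem: convergence of crosscut and ambient distance} (and its one-point specialization to a point-vs-arc crosscut distance) and the compact containment result Lemma~\ref{lem: K in Omega_n for n sufficiently large}.

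For the lower bound, fix $\varepsilon>0$ and, for each large $n$, let $\gamma_n$ be a curve in $\Omega_n$ joining its two distinguished arcs with $\mathrm{diam}(\gamma_n)<d_n+\varepsilon$, with endpoints $p_n\in[A_n,B_n]_{\partial\Omega_n}$ and $q_n\in[C_n,D_n]_{\partial\Omega_n}$. The $\delta_n$-good interior approximation condition delivers crosscuts $\alpha_n,\beta_n$ of $\Omega$, each of length at most $\delta_n$, joining $p_n$ to $[A,B]_{\partial\Omega^*}$ and $q_n$ to $[C,D]_{\partial\Omega^*}$ with the correct separation properties. Concatenating $\alpha_n\cup\gamma_n\cup\beta_n$ gives a connected set in $\Omega$ whose closure in $\Omega^*$ meets both $[A,B]_{\partial\Omega^*}$ and $[C,D]_{\partial\Omega^*}$, hence produces an admissible curve for the definition of $d$. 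Since the diameter of a union of sets with pairwise-meeting closures is at most the sum of diameters, this admissible curve has diameter at most $\mathrm{diam}(\gamma_n)+2\delta_n$. Therefore $d\le d_n+2\delta_n+\varepsilon$; sending $n\to\infty$ and then $\varepsilon\to 0$ yields $d\le \liminf_n d_n$.

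For the upper bound, fix $\varepsilon>0$ and a curve $\gamma:[0,1]\to\overline{\Omega^{*}}$ with $\gamma(0)=u\in (A,B)_{\partial\Omega^*}$, $\gamma(1)=v\in(C,D)_{\partial\Omega^*}$ (the non-corner assumption is harmless since corners can be approached from the open arcs without increasing diameter), and $\mathrm{diam}(\gamma)<d+\varepsilon$. Because $\gamma$ is continuous in the Carath\'eodory topology and $u$ is a prime end on $(A,B)_{\partial\Omega^*}$ disjoint from the compact arc $[C,D]_{\partial\Omega^*}$, the definition of prime ends (as chains of crosscuts of vanishing diameter) yields
\[
d_{cc}^{\Omega}\!\bigl(\gamma(\eta),\,[A,B]_{\partial\Omega^*};\,[C,D]_{\partial\Omega^*}\bigr)\xrightarrow[\eta\to 0^+]{} 0,
\]
and analogously for $\gamma(1-\eta)$ with respect to $[C,D]_{\partial\Omega^*}$. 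Now for any fixed small $\eta>0$, the subarc $\gamma[\eta,1-\eta]$ is a compact subset of $\Omega$, hence contained in $\Omega_n$ for all large $n$ by Lemma~\ref{lem: K in Omega_n for n sufficiently large}. Applying (the single-point form of) Lemma~\ref{lem: convergence of crosscut and ambient distance} to $p_\eta:=\gamma(\eta)$ and $q_\eta:=\gamma(1-\eta)$ gives
\[
\lim_{n\to\infty} d_{cc}^{\Omega_n}\!\bigl(p_\eta,\,[A_n,B_n]_{\partial\Omega_n};\,[C_n,D_n]_{\partial\Omega_n}\bigr)=d_{cc}^{\Omega}\!\bigl(p_\eta,\,[A,B]_{\partial\Omega^*};\,[C,D]_{\partial\Omega^*}\bigr),
\]
and similarly for $q_\eta$. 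Choosing for large $n$ crosscuts $\alpha_n$ of $\Omega_n$ from $p_\eta$ to $[A_n,B_n]_{\partial\Omega_n}$ and $\beta_n$ from $q_\eta$ to $[C_n,D_n]_{\partial\Omega_n}$ whose lengths are $o(1)$ as $\eta\to 0^+$ and $n\to\infty$, the concatenation $\alpha_n\cup\gamma[\eta,1-\eta]\cup\beta_n$ is an admissible curve for $d_n$ with diameter bounded by $\mathrm{diam}(\gamma)+\mathrm{length}(\alpha_n)+\mathrm{length}(\beta_n)$. This gives $\limsup_n d_n\le d+\varepsilon$; letting $\varepsilon\to 0$ completes the proof.

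The main technical obstacle lies in the upper bound, specifically in justifying that the crosscut distance from the interior point $\gamma(\eta)$ to the boundary arc $[A,B]_{\partial\Omega^*}$ (avoiding $[C,D]_{\partial\Omega^*}$) vanishes as $\eta\to 0^+$, and in ensuring that the short $\Omega_n$-crosscuts $\alpha_n,\beta_n$ produced by Lemma~\ref{lem: convergence of crosscut and ambient distance} actually contribute only a small amount to the diameter of the concatenated curve rather than to its length alone. The first point is handled by noting that, near a non-corner prime end $u\in(A,B)_{\partial\Omega^*}$, any sufficiently short defining crosscut of $u$ automatically separates $p_\eta$ from the disjoint compact arc $[C,D]_{\partial\Omega^*}$ and has its $p_\eta$-side closure touching $[A,B]_{\partial\Omega^*}$ at $u$; the second is immediate from diameter $\le$ length for the short crosscuts $\alpha_n,\beta_n$.
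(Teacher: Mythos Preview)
Your lower bound argument is essentially identical to the paper's: take a near-optimal curve $\gamma_n$ in $\Omega_n$, use the $\delta_n$-good approximation to furnish short crosscuts of $\Omega$ at its endpoints, and stitch to obtain an admissible curve for $d$ with diameter at most $\text{diam}(\gamma_n)+2\delta_n$.

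For the upper bound the two arguments diverge. The paper fixes a near-optimal crosscut $\gamma$ of $\Omega$ with endpoints $u\in(A,B)_{\partial\Omega^*}$, $v\in(C,D)_{\partial\Omega^*}$, chooses $s,t$ so that $\text{diam}(\gamma[0,s])$ is less than a fixed fraction of the crosscut distance $c(u)$ from $u$ to the three other boundary arcs (and likewise for $\gamma[t,1]$), and then argues by contradiction that for $n$ large every intersection of $\gamma[0,s]$ with $\partial\Omega_n$ lies on $[A_n,B_n]_{\partial\Omega_n}$: otherwise, stitching a short $\delta_n$-crosscut to a circular arc of radius $c(u)/(4\pi)$ would place $u$ within crosscut distance $<c(u)$ of one of the other three arcs. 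This yields a \emph{subarc} $\gamma_n\subseteq\gamma$ that is itself a crosscut of $\Omega_n$ joining the correct arcs, giving $d_n\le\text{diam}(\gamma)$ directly, with no additive error and no concatenation.

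Your route instead invokes the single-point, single-arc-pair form of Lemma~\ref{lem: convergence of crosscut and ambient distance} at $p_\eta=\gamma(\eta)$ and $q_\eta=\gamma(1-\eta)$ to obtain short crosscuts of $\Omega_n$, and then stitches. This is legitimate---the arc-by-arc convergence you need is exactly equation~\eqref{eqn: convergence of crosscut distance wrt one boundary arc} inside the proof of that lemma, and it works for a single point just as well. One point of care: the crosscut $\alpha_n$ furnished by the crosscut-distance definition does not pass through $p_\eta$; it is a crosscut of $\Omega_n$ (both endpoints on $\partial\Omega_n$) that \emph{separates} $p_\eta$ from $[C_n,D_n]$ with the $p_\eta$-side touching $[A_n,B_n]$. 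So ``$\alpha_n\cup\gamma[\eta,1-\eta]\cup\beta_n$'' must be read in the same informal stitching sense you and the paper both use for the lower bound, not as a literal concatenation of curves meeting at $p_\eta$ and $q_\eta$. With that caveat your argument is correct. Its advantage is modularity---you reuse Lemma~\ref{lem: convergence of crosscut and ambient distance} rather than rerunning the $c(u)$-contradiction that already appears inside its proof; the paper's direct route buys the sharper inequality $d_n\le\text{diam}(\gamma)$ without any $o(1)$ terms and avoids a second stitching step.
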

\begin{proof}
    Suppose $\gamma_{n}$ is a curve in $\Omega_{n}$ from $[A_{n},B_{n}]_{\partial{\Omega_{n}}}$ to $[C_{n},D_{n}]_{\partial{\Omega_{n}}}$. Let $u_{n}$ be the endpoint of $\gamma_{n}$ on $[A_{n},B_{n}]_{\partial{\Omega_{n}}}$ and let $v_{n}$ be the endpoint of $\gamma_{n}$ on $[C_{n},D_{n}]_{\partial{\Omega_{n}}}$. Since $(\Omega_{n},A_{n},B_{n},C_{n},D_{n})$ is a $\delta_{n}$-good interior approximation of $(\Omega,A,B,C,D)$, we can find crosscuts $\eta_{n}^{(1)}$ and $\eta_{n}^{(2)}$ of $\Omega$ so that $\eta_{n}^{(1)}$ joins $u_{n}$ to $[A,B]_{\partial{\Omega^{\ast}}}$ and separates $u_{n}$ from $[C,D]_{\partial{\Omega^{\ast}}}$ in $\Omega$, $\eta_{n}^{(2)}$ joins $v_{n}$ to $[C,D]_{\partial{\Omega^{\ast}}}$ and separates $v_{n}$ from $[A,B]_{\partial{\Omega^{\ast}}}$ in $\Omega$, and $\text{length}(\eta_{n}^{(1)}), \text{length}(\eta_{n}^{(2)})<\delta_{n}$. Stitching together $\eta_{n}^{(1)}, \gamma_{n}$ and $\eta_{n}^{(2)}$ we can produce a simple curve from $[A,B]_{\partial{\Omega^{\ast}}}$ to $[C,D]_{\partial{\Omega^{\ast}}}$ in $\Omega$ with diameter at most $\text{diam}(\gamma_{n})+2\delta_{n}$. Hence, $d\leq{d_{n}+2\delta_{n}}$ for all $n\in{\mathbb{N}}$, which tells us that: 
    \begin{equation*}
        d\leq{\liminf_{n\rightarrow{\infty}}d_{n}}
    \end{equation*}
    \begin{figure}[H]
        \centering
        \includegraphics[scale=0.55]{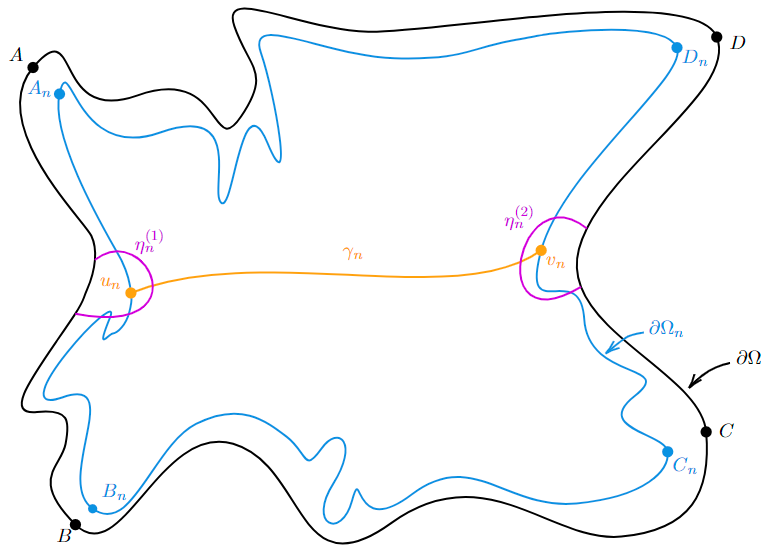}  
        \caption{Given an curve $\gamma_{n}$ in $\Omega_{n}$ from $[A_{n},B_{n}]_{\partial{\Omega_{n}}}$ to $[C_{n},D_{n}]_{\partial{\Omega_{n}}}$ with endpoints $u_{n}\in{[A_{n},B_{n}]_{\partial{\Omega_{n}}}}$ and $v_{n}\in{[C_{n},D_{n}]_{\partial{\Omega_{n}}}}$, we can stitch together $\gamma_{n}$ with the crosscuts $\eta_{n}^{(1)}$ and $\eta_{n}^{(2)}$ to produce a curve from $[A,B]_{\partial{\Omega^{\ast}}}$ to $[C,D]_{\partial{\Omega^{\ast}}}$ in $\Omega$ whose diameter is close to that of $\gamma_{n}$.} 
    \end{figure}
    \noindent Suppose $\gamma$ is a crosscut of $\Omega$ joining $[A,B]_{\partial{\Omega^{\ast}}}$ and $[C,D]_{\partial{\Omega^{\ast}}}$ with endpoints $u$ in $[A,B]_{\partial{\Omega^{\ast}}}$ and $v$ in $[C,D]_{\partial{\Omega^{\ast}}}$. Since any path from $[A,B]_{\partial{\Omega^{\ast}}}$ to $[C,D]_{\partial{\Omega^{\ast}}}$ in $\Omega$ that starts or ends at a corner $A,B,C,D$ of our conformal rectangle is close to a path from $[A,B]_{\partial{\Omega^{\ast}}}$ to $[C,D]_{\partial{\Omega^{\ast}}}$ in $\Omega$ that has almost the same diameter and doesn't use any of the corners of $(\Omega,A,B,C,D)$, we can assume without loss of generality that $u\in{(A,B)}_{\partial{\Omega^{\ast}}}$ and $v\in{(C,D)_{\partial{\Omega^{\ast}}}}$. Hence: 
    \begin{align*}
         c(u)&=\min\{d_{cc}^{\Omega}(u,[B,C]_{\partial{\Omega^{\ast}}};[A,D]_{\partial{\Omega^{\ast}}}), d_{cc}^{\Omega}(u,[C,D]_{\partial{\Omega^{\ast}}};[A,B]_{\partial{\Omega^{\ast}}}), d_{cc}^{\Omega}(u,[D,A]_{\partial{\Omega^{\ast}}};[B,C]_{\partial{\Omega^{\ast}}})\}>0 \\
         c(v)&=\min\{d_{cc}^{\Omega}(v,[B,C]_{\partial{\Omega^{\ast}}};[A,D]_{\partial{\Omega^{\ast}}}), d_{cc}^{\Omega}(v,[C,D]_{\partial{\Omega^{\ast}}};[A,B]_{\partial{\Omega^{\ast}}}), d_{cc}^{\Omega}(v,[D,A]_{\partial{\Omega^{\ast}}};[B,C]_{\partial{\Omega^{\ast}}})\}>0
    \end{align*}
   Let $\gamma:[0,1]\rightarrow{\Omega^{\ast}}$ be an explicit parametrization of $\gamma$. Choose $s,t\in{(0,1)}$ so that: 
    \begin{align*}
        \text{diam}(\gamma[0,s])&<\frac{c(u)}{4\pi}, & \text{diam}(\gamma[t,1])&<\frac{c(v)}{4\pi}
    \end{align*}
    Since $\gamma[s,t]$ is a compact subset of $\Omega$, by Lemma \ref{lem: K in Omega_n for n sufficiently large}, $\gamma[s,t]\subseteq{\Omega_{n}}$ for all $n$ sufficiently large. In this case, since $\gamma$ starts and ends outside $\Omega_{n}$, $\gamma$ must intersect $\Omega_{n}$ at least twice: once when it enters $\Omega_{n}$ and once when it exits. Since $\gamma[s,t]\subseteq{\Omega_{n}}$, the points of intersection of $\gamma$ with $\partial{\Omega_{n}}$ must all lie in $\gamma[0,s]$ or $\gamma[t,1]$. Since $u=\gamma(0)\in{(A,B)_{\partial{\Omega^{\ast}}}}$ and $\text{diam}(\gamma[0,s])<\frac{c(u)}{4\pi}$, there is a subarc $\chi$ of $\{z\in{\Omega^{\ast}}:|z-u|=\frac{c(u)}{4\pi}\}$ so that $[C,D]_{\partial{\Omega^{\ast}}}$ and $\gamma[0,s]$ lie in distinct connected components of $\Omega^{\ast}\setminus{\chi}$. In other words: 
    \begin{equation*}
        d_{cc}^{\Omega}(\gamma[0,s],[A,B]_{\partial{\Omega^{\ast}}};[C,D]_{\partial{\Omega^{\ast}}})<\frac{c(u)}{2}
    \end{equation*}
    \begin{figure}[h]
        \centering
        \includegraphics[scale=0.52]{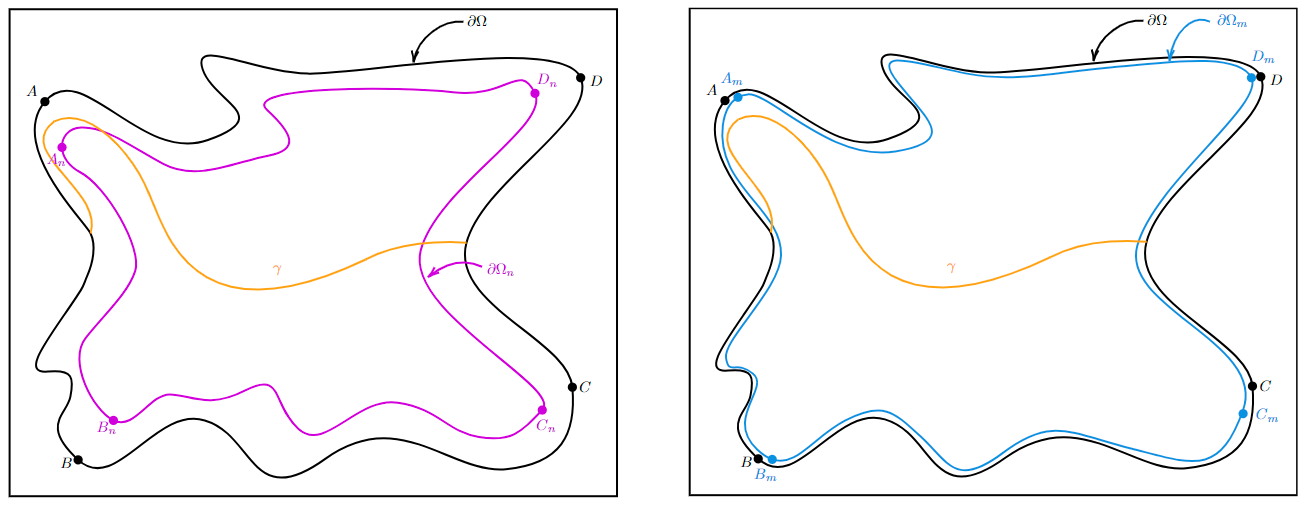}  
        \caption{On the left: The situation we are trying to avoid: the curve $\gamma$ between $[A,B]_{\partial{\Omega^{\ast}}}$ and $[C,D]_{\partial{\Omega^{\ast}}}$ in $\Omega$ enters $\Omega_{n}$ via the boundary arc $[D_{n},A_{n}]_{\partial{\Omega_{n}}}$ rather than $[A_{n},B_{n}]_{\partial{\Omega_{n}}}$. On the right: For a fixed curve $\gamma$, we can avoid the picture on the left by taking $m>n$ sufficiently large.}
    \end{figure}
    Hence, the points of intersection of $\gamma[0,s]$ and $\partial{\Omega_{n}}$ must all lie on $[A_{n},B_{n}]_{\partial{\Omega_{n}}}$. To see why, suppose $w_{n}$ is a point of intersection of $\partial{\Omega_{n}}$ and $\gamma[0,s]$, and $w_{n}$ lies on one of the other boundary arcs of $(\Omega_{n},A_{n},B_{n},C_{n},D_{n})$. Without loss of generality, suppose $w_{n}\in{[B_{n},C_{n}]_{\partial{\Omega_{n}}}}$. Since $(\Omega_{n},A_{n},B_{n},C_{n},D_{n})$ is a $\delta_{n}$-good interior approximation of $(\Omega,A,B,C,D)$, there exists a crosscut $\zeta_{n}$ of $\Omega$ joining $w_{n}$ to $[B,C]_{\partial{\Omega^{\ast}}}$, and separating $w_{n}$ from $[D,A]_{\partial{\Omega^{\ast}}}$ so that $\text{length}(\zeta_{n})<\delta_{n}$. Stitching together $\chi$ and $\zeta_{n}$ we can produce a crosscut of $\Omega$ with length at most $\delta_{n}+\frac{c(u)}{2}$ that joins $u$ to $[B,C]_{\partial{\Omega^{\ast}}}$ and separates $u$ from $[D,A]_{\partial{\Omega^{\ast}}}$. Hence, for $n$ sufficiently large so that $\delta_{n}<\frac{c(u)}{2}$, we have that: 
    \begin{equation*}
        d_{cc}^{\Omega}(u,[B,C]_{\partial{\Omega^{\ast}}},[D,A]_{\partial{\Omega^{\ast}}})<\delta_{n}+\frac{c(u)}{2}<c(u)
    \end{equation*}
    This contradicts the definition of $c(u)$. Thus, the points of intersection of $\gamma[0,s]$ and $\partial{\Omega_{n}}$ all lie on $[A_{n},B_{n}]_{\partial{\Omega^{\ast}}}$. By the exact same argument, the points of intersection of $\gamma[t,1]$ and $\partial{\Omega_{n}}$ all lie on $[C_{n},D_{n}]_{\partial{\Omega^{\ast}}}$, for $n$ sufficiently large. Hence, there exists a subarc $\gamma_{n}$ of $\gamma$ so that $\gamma_{n}$ is a crosscut of $\Omega_{n}$ joining $[A_{n},B_{n}]_{\partial{\Omega_{n}}}$ and $[C_{n},D_{n}]_{\partial{\Omega_{n}}}$ such that $\gamma[s,t]\subseteq{\gamma_{n}}$, which tells us that $\text{diam}(\gamma_{n})\leq{\text{diam}(\gamma)}$. Since we can do this for any curve $\gamma$ from $(A,B)_{\partial{\Omega^{\ast}}}$ to $(C,D)_{\partial{\Omega^{\ast}}}$ in $\Omega$, we conclude that: 
    \begin{equation*}
        \limsup_{n\rightarrow{\infty}}d_{n}\leq{d}
    \end{equation*}
\end{proof}
\noindent In order to prove Proposition \ref{prop: we can approximate any simply connected domain}, we need the following analogue of Wolff's Lemma (see Proposition 2.2 of \cite{pommerenke}) for conformal mappings of rectangles:
\begin{lem}
\label{lem: Wolff's Lemma for rectangles}
    Consider the conformal rectangle $(\mathcal{R}_{L},i,0,L,L+i)$, which is just $\mathcal{R}_{L}$ with its four corners as the four distinguished boundary points. Let $\Omega$ be a bounded simply connected domain and let $\psi:\mathcal{R}_{L}\rightarrow{\Omega}$ be a conformal map from $\mathcal{R}_{L}$ to $\Omega$. If $z_{0}$ is a point of $\mathcal{R}_{L}$ so that $d=\text{dist}(z_{0},\partial{\mathcal{R}_{L}})<\frac{1\wedge{L}}{2}$ and $\mathcal{I}$ and $\mathcal{J}$ are boundary arcs of $(\mathcal{R}_{L},i,0,L,L+i)$ so that $\mathcal{I}$ is the boundary arc closest to $z_{0}$, and $\mathcal{J}$ is opposite to $\mathcal{I}$, then there exists a crosscut of $\Omega$ of length less than $\sqrt{\frac{\pi\text{Area}(\Omega)}{2\log\big(\frac{1\wedge{L}}{2d}\big)}}$ joining $\psi(z_{0})$ to $\psi(\mathcal{I})$ and separating $\psi(z_{0})$ from $\psi(\mathcal{J})$ in $\Omega$. Equivalently: 
    \begin{equation*}
        d_{cc}^{\Omega}(\psi(z_{0}),\psi(\mathcal{I});\psi(\mathcal{J}))<\sqrt{\frac{\pi\text{Area}(\Omega)}{2\log\big(\frac{1\wedge{L}}{2d}\big)}}
    \end{equation*}
\end{lem}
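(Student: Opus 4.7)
The plan is to adapt the classical proof of Wolff's lemma (which on $\mathbb{D}$ integrates the lengths of circular arcs centered at a boundary point against the Dirichlet-type measure $dr/r$) to the rectangular setting. After a rigid motion of the plane I will assume $\mathcal{I}$ is the left vertical side of $\mathcal{R}_L$ and $\mathcal{R}_L$ sits in the right half-plane; write $z_0=(d,y_0)$. Since $\mathcal{I}$ is the nearest boundary arc we have $y_0\in[d,\ell_I-d]$, where $\ell_I\in\{1,L\}$ is the length of $\mathcal{I}$; the point $p:=(0,y_0)\in\mathcal{I}$ realizes $|z_0-p|=d$, and the opposite arc $\mathcal{J}$ is at perpendicular distance $W\in\{1,L\}\setminus\{\ell_I\}\geq 1\wedge L$.

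For each $r\in\bigl(d,(1\wedge L)/2\bigr)$ I will set $\alpha_r:=\{z\in\overline{\mathcal{R}_L}:|z-p|=r\}$. Because $p\in\mathcal{I}$ and $\mathcal{R}_L$ lies in a single closed half-plane bounded by the line through $\mathcal{I}$, the arc $\alpha_r$ has angular extent at most $\pi$ and hence $|\alpha_r|\leq\pi r$; its endpoints lie on $\partial\mathcal{R}_L$, and since $r<(1\wedge L)/2\leq W$ neither endpoint lies on $\mathcal{J}$. Thus $\alpha_r$ splits $\mathcal{R}_L$ into two components, $z_0$ sitting in the inner one (since $|z_0-p|=d<r$) while $\mathcal{J}$ lies in the outer one, and a neighborhood of $p$ in $\mathcal{I}$ also belongs to the inner component. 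Composing with the prime-end extension $\psi:\mathcal{R}_L^{\ast}\to\Omega^{\ast}$ then shows that $\psi(\alpha_r)$ is a crosscut of $\Omega$ joining $\psi(z_0)$ to $\psi(\mathcal{I})$ and separating $\psi(z_0)$ from $\psi(\mathcal{J})$ in the sense required by the lemma.

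To bound the length of $\psi(\alpha_r)$ for a well-chosen $r$, I will apply the Cauchy--Schwarz inequality to get
\[
\ell\bigl(\psi(\alpha_r)\bigr)^{2}=\Bigl(\int_{\alpha_r}|\psi'(z)|\,|dz|\Bigr)^{2}\leq|\alpha_r|\int_{\alpha_r}|\psi'|^{2}\,|dz|\leq\pi r\int_{\alpha_r}|\psi'|^{2}\,|dz|.
\]
Dividing by $r$, integrating from $d$ to $(1\wedge L)/2$, and using Fubini in polar coordinates centered at $p$ together with the conformal area identity $\int_A|\psi'|^{2}\,dA=\text{Area}(\psi(A))$, I obtain
\[
\int_d^{(1\wedge L)/2}\frac{\ell(\psi(\alpha_r))^{2}}{r}\,dr\leq\pi\,\text{Area}\bigl(\psi(A)\bigr)\leq\pi\,\text{Area}(\Omega),
\]
where $A=\{z\in\mathcal{R}_L:d<|z-p|<(1\wedge L)/2\}$. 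Since $\int_d^{(1\wedge L)/2}dr/r=\log\bigl(\tfrac{1\wedge L}{2d}\bigr)$, the mean value theorem produces some $r^{\ast}$ with $\ell(\psi(\alpha_{r^\ast}))^{2}\leq\pi\,\text{Area}(\Omega)/\log\bigl(\tfrac{1\wedge L}{2d}\bigr)$, and $\psi(\alpha_{r^\ast})$ is the desired crosscut.

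The delicate point will be checking that $\alpha_r$ retains its separation property for \emph{all} $y_0\in[d,\ell_I-d]$, including when $y_0$ is within $(1\wedge L)/2$ of an endpoint of $\mathcal{I}$ so that $\alpha_r$ ceases to be an exact half-circle and terminates on an adjacent side of $\mathcal{R}_L$. The separation property is still intact because it relies only on the two inequalities $r>d$ (putting $z_0$ strictly inside $B(p,r)$) and $r<W$ (keeping $\mathcal{J}$ strictly outside), and the uniform bound $|\alpha_r|\leq\pi r$ together with the Fubini/coarea computation persists. Recovering the precise constant in the statement (with $2\log$ in the denominator) appears to require a modest sharpening of the angular bound on $|\alpha_r|$ in the regime where only a portion of the relevant half-disk meets $\mathcal{R}_L$.
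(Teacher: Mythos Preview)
Your argument is essentially the paper's: both run the Wolff length--area trick over a one-parameter family of circular arcs in $\mathcal{R}_L$, apply Cauchy--Schwarz to bound $\ell(\psi(\alpha_r))^2$ by the arc length times $\int_{\alpha_r}|\psi'|^2\,|dz|$, integrate against $dr/r$, and extract a radius whose image is short. The one difference is that you center the circles at the boundary foot $p\in\mathcal{I}$, whereas the paper centers them at $z_0$ itself; your choice makes the half-plane bound $|\alpha_r|\leq\pi r$ immediate and keeps $\alpha_r$ a single arc even when it lands on an adjacent side, which is a mild simplification over the paper's setup.

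On the constant: you correctly flag that your argument delivers only $\sqrt{\pi\,\mathrm{Area}(\Omega)/\log(\cdot)}$ rather than the stated bound with $2\log$. In fact the paper's own Cauchy--Schwarz step appeals to ``$(\theta_2(s)-\theta_1(s))\geq\tfrac{\pi}{2}$,'' but to extract the factor $\tfrac{2}{\pi s}$ one needs an \emph{upper} bound on the angular extent, not a lower bound, and circles centered at $z_0$ have angular extent in $(\pi,2\pi)$ for $s$ near $d$; so the paper does not actually justify the sharper constant either. Since the lemma is only used qualitatively (to produce $\delta$-good interior approximations in Proposition~2.14), the missing factor of $\sqrt{2}$ is harmless and your version is enough.
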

\noindent The proof of this result is identical to the proof of Wolff's Lemma. For the convenience of the reader, we reproduce the argument below: 
\begin{proof}
   If $z_{0}$ is a point of $\mathcal{R}_{L}$ so that $d=\text{dist}(z_{0},\partial{\Omega})=\text{dist}(z_{0},\mathcal{I})<\frac{1\wedge{L}}{2}$, then for each $s\in{(d,\frac{1\wedge{L}}{2})}$,
   \begin{equation*}
       \gamma_{s}=\mathcal{R}_{L}\cap{\{z:|z-z_{0}|=s\}}
   \end{equation*}
   is a crosscut of $\mathcal{R}_{L}$ that joins $z_{0}$ to $\mathcal{I}$ and separates $z_{0}$ from $\mathcal{J}$. It follows that $\psi(\gamma_{s})$ is a crosscut of $\Omega$ that joins $\psi(z_{0})$ to $\psi(\mathcal{I})$ and separates $\psi(z_{0})$ from $\psi(\mathcal{J})$. We will now show that one of these crosscuts $\psi(\gamma_{s})$ is sufficiently short, using the length-area trick. Namely:
   \begin{align*}
       \text{Area}(\Omega)&>\text{Area}(\psi\Big(\mathcal{R}_{L}\cap{\{z:d<|z-z_{0}|<\frac{1\wedge{L}}{2}\}}\Big))=\int_{\mathcal{R}_{L}\cap{\{z:d<|z-z_{0}|<\frac{1\wedge{L}}{2}\}}}|\psi'(x+iy)|^{2}dxdy \\ 
       &=\int_{d}^{\frac{1\wedge{L}}{2}}\int_{\theta_{1}(s)}^{\theta_{2}(s)}s|\psi'(z_{0}+se^{i\theta})|^{2}d\theta{ds}\overset{(\ast)}{\geq}{\int_{d}^{\frac{1\wedge{L}}{2}}\frac{2}{\pi{s}}\Big(\int_{\theta_{1}(s)}^{\theta_{2}(s)}s|\psi'(z_{0}+se^{i\theta})|d\theta\Big)^{2}ds} \\
       &=\frac{2}{\pi}\int_{d}^{\frac{1\wedge{L}}{2}}\frac{\big(\text{length}(\psi(\gamma_{s}))\big)^{2}}{s}ds\geq{\frac{2}{\pi}\inf_{d<s<\frac{1\wedge{L}}{2}}\big(\text{length}(\psi(\gamma_{s}))\big)^{2}\hspace{1pt}\log\Big(\frac{1\wedge{L}}{2d}\Big)}
   \end{align*}
   The inequality ``$(\ast)$" follows by Cauchy-Schwarz and the observation that since we're working in $\mathcal{R}_{L}$, $(\theta_{2}(s)-\theta_{1}(s))\geq{\frac{\pi}{2}}$ for all $s\in{(d,\frac{1\wedge{L}}{2})}$. Rearranging, we get that: 
   \begin{equation*}
       \inf_{d<s<\frac{1\wedge{L}}{2}}\text{length}(\psi(\gamma_{s}))<\sqrt{\frac{\pi\text{Area}(\Omega)}{2\log\big(\frac{1\wedge{L}}{2d}\big)}}
   \end{equation*}
\end{proof}
\noindent Having established Lemma \ref{lem: Wolff's Lemma for rectangles}, Proposition \ref{prop: we can approximate any simply connected domain} follows as an immediate corollary: 
\begin{proof}
    (of Proposition \ref{prop: we can approximate any simply connected domain}) Let $\phi:\Omega\rightarrow{\mathcal{R}_{L}}$ be the conformal map from $\Omega$ to $\mathcal{R}_{L}$ so that the four distinguished prime ends of $\Omega$ are mapped to the four corners of $\mathcal{R}_{L}$ and in particular, $\phi(A)=i$. Fix $r\in{(0,\frac{1\wedge{L}}{2})}$. Consider the subrectangle $(\Omega_{r},A_{r},B_{r},C_{r},D_{r})$ of $(\Omega,A,B,C,D)$, where:
    \begin{multicols}{2}
        \begin{itemize}
            \item $\Omega_{r}=\phi^{-1}\big((r,L-r)\times{(r,1-r)}\big)$
            \item $A_{r}=\phi^{-1}(r+(1-r)i)$
            \item $B_{r}=\phi^{-1}(r+ri)$
            \item $C_{r}=\phi^{-1}((L-r)+ri)$
            \item $D_{r}=\phi^{-1}((L-r)+(1-r)i)$
            \item[\vspace{\fill}]
        \end{itemize}
    \end{multicols}
    \noindent In other words, $\Omega_{r}$ is the preimage of the rectangle $(r,L-r)\times{(r,1-r)}$ under the conformal map $\phi$, and the prime ends $A_{r},B_{r},C_{r},D_{r}$ are the preimages under $\phi$ of the four corners of this rectangle. By Lemma \ref{lem: Wolff's Lemma for rectangles},
    \begin{equation*}
        d_{cc}^{\Omega}([A_{r},B_{r}]_{\partial{\Omega_{r}}},[A,B]_{\partial{\Omega^{\ast}}};[C,D]_{\partial{\Omega^{\ast}}})\leq{\sqrt{\frac{\pi\text{Area}(\Omega)}{2\log\big(\frac{1\wedge{L}}{2r}\big)}}}
    \end{equation*}
    By the exact same reasoning, the analogous result is true for the other three boundary arcs of $(\Omega_{r},A_{r},B_{r},C_{r},D_{r})$. Thus, choosing $r<\big(\frac{1\wedge{L}}{2}\big)e^{-\frac{\pi\text{Area}(\Omega)}{2\delta^{2}}}$ for some fixed $\delta>0$, the desired result follows. 
\end{proof}
\end{appendices}

\end{document}